\PassOptionsToPackage{pagebackref=true}{hyperref}
\documentclass[english,11pt]{article}
\usepackage{tikz}
\usetikzlibrary{decorations.pathreplacing}
\usepackage{enumerate}
\usepackage[shortlabels]{enumitem}
\usepackage{verbatim}
\usepackage[margin=1in]{geometry}
\usepackage{amssymb}
\usepackage{caption}
\usepackage{bbm}
\usepackage{amsthm}
\usepackage{enumitem}
\usepackage{xcolor}
	\definecolor{my_blue}{rgb}{0.0,0.0,0.6}
	\definecolor{my_green}{rgb}{0.0,0.5,0.0}
	\definecolor{my_light_gray}{gray}{0.6}
\usepackage{scalerel}    
\usepackage{stmaryrd}   
\usepackage{tocbibind} 
\usepackage{microtype} 
\usepackage{stackengine}

\makeatletter
\DeclareRobustCommand\widecheck[1]{{\mathpalette\@widecheck{#1}}}
\def\@widecheck#1#2{%
    \setbox\z@\hbox{\m@th$#1#2$}%
    \setbox\tw@\hbox{\m@th$#1%
       \widehat{%
          \vrule\@width\z@\@height\ht\z@
          \vrule\@height\z@\@width\wd\z@}$}%
    \dp\tw@-\ht\z@
    \@tempdima\ht\z@ \advance\@tempdima2\ht\tw@ \divide\@tempdima\thr@@
    \setbox\tw@\hbox{%
       \raise\@tempdima\hbox{\scalebox{1}[-1]{\lower\@tempdima\box
\tw@}}}%
    {\ooalign{\box\tw@ \cr \box\z@}}}
\makeatother

\usepackage{rotating,centernot,cancel}    

\usepackage{amsmath}
\usepackage{tocloft}
\usepackage{float}

\usepackage{tcolorbox}   
\PassOptionsToPackage{hyphens}{url}\usepackage{hyperref}
    \hypersetup{colorlinks=true, linkcolor=my_blue, citecolor=my_green, urlcolor=my_light_gray}
\usepackage{babel}
\theoremstyle{plain}

\input amssym.def
\input amssym.tex

\usepackage{mathtools}

\numberwithin{equation}{section}

\newtheorem{theorem}{Theorem}[section]
\newtheorem{corollary}[theorem]{Corollary} 
\newtheorem{lemma}[theorem]{Lemma} 
\newtheorem*{lemma*}{Lemma}
\newtheorem{proposition}[theorem]{Proposition} 

\theoremstyle{definition}
\newtheorem{remark}[theorem]{Remark}

\theoremstyle{definition}

\def\R{\mathbb R}

\def\Z{\mathbb Z}

\def\to{\rightarrow}
\def\dlim[#1][#2]{\lim_{#1 \to #2, #1 \neq #2}}

\def\Exp{\textup{Exp}}
\def\Cov{\textup{$\mathbb{C}$ov}}
\def\Rec{\mathsf{Rec}}

\def\bfe{\mathbf e}
\def\bfu{\mathbf u}
\def\bfv{\mathbf v}
\def\bfp{\mathbf p}

\newcommand{\be}{\begin{equation}}
\newcommand{\ee}{\end{equation}}

\newcommand\bbullet{{{\scaleobj{0.6}{\bullet}}}} 
\newcommand\mydots{\hbox to 1em{.\hss.\hss.}}

\def\wt{\widetilde}  \def\wh{\widehat}  \def\wc{\widecheck}

\newcommand{\myfootnote}[1]{
    \renewcommand{\thefootnote}{}
    \footnotetext{\scriptsize#1}
    \renewcommand{\thefootnote}{\arabic{footnote}}
}

	\newcommand{\eq}[1]{\begin{align*} #1 \end{align*}}
	\newcommand{\eeq}[1]{\begin{align} \begin{split} #1 \end{split} \end{align}}

        \newcommand{\stackref}[2]{
        		\readlist*\mylist{#1}
        		\stackrel{\mbox{\footnotesize\foreachitem\x\in\mylist[]{\ifnum\xcnt=1\else,\fi\eqref{\x}}}}{#2}
        }
        \newcommand{\stackrefp}[2]{
        		\readlist*\mylist{#1}
        		\stackrel{\hphantom{\mbox{\footnotesize\foreachitem\x\in\mylist[]{\ifnum\xcnt=1\else,\fi\eqref{\x}}}}}{#2}
        }
        \newcommand{\stackrefpp}[3]{
       		\readlist*\mylist{#1}
        		\readlist*\mylistt{#2}
        		\stackrel{\parbox{\widthof{\footnotesize\foreachitem\x\in\mylistt[]{\ifnum\xcnt=1\else,\fi\eqref{\x}}}}{\centering\footnotesize\foreachitem\x\in\mylist[]{{\ifnum\xcnt=1\else,\fi\eqref{\x}}}}}{#3}
        }

\allowdisplaybreaks

\title{Negative association of Busemann functions 
\\in exponential last-passage percolation}
\author{
  Erik Bates\thanks{\scriptsize{Department of Mathematics, North Carolina State University.
 \texttt{ebates@ncsu.edu}}}  \qquad  Xiao Shen\thanks{\scriptsize{Department of Mathematics, North Carolina State University. \texttt{xshen9@ncsu.edu}}}
  }
\date{}
\begin{document}
\maketitle

\begin{abstract}

One hallmark of exactly solvable KPZ random growth models is product-form invariant measures.
In the setting of exponential last-passage percolation (LPP), this corresponds to the independence of Busemann increments along any down-right path. 
However, this independence breaks down when multiple asymptotic directions are considered simultaneously, owing to the fact that \textit{jointly} invariant measures are not \textit{jointly} product-form.
This paper shows that the failure of independence is one-sided: Busemann increments across arbitrary directions are negatively associated.
As an application, we derive an exponential concentration inequality for sums of Busemann increments on the diffusive scale, even when the increments are not independent.
While our argument relies on a Burke property that is special to exponential weights, all other proof ingredients---including hidden LPP monotonicities and braid relations for queueing maps---hold for arbitrary weights.

\begin{figure}[h]
\begin{center}

\tikzset{every picture/.style={line width=0.75pt}} 

\begin{tikzpicture}[x=0.75pt,y=0.75pt,yscale=-1,xscale=1]

\draw  [dash pattern={on 0.84pt off 2.51pt}]  (129.2,89.3) -- (169.2,89.3) ;
\draw  [dash pattern={on 0.84pt off 2.51pt}]  (169.2,88.3) -- (169.2,148.3) ;
\draw    (169.2,149.3) -- (189.2,149.3) ;
\draw  [dash pattern={on 0.84pt off 2.51pt}]  (189.2,148.3) -- (189.2,168.3) ;
\draw  [dash pattern={on 0.84pt off 2.51pt}]  (189.2,168.3) -- (289.2,168.3) ;
\draw  [dash pattern={on 0.84pt off 2.51pt}]  (289.2,168.3) -- (289.2,208.3) ;
\draw  [dash pattern={on 0.84pt off 2.51pt}]  (289.2,208.3) -- (389.2,208.3) ;
\draw  [dash pattern={on 0.84pt off 2.51pt}]  (389.2,209.3) -- (389.2,229.3) ;
\draw [color={rgb, 255:red, 208; green, 2; blue, 27 }  ,draw opacity=1 ][line width=2.25]    (169.2,89.3) -- (169.2,109.3) ;
\draw [color={rgb, 255:red, 208; green, 2; blue, 27 }  ,draw opacity=1 ]   (169.2,89.3) .. controls (211.77,55.64) and (221.21,67.16) .. (258.27,40.62) ;
\draw [shift={(259.4,39.8)}, rotate = 144.01] [color={rgb, 255:red, 208; green, 2; blue, 27 }  ,draw opacity=1 ][line width=0.75]    (10.93,-3.29) .. controls (6.95,-1.4) and (3.31,-0.3) .. (0,0) .. controls (3.31,0.3) and (6.95,1.4) .. (10.93,3.29)   ;
\draw [color={rgb, 255:red, 208; green, 2; blue, 27 }  ,draw opacity=1 ]   (169.2,109.3) .. controls (209.2,79.3) and (188.2,87.3) .. (228.2,57.3) ;
\draw [color={rgb, 255:red, 74; green, 144; blue, 226 }  ,draw opacity=1 ][line width=2.25]    (169.2,149.3) -- (189.2,149.3) ;
\draw [color={rgb, 255:red, 74; green, 144; blue, 226 }  ,draw opacity=1 ]   (169.2,149.3) .. controls (206.44,89.52) and (308.3,79.89) .. (336.56,69.63) ;
\draw [shift={(338.2,69)}, rotate = 157.75] [color={rgb, 255:red, 74; green, 144; blue, 226 }  ,draw opacity=1 ][line width=0.75]    (10.93,-3.29) .. controls (6.95,-1.4) and (3.31,-0.3) .. (0,0) .. controls (3.31,0.3) and (6.95,1.4) .. (10.93,3.29)   ;
\draw [color={rgb, 255:red, 74; green, 144; blue, 226 }  ,draw opacity=1 ]   (189.2,149.3) .. controls (212.32,127.28) and (204.2,116.3) .. (258.2,90.3) ;
\draw [color={rgb, 255:red, 245; green, 166; blue, 35 }  ,draw opacity=1 ][line width=2.25]    (229.2,168.43) -- (249.2,168.43) ;
\draw [color={rgb, 255:red, 245; green, 166; blue, 35 }  ,draw opacity=1 ]   (229.2,168.43) .. controls (268.6,138.88) and (270.16,93.8) .. (290.27,55.18) ;
\draw [shift={(291.2,53.43)}, rotate = 118.3] [color={rgb, 255:red, 245; green, 166; blue, 35 }  ,draw opacity=1 ][line width=0.75]    (10.93,-3.29) .. controls (6.95,-1.4) and (3.31,-0.3) .. (0,0) .. controls (3.31,0.3) and (6.95,1.4) .. (10.93,3.29)   ;
\draw [color={rgb, 255:red, 245; green, 166; blue, 35 }  ,draw opacity=1 ]   (248.58,168.68) .. controls (272.58,151.68) and (270.38,133.19) .. (271.5,103.06) ;
\draw [color={rgb, 255:red, 189; green, 16; blue, 224 }  ,draw opacity=1 ][line width=2.25]    (189.2,149.3) -- (189.2,169.3) ;
\draw [color={rgb, 255:red, 126; green, 211; blue, 33 }  ,draw opacity=1 ][line width=2.25]    (310.2,208.3) -- (330.2,208.3) ;
\draw [color={rgb, 255:red, 126; green, 211; blue, 33 }  ,draw opacity=1 ]   (310.2,208.3) .. controls (350,178.45) and (355.15,103.06) .. (417.26,63.89) ;
\draw [shift={(418.2,63.3)}, rotate = 148.24] [color={rgb, 255:red, 126; green, 211; blue, 33 }  ,draw opacity=1 ][line width=0.75]    (10.93,-3.29) .. controls (6.95,-1.4) and (3.31,-0.3) .. (0,0) .. controls (3.31,0.3) and (6.95,1.4) .. (10.93,3.29)   ;
\draw [color={rgb, 255:red, 126; green, 211; blue, 33 }  ,draw opacity=1 ]   (330.2,208.3) .. controls (354.2,191.3) and (350.2,165.3) .. (358.2,135.3) ;
\draw [color={rgb, 255:red, 189; green, 16; blue, 224 }  ,draw opacity=1 ]   (189.2,168.3) .. controls (262.83,155.37) and (313.41,112.12) .. (399.03,106.95) ;
\draw [shift={(400.32,106.88)}, rotate = 176.82] [color={rgb, 255:red, 189; green, 16; blue, 224 }  ,draw opacity=1 ][line width=0.75]    (10.93,-3.29) .. controls (6.95,-1.4) and (3.31,-0.3) .. (0,0) .. controls (3.31,0.3) and (6.95,1.4) .. (10.93,3.29)   ;
\draw [color={rgb, 255:red, 189; green, 16; blue, 224 }  ,draw opacity=1 ]   (189.2,149.3) .. controls (223.92,109.68) and (295.52,119.28) .. (343.52,116.08) ;

\draw (151.75,67.53) node [anchor=north west][inner sep=0.75pt]    {$B_{1}$};
\draw (147.45,139.6) node [anchor=north west][inner sep=0.75pt]    {$B_{2}$};
\draw (172.9,167.28) node [anchor=north west][inner sep=0.75pt]    {$B_{3}$};
\draw (231.2,172.7) node [anchor=north west][inner sep=0.75pt]    {$B_{4}$};
\draw (312.2,211.7) node [anchor=north west][inner sep=0.75pt]    {$B_{n}$};

\end{tikzpicture}

\captionsetup{width=.8\linewidth, font=small, singlelinecheck=false}
\caption{
A special case of our main result.
Consider Busemann increments $B_1,\ldots,B_n$ along a down-right path, with $n$ arbitrary slope parameters. 
Unless the slopes are non-decreasing, these increments are not independent. 
Nevertheless, our result shows they are negatively dependent in the sense that \\[-0.5\baselineskip]
\begin{minipage}[t]{\linewidth}
\[
\mathbb{P}\Big(\bigcap_{i=1}^n\{B_i < t_i\}\Big)
\le
\prod_{i=1}^n\mathbb{P}\big(B_i < t_i\big) \quad \text{and} \quad 
\mathbb{P}\Big(\bigcap_{i=1}^n\{B_i > t_i\}\Big)
\le
\prod_{i=1}^n\mathbb{P}\big(B_i > t_i\big). \vspace{0.5\baselineskip}
\]
\end{minipage}
This roughly suggests the joint Busemann process is more concentrated than the product of its marginals.
}
\label{fig_titlepage}
\end{center}
\end{figure}
\end{abstract}

\myfootnote{Date: \today}
\myfootnote{2020 Mathematics Subject Classification: 60K35, 60K37}
\myfootnote{Keywords: last-passage percolation, Kardar--Parisi--Zhang, Busemann function, negative association, random growth}

\addtocontents{toc}{\protect\setcounter{tocdepth}{-1}}
\tableofcontents
\addtocontents{toc}{\protect\setcounter{tocdepth}{2}}

\section{Introduction}\label{intro}

In the study of random growth models, one of the central problems is to prove the universality of fluctuation statistics.
In last-passage percolation (LPP), the two fluctuations of greatest interest are of \textit{passage time} and \textit{geodesic location}.
A wide range of models---also including first-passage percolation, polymers, and many exclusion processes---are believed to belong to the Kardar--Parisi--Zhang (KPZ) universality class, meaning their asymptotic fluctuations admit a common scaling limit.
Moreover, the size of fluctuations are believed to be governed by exponents whose values reflect certain fractal dimensions of the scaling limit.
In our setting, it is common to name a \textit{fluctuation exponent} $\chi$ for passage times, and a \textit{transversal fluctuation exponent} $\xi$ for geodesic location.
The so-called KPZ relation says that $\chi = 2\xi-1$ \cite{chatterjee13, auffinger_damron14, auffinger_damron13b}.

In the two-dimensional case, a second relation $\xi = 2\chi$ is believed to be true (hence $\chi = \tfrac13$ and $\xi = \tfrac23$) and has been verified for various exactly solvable directed models.
Directed-ness means the growth dynamics are Markovian, and exact solvability means the associated invariant measures can be explicitly identified.
When these invariant measures have independent increments, it is possible to confirm the KPZ fluctuation exponents using probabilistic arguments.
Examples include Poissonian LPP \cite{cator_groeneboom06}, exponential LPP \cite{balazs_cator_seppalainen06}, the asymmetric simple exclusion process \cite{balazs_seppalainen10}, the inverse-gamma polymer \cite{seppalainen12}, the O'Connell--Yor polymer \cite{seppalainen_valko10, morenoflores_seppalainen_valko14}, the beta, inverse-beta, and strict-weak polymers \cite{chaumont_noack18b}, strict-strict Bernoulli LPP \cite{ciech_georgiou19}, and the KPZ equation itself \cite{balazs_quastel_seppalainen11}.

Beyond exactly solvable models, invariant measures can be accessed indirectly via Busemann functions.
In the setting of LPP, Busemann functions are defined as limits of differences in passage times from two initial points to a common terminal point that is sent to infinity in a prescribed direction; see \eqref{busemann_def}.
It turns out the increments of a Busemann function are those of an invariant measure, and the prescribed direction corresponds to a conserved quantity that (should) parameterize all extremal invariant measures.
The solvable models mentioned above have Busemann functions with \textit{independent} increments (see Remark \ref{rem_ind}), which is implicitly the key force behind those results.
But in general this property is not believed to hold; in fact, a result to this effect is obtained in \cite{chaumont_noack18a}.

Nevertheless, Busemann functions have the potential to deliver fluctuation exponents for general models, not through independence but rather \textit{negative correlation}.
The latter still implies that fluctuations of Busemann functions scale (at most) diffusively, which is the essential component for showing (the $\ge$ direction of) $\xi = 2\chi$.
A heuristic explanation can be found in \cite[Appendix B]{alevy_krishnan22}.
The same paper identified a sufficient condition for one particular pair of Busemann increments to have negative correlations, along with a family of weight distributions for which the condition could be rigorously verified.
This was not strong enough to prove $\xi \ge 2\chi$, because one needs negative correlations for many increment-pairs not covered by \cite{alevy_krishnan22}.
Even so, \cite{alevy_krishnan22} provided some rigorous footing for this approach.
Otherwise the evidence of negative correlations is scant, despite the potentially far-reaching implications.

Although Busemann increments might not be negatively correlated among all LPP models (see \cite[Questions 1 and 2]{alevy_krishnan22}), there is already a setting within exactly solvable models in which to approach this problem.
Specifically, this paper considers Busemann increments across \textit{different directions}, which are not necessarily independent (e.g.\ Proposition~\ref{prop_strict}).
The available descriptions of the joint law reveal a much more complicated structure, and many aspects of the joint process remain elusive.
The main result of this paper is a negative association statement (Theorem~\ref{thm_1}) that accommodates arbitrarily many directions.
This statement is more than strong enough to imply the type of diffusivity mentioned above, even for sums of Busemann increments with potentially different directions (Corollary~\ref{thm_diffusive}).
We only work with exponential last-passage percolation (LPP), but unlike for the delicate independence properties, we anticipate that some version of our main result holds beyond exactly solvable settings.
In fact, the bulk of our proof works for general weights; the only use of the exponential distribution comes in the Burke property (Proposition~\ref{burk}), albeit a crucial ingredient.

\subsection{The corner growth model and Busemann functions}
Denote the standard basis vectors in $\R^2$ by $\bfe_1 = (1,0)$ and $\bfe_2 = (0,1)$.
Consider a collection of i.i.d.\ $\Exp(1)$ random variables $\{\omega_{\bfv}\}_{\bfv\in \Z^2}$, which are regarded as weights attached to the vertices of $\Z^2$.
The {\it last-passage value} between two coordinate-wise ordered points $\bfu \leq \bfv$ in $\Z^2$, is the maximum sum of weights along an up-right nearest-neighbor path from $\bfu$ to $\bfv$. 
More precisely, let $\Pi^{\bfu,\bfv}$ denote the collection of paths $\mathbf p_{\bbullet}=(\bfp_i)_{i=0}^{|\bfv-\bfu|_1}$ that start at $\bfp_0=\bfu$, end at $\bfp_{|\bfv-\bfu|_1}=\bfv$, and satisfy $\bfp_{i+1}-\bfp_i\in\{\mathbf e_1, \mathbf e_2\}$ for every $i$.
Then the last-passage value between $\bfu$ and $\bfv$ is denoted by
\begin{equation}\label{def_G}
G_{\bfu,\bfv} = \max_{\bfp_{\bbullet}\in\Pi^{\bfu,\bfv}} \sum_{i=0}^{|\bfv-\bfu|_1} \omega_{\bfp_i}.
\end{equation}
We set $G_{\bfu, \bfv} = -\infty$ if $\Pi^{\bfu, \bfv}$ is empty (i.e.~$\bfu \centernot\leq\bfv$). 
Because the weights are continuously distributed, there is almost surely a unique maximizing path which is called the \textit{geodesic} between $\bfu$ and $\bfv$. 

The regime of interest is when $\mathbf v\to\infty$.
We parametrize the allowed asymptotic directions by the following $\ell^1$-unit vectors:
\begin{equation}
\boldsymbol{\xi}[\rho] = \tfrac{1}{|(\rho^2, (1-\rho)^2)|_1}\left(\rho^2, (1-\rho)^2 \right), \quad \rho\in(0,1). \label{char1}
\end{equation}
The \textit{Busemann function} in direction $\boldsymbol{\xi}[\rho]$ is the following almost sure limit:
\begin{align} \label{busemann_def}
B^\rho_{\mathbf{x},\mathbf{y}} = \lim_{n\to \infty}(G_{\mathbf{x}, \mathbf{v}_n} - G_{\mathbf{y}, \mathbf{v}_n}),
\end{align}
where $(\mathbf{v}_n)$ is any sequence with $|\mathbf{v}_n|_1 \to \infty$ and $\mathbf{v}_n / |\mathbf{v}_n|_1 \to \boldsymbol{\xi}[\rho]$ as $n\to\infty$. The existence of this limit was first established in \cite{ferrari_pimentel05}, and can be read from \cite[Theorem 4.2(iii)]{seppalainen20a} in the present notation. 
Over unit edges, the marginal distributions are
\begin{equation}\label{mdist}B^{\rho}_{\mathbf x, \mathbf x+\mathbf e_1} \sim \Exp(\rho) \qquad \textup{ and } \qquad B^{\rho}_{\mathbf x, \mathbf x+\mathbf e_2} \sim \Exp(1-\rho).
\end{equation}
When we evaluate the Busemann functions over these unit edges, we will often refer to them as \textit{Busemann increments}.
Note that the order of vertices matters, so we write $[\![\mathbf a,\mathbf b]\!]$ to refer to a unit edge with the orientation $\mathbf{a}\to\mathbf{b}$, allowing $\mathbf b\in\{\mathbf a\pm\mathbf e_1,\mathbf a\pm\mathbf e_2\}$.

Even though it does not directly affect this paper, we mention that the almost sure event on which the limit \eqref{busemann_def} exists depends on $\rho$, and in fact \eqref{busemann_def} does not hold simultaneously for every $\rho\in(0,1)$.
For general i.i.d.\ weights $(\omega_{\bfv})_{\bfv\in\Z^2}$, the limit \eqref{busemann_def} still makes sense \cite{georgiou_rassoulagha_seppalainen17a, janjigian_rassoulagha20a}, but an explicit distribution as in \eqref{mdist} is not available.
In higher dimensions, existence of the limit is not known, but there is still a process $(B^{\rho}_{\mathbf x,\mathbf y})_{\rho,\mathbf x,\mathbf y}$ with the correct distributional properties \cite{groathouse_janjigian_rassoulagha25}.

\subsection{Main results}\label{sec:main}

The setup for our central result, Theorem \ref{thm_1}, is illustrated in Figure \ref{fig_main}. 
Given vertices $\mathbf x = (x_1,x_2)$ and $\mathbf y = (y_1,y_2)$ in $\mathbb{Z}^2$, we write $\mathbf{x} \preceq \mathbf y$ if $x_1 \le y_1$ and $x_2 \ge y_2$, meaning $\mathbf{x}$ precedes $\mathbf{y}$ along a down-right path. 
Given $\mathbf{x} \preceq \mathbf y$, let $\Rec[\mathbf{x}, \mathbf{y}]\subset\Z^2$ denote the rectangle with upper left corner at $\mathbf{x}$ and lower right corner at $\mathbf{y}$. 
Consider a sequence of vertices $(\mathbf x_i)_{i\in \mathbb{Z}}$ such that $\mathbf x_{i-1} \preceq \mathbf x_i$ and $\mathbf x_{2i}- \mathbf x_{2i-1} \in \{\mathbf{e}_1, -\mathbf{e}_2\}$ for every $i$. 
In other words, we have a sequence of unit edges $([\![\mathbf{x}_{2i-1},\mathbf{x}_{2i}]\!])_{i\in\Z}$ that lie on a common down-right path but are not necessarily adjacent.
Given some direction parameter $\rho\in(0,1)$, denote the corresponding Busemann increments by
$$B_i  = B^\rho_{\mathbf x_{2i-1}, \mathbf x_{2i}}.$$
For any direction parameters $\lambda_1,\ldots,\lambda_k\in(0,1)$, consider Busemann functions $R_1,\ldots,R_k$ defined as
$$R_j = B^{\lambda_j}_{\mathbf a_j, \mathbf b_j} \quad \text{for some $\mathbf a_j \preceq \mathbf b_j$ such that $\mathbf a_j, \mathbf b_j \in \Rec[\mathbf x_{2i}, \mathbf x_{2i+1}]$ for some $i\in \mathbb{Z}$}.$$
The following result states that, in a probabilistic sense, increasing the values of $(B_i)_{i=1}^n$ tends to decrease the values of $(R_j)_{j=1}^k$.

\begin{figure}[t]
\begin{center}

\tikzset{every picture/.style={line width=0.75pt}} 

\begin{tikzpicture}[x=0.75pt,y=0.75pt,yscale=-1,xscale=1]

\draw [color={rgb, 255:red, 208; green, 2; blue, 27 }  ,draw opacity=0.33 ][line width=2.25]    (234.25,150.31) -- (246.56,150.58) ;
\draw [color={rgb, 255:red, 208; green, 2; blue, 27 }  ,draw opacity=0.33 ]   (222.37,134.02) .. controls (257.68,120.52) and (258.59,127.64) .. (296.09,104.89) ;
\draw [color={rgb, 255:red, 208; green, 2; blue, 27 }  ,draw opacity=0.33 ]   (246.56,150.58) .. controls (294.92,118.6) and (309.12,81.33) .. (332.78,53.53) ;
\draw [shift={(334.62,51.41)}, rotate = 131.53] [fill={rgb, 255:red, 208; green, 2; blue, 27 }  ,fill opacity=0.33 ][line width=0.08]  [draw opacity=0] (8.93,-4.29) -- (0,0) -- (8.93,4.29) -- cycle    ;
\draw [color={rgb, 255:red, 0; green, 90; blue, 200 }  ,draw opacity=1 ]   (253.96,177.7) .. controls (316.8,107.55) and (355.14,122.25) .. (447.6,72.95) ;
\draw [shift={(449,72.2)}, rotate = 151.76] [fill={rgb, 255:red, 0; green, 90; blue, 200 }  ,fill opacity=1 ][line width=0.08]  [draw opacity=0] (8.93,-4.29) -- (0,0) -- (8.93,4.29) -- cycle    ;
\draw [color={rgb, 255:red, 0; green, 90; blue, 200 }  ,draw opacity=1 ]   (268.47,177.87) .. controls (292.29,164.69) and (286.78,160.63) .. (297.28,138.88) ;
\draw [color={rgb, 255:red, 208; green, 2; blue, 27 }  ,draw opacity=0.33 ][line width=2.25]    (219.87,134.44) -- (233.74,134.24) ;
\draw  [color={rgb, 255:red, 208; green, 2; blue, 27 }  ,draw opacity=0.33 ][fill={rgb, 255:red, 0; green, 0; blue, 0 }  ,fill opacity=1 ] (244.89,150.58) .. controls (244.89,149.66) and (245.64,148.92) .. (246.56,148.92) .. controls (247.47,148.92) and (248.22,149.66) .. (248.22,150.58) .. controls (248.22,151.5) and (247.47,152.24) .. (246.56,152.24) .. controls (245.64,152.24) and (244.89,151.5) .. (244.89,150.58) -- cycle ;
\draw [color={rgb, 255:red, 0; green, 90; blue, 200 }  ,draw opacity=1 ][line width=2.25]    (252.32,177.72) -- (268.47,177.87) ;
\draw  [color={rgb, 255:red, 208; green, 2; blue, 27 }  ,draw opacity=0.33 ][fill={rgb, 255:red, 0; green, 0; blue, 0 }  ,fill opacity=1 ] (218.2,134.44) .. controls (218.2,133.53) and (218.95,132.78) .. (219.87,132.78) .. controls (220.78,132.78) and (221.53,133.53) .. (221.53,134.44) .. controls (221.53,135.36) and (220.78,136.11) .. (219.87,136.11) .. controls (218.95,136.11) and (218.2,135.36) .. (218.2,134.44) -- cycle ;
\draw [color={rgb, 255:red, 208; green, 2; blue, 27 }  ,draw opacity=0.33 ][line width=2.25]    (173.52,150.4) -- (207.6,150.4) ;
\draw [color={rgb, 255:red, 208; green, 2; blue, 27 }  ,draw opacity=0.33 ]   (176.53,134.62) .. controls (211.85,121.12) and (239.64,129) .. (278.98,123.33) ;
\draw [color={rgb, 255:red, 208; green, 2; blue, 27 }  ,draw opacity=0.33 ][line width=2.25]    (173.54,134.62) -- (173.52,150.4) ;
\draw  [color={rgb, 255:red, 208; green, 2; blue, 27 }  ,draw opacity=0.33 ][fill={rgb, 255:red, 0; green, 0; blue, 0 }  ,fill opacity=1 ] (205.94,150.4) .. controls (205.94,149.48) and (206.68,148.74) .. (207.6,148.74) .. controls (208.52,148.74) and (209.26,149.48) .. (209.26,150.4) .. controls (209.26,151.32) and (208.52,152.06) .. (207.6,152.06) .. controls (206.68,152.06) and (205.94,151.32) .. (205.94,150.4) -- cycle ;
\draw  [color={rgb, 255:red, 208; green, 2; blue, 27 }  ,draw opacity=0.33 ][fill={rgb, 255:red, 0; green, 0; blue, 0 }  ,fill opacity=1 ] (171.87,134.62) .. controls (171.87,133.7) and (172.62,132.95) .. (173.54,132.95) .. controls (174.46,132.95) and (175.2,133.7) .. (175.2,134.62) .. controls (175.2,135.53) and (174.46,136.28) .. (173.54,136.28) .. controls (172.62,136.28) and (171.87,135.53) .. (171.87,134.62) -- cycle ;
\draw [color={rgb, 255:red, 0; green, 90; blue, 200 }  ,draw opacity=1 ]   (342.89,230.17) .. controls (351.2,208.68) and (365.7,189.67) .. (373.95,170.67) ;
\draw [color={rgb, 255:red, 0; green, 90; blue, 200 }  ,draw opacity=1 ]   (342.87,248.09) .. controls (366.69,234.92) and (382.22,119.75) .. (392.72,98) ;
\draw [color={rgb, 255:red, 208; green, 2; blue, 27 }  ,draw opacity=0.33 ]   (207.6,150.4) .. controls (244.85,125.63) and (367,120.83) .. (403.67,114.12) ;
\draw [shift={(406.32,113.6)}, rotate = 168.02] [fill={rgb, 255:red, 208; green, 2; blue, 27 }  ,fill opacity=0.33 ][line width=0.08]  [draw opacity=0] (8.93,-4.29) -- (0,0) -- (8.93,4.29) -- cycle    ;
\draw [color={rgb, 255:red, 208; green, 2; blue, 27 }  ,draw opacity=0.33 ][line width=2.25]    (356.42,275.24) -- (386.62,275.33) ;
\draw [color={rgb, 255:red, 208; green, 2; blue, 27 }  ,draw opacity=0.33 ]   (356.25,258.29) .. controls (391.57,244.79) and (388.68,247.19) .. (426.18,224.44) ;
\draw [color={rgb, 255:red, 208; green, 2; blue, 27 }  ,draw opacity=0.33 ]   (386.62,275.33) .. controls (434.98,243.35) and (440.56,205.83) .. (463.8,178.03) ;
\draw [shift={(465.62,175.91)}, rotate = 131.53] [fill={rgb, 255:red, 208; green, 2; blue, 27 }  ,fill opacity=0.33 ][line width=0.08]  [draw opacity=0] (8.93,-4.29) -- (0,0) -- (8.93,4.29) -- cycle    ;
\draw [color={rgb, 255:red, 208; green, 2; blue, 27 }  ,draw opacity=0.33 ]   (374.69,249.14) .. controls (376.97,240.28) and (414.84,223.56) .. (446.4,204.67) ;
\draw [color={rgb, 255:red, 208; green, 2; blue, 27 }  ,draw opacity=0.33 ]   (389.62,263.24) .. controls (437.73,231.42) and (436.26,194) .. (439.02,163.39) ;
\draw [shift={(439.29,160.57)}, rotate = 95.81] [fill={rgb, 255:red, 208; green, 2; blue, 27 }  ,fill opacity=0.33 ][line width=0.08]  [draw opacity=0] (8.93,-4.29) -- (0,0) -- (8.93,4.29) -- cycle    ;
\draw [color={rgb, 255:red, 208; green, 2; blue, 27 }  ,draw opacity=0.33 ][line width=2.25]    (356.25,258.29) -- (356.42,275.24) ;
\draw  [color={rgb, 255:red, 208; green, 2; blue, 27 }  ,draw opacity=0.33 ][fill={rgb, 255:red, 0; green, 0; blue, 0 }  ,fill opacity=1 ] (384.96,275.33) .. controls (384.96,274.42) and (385.7,273.67) .. (386.62,273.67) .. controls (387.54,273.67) and (388.28,274.42) .. (388.28,275.33) .. controls (388.28,276.25) and (387.54,277) .. (386.62,277) .. controls (385.7,277) and (384.96,276.25) .. (384.96,275.33) -- cycle ;
\draw [color={rgb, 255:red, 208; green, 2; blue, 27 }  ,draw opacity=0.33 ][line width=2.25]    (389.02,249.28) -- (388.89,263.42) ;
\draw  [color={rgb, 255:red, 208; green, 2; blue, 27 }  ,draw opacity=0.33 ][fill={rgb, 255:red, 0; green, 0; blue, 0 }  ,fill opacity=1 ] (373.02,249.14) .. controls (373.02,248.22) and (373.77,247.48) .. (374.69,247.48) .. controls (375.6,247.48) and (376.35,248.22) .. (376.35,249.14) .. controls (376.35,250.06) and (375.6,250.8) .. (374.69,250.8) .. controls (373.77,250.8) and (373.02,250.06) .. (373.02,249.14) -- cycle ;
\draw  [color={rgb, 255:red, 208; green, 2; blue, 27 }  ,draw opacity=0.33 ][fill={rgb, 255:red, 0; green, 0; blue, 0 }  ,fill opacity=1 ] (354.59,258.29) .. controls (354.59,257.37) and (355.33,256.63) .. (356.25,256.63) .. controls (357.17,256.63) and (357.91,257.37) .. (357.91,258.29) .. controls (357.91,259.21) and (357.17,259.95) .. (356.25,259.95) .. controls (355.33,259.95) and (354.59,259.21) .. (354.59,258.29) -- cycle ;
\draw [color={rgb, 255:red, 208; green, 2; blue, 27 }  ,draw opacity=0.33 ][line width=2.25]    (287.2,217.56) -- (302.48,217.6) ;
\draw [color={rgb, 255:red, 208; green, 2; blue, 27 }  ,draw opacity=0.33 ]   (288.69,202.04) .. controls (327.28,197.6) and (317.15,201.27) .. (356.48,195.6) ;
\draw [color={rgb, 255:red, 208; green, 2; blue, 27 }  ,draw opacity=0.33 ][line width=2.25]    (287.02,202.04) -- (287.2,217.56) ;
\draw  [color={rgb, 255:red, 208; green, 2; blue, 27 }  ,draw opacity=0.33 ][fill={rgb, 255:red, 0; green, 0; blue, 0 }  ,fill opacity=1 ] (285.36,202.04) .. controls (285.36,201.12) and (286.11,200.38) .. (287.02,200.38) .. controls (287.94,200.38) and (288.69,201.12) .. (288.69,202.04) .. controls (288.69,202.96) and (287.94,203.7) .. (287.02,203.7) .. controls (286.11,203.7) and (285.36,202.96) .. (285.36,202.04) -- cycle ;
\draw [color={rgb, 255:red, 208; green, 2; blue, 27 }  ,draw opacity=0.33 ]   (302.48,217.6) .. controls (339.73,192.83) and (386.67,190.69) .. (419.58,184.12) ;
\draw [shift={(422.08,183.6)}, rotate = 168.02] [fill={rgb, 255:red, 208; green, 2; blue, 27 }  ,fill opacity=0.33 ][line width=0.08]  [draw opacity=0] (8.93,-4.29) -- (0,0) -- (8.93,4.29) -- cycle    ;
\draw [color={rgb, 255:red, 208; green, 2; blue, 27 }  ,draw opacity=0.33 ][line width=2.25]    (374.69,249.14) -- (389.02,249.28) ;
\draw  [color={rgb, 255:red, 208; green, 2; blue, 27 }  ,draw opacity=0.33 ][fill={rgb, 255:red, 0; green, 0; blue, 0 }  ,fill opacity=1 ] (387.96,263.24) .. controls (387.96,262.32) and (388.7,261.58) .. (389.62,261.58) .. controls (390.54,261.58) and (391.28,262.32) .. (391.28,263.24) .. controls (391.28,264.16) and (390.54,264.9) .. (389.62,264.9) .. controls (388.7,264.9) and (387.96,264.16) .. (387.96,263.24) -- cycle ;
\draw  [draw opacity=0][fill={rgb, 255:red, 155; green, 155; blue, 155 }  ,fill opacity=0.15 ] (267.87,178.38) -- (342.89,178.38) -- (342.89,230.17) -- (267.87,230.17) -- cycle ;
\draw  [draw opacity=0][fill={rgb, 255:red, 155; green, 155; blue, 155 }  ,fill opacity=0.15 ] (342.87,248.09) -- (443.68,248.09) -- (443.68,301.6) -- (342.87,301.6) -- cycle ;
\draw  [draw opacity=0][fill={rgb, 255:red, 155; green, 155; blue, 155 }  ,fill opacity=0.15 ] (163.31,118.02) -- (252.32,118.02) -- (252.32,177.72) -- (163.31,177.72) -- cycle ;
\draw  [color={rgb, 255:red, 208; green, 2; blue, 27 }  ,draw opacity=0.33 ][fill={rgb, 255:red, 0; green, 0; blue, 0 }  ,fill opacity=1 ] (300.82,217.6) .. controls (300.82,216.68) and (301.56,215.94) .. (302.48,215.94) .. controls (303.4,215.94) and (304.14,216.68) .. (304.14,217.6) .. controls (304.14,218.52) and (303.4,219.26) .. (302.48,219.26) .. controls (301.56,219.26) and (300.82,218.52) .. (300.82,217.6) -- cycle ;
\draw [color={rgb, 255:red, 208; green, 2; blue, 27 }  ,draw opacity=0.33 ][line width=2.25]    (301.82,188.44) -- (316.95,188.55) ;
\draw [color={rgb, 255:red, 208; green, 2; blue, 27 }  ,draw opacity=0.33 ]   (303.49,188.44) .. controls (338.8,174.94) and (319.68,166) .. (360.48,133.2) ;
\draw [color={rgb, 255:red, 208; green, 2; blue, 27 }  ,draw opacity=0.33 ][line width=2.25]    (316.95,188.55) -- (317.12,205.5) ;
\draw  [color={rgb, 255:red, 208; green, 2; blue, 27 }  ,draw opacity=0.33 ][fill={rgb, 255:red, 0; green, 0; blue, 0 }  ,fill opacity=1 ] (300.16,188.44) .. controls (300.16,187.52) and (300.91,186.78) .. (301.82,186.78) .. controls (302.74,186.78) and (303.49,187.52) .. (303.49,188.44) .. controls (303.49,189.36) and (302.74,190.1) .. (301.82,190.1) .. controls (300.91,190.1) and (300.16,189.36) .. (300.16,188.44) -- cycle ;
\draw [color={rgb, 255:red, 208; green, 2; blue, 27 }  ,draw opacity=0.33 ]   (317.28,204) .. controls (354.33,179.36) and (364.52,122.36) .. (368.74,93.04) ;
\draw [shift={(369.12,90.4)}, rotate = 98.02] [fill={rgb, 255:red, 208; green, 2; blue, 27 }  ,fill opacity=0.33 ][line width=0.08]  [draw opacity=0] (8.93,-4.29) -- (0,0) -- (8.93,4.29) -- cycle    ;
\draw  [color={rgb, 255:red, 208; green, 2; blue, 27 }  ,draw opacity=0.33 ][fill={rgb, 255:red, 0; green, 0; blue, 0 }  ,fill opacity=1 ] (315.62,204) .. controls (315.62,203.08) and (316.36,202.34) .. (317.28,202.34) .. controls (318.2,202.34) and (318.94,203.08) .. (318.94,204) .. controls (318.94,204.92) and (318.2,205.66) .. (317.28,205.66) .. controls (316.36,205.66) and (315.62,204.92) .. (315.62,204) -- cycle ;
\draw [color={rgb, 255:red, 208; green, 2; blue, 27 }  ,draw opacity=0.33 ][line width=2.25]    (233.74,134.24) -- (233.89,150.96) ;
\draw  [fill={rgb, 255:red, 0; green, 0; blue, 0 }  ,fill opacity=1 ] (266.8,177.87) .. controls (266.8,176.95) and (267.55,176.2) .. (268.47,176.2) .. controls (269.38,176.2) and (270.13,176.95) .. (270.13,177.87) .. controls (270.13,178.78) and (269.38,179.53) .. (268.47,179.53) .. controls (267.55,179.53) and (266.8,178.78) .. (266.8,177.87) -- cycle ;
\draw  [fill={rgb, 255:red, 0; green, 0; blue, 0 }  ,fill opacity=1 ] (250.63,177.7) .. controls (250.63,176.78) and (251.38,176.04) .. (252.29,176.04) .. controls (253.21,176.04) and (253.96,176.78) .. (253.96,177.7) .. controls (253.96,178.62) and (253.21,179.36) .. (252.29,179.36) .. controls (251.38,179.36) and (250.63,178.62) .. (250.63,177.7) -- cycle ;
\draw [color={rgb, 255:red, 0; green, 90; blue, 200 }  ,draw opacity=1 ][line width=2.25]    (342.87,248.09) -- (342.89,230.17) ;
\draw  [fill={rgb, 255:red, 0; green, 0; blue, 0 }  ,fill opacity=1 ] (341.23,230.17) .. controls (341.23,229.25) and (341.98,228.5) .. (342.89,228.5) .. controls (343.81,228.5) and (344.56,229.25) .. (344.56,230.17) .. controls (344.56,231.08) and (343.81,231.83) .. (342.89,231.83) .. controls (341.98,231.83) and (341.23,231.08) .. (341.23,230.17) -- cycle ;
\draw  [fill={rgb, 255:red, 0; green, 0; blue, 0 }  ,fill opacity=1 ] (341.2,248.09) .. controls (341.2,247.17) and (341.95,246.43) .. (342.87,246.43) .. controls (343.78,246.43) and (344.53,247.17) .. (344.53,248.09) .. controls (344.53,249.01) and (343.78,249.75) .. (342.87,249.75) .. controls (341.95,249.75) and (341.2,249.01) .. (341.2,248.09) -- cycle ;
\draw [color={rgb, 255:red, 208; green, 2; blue, 27 }  ,draw opacity=0.33 ][line width=2.25]    (409.42,276.08) -- (424.55,276.19) ;
\draw [color={rgb, 255:red, 208; green, 2; blue, 27 }  ,draw opacity=0.33 ]   (411.09,276.08) .. controls (446.4,262.58) and (434.08,258.8) .. (474.88,226) ;
\draw [color={rgb, 255:red, 208; green, 2; blue, 27 }  ,draw opacity=0.33 ][line width=2.25]    (424.55,276.19) -- (424.72,293.14) ;
\draw  [color={rgb, 255:red, 208; green, 2; blue, 27 }  ,draw opacity=0.33 ][fill={rgb, 255:red, 0; green, 0; blue, 0 }  ,fill opacity=1 ] (407.76,276.08) .. controls (407.76,275.16) and (408.51,274.42) .. (409.42,274.42) .. controls (410.34,274.42) and (411.09,275.16) .. (411.09,276.08) .. controls (411.09,277) and (410.34,277.74) .. (409.42,277.74) .. controls (408.51,277.74) and (407.76,277) .. (407.76,276.08) -- cycle ;
\draw [color={rgb, 255:red, 208; green, 2; blue, 27 }  ,draw opacity=0.33 ]   (424.88,291.64) .. controls (462.51,266.62) and (448.13,234.19) .. (499.3,210.29) ;
\draw [shift={(501.68,209.2)}, rotate = 156.04] [fill={rgb, 255:red, 208; green, 2; blue, 27 }  ,fill opacity=0.33 ][line width=0.08]  [draw opacity=0] (8.93,-4.29) -- (0,0) -- (8.93,4.29) -- cycle    ;
\draw  [color={rgb, 255:red, 208; green, 2; blue, 27 }  ,draw opacity=0.33 ][fill={rgb, 255:red, 0; green, 0; blue, 0 }  ,fill opacity=1 ] (423.22,291.64) .. controls (423.22,290.72) and (423.96,289.98) .. (424.88,289.98) .. controls (425.8,289.98) and (426.54,290.72) .. (426.54,291.64) .. controls (426.54,292.56) and (425.8,293.3) .. (424.88,293.3) .. controls (423.96,293.3) and (423.22,292.56) .. (423.22,291.64) -- cycle ;

\draw (249,183.1) node [anchor=north west][inner sep=0.75pt]    {\textcolor{blue}{$B_{1}$}};
\draw (320,232) node [anchor=north west][inner sep=0.75pt]    {\textcolor{blue}{$B_{2}$}};
\draw (453.33,51.73) node [anchor=north west][inner sep=0.75pt]    {$\boldsymbol\xi[\rho]$};
\draw (162.24,152.92) node [anchor=north west][inner sep=0.75pt]    {\textcolor{red}{$ \begin{array}{l}
R_{1}\\
\end{array}$}};
\draw (222.24,151.72) node [anchor=north west][inner sep=0.75pt]    {\textcolor{red}{$ \begin{array}{l}
R_{2}\\
\end{array}$}};
\draw (271.44,218.52) node [anchor=north west][inner sep=0.75pt]    {\textcolor{red}{$ \begin{array}{l}
R_{3}\\
\end{array}$}};
\draw (291.84,165.32) node [anchor=north west][inner sep=0.75pt]    {\textcolor{red}{$ \begin{array}{l}
R_{4}\\
\end{array}$}};
\draw (344.87,278.25) node [anchor=north west][inner sep=0.75pt]    {\textcolor{red}{$ \begin{array}{l}
R_{5}\\
\end{array}$}};
\draw (395.47,279.45) node [anchor=north west][inner sep=0.75pt]    {\textcolor{red}{$ \begin{array}{l}
R_{7}\\
\end{array}$}};
\draw (359.87,251.8) node [anchor=north west][inner sep=0.75pt]    {\textcolor{red}{$\begin{array}{l}
R_{6}\\
\end{array}$}};

\end{tikzpicture}

\captionsetup{width=.8\linewidth}
\caption{Setting of Theorem \ref{thm_1}, in the case $n=2$. 
Increasing the Busemann increments $B_1$ and $B_2$ (shown in blue), tends to probabilistically decrease the Busemann functions within any of the gray regions.
This diagram shows $k=7$ such Busemann functions (in light red), which can have any directions and any vertex pairs $(\mathbf a_j,\mathbf b_j)$ that respect the down-right order $\preceq$.}
\label{fig_main}
\end{center}
\end{figure}

\begin{theorem}[Negative association] \label{thm_1}
For any non-negative functions $f_1,\dots,f_n\colon\R\to[0,\infty)$ and $g\colon\R^k\to[0,\infty)$ which are all coordinate-wise monotone in the same direction,
we have
\[
\mathbb{E}\Big[g\big(R_1, \dots, R_k\big) \prod_{i=1}^n f_i(B_i)\Big] \le \mathbb{E}\Big[g\big(R_1, \dots, R_k\big)\Big] \prod_{i=1}^n \mathbb{E}\Big[f_i(B_i)\Big].
\]
\end{theorem}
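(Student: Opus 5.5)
The approach is to realize every Busemann function in the statement as a deterministic functional of independent boundary data, using the Burke property (Proposition~\ref{burk}). The target representation is
\[
R_j=\Phi_j(B_1,\dots,B_n,\eta),
\]
where $\eta$ is independent of $(B_1,\dots,B_n)$, the $B_i$ are mutually independent, and each $\Phi_j$ is coordinatewise \emph{non-increasing} in every $B_i$. Given such a representation, the theorem reduces to a standard two-step argument. First condition on $\eta$. Then each $g(\Phi(\cdot,\eta))$ is monotone in the direction opposite to the $f_i$. By Harris/FKG for product measures on independent coordinates, or more simply Chebyshev's sum inequality iterated over the independent $B_i$, we obtain
\[
\mathbb E\Big[g\prod_i f_i(B_i)\,\Big|\,\eta\Big]\le \mathbb E[g\mid\eta]\prod_i\mathbb E f_i(B_i).
\]
Averaging over $\eta$ then gives the claim. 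Nonnegativity of the functions is used in the iteration: peeling off one $B_i$ at a time, the remaining product is monotone in the same direction as before.

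To build the representation, I will first fix the down-right path $\mathcal P$ through all the $\mathbf x_i$. By the Burke property, the direction-$\rho$ increments along $\mathcal P$ are independent with the marginals \eqref{mdist}; in particular, $B_1,\dots,B_n$ are independent. Busemann functions of other directions $\lambda_j$ inside the rectangles $\Rec[\mathbf x_{2i},\mathbf x_{2i+1}]$ are obtained from the multi-direction joint structure. I plan to view the joint Busemann process along $\mathcal P$ (equivalently, the stationary multi-class queue) as the output of iterated queueing maps applied to independent inputs. These inputs are the $\rho$-increments along $\mathcal P$ together with independent exponential sequences, one per additional direction, and all of them together constitute the data $\{B_i\}\cup\eta$. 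The increments $R_j$, which lie in rectangles strictly between the edges carrying the $B_i$, are then functions of the weights in the north-east region together with the boundary increments, through the standard LPP recursion with Busemann boundary conditions.

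The core of the proof is to show that, for these maps, raising a $\rho$-increment on an edge $[\![\mathbf x_{2i-1},\mathbf x_{2i}]\!]$ lowers every $\lambda$-Busemann difference across a rectangle lying between such edges. This will be done in two stages. When $\lambda=\rho$, it is a "hidden monotonicity" of LPP with boundary. Increasing a horizontal boundary increment raises the passage values to all points to its right. Differences $G$ taken between points inside a gap, which sit on both sides of no increased edge, then change in the direction dictated by a crossing/comparison lemma for paths. A vertical edge is treated symmetrically with the sign flipped by orientation. For general $\lambda_j$, I will pass through the queueing representation, using braid relations for the queueing maps to reorder the compositions. The reordering is needed so that the $\rho$-layer is applied in a position where its monotone effect propagates cleanly, and each queueing map is monotone in its arrival and service inputs in a known way.

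I expect the main obstacle to be exactly this last step. The difficulty is proving that the multi-direction coupling can be arranged so that the $\rho$-increments enter as free independent coordinates and every $R_j$ is antitone in them, uniformly in whether $\lambda_j<\rho$ or $\lambda_j>\rho$. The two cases interact with the queue ordering differently, so this is where the braid relations will be needed. The Harris-type conclusion is routine by comparison.
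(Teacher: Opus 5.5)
Your reduction is sound. If there were a representation $R_j=\Phi_j(B_1,\dots,B_n,\eta)$ with $\eta$ independent of the mutually independent $B_i$ and every $\Phi_j$ antitone in each $B_i$, then conditioning on $\eta$ and applying Harris--FKG would finish the proof. The gap is that this representation and its antitonicity are the entire content of the theorem, and you do not supply them; you leave them as ``the main obstacle.''

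The tools you point to do not give antitonicity automatically. Queueing maps are not monotone ``in a known way'' that survives composition. If you raise one boundary weight $\omega_{(i_0,m)}$, the output increments $I^{\textup{S},m}_{(i,m+1)}$ of $\sigma^m$ weakly increase. But the dual output $\wc\omega_{(i,m)}=I^{\textup{S},m}_{(i,m)}\wedge J^{\textup{S},m}_{(i-1,m+1)}$ weakly increases at $i=i_0$ and weakly \emph{decreases} for $i>i_0$, because $J$ decreases (Proposition~\ref{inc_bdry}). So the sign of the response depends on position, and monotonicity of the $R_j$ must be proved by hand for each region and each ordering of $\lambda_j$ versus $\rho$. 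This is what Lemma~\ref{prop_dec1} does:
\begin{itemize}
\item corner-flipping for horizontal increments southeast of the edge with smaller parameter;
\item a restricted-exit-point case analysis for the vertical ones;
\item the decrease of $I\wedge J$ for the northwest increments with larger parameter.
\end{itemize}
Your $\lambda=\rho$ stage is also misdescribed. Same-direction increments in the gaps do not move at all when a $B_i$ changes, since they are independent of the $B_i$, so no crossing lemma is involved.

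The second missing idea is how to make a Busemann increment a free coordinate at all. In the multi-direction description the paper uses (Proposition~\ref{B_bdry}), Busemann increments are outputs of the boundary LPP, not inputs. The paper treats \emph{one} edge $\mathcal B$ at level $n$ at a time, with parameters $\rho_1>\dots>\rho_{2k}$ and $\rho=\rho_k$. It passes to $h^*=\pi^{-k,n-1}\cdots\pi^{-1,n-1}h$. Burke (Proposition~\ref{burk}) makes this again an independent exponential field. The isometry together with Proposition~\ref{LPP_queue} turns $G(\mathcal B)$ into the single weight $-\omega^\star_{(0,n)}$. Cancellation and braid relations then re-express every other increment in terms of this field.

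That construction depends on the level of $\mathcal B$, so it does not produce your simultaneous representation with all $B_i$ free. That representation amounts to a joint monotone coupling of the conditional laws of $(R_j)_j$ given $(B_i)_i$, a stronger statement that neither you nor the paper establishes. It is also unnecessary. You can peel off one $B_i$ at a time, absorbing the remaining factors $f_i(B_i)$ into $g$. This works for two reasons:
\begin{itemize}
\item the other $B_i$ are same-direction increments northwest or southeast of the current edge, which the perturbation leaves unchanged (Lemma~\ref{prop_dec2});
\item a product of nonnegative functions monotone in the same direction is again monotone in that direction.
\end{itemize}
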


\begin{remark}[Terminology disclaimer] \label{rem_terminology1}
In general, random variables $X_1,\ldots,X_N$ are said to be \textit{negatively associated} if for any disjoint subsets $\mathcal I,\mathcal{J}$ of $\{1,\ldots,N\}$, and any coordinate-wise non-decreasing functions $f$ and $g$, one has
\begin{align} \label{na_term}
\mathbb E\Big[f\big((X_i)_{i\in\mathcal I}\big)g\big((X_j)_{j\in\mathcal J}\big)\Big] \le
\mathbb E\Big[f\big((X_i)_{i\in\mathcal I}\big)\Big]\mathbb E\Big[g\big((X_j)_{j\in\mathcal J}\big)\Big].
\end{align}
Theorem~\ref{thm_1} is slightly weaker than this for the collection $B_1,\ldots,B_n,R_1,\ldots,R_k$, since we are insisting on specific types of subsets.
Nevertheless, the result is strong enough for our applications, discussed below.
One consequence of Theorem~\ref{thm_1} is that any $B_i$ is negatively associated with the random vector $(R_1,\ldots,R_k)$, in the sense that \eqref{na_term} holds with $(X_i)_{i\in\mathcal I} = B_i$ and $(X_j)_{j\in\mathcal J} = (R_1,\ldots,R_k)$.
\end{remark}

There are several corollaries, which we present next.
Let $\mathcal{Y} = (\mathbf y_i)_{i\in\mathbb Z}$ be a down-right path, meaning
$\mathbf y_{i}-\mathbf y_{i-1} \in \{\mathbf e_1,-\mathbf e_2\}$ for every $i$.
The following result gives control of the joint moment generating function of Busemann increments along $\mathcal{Y}$.

\begin{corollary}[Domination of MGF by products]\label{mgf}
For any direction parameters $\rho_1, \dots, \rho_n \in (0,1)$, any down-right path $\mathcal{Y} = (\mathbf y_i)_{i\in \mathbb{Z}}$, and any $\alpha_1,\ldots,\alpha_n\ge0$, we have
\begin{subequations} 
\begin{align} \label{mgf_a}
\mathbb{E}\Big[\exp\Big(\sum_{i=1}^n \alpha_i B^{\rho_i}_{\mathbf y_i, \mathbf y_{i+1}}\Big)\Big]
&\leq \prod_{i=1}^n \mathbb{E}\big[\exp\big(\alpha_i B^{\rho_i}_{\mathbf y_i, \mathbf y_{i+1}}\big)\big] \\
\text{and}\quad \label{mgf_b}
\mathbb{E}\Big[\exp\Big(-\sum_{i=1}^n \alpha_i B^{\rho_i}_{\mathbf y_i, \mathbf y_{i+1}}\Big)\Big]
&\leq \prod_{i=1}^n \mathbb{E}\big[\exp\big(-\alpha_i B^{\rho_i}_{\mathbf y_i, \mathbf y_{i+1}}\big)\big].
\end{align}
\end{subequations}
\end{corollary}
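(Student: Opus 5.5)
The plan is to prove \eqref{mgf_a} by induction on $n$, peeling off one increment at a time and applying Theorem~\ref{thm_1} with $n=1$ at each step. The case $n=1$ is trivial. For the inductive step, take the single edge in Theorem~\ref{thm_1} to be the first one,
\[
B_1 := B^{\rho_1}_{\mathbf y_1,\mathbf y_2},
\]
so that $\mathbf x_1=\mathbf y_1$, $\mathbf x_2=\mathbf y_2$, and $\mathbf x_2-\mathbf x_1\in\{\mathbf e_1,-\mathbf e_2\}$ as required. For the remaining terms, set
\[
R_j := B^{\rho_{j+1}}_{\mathbf y_{j+1},\mathbf y_{j+2}}, \qquad j=1,\dots,n-1.
\]
All of these are Busemann functions between $\preceq$-ordered vertices lying in $\Rec[\mathbf x_2,\mathbf x_3]$, where we take $\mathbf x_3=\mathbf y_{n+1}$. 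Indeed, $\mathbf y_2\preceq \mathbf y_3\preceq\cdots\preceq\mathbf y_{n+1}$ along the down-right path, so every $\mathbf y_i$ with $i\ge 2$ lies in the rectangle with upper-left corner $\mathbf y_2$ and lower-right corner $\mathbf y_{n+1}$. The remaining $\mathbf x_i$ for $i\notin\{1,2,3\}$ can be chosen arbitrarily far away along a down-right continuation of the path; they play no role.

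Next I would apply Theorem~\ref{thm_1} with the two functions
\[
f_1(b)=e^{\alpha_1 b}, \qquad g(r_1,\dots,r_{n-1})=\exp\Big(\sum_{j=1}^{n-1}\alpha_{j+1}r_j\Big).
\]
Both are non-negative and coordinate-wise non-decreasing because every $\alpha_i\ge 0$, so the theorem gives
\[
\mathbb E\Big[\exp\Big(\sum_{i=1}^n\alpha_iB^{\rho_i}_{\mathbf y_i,\mathbf y_{i+1}}\Big)\Big]\le \mathbb E\big[e^{\alpha_1 B_1}\big]\,\mathbb E\big[g(R_1,\dots,R_{n-1})\big].
\]
The last factor is exactly the left side of \eqref{mgf_a} for the $n-1$ edges $\mathbf y_2\to\cdots\to\mathbf y_{n+1}$, which is again a down-right path. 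The induction hypothesis then closes the argument. For \eqref{mgf_b}, the same argument works with $f_1(b)=e^{-\alpha_1 b}$ and $g=\exp(-\sum_j\alpha_{j+1}r_j)$, which are coordinate-wise non-increasing. Theorem~\ref{thm_1} allows this, since it only requires all the functions to be monotone in the same direction.

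The main point to verify is that the setup of Theorem~\ref{thm_1} legitimately contains the configuration above, and this is where I expect the only real care to be needed. First, the $R_j$ may have directions $\lambda_j=\rho_{j+1}$ different from the direction $\rho_1$ of $B_1$; the theorem allows arbitrary $\lambda_j$, so this is fine. Second, the endpoints $\mathbf a_j=\mathbf y_{j+1}$ and $\mathbf b_j=\mathbf y_{j+2}$ must satisfy $\mathbf a_j\preceq\mathbf b_j$ and lie in a single rectangle $\Rec[\mathbf x_2,\mathbf x_3]$, which the containment above guarantees. Third, the edge $[\![\mathbf y_1,\mathbf y_2]\!]$ must have the correct orientation, which it does since $\mathbf y_2-\mathbf y_1\in\{\mathbf e_1,-\mathbf e_2\}$.

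A secondary concern is integrability. Each marginal MGF is finite precisely when $\alpha_i$ is below the corresponding rate, $\rho_i$ or $1-\rho_i$ by \eqref{mdist}. If some marginal MGF is infinite, then the right-hand side of \eqref{mgf_a} is $+\infty$ and the inequality holds trivially. Otherwise all quantities are finite, and the inequality from Theorem~\ref{thm_1} for non-negative functions is valid as stated, including in the case where the left side might a priori be infinite.
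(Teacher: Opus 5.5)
Your proposal is correct and is essentially the paper's argument: take $B^{\rho_1}_{\mathbf y_1,\mathbf y_2}$ as the single edge in Theorem~\ref{thm_1}, put the remaining increments in one rectangle as the $R_j$'s, and iterate. The only difference is that the paper applies the theorem to the bounded functions $L\wedge e^{\pm\alpha_i B}$ and sends $L\to\infty$ by monotone convergence, which replaces your integrability case split. (Your finiteness criterion there is slightly off: in \eqref{mgf_a} a vertical step gives a nonpositive increment whose MGF is always finite, and likewise for horizontal steps in \eqref{mgf_b}. This does not affect the argument.)
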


This, in turn, leads to an exponential concentration inequality at the diffusive scale $\sqrt{n}$ for sums of Busemann increments, even in the absence of independence.

\begin{corollary}[Concentration of sums] \label{thm_diffusive}
Fix $\varepsilon >0$. 
For any $\rho_1,\rho_2, \dots,\rho_n \in [\varepsilon,1 -\varepsilon]$, any down-right path $\mathcal{Y}= (\mathbf y_i)_{i\in \mathbb{Z}}$, and any $t\ge 0$, we have
\begin{subequations}
\begin{align} \label{diffusive_a}
\mathbb{P}\Big(\sum_{i=1}^n B^{\rho_{i}}_{\mathbf y_i, \mathbf y_{i+1}} - \mathbb{E}\Big[\sum_{i=1}^n B^{\rho_{i}}_{\mathbf y_i, \mathbf y_{i+1}} \Big]\geq t \sqrt n \Big) 
&\leq e^{-\min\{\varepsilon^2t^2, \varepsilon t\sqrt n\}/4}\\
\text{and} \quad \label{diffusive_b}
\mathbb{P}\Big(\sum_{i=1}^n B^{\rho_{i}}_{\mathbf y_i, \mathbf y_{i+1}} - \mathbb{E}\Big[\sum_{i=1}^n B^{\rho_{i}}_{\mathbf y_i, \mathbf y_{i+1}} \Big]\leq -t \sqrt n \Big) 
&\leq e^{-\min\{\varepsilon^2t^2, \varepsilon t\sqrt n\}/4}.
\end{align}
\end{subequations}
\end{corollary}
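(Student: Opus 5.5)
The plan is a Chernoff bound in which Corollary~\ref{mgf} stands in for independence; after that, only the explicit exponential marginals \eqref{mdist} are needed. Write $X_i = B^{\rho_i}_{\mathbf y_i,\mathbf y_{i+1}}$, $S=\sum_{i=1}^n X_i$, and fix $\alpha\ge 0$. Markov's inequality and \eqref{mgf_a}, with all $\alpha_i=\alpha$, give
\[
\mathbb P\big(S-\mathbb E S\ge t\sqrt n\big)\le e^{-\alpha t\sqrt n}\,\mathbb E\big[e^{\alpha(S-\mathbb E S)}\big]\le e^{-\alpha t\sqrt n}\prod_{i=1}^n \mathbb E\big[e^{\alpha(X_i-\mathbb E X_i)}\big].
\]
For the lower tail \eqref{diffusive_b}, the same computation with \eqref{mgf_b} gives the analogous bound with $-\alpha$ in place of $\alpha$. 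The centering factors $e^{\mp\alpha\mathbb E X_i}$ are deterministic, so they pass through the inequality unchanged.

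Next I bound each centered one-dimensional MGF. Along a down-right path each step is $\mathbf e_1$ or $-\mathbf e_2$. In the first case $X_i\sim\Exp(\rho_i)$ by \eqref{mdist}. In the second case, antisymmetry $B_{\mathbf x,\mathbf y}=-B_{\mathbf y,\mathbf x}$ (immediate from \eqref{busemann_def}) and \eqref{mdist} give $X_i=-Y$ with $Y\sim\Exp(1-\rho_i)$. Either way $X_i=\pm Y$ with $Y\sim\Exp(r)$ and $r\in[\varepsilon,1-\varepsilon]$. So it suffices to show that for $|\beta|\le \varepsilon/2$,
\[
\log\mathbb E\big[e^{\beta(Y-1/r)}\big]\le \beta^2/\varepsilon^2 .
\]
Put $u=\beta/r$, so $|u|\le 1/2$. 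The left side equals $-u-\log(1-u)$. For $0\le u\le 1/2$ the series gives $-\log(1-u)\le u+u^2$. For $u\le 0$, $-u-\log(1-u)\le u^2/2$. In both cases the left side is at most $u^2\le \beta^2/\varepsilon^2$. Applying this with $\beta=\pm\alpha$ yields, for $0\le\alpha\le\varepsilon/2$ and either tail,
\[
\mathbb P(\cdots)\le \exp\big(-\alpha t\sqrt n+n\alpha^2/\varepsilon^2\big).
\]

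Finally I optimize over $\alpha\in[0,\varepsilon/2]$. If $t\varepsilon\le\sqrt n$, I take $\alpha=t\varepsilon^2/(2\sqrt n)\le\varepsilon/2$; the exponent becomes $-\varepsilon^2t^2/4$. If $t\varepsilon>\sqrt n$, I take $\alpha=\varepsilon/2$; the exponent is $-\varepsilon t\sqrt n/2+n/4$, and $n/4<\varepsilon t\sqrt n/4$ makes it at most $-\varepsilon t\sqrt n/4$. In both regimes the exponent is at most $-\min\{\varepsilon^2t^2,\varepsilon t\sqrt n\}/4$, which gives \eqref{diffusive_a} and \eqref{diffusive_b}.

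All the real content sits in Corollary~\ref{mgf}, which is assumed. The only points needing care are the sign convention for $-\mathbf e_2$ steps, which is why I need the MGF bound for both signs of $\beta$, and the restriction $\alpha<r$ needed for finiteness, which is ensured by $\alpha\le\varepsilon/2$.
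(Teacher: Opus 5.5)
Your proposal is correct and follows the paper's proof essentially step for step. Both use a Chernoff bound in which Corollary~\ref{mgf} replaces independence. Both use the one-variable estimate $\log\mathbb E[e^{\alpha X_i}]\le\alpha^2/\varepsilon^2$ for $|\alpha|\le\varepsilon/2$; the paper gets it from the series $\sum_{m\ge 2}(\alpha/\lambda_i)^m/m$, while you split by the sign of $\beta$. Both then take the same two choices, $\alpha=\varepsilon^2t/(2\sqrt n)$ when $\varepsilon t\le\sqrt n$ and $\alpha=\varepsilon/2$ otherwise.
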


From Theorem \ref{thm_1}, one can also determine that Busemann increments along a down-right path are negatively dependent in the following sense; see Figure~\ref{fig_titlepage} for an illustration.

\begin{corollary}[Negative dependence]\label{downright_path}
For every down-right path $\mathcal{Y} = \{\mathbf y_i\}_{i\in \mathbb{Z}}$, every choice of $\rho_1,\dots,\rho_n \in (0,1)$, and every $t_1,\dots,t_n \in \mathbb{R}$, we have
\begin{subequations} \label{negative_orthant_dependence}
\begin{align} \label{negative_orthant_dependence_a}
\mathbb{P}\Big(\bigcap_{i=1}^n \Big\{B^{\rho_{i}}_{\mathbf y_{i}, \mathbf y_{i+1}} > t_i\Big\}\Big) 
&\leq \prod_{i=1}^n\mathbb{P}\big( B^{\rho_{i}}_{\mathbf y_{i}, \mathbf y_{i+1}} > t_i\big) \\
\text{and} \quad \label{negative_orthant_dependence_b}
\mathbb{P}\Big(\bigcap_{i=1}^n \Big\{B^{\rho_{i}}_{\mathbf y_{i}, \mathbf y_{i+1}} < t_i\Big\}\Big) 
&\leq \prod_{i=1}^n\mathbb{P}\big( B^{\rho_{i}}_{\mathbf y_{i}, \mathbf y_{i+1}} < t_i\big).
\end{align}
\end{subequations}
\end{corollary}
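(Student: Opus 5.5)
We deduce Corollary~\ref{downright_path} from Theorem~\ref{thm_1} by induction on $n$, peeling off one edge at a time. The key point is that Theorem~\ref{thm_1} allows the single edge carrying $B$ to have its own direction $\rho$, while the functions $R_j$ may have arbitrary directions. So for each $m$ we apply the theorem with the single distinguished edge $[\![\mathbf y_m,\mathbf y_{m+1}]\!]$ and direction $\rho=\rho_m$. The remaining increments $B^{\rho_i}_{\mathbf y_i,\mathbf y_{i+1}}$, $i\neq m$, play the role of the $R_j$.

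First we check the geometric hypotheses. Take the theorem's sequence $(\mathbf x_i)$ with $\mathbf x_1=\mathbf y_m$ and $\mathbf x_2=\mathbf y_{m+1}$. We set $\mathbf x_0=\mathbf y_m$ and let $\mathbf x_{-1},\mathbf x_{-2},\dots$ run far to the upper left along $\mathcal Y$. Likewise we set $\mathbf x_3=\mathbf y_{m+1}$ and let $\mathbf x_4,\mathbf x_5,\dots$ run far to the lower right. Degenerate rectangles and "edges" only serve as bookkeeping; alternatively we pad with far-away unit edges, and they carry $f_i\equiv 1$, which the theorem permits since constant functions are monotone in either direction.

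With this setup, every edge $[\![\mathbf y_i,\mathbf y_{i+1}]\!]$ with $i<m$ has both endpoints in $\Rec[\mathbf x_{0'},\mathbf y_m]$ for a suitably chosen far earlier $\mathbf x$. Similarly, every edge with $i>m$ lies in $\Rec[\mathbf y_{m+1},\cdot]$. Both endpoints respect $\preceq$, since $\mathcal Y$ is down-right.

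For \eqref{negative_orthant_dependence_a}, take $f(x)=\mathbf 1\{x>t_m\}$ and $g(r)=\prod_{i\neq m}\mathbf 1\{r_i>t_i\}$. Both are non-negative and non-decreasing, so Theorem~\ref{thm_1} gives
\[
\mathbb P\Big(\bigcap_{i=1}^n\{B^{\rho_i}_{\mathbf y_i,\mathbf y_{i+1}}>t_i\}\Big)\le \mathbb P\big(B^{\rho_m}_{\mathbf y_m,\mathbf y_{m+1}}>t_m\big)\,\mathbb P\Big(\bigcap_{i\neq m}\{B^{\rho_i}_{\mathbf y_i,\mathbf y_{i+1}}>t_i\}\Big).
\]
Taking $m=n$, the last factor is the same event for the down-right path with $n-1$ marked edges. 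Induction on $n$ finishes the proof, and the base case $n=1$ is trivial. For \eqref{negative_orthant_dependence_b} we use instead the non-increasing indicators $\mathbf 1\{x<t_m\}$ and $\prod_{i\neq m}\mathbf 1\{r_i<t_i\}$, which the theorem also allows.

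The only real obstacle is the bookkeeping that places all other edges of $\mathcal Y$ inside the rectangles $\Rec[\mathbf x_{2i},\mathbf x_{2i+1}]$ adjacent to the distinguished edge. Since $\mathcal Y$ is monotone, the part before $\mathbf y_m$ lies in a rectangle ending at $\mathbf y_m$, and the part after lies in one starting at $\mathbf y_{m+1}$. One can also simply take $\mathbf x_{0}=\mathbf y_{1}$ and $\mathbf x_{3}=\mathbf y_{n+1}$. Indicator functions are bounded, so no integrability issues arise.
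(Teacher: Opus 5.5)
Your argument is correct and is essentially the paper's own proof. You induct on $n$ and apply Theorem~\ref{thm_1} with the single distinguished edge $[\![\mathbf y_n,\mathbf y_{n+1}]\!]$ in direction $\rho_n$, taking $f$ to be the indicator of $\{B^{\rho_n}_{\mathbf y_n,\mathbf y_{n+1}}>t_n\}$ (or of $\{\cdot<t_n\}$) and $g$ the product of the other $n-1$ indicators.

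One bookkeeping fix: drop your first assignment $\mathbf x_0=\mathbf y_m$. It makes $\Rec[\mathbf x_0,\mathbf x_1]$ a single point, which leaves $[\![\mathbf y_{m-1},\mathbf y_m]\!]$ outside every admissible rectangle. Use instead your closing choice $\mathbf x_0=\mathbf y_1$, $\mathbf x_1=\mathbf y_n$, $\mathbf x_2=\mathbf x_3=\mathbf y_{n+1}$, extended on both sides by unit edges respecting $\preceq$.
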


\begin{remark}[Notions of negative dependence and vector dominance] \label{rem_terminology2}
In the language of \cite{joagdev_proschan83}, \eqref{negative_orthant_dependence} says that 
$B^{\rho_{1}}_{\mathbf y_{1}, \mathbf y_{2}},\ldots,B^{\rho_{n}}_{\mathbf y_{n}, \mathbf y_{n+1}}$ are \textit{negatively orthant dependent}.
In general, this is weaker than negative association as defined in Remark~\ref{rem_terminology1}.
Alternatively, in the language of \cite{joe90}, \eqref{negative_orthant_dependence} says that the random vector $(B^{\rho_{i}}_{\mathbf y_{i}, \mathbf y_{i+1}})_{i=1}^n$ is \textit{less concordant} than $(\wh B^{\rho_{i}}_{\mathbf y_{i}, \mathbf y_{i+1}})_{i=1}^n$, where $\wh B^{\rho_{i}}_{\mathbf y_{i}, \mathbf y_{i+1}}\stackrel{\textup{d}}{=}B^{\rho_{i}}_{\mathbf y_{i}, \mathbf y_{i+1}}$ and $\wh B^{\rho_{1}}_{\mathbf y_{1}, \mathbf y_{2}},\ldots,\wh B^{\rho_{n}}_{\mathbf y_{n}, \mathbf y_{n+1}}$ are independent.
If $n=2$, this concordance domination is equivalent to supermodular domination, e.g.\ see \cite[Theorem~6]{epstein_tanny80}.
\end{remark}

\begin{remark}[Equality in \eqref{negative_orthant_dependence} when the direction parameters are non-increasing]\label{rem_ind}
It is known that if $1> \rho_1 \geq \rho_2 \geq \dots \geq \rho_n>0$ (as in the left of Figure \ref{fig:thm1}), then the collection of random variables $\{B^{\rho_i}_{\mathbf{y}_i, \mathbf{y}_{i+1}}\}_{i=1}^n$ is independent.
In the case $\rho_1=\rho_2=\cdots=\rho_n$, this fact goes back to \cite{balazs_cator_seppalainen06,cator_pimentel12}; see also \cite[Theorem~4.2(i)]{seppalainen20a}.
When inequalities are allowed, this independence was established in \cite[Theorem~2.1]{ind_buse}.
In Corollary \ref{downright_path} we are not assuming any ordering (see the right of Figure \ref{fig:thm1}), and independence is replaced with negative dependence.  
\end{remark}

\begin{figure}[t]
\begin{center}

\tikzset{every picture/.style={line width=0.75pt}} 

\begin{tikzpicture}[x=0.75pt,y=0.75pt,yscale=-0.8,xscale=0.8]

\draw  [dash pattern={on 0.84pt off 2.51pt}]  (22.2,102.3) -- (62.2,102.3) ;
\draw  [dash pattern={on 0.84pt off 2.51pt}]  (62.2,101.3) -- (62.2,161.3) ;
\draw    (62.2,162.3) -- (82.2,162.3) ;
\draw  [dash pattern={on 0.84pt off 2.51pt}]  (82.2,161.3) -- (82.2,181.3) ;
\draw  [dash pattern={on 0.84pt off 2.51pt}]  (82.2,181.3) -- (182.2,181.3) ;
\draw  [dash pattern={on 0.84pt off 2.51pt}]  (182.2,181.3) -- (182.2,221.3) ;
\draw  [dash pattern={on 0.84pt off 2.51pt}]  (182.2,221.3) -- (282.2,221.3) ;
\draw  [dash pattern={on 0.84pt off 2.51pt}]  (282.2,222.3) -- (282.2,242.3) ;
\draw [color={rgb, 255:red, 208; green, 2; blue, 27 }  ,draw opacity=1 ][line width=2.25]    (62.2,102.3) -- (62.2,122.3) ;
\draw [color={rgb, 255:red, 208; green, 2; blue, 27 }  ,draw opacity=1 ]   (62.2,102.3) .. controls (104.77,68.64) and (258.39,53.55) .. (316.76,50.83) ;
\draw [shift={(318.5,50.75)}, rotate = 177.49] [color={rgb, 255:red, 208; green, 2; blue, 27 }  ,draw opacity=1 ][line width=0.75]    (10.93,-3.29) .. controls (6.95,-1.4) and (3.31,-0.3) .. (0,0) .. controls (3.31,0.3) and (6.95,1.4) .. (10.93,3.29)   ;
\draw [color={rgb, 255:red, 208; green, 2; blue, 27 }  ,draw opacity=1 ]   (62.2,122.3) .. controls (102.2,92.3) and (74.5,109.25) .. (114.5,79.25) ;
\draw [color={rgb, 255:red, 74; green, 144; blue, 226 }  ,draw opacity=1 ][line width=2.25]    (62.2,162.3) -- (82.2,162.3) ;
\draw [color={rgb, 255:red, 74; green, 144; blue, 226 }  ,draw opacity=1 ]   (62.2,162.3) .. controls (99.44,102.52) and (282.26,54.72) .. (313.76,42.93) ;
\draw [shift={(315.5,42.25)}, rotate = 157.75] [color={rgb, 255:red, 74; green, 144; blue, 226 }  ,draw opacity=1 ][line width=0.75]    (10.93,-3.29) .. controls (6.95,-1.4) and (3.31,-0.3) .. (0,0) .. controls (3.31,0.3) and (6.95,1.4) .. (10.93,3.29)   ;
\draw [color={rgb, 255:red, 74; green, 144; blue, 226 }  ,draw opacity=1 ]   (82.2,162.3) .. controls (105.32,140.28) and (112.5,132.25) .. (137,107.75) ;
\draw [color={rgb, 255:red, 245; green, 166; blue, 35 }  ,draw opacity=1 ][line width=2.25]    (122.2,181.1) -- (142.2,181.1) ;
\draw [color={rgb, 255:red, 245; green, 166; blue, 35 }  ,draw opacity=1 ]   (122.2,181.1) .. controls (161.8,151.4) and (273.24,72.16) .. (306.52,34.19) ;
\draw [shift={(307.5,33.05)}, rotate = 130.48] [color={rgb, 255:red, 245; green, 166; blue, 35 }  ,draw opacity=1 ][line width=0.75]    (10.93,-3.29) .. controls (6.95,-1.4) and (3.31,-0.3) .. (0,0) .. controls (3.31,0.3) and (6.95,1.4) .. (10.93,3.29)   ;
\draw [color={rgb, 255:red, 245; green, 166; blue, 35 }  ,draw opacity=1 ]   (142.2,181.1) .. controls (166.2,164.1) and (191.5,154.05) .. (199.5,124.05) ;
\draw [color={rgb, 255:red, 126; green, 211; blue, 33 }  ,draw opacity=1 ][line width=2.25]    (203.2,221.3) -- (223.2,221.3) ;
\draw [color={rgb, 255:red, 126; green, 211; blue, 33 }  ,draw opacity=1 ]   (203.2,221.3) .. controls (242.8,191.6) and (282.99,90.74) .. (297.08,32.5) ;
\draw [shift={(297.5,30.75)}, rotate = 103.3] [color={rgb, 255:red, 126; green, 211; blue, 33 }  ,draw opacity=1 ][line width=0.75]    (10.93,-3.29) .. controls (6.95,-1.4) and (3.31,-0.3) .. (0,0) .. controls (3.31,0.3) and (6.95,1.4) .. (10.93,3.29)   ;
\draw [color={rgb, 255:red, 126; green, 211; blue, 33 }  ,draw opacity=1 ]   (223.2,221.3) .. controls (247.2,204.3) and (245,181.25) .. (253,151.25) ;
\draw  [dash pattern={on 0.84pt off 2.51pt}]  (394.2,102.3) -- (434.2,102.3) ;
\draw  [dash pattern={on 0.84pt off 2.51pt}]  (434.2,101.3) -- (434.2,161.3) ;
\draw    (434.2,162.3) -- (454.2,162.3) ;
\draw  [dash pattern={on 0.84pt off 2.51pt}]  (454.2,161.3) -- (454.2,181.3) ;
\draw  [dash pattern={on 0.84pt off 2.51pt}]  (454.2,181.3) -- (554.2,181.3) ;
\draw  [dash pattern={on 0.84pt off 2.51pt}]  (554.2,181.3) -- (554.2,221.3) ;
\draw  [dash pattern={on 0.84pt off 2.51pt}]  (554.2,221.3) -- (654.2,221.3) ;
\draw  [dash pattern={on 0.84pt off 2.51pt}]  (654.2,222.3) -- (654.2,242.3) ;
\draw [color={rgb, 255:red, 208; green, 2; blue, 27 }  ,draw opacity=1 ][line width=2.25]    (434.2,102.3) -- (434.2,122.3) ;
\draw [color={rgb, 255:red, 208; green, 2; blue, 27 }  ,draw opacity=1 ]   (434.2,102.3) .. controls (476.77,68.64) and (486.21,80.16) .. (523.27,53.62) ;
\draw [shift={(524.4,52.8)}, rotate = 144.01] [color={rgb, 255:red, 208; green, 2; blue, 27 }  ,draw opacity=1 ][line width=0.75]    (10.93,-3.29) .. controls (6.95,-1.4) and (3.31,-0.3) .. (0,0) .. controls (3.31,0.3) and (6.95,1.4) .. (10.93,3.29)   ;
\draw [color={rgb, 255:red, 208; green, 2; blue, 27 }  ,draw opacity=1 ]   (434.2,122.3) .. controls (474.2,92.3) and (453.2,100.3) .. (493.2,70.3) ;
\draw [color={rgb, 255:red, 74; green, 144; blue, 226 }  ,draw opacity=1 ][line width=2.25]    (434.2,162.3) -- (454.2,162.3) ;
\draw [color={rgb, 255:red, 74; green, 144; blue, 226 }  ,draw opacity=1 ]   (434.2,162.3) .. controls (471.44,102.52) and (573.3,92.89) .. (601.56,82.63) ;
\draw [shift={(603.2,82)}, rotate = 157.75] [color={rgb, 255:red, 74; green, 144; blue, 226 }  ,draw opacity=1 ][line width=0.75]    (10.93,-3.29) .. controls (6.95,-1.4) and (3.31,-0.3) .. (0,0) .. controls (3.31,0.3) and (6.95,1.4) .. (10.93,3.29)   ;
\draw [color={rgb, 255:red, 74; green, 144; blue, 226 }  ,draw opacity=1 ]   (454.2,162.3) .. controls (477.32,140.28) and (469.2,129.3) .. (523.2,103.3) ;
\draw [color={rgb, 255:red, 245; green, 166; blue, 35 }  ,draw opacity=1 ][line width=2.25]    (494,181.1) -- (514,181.1) ;
\draw [color={rgb, 255:red, 245; green, 166; blue, 35 }  ,draw opacity=1 ]   (494,181.1) .. controls (533.4,151.55) and (534.96,106.48) .. (555.07,67.86) ;
\draw [shift={(556,66.1)}, rotate = 118.3] [color={rgb, 255:red, 245; green, 166; blue, 35 }  ,draw opacity=1 ][line width=0.75]    (10.93,-3.29) .. controls (6.95,-1.4) and (3.31,-0.3) .. (0,0) .. controls (3.31,0.3) and (6.95,1.4) .. (10.93,3.29)   ;
\draw [color={rgb, 255:red, 245; green, 166; blue, 35 }  ,draw opacity=1 ]   (514,181.1) .. controls (538,164.1) and (526.2,150.9) .. (534.2,120.9) ;
\draw [color={rgb, 255:red, 126; green, 211; blue, 33 }  ,draw opacity=1 ][line width=2.25]    (575.2,221.3) -- (595.2,221.3) ;
\draw [color={rgb, 255:red, 126; green, 211; blue, 33 }  ,draw opacity=1 ]   (575.2,221.3) .. controls (615,191.45) and (620.15,116.06) .. (682.26,76.89) ;
\draw [shift={(683.2,76.3)}, rotate = 148.24] [color={rgb, 255:red, 126; green, 211; blue, 33 }  ,draw opacity=1 ][line width=0.75]    (10.93,-3.29) .. controls (6.95,-1.4) and (3.31,-0.3) .. (0,0) .. controls (3.31,0.3) and (6.95,1.4) .. (10.93,3.29)   ;
\draw [color={rgb, 255:red, 126; green, 211; blue, 33 }  ,draw opacity=1 ]   (595.2,221.3) .. controls (619.2,204.3) and (615.2,178.3) .. (623.2,148.3) ;

\end{tikzpicture}

\captionsetup{width=.8\linewidth}
\caption{Illustration of Remark \ref{rem_ind}. On the left, the direction parameters are non-increasing ($\rho_i \geq \rho_{i+1}$), meaning the asymptotic slopes are non-decreasing and thus cross one another.
On the right, the direction parameters are allowed to be non-monotone.}
\label{fig:thm1}
\end{center}
\end{figure}

In addition to the corollaries just presented, the proof strategy of Theorem~\ref{thm_1} allows us to improve the independence result from \cite{ind_buse} stated in Remark \ref{rem_ind}.

\begin{theorem}[Improved independence]\label{imp_ind}
Fix a direction parameter $\lambda\in(0,1)$ and a unit edge $[\![\mathbf{a}, \mathbf{b}]\!]$, where $\mathbf {b}\in\{\mathbf a+\bfe_1,\mathbf a-\bfe_2\}$.
Let $\mathcal{F}_{\preceq}$ denote the $\sigma$-algebra generated by all Busemann functions $B^\rho_{\mathbf x,\mathbf y}$ of the form
\begin{align} \label{ind_order}
\text{$\rho\ge\lambda$ and $\mathbf x,\mathbf y\preceq \mathbf a$}
\qquad \text{or} \qquad
\text{$\rho\le\lambda$ and $\mathbf x,\mathbf y\succeq \mathbf b$.}
\end{align}
Then $B^\lambda_{\mathbf a, \mathbf b}$ is independent of $\mathcal{F}_\preceq$.
\end{theorem}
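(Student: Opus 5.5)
We will realize all Busemann functions in \eqref{ind_order} as deterministic functions of boundary data on a single down-right path passing through the edge $[\![\mathbf a,\mathbf b]\!]$, together with the bulk weights. The goal is to place $B^\lambda_{\mathbf a,\mathbf b}$ inside a product structure that is independent of everything else.

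Fix a down-right path $\mathcal Y$ that contains the edge $[\![\mathbf a,\mathbf b]\!]$. Every $\mathbf x\preceq\mathbf a$ and every $\mathbf y\succeq \mathbf b$ lies weakly on the corresponding side of some such path. Since $\mathcal F_\preceq$ is generated by countably many functions, it suffices by a $\pi$-$\lambda$ argument to treat finitely many generators. So fix finitely many parameters $\rho_1\ge\cdots\ge\rho_m\ge\lambda\ge\sigma_1\ge\cdots\ge\sigma_\ell$, together with finitely many vertex pairs; the $\rho$'s are used strictly above-left of the edge and the $\sigma$'s strictly below-right. By additivity $B_{\mathbf x,\mathbf z}=B_{\mathbf x,\mathbf y}+B_{\mathbf y,\mathbf z}$ and the recovery of Busemann functions from down-right boundary increments (the usual ``northeast'' variational/queueing description, valid for general weights), each generator is a measurable function of Busemann increments along a down-right path. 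This path runs weakly above-left of $\mathbf a$ for the $\rho$'s and weakly below-right of $\mathbf b$ for the $\sigma$'s. Here I would use the hidden monotonicity: Busemann functions at a point $\mathbf x\preceq \mathbf a$ are determined by Busemann increments along the portion of $\mathcal Y$ up to $\mathbf a$ together with weights in the region northeast of it, and these weights are disjoint from the edge except through the increments on $\mathcal Y$.

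The key step is then the joint distributional description: the multi-parameter Busemann increments along $\mathcal Y$ are obtained from independent product-form input sequences through a composition of queueing maps (the Burke property, Proposition~\ref{burk}, combined with the braid relations for queueing maps). Using the braid relations, I would reorder the composition so that the $\lambda$-increment on the edge $[\![\mathbf a,\mathbf b]\!]$ is an output at a stage where the inputs coming from the above-left part carry parameters $\rho_i\ge\lambda$, and the inputs coming from the below-right part carry $\sigma_j\le\lambda$. Burke's property says a queue with exponential service and arrivals produces departures independent of the past of the associated process. Applied edge by edge, this yields that $B^\lambda_{\mathbf a,\mathbf b}$ is independent of $(B^{\rho_i}$ increments on $\mathcal Y$ before $\mathbf a)$ jointly with $(B^{\sigma_j}$ increments on $\mathcal Y$ after $\mathbf b)$. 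This is a refined version of \cite[Theorem~2.1]{ind_buse}, but now with only one increment singled out and with the unrestricted families on each side.

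Finally I would argue that the bulk weights used to reconstruct the generators off $\mathcal Y$ are independent of the edge increment given the boundary, so independence lifts from the path increments to all of $\mathcal F_\preceq$. I expect the main obstacle to be this middle step: arranging the queueing composition so that a single $\lambda$-edge is simultaneously a Burke output independent of many higher-parameter increments upstream and many lower-parameter increments downstream. I would first try to prove it by induction on $m+\ell$. Each induction step adds one parameter and uses a braid relation to commute the new queue past the $\lambda$ stage, and Burke's property ensures that the independence of the distinguished departure is preserved.
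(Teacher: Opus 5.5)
\textbf{There is a genuine gap, in the step that carries all the content.} Your reduction still leaves you needing the following: $B^\lambda_{\mathbf a,\mathbf b}$ is independent of the \emph{joint} family made of the $B^{\rho_i}$-increments on the edges of $\mathcal Y$ before $\mathbf a$ (several $\rho_i\ge\lambda$ on the same edges) together with the $B^{\sigma_j}$-increments after $\mathbf b$ (several $\sigma_j\le\lambda$). That is itself a special case of the theorem. It is not covered by \cite[Theorem~2.1]{ind_buse}, which assigns a single parameter to each edge of the path.

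Your proposed mechanism does not supply it. These families are all generated from the same inputs by multiclass queueing maps, and their joint law is not of product form (Proposition~\ref{prop_strict}). So invoking ``departures are independent of the past'' edge by edge says nothing about joint independence. The suggested induction also never states which independence property is preserved when a new queue is commuted past the $\lambda$ stage, even though that queue's input involves the $\lambda$-line near the edge.

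\textbf{The missing idea is the paper's conjugation trick.}
\begin{enumerate}
\item Conjugate the environment by queueing maps, $h^*=\pi^{-k,n-1}\cdots\pi^{-1,n-1}h$. By the isometry and the representation of increments, the target increment becomes literally the single weight $-\omega^\star_{(0,n)}$ of an environment that, by Burke, is again independent and of product form.
\item Rewrite every other increment as an explicit function of $h^\star$ through the inverse maps. This uses the braid relation to postpone the mixing of level $n$, together with cancellation, duality and the nested property.
\item Check by locality (Lemma~\ref{prop_dec2}) that under \eqref{ind_order} none of these functions depends on $\omega^\star_{(0,n)}$. South-boundary passage times to vertices left of $x=0$, and north-boundary passage times from vertices at or right of $x=0$, never see this weight. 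In the remaining case it enters both passage times and cancels in the difference.
\end{enumerate}
Independence is then immediate. Additivity plus the corner vertex $\mathbf z$ handle general pairs, so no reconstruction from bulk weights is needed.

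\textbf{Your reduction step also contains an error.} For Busemann functions in direction $\boldsymbol\xi[\rho]$, the recursion $B_{\mathbf x,\mathbf w}=\omega_{\mathbf x}+\max\{B_{\mathbf x+\mathbf e_1,\mathbf w},B_{\mathbf x+\mathbf e_2,\mathbf w}\}$ propagates \emph{southwest}. So increments on $\mathcal Y$ determine Busemann functions only at vertices southwest of $\mathcal Y$, together with the weights strictly southwest of it. The weights northeast of $\mathcal Y$ are exactly what $B^\lambda_{\mathbf a,\mathbf b}$ is built from, so they cannot be independent of it. A property of the form ``independent given the boundary'' would not suffice in any case.

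The repair is as follows. Choose $\mathcal Y$ to go up from $\mathbf a$ beyond all generator vertices and then left, and right from $\mathbf b$ beyond them and then down. Then every generator vertex lies weakly southwest of $\mathcal Y$, and first-entry points onto $\mathcal Y$ stay on the correct side of the edge. You can then use that the weights strictly southwest of $\mathcal Y$ are independent of all increments on $\mathcal Y$ jointly. This fixes only the reduction; the core independence claim above remains unproved.
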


\begin{remark}[Differences from earlier result]
The result mentioned in Remark~\ref{rem_ind} gives independence along a single down-right path passing through $\{\mathbf a,\mathbf b\}$, with totally ordered direction parameters.
By comparison, Theorem~\ref{imp_ind} covers the union of all such paths, and only requires the direction parameters to satisfy \eqref{ind_order}.
The tradeoff is that we obtain independence only for a pair of objects, rather than mutual independence of many Busemann increments.
\end{remark}

Finally, we state a proposition demonstrating that the inequality in Theorem \ref{thm_1} can be strict, which shows the correlation structure of Busemann functions across different directions is truly nonzero. 

\begin{proposition}[Example of dependence]\label{prop_strict}
If $0 < \lambda < \rho < 1$, then
$\Cov\big(B^\lambda_{-\mathbf e_1, \mathbf{0}},B^\rho_{\mathbf{0}, \mathbf e_1}\big)<0.$
\end{proposition}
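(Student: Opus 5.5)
We need $\mathrm{Cov}(B^\lambda_{-\bfe_1,\mathbf 0}, B^\rho_{\mathbf 0,\bfe_1})<0$ for $\lambda<\rho$. The plan is to realize both increments through the joint (multi-type) stationary construction: by the Burke property (Proposition~\ref{burk}) and the queueing representation of the joint Busemann process on a horizontal line, the pair $(B^\lambda, B^\rho)$ restricted to the horizontal edges of the $x$-axis is described by a queueing map. Concretely, we take independent sequences $I^\rho_j\sim\Exp(\rho)$ and $I^\lambda$-type inputs. Then $B^\rho$ on horizontal edges is the i.i.d.\ $\Exp(\rho)$ sequence, and $B^\lambda$ is the output $D(I^\lambda, I^\rho)$ of a queue whose arrivals are $\Exp(\rho)$ and whose services are $\Exp(\lambda)$, or the reverse orientation, depending on conventions.

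In this representation, the edge $[\![-\bfe_1,\mathbf 0]\!]$ for $\lambda$ sits immediately to the left of the edge $[\![\mathbf 0,\bfe_1]\!]$ for $\rho$. The departure at the previous site is a function of the current queue state and of arrivals up to index $0$. The $\rho$-increment at $[\![\mathbf 0,\bfe_1]\!]$ is one of the i.i.d.\ inputs at index $1$. If $B^\lambda_{-\bfe_1,\mathbf 0}$ depends only on inputs with index $\le 0$, the covariance is $0$. So the correct dependence must come from the time-reversed direction: we must identify which input indices each variable depends on.

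To locate this dependence, I would use the standard formula
\[
B^\lambda_{x-1,x} = \max\Big(B^\rho_{x-1,x},\; S^\lambda_{x}-\text{workload}\Big),
\]
that is, a departure equals $\max(\text{service}, \ \text{arrival gap}-\text{waiting})$ in a form where the future $\rho$-inputs enter. Concretely, with the queue run backward, $B^\lambda_{-\bfe_1,\mathbf 0} = J_0 + (Q_1 - I_0)^+$, where $Q_1 = \sup_{m\ge1}\sum_{j=1}^m (J_j - I^\rho_j)$ involves $I^\rho_1 = B^\rho_{\mathbf 0,\bfe_1}$ with a negative sign. Thus $B^\lambda_{-\bfe_1,\mathbf 0}$ is a coordinate-wise non-increasing function of $I^\rho_1$, while the other inputs are independent of $I^\rho_1$. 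Conditioning on everything except $I^\rho_1$, Chebyshev's (Harris) inequality for one variable gives $\Cov\le 0$.

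For strictness, I would show that the function $I^\rho_1\mapsto B^\lambda_{-\bfe_1,\mathbf 0}$ is strictly decreasing on a set of positive probability. This happens when the supremum in $Q_1$ is attained at some $m\ge1$, $Q_1 > I_0$, and $Q_1>0$; each of these has positive probability since the inputs are independent exponentials with full support. A non-constant decreasing function of an exponential variable has strictly negative covariance with that variable, because $\Cov(h(X),X)=\tfrac12\mathbb E[(h(X)-h(X'))(X-X')]<0$ for independent copies $X,X'$.

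The main obstacle is getting the correct joint queueing representation, including its orientation. I expect it is available from the paper's earlier setup of queueing maps and the Burke property. The key point to verify is that it expresses $B^\lambda$ on an edge as a monotone (decreasing) function of the $\rho$-increment on the edge to its right, and not only on the edges to its left.
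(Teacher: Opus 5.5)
\paragraph{Where the proposal has a gap.} Your overall strategy is sound, and your structural guess is right: the $\rho$-increments are i.i.d.\ service times and the $\lambda$-increments are departures. The closing step (Chebyshev's inequality in the single variable $B^\rho_{\mathbf 0,\bfe_1}$) is also fine. But the whole proof rests on the joint queueing representation, and the proposal leaves that step unproved and, as written, gets it wrong.

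\begin{itemize}
\item The representation does not follow from the Burke property alone. It is the two-parameter case of the Fan--Sepp\"al\"ainen joint law (Proposition~\ref{B_bdry}).
\item A queue with $\Exp(\rho)$ arrivals and $\Exp(\lambda)$ services is unstable, since $\lambda<\rho$.
\item If the $J_j$ are the $\lambda$-type inputs, the walk $\sum_{j=1}^m(J_j-I^\rho_j)$ has drift $\lambda^{-1}-\rho^{-1}>0$, so $Q_1=+\infty$ almost surely.
\item The leading term of a departure should be the service time at that edge, which is a $\rho$-variable, not a $\lambda$-input.
\item You fix the orientation by observing that the other orientation would give zero covariance. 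That assumes the conclusion.
\item For strictness you check only that several events each have positive probability, not that their intersection does.
\end{itemize}

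\paragraph{The correct representation and how to close the argument.} The statement the paper takes from \cite[Theorem 3.2]{fan_seppalainen20} is
\begin{equation*}
(B^\lambda_{-\bfe_1,\mathbf 0},\,B^\rho_{\mathbf 0,\bfe_1})\stackrel{\textup{d}}{=}\big(X_2+(I_2-X_1-(J-I_1)^+)^+,\;X_1\big).
\end{equation*}
Here $X_1,X_2\sim\Exp(\rho)$ are services, $I_1,I_2\sim\Exp(\lambda)$ are inter-arrivals, and $J\sim\Exp(\rho-\lambda)$, all independent. The quantity $X_1+(J-I_1)^+$ is the sojourn time of the customer at the neighbouring edge, and it is increasing in $X_1=B^\rho_{\mathbf 0,\bfe_1}$.

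You can also derive this with the paper's own tools. Take Proposition~\ref{B_bdry} with $\rho_1=\rho$ and $\rho_2=\lambda$; after its $180^\circ$ rotation, the $\rho$-edge sits just left of the $\lambda$-edge. Pass to $\sigma^{-1}h$.
\begin{itemize}
\item By \eqref{qmap}, level $0$ of $\sigma^{-1}h$ is the sequence of $\rho$-increments.
\item By Proposition~\ref{burk}, levels $-2,-1,0$ of $\sigma^{-1}h$ are independent with rates $\lambda,1,\rho$.
\item By \eqref{isometry_bdy}, the $\lambda$-increments on level $0$ are unchanged.
\item The corner-flipping relation \eqref{flip} then exhibits the $\lambda$-increments as departures of the queue with these services and arrivals $I^{\textup{S},-2}_{(\cdot,-1)}(\sigma^{-1}h)$. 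By Burke again, these arrivals are i.i.d.\ $\Exp(\lambda)$ and independent of level $0$.
\end{itemize}

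With this in hand your argument closes directly. Set $F(x)=\mathbb E[B^\lambda_{-\bfe_1,\mathbf 0}\mid X_1=x]$. Then $F(x)=\rho^{-1}+\lambda^{-1}e^{-\lambda x}\,\mathbb E[e^{-\lambda(J-I_1)^+}]$, which is strictly decreasing, so $\Cov=\Cov(F(X_1),X_1)<0$.

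This is more direct than the paper's route. The paper uses the same representation but compares joint and product tail probabilities for large $s$. It then needs Theorem~\ref{thm_1} to know the inequality is non-strict everywhere before integrating to the covariance.
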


\subsection{Related literature}

To our knowledge, the only other negative correlation result for Busemann functions is due to Alevy and Krishnan~\cite{alevy_krishnan22}, who studied adjacent increments $(B^\rho_{\mathbf{e}_2,\mathbf{0}}, B^\rho_{\mathbf{0}, \mathbf{e}_1})$ under general i.i.d.\ weights. Building on a convexity argument by Sepp\"al\"ainen that provided a sufficient condition for negative correlation, Alevy and Krishnan identified an explicit family of weight distributions for which this condition can be rigorously verified.
The exponential case is not included, since $B^\rho_{\mathbf{e}_2,\mathbf{0}}$ and $B^\rho_{\mathbf{0}, \mathbf{e}_1}$ are independent in this setting.
This makes our results effectively disjoint from theirs; nevertheless, our proof strategy exploits several hidden monotonicity properties of two-dimensional LPP that do not rely on the choice of exponential weights.

Despite the exactly-solvable structure of exponential LPP, the joint distribution of Busemann functions across different parameters is much more complicated than the distribution in a single parameter.
The first distributional description of the full Busemann process was given in \cite{fan_seppalainen20}.
It is a queueing-theoretic description, making use of ideas originally developed for multiclass particle systems \cite{ferrari_martin06,ferrari_martin07,ferrari_martin09}.
Besides aiding the strategy of the present paper, the distribution from \cite{fan_seppalainen20} has played a role in several other developments, including geodesic coalesence estimates \cite{seppalainen_shen20}, local agreement between finite and semi-infinite geodesics \cite{balazs_busani_seppalainen21}, local agreement between geodesics with respect to different initial conditions \cite{uni_tree}, and a full description of the geometry of all infinite geodesics \cite{balazs_busani_seppalainen20,janjigian_rassoulagha_seppalainen23}.

More recently, a second description of the Busemann process was found in \cite{bates_emrah_martin_seppalainen_sorensen26}, by identifying a finite-volume inhomogeneous LPP (i.e.~weights are still independent exponential variables, but with different rate parameters) whose increments have the same joint law as the Busemann increments.
The proof leveraged a distributional symmetry under permutations of the rate parameters, as well as the existence and properties of Busemann functions for inhomogeneous models \cite{emrah_janjigian_seppalainen25}.
We too use manipulations of rate parameters, but in a simpler setting.
Our negative association result~\ref{thm_1} must be true for the inhomogeneous finite-volume LPP in \cite[Theorem~2.4]{bates_emrah_martin_seppalainen_sorensen26}, and it would be interesting to see a direct proof not passing through Busemann functions. 

Descriptions of joint Busemann processes (also known as stationary horizons, and whose laws are called jointly invariant measures) have been completed in other settings, including zero-temperature models such Brownian LPP and the directed landscape \cite{busani24, seppalainen_sorensen23a, seppalainen_sorensen23b, busani_seppalainen_sorensen24a}, positive-temperature models such as the inverse-gamma polymer \cite{bates_fan_seppalainen25} and the KPZ equation \cite{groathouse_rassoulagha_seppalainen_sorensen25}, periodic models \cite{corwin_gu_sorensen26}, and half-space models \cite{dauvergne_zhang26_arxiv, das_sorensen_yang26_arxiv}.
Moreover, these processes are related to one another by various scaling limits, e.g.\ \cite{busani_seppalainen_sorensen24b}.
This makes us all the more optimistic that our results can be adapted to other solvable settings---if not further---an endeavor we leave for future work.

\subsection{Organization of the paper}
In Section~\ref{q_basic}, we define relevant queueing maps and state several of their properties. 
In Section~\ref{proof1}, we prove a negative association result for the increments in a closely related model known as increment-stationary LPP.
Using that result, we prove in Section~\ref{proof_main} the main theorems and corollaries.

\vspace{3mm}
\noindent\textbf{Acknowledgements.} 
E.B. was partially supported by National Science Foundation grant DMS-2412473.

\section{Preliminaries}\label{q_basic}

In this section, we record several useful tools that will be needed in later proofs.
Sections~\ref{subsec_queueing_map} through \ref{subsec_monotonicities} are purely deterministic; in particular, there is no mention of exponentially distributed weights. 
This distributional hypothesis is only invoked in Section~\ref{subsec_burke}, which discusses the Burke property.

\subsection{The upward queueing map} \label{subsec_queueing_map}

Let $\omega = \{\omega_{\mathbf{z}}\}_{\mathbf{z} \in \mathbb{Z}^2}$ 
be a collection of non-negative numbers which can be thought of as weights attached to the vertices of $\mathbb{Z}^2$. 
We will define another environment $ h = \{h^m\}_{m \in \mathbb{Z}}$ which can be viewed as a transformation of the vertex weights to horizontal unit-edge weights. 
More precisely, for each fixed $m$, define the function $h^m\colon \mathbb{Z} \to \mathbb{R}$ such that 
\begin{equation}\label{defh}
h^m(0) = 0 \qquad \text{and} \qquad h^m(i)  - h^m(i-1) = \omega_{(i,m)} \quad  \textup{ for }i \in \mathbb{Z}.
\end{equation}
The value $h^m(i)  - h^m(i-1)$ can be thought of as a weight attached to the unit edge $[\![(i-1, m), (i,m)]\!]$. For readers who are familiar with the semi-discrete and continuous models, $h^m$ can be thought of as the discrete version of a two-sided Brownian motion.
We refer to the horizontal line $\{(x,m):\, x\in\Z\}$ as \textit{level} $m$, and regard $h^m$ as the environment on level $m$.

The two environments $\omega$ and $h$ are equivalent, and the inverse map recovering $\omega$ from $h$ is given directly by taking the differences in \eqref{defh}. In addition, recalling the definition of the last-passage value $G_{\mathbf{x}, \mathbf{y}}$ from \eqref{def_G}, we have 
\begin{equation}\label{defGh}
G_{(a,m), (b,n)} = G_{(a,m), (b,n)} (h) = \max_{a = x_m \leq \dots \leq x_{n+1} =b} \sum_{j=m}^n h^j(x_{j+1}) - h^j(x_{j}-1).
\end{equation}
For an illustration of this alternative definition, see Figure \ref{figh}.
The notation \( G_{(a,m), (b,n)}(h) \) indicates that the last-passage value \( G_{(a,m), (b,n)} \) is computed with respect to the environment \( h \). 
This will be especially important later when we compare $G_{(a,m), (b,n)}(h)$ and $G_{(a,m), (b,n)}(h')$ for two different environments $h$ and $h'$.

\begin{figure}[t]
\begin{center}

\tikzset{every picture/.style={line width=0.75pt}} 

\begin{tikzpicture}[x=0.75pt,y=0.75pt,yscale=-1.5,xscale=1.5]

\draw  [draw opacity=0] (101,189.9) -- (221.4,189.9) -- (221.4,250.25) -- (101,250.25) -- cycle ; \draw  [color={rgb, 255:red, 155; green, 155; blue, 155 }  ,draw opacity=0.62 ] (101,189.9) -- (101,250.25)(121,189.9) -- (121,250.25)(141,189.9) -- (141,250.25)(161,189.9) -- (161,250.25)(181,189.9) -- (181,250.25)(201,189.9) -- (201,250.25)(221,189.9) -- (221,250.25) ; \draw  [color={rgb, 255:red, 155; green, 155; blue, 155 }  ,draw opacity=0.62 ] (101,189.9) -- (221.4,189.9)(101,209.9) -- (221.4,209.9)(101,229.9) -- (221.4,229.9)(101,249.9) -- (221.4,249.9) ; \draw  [color={rgb, 255:red, 155; green, 155; blue, 155 }  ,draw opacity=0.62 ]  ;
\draw  [color={rgb, 255:red, 155; green, 155; blue, 155 }  ,draw opacity=0.62 ][fill={rgb, 255:red, 155; green, 155; blue, 155 }  ,fill opacity=1 ] (118.17,229.9) .. controls (118.17,228.34) and (119.44,227.07) .. (121,227.07) .. controls (122.56,227.07) and (123.83,228.34) .. (123.83,229.9) .. controls (123.83,231.46) and (122.56,232.72) .. (121,232.72) .. controls (119.44,232.72) and (118.17,231.46) .. (118.17,229.9) -- cycle ;
\draw  [color={rgb, 255:red, 155; green, 155; blue, 155 }  ,draw opacity=0.62 ][fill={rgb, 255:red, 155; green, 155; blue, 155 }  ,fill opacity=1 ] (138.17,249.9) .. controls (138.17,248.34) and (139.44,247.07) .. (141,247.07) .. controls (142.56,247.07) and (143.83,248.34) .. (143.83,249.9) .. controls (143.83,251.46) and (142.56,252.72) .. (141,252.72) .. controls (139.44,252.72) and (138.17,251.46) .. (138.17,249.9) -- cycle ;
\draw  [color={rgb, 255:red, 155; green, 155; blue, 155 }  ,draw opacity=0.62 ][fill={rgb, 255:red, 155; green, 155; blue, 155 }  ,fill opacity=1 ] (138.17,229.9) .. controls (138.17,228.34) and (139.44,227.07) .. (141,227.07) .. controls (142.56,227.07) and (143.83,228.34) .. (143.83,229.9) .. controls (143.83,231.46) and (142.56,232.72) .. (141,232.72) .. controls (139.44,232.72) and (138.17,231.46) .. (138.17,229.9) -- cycle ;
\draw  [color={rgb, 255:red, 155; green, 155; blue, 155 }  ,draw opacity=0.62 ][fill={rgb, 255:red, 155; green, 155; blue, 155 }  ,fill opacity=1 ] (158.17,229.9) .. controls (158.17,228.34) and (159.44,227.07) .. (161,227.07) .. controls (162.56,227.07) and (163.83,228.34) .. (163.83,229.9) .. controls (163.83,231.46) and (162.56,232.72) .. (161,232.72) .. controls (159.44,232.72) and (158.17,231.46) .. (158.17,229.9) -- cycle ;
\draw  [color={rgb, 255:red, 155; green, 155; blue, 155 }  ,draw opacity=0.62 ][fill={rgb, 255:red, 155; green, 155; blue, 155 }  ,fill opacity=1 ] (178.17,249.9) .. controls (178.17,248.34) and (179.44,247.07) .. (181,247.07) .. controls (182.56,247.07) and (183.83,248.34) .. (183.83,249.9) .. controls (183.83,251.46) and (182.56,252.72) .. (181,252.72) .. controls (179.44,252.72) and (178.17,251.46) .. (178.17,249.9) -- cycle ;
\draw  [color={rgb, 255:red, 155; green, 155; blue, 155 }  ,draw opacity=0.62 ][fill={rgb, 255:red, 155; green, 155; blue, 155 }  ,fill opacity=1 ] (158.17,249.9) .. controls (158.17,248.34) and (159.44,247.07) .. (161,247.07) .. controls (162.56,247.07) and (163.83,248.34) .. (163.83,249.9) .. controls (163.83,251.46) and (162.56,252.72) .. (161,252.72) .. controls (159.44,252.72) and (158.17,251.46) .. (158.17,249.9) -- cycle ;
\draw  [color={rgb, 255:red, 155; green, 155; blue, 155 }  ,draw opacity=0.62 ][fill={rgb, 255:red, 155; green, 155; blue, 155 }  ,fill opacity=1 ] (178.17,229.9) .. controls (178.17,228.34) and (179.44,227.07) .. (181,227.07) .. controls (182.56,227.07) and (183.83,228.34) .. (183.83,229.9) .. controls (183.83,231.46) and (182.56,232.72) .. (181,232.72) .. controls (179.44,232.72) and (178.17,231.46) .. (178.17,229.9) -- cycle ;
\draw  [color={rgb, 255:red, 155; green, 155; blue, 155 }  ,draw opacity=0.62 ][fill={rgb, 255:red, 155; green, 155; blue, 155 }  ,fill opacity=1 ] (198.17,229.9) .. controls (198.17,228.34) and (199.44,227.07) .. (201,227.07) .. controls (202.56,227.07) and (203.83,228.34) .. (203.83,229.9) .. controls (203.83,231.46) and (202.56,232.72) .. (201,232.72) .. controls (199.44,232.72) and (198.17,231.46) .. (198.17,229.9) -- cycle ;
\draw  [color={rgb, 255:red, 155; green, 155; blue, 155 }  ,draw opacity=0.62 ][fill={rgb, 255:red, 155; green, 155; blue, 155 }  ,fill opacity=1 ] (218.17,249.9) .. controls (218.17,248.34) and (219.44,247.07) .. (221,247.07) .. controls (222.56,247.07) and (223.83,248.34) .. (223.83,249.9) .. controls (223.83,251.46) and (222.56,252.72) .. (221,252.72) .. controls (219.44,252.72) and (218.17,251.46) .. (218.17,249.9) -- cycle ;
\draw  [color={rgb, 255:red, 155; green, 155; blue, 155 }  ,draw opacity=0.62 ][fill={rgb, 255:red, 155; green, 155; blue, 155 }  ,fill opacity=1 ] (198.17,249.9) .. controls (198.17,248.34) and (199.44,247.07) .. (201,247.07) .. controls (202.56,247.07) and (203.83,248.34) .. (203.83,249.9) .. controls (203.83,251.46) and (202.56,252.72) .. (201,252.72) .. controls (199.44,252.72) and (198.17,251.46) .. (198.17,249.9) -- cycle ;
\draw  [color={rgb, 255:red, 155; green, 155; blue, 155 }  ,draw opacity=0.62 ][fill={rgb, 255:red, 155; green, 155; blue, 155 }  ,fill opacity=1 ] (218.17,229.9) .. controls (218.17,228.34) and (219.44,227.07) .. (221,227.07) .. controls (222.56,227.07) and (223.83,228.34) .. (223.83,229.9) .. controls (223.83,231.46) and (222.56,232.72) .. (221,232.72) .. controls (219.44,232.72) and (218.17,231.46) .. (218.17,229.9) -- cycle ;
\draw  [color={rgb, 255:red, 155; green, 155; blue, 155 }  ,draw opacity=0.62 ][fill={rgb, 255:red, 155; green, 155; blue, 155 }  ,fill opacity=1 ] (98.17,249.9) .. controls (98.17,248.34) and (99.44,247.07) .. (101,247.07) .. controls (102.56,247.07) and (103.83,248.34) .. (103.83,249.9) .. controls (103.83,251.46) and (102.56,252.72) .. (101,252.72) .. controls (99.44,252.72) and (98.17,251.46) .. (98.17,249.9) -- cycle ;
\draw  [color={rgb, 255:red, 155; green, 155; blue, 155 }  ,draw opacity=0.62 ][fill={rgb, 255:red, 155; green, 155; blue, 155 }  ,fill opacity=1 ] (98.17,229.9) .. controls (98.17,228.34) and (99.44,227.07) .. (101,227.07) .. controls (102.56,227.07) and (103.83,228.34) .. (103.83,229.9) .. controls (103.83,231.46) and (102.56,232.72) .. (101,232.72) .. controls (99.44,232.72) and (98.17,231.46) .. (98.17,229.9) -- cycle ;
\draw  [color={rgb, 255:red, 155; green, 155; blue, 155 }  ,draw opacity=0.62 ][fill={rgb, 255:red, 155; green, 155; blue, 155 }  ,fill opacity=1 ] (98.17,189.9) .. controls (98.17,188.34) and (99.44,187.07) .. (101,187.07) .. controls (102.56,187.07) and (103.83,188.34) .. (103.83,189.9) .. controls (103.83,191.46) and (102.56,192.72) .. (101,192.72) .. controls (99.44,192.72) and (98.17,191.46) .. (98.17,189.9) -- cycle ;
\draw  [color={rgb, 255:red, 155; green, 155; blue, 155 }  ,draw opacity=0.62 ][fill={rgb, 255:red, 155; green, 155; blue, 155 }  ,fill opacity=1 ] (158.17,189.9) .. controls (158.17,188.34) and (159.44,187.07) .. (161,187.07) .. controls (162.56,187.07) and (163.83,188.34) .. (163.83,189.9) .. controls (163.83,191.46) and (162.56,192.72) .. (161,192.72) .. controls (159.44,192.72) and (158.17,191.46) .. (158.17,189.9) -- cycle ;
\draw  [color={rgb, 255:red, 155; green, 155; blue, 155 }  ,draw opacity=0.62 ][fill={rgb, 255:red, 155; green, 155; blue, 155 }  ,fill opacity=1 ] (158.17,189.9) .. controls (158.17,188.34) and (159.44,187.07) .. (161,187.07) .. controls (162.56,187.07) and (163.83,188.34) .. (163.83,189.9) .. controls (163.83,191.46) and (162.56,192.72) .. (161,192.72) .. controls (159.44,192.72) and (158.17,191.46) .. (158.17,189.9) -- cycle ;
\draw  [color={rgb, 255:red, 155; green, 155; blue, 155 }  ,draw opacity=0.62 ][fill={rgb, 255:red, 155; green, 155; blue, 155 }  ,fill opacity=1 ] (178.17,209.9) .. controls (178.17,208.34) and (179.44,207.07) .. (181,207.07) .. controls (182.56,207.07) and (183.83,208.34) .. (183.83,209.9) .. controls (183.83,211.46) and (182.56,212.72) .. (181,212.72) .. controls (179.44,212.72) and (178.17,211.46) .. (178.17,209.9) -- cycle ;
\draw  [color={rgb, 255:red, 155; green, 155; blue, 155 }  ,draw opacity=0.62 ][fill={rgb, 255:red, 155; green, 155; blue, 155 }  ,fill opacity=1 ] (138.17,189.9) .. controls (138.17,188.34) and (139.44,187.07) .. (141,187.07) .. controls (142.56,187.07) and (143.83,188.34) .. (143.83,189.9) .. controls (143.83,191.46) and (142.56,192.72) .. (141,192.72) .. controls (139.44,192.72) and (138.17,191.46) .. (138.17,189.9) -- cycle ;
\draw  [color={rgb, 255:red, 155; green, 155; blue, 155 }  ,draw opacity=0.62 ][fill={rgb, 255:red, 155; green, 155; blue, 155 }  ,fill opacity=1 ] (158.17,209.9) .. controls (158.17,208.34) and (159.44,207.07) .. (161,207.07) .. controls (162.56,207.07) and (163.83,208.34) .. (163.83,209.9) .. controls (163.83,211.46) and (162.56,212.72) .. (161,212.72) .. controls (159.44,212.72) and (158.17,211.46) .. (158.17,209.9) -- cycle ;
\draw  [color={rgb, 255:red, 155; green, 155; blue, 155 }  ,draw opacity=0.62 ][fill={rgb, 255:red, 155; green, 155; blue, 155 }  ,fill opacity=1 ] (118.17,209.9) .. controls (118.17,208.34) and (119.44,207.07) .. (121,207.07) .. controls (122.56,207.07) and (123.83,208.34) .. (123.83,209.9) .. controls (123.83,211.46) and (122.56,212.72) .. (121,212.72) .. controls (119.44,212.72) and (118.17,211.46) .. (118.17,209.9) -- cycle ;
\draw  [color={rgb, 255:red, 155; green, 155; blue, 155 }  ,draw opacity=0.62 ][fill={rgb, 255:red, 155; green, 155; blue, 155 }  ,fill opacity=1 ] (98.17,209.9) .. controls (98.17,208.34) and (99.44,207.07) .. (101,207.07) .. controls (102.56,207.07) and (103.83,208.34) .. (103.83,209.9) .. controls (103.83,211.46) and (102.56,212.72) .. (101,212.72) .. controls (99.44,212.72) and (98.17,211.46) .. (98.17,209.9) -- cycle ;
\draw  [color={rgb, 255:red, 155; green, 155; blue, 155 }  ,draw opacity=0.62 ][fill={rgb, 255:red, 155; green, 155; blue, 155 }  ,fill opacity=1 ] (178.17,189.9) .. controls (178.17,188.34) and (179.44,187.07) .. (181,187.07) .. controls (182.56,187.07) and (183.83,188.34) .. (183.83,189.9) .. controls (183.83,191.46) and (182.56,192.72) .. (181,192.72) .. controls (179.44,192.72) and (178.17,191.46) .. (178.17,189.9) -- cycle ;
\draw  [color={rgb, 255:red, 155; green, 155; blue, 155 }  ,draw opacity=0.62 ][fill={rgb, 255:red, 155; green, 155; blue, 155 }  ,fill opacity=1 ] (198.17,209.9) .. controls (198.17,208.34) and (199.44,207.07) .. (201,207.07) .. controls (202.56,207.07) and (203.83,208.34) .. (203.83,209.9) .. controls (203.83,211.46) and (202.56,212.72) .. (201,212.72) .. controls (199.44,212.72) and (198.17,211.46) .. (198.17,209.9) -- cycle ;
\draw  [color={rgb, 255:red, 155; green, 155; blue, 155 }  ,draw opacity=0.62 ][fill={rgb, 255:red, 155; green, 155; blue, 155 }  ,fill opacity=1 ] (198.17,189.9) .. controls (198.17,188.34) and (199.44,187.07) .. (201,187.07) .. controls (202.56,187.07) and (203.83,188.34) .. (203.83,189.9) .. controls (203.83,191.46) and (202.56,192.72) .. (201,192.72) .. controls (199.44,192.72) and (198.17,191.46) .. (198.17,189.9) -- cycle ;
\draw  [color={rgb, 255:red, 155; green, 155; blue, 155 }  ,draw opacity=0.62 ][fill={rgb, 255:red, 155; green, 155; blue, 155 }  ,fill opacity=1 ] (218.17,209.9) .. controls (218.17,208.34) and (219.44,207.07) .. (221,207.07) .. controls (222.56,207.07) and (223.83,208.34) .. (223.83,209.9) .. controls (223.83,211.46) and (222.56,212.72) .. (221,212.72) .. controls (219.44,212.72) and (218.17,211.46) .. (218.17,209.9) -- cycle ;
\draw  [color={rgb, 255:red, 155; green, 155; blue, 155 }  ,draw opacity=0.62 ][fill={rgb, 255:red, 155; green, 155; blue, 155 }  ,fill opacity=1 ] (118.17,189.9) .. controls (118.17,188.34) and (119.44,187.07) .. (121,187.07) .. controls (122.56,187.07) and (123.83,188.34) .. (123.83,189.9) .. controls (123.83,191.46) and (122.56,192.72) .. (121,192.72) .. controls (119.44,192.72) and (118.17,191.46) .. (118.17,189.9) -- cycle ;
\draw  [color={rgb, 255:red, 155; green, 155; blue, 155 }  ,draw opacity=0.62 ][fill={rgb, 255:red, 155; green, 155; blue, 155 }  ,fill opacity=1 ] (138.17,209.9) .. controls (138.17,208.34) and (139.44,207.07) .. (141,207.07) .. controls (142.56,207.07) and (143.83,208.34) .. (143.83,209.9) .. controls (143.83,211.46) and (142.56,212.72) .. (141,212.72) .. controls (139.44,212.72) and (138.17,211.46) .. (138.17,209.9) -- cycle ;
\draw [color={rgb, 255:red, 74; green, 144; blue, 226 }  ,draw opacity=1 ][line width=2.25]    (121,249.9) -- (150.9,249.9) -- (161,249.9) ;
\draw [color={rgb, 255:red, 74; green, 144; blue, 226 }  ,draw opacity=1 ][line width=2.25]    (161,249.9) -- (161,209.9) ;
\draw [color={rgb, 255:red, 74; green, 144; blue, 226 }  ,draw opacity=1 ][line width=2.25]    (161,209.9) -- (201,209.9) ;
\draw [color={rgb, 255:red, 74; green, 144; blue, 226 }  ,draw opacity=1 ][fill={rgb, 255:red, 74; green, 144; blue, 226 }  ,fill opacity=1 ][line width=2.25]    (201,209.9) -- (201,189.9) ;
\draw [color={rgb, 255:red, 74; green, 144; blue, 226 }  ,draw opacity=1 ][line width=2.25]    (201,189.9) -- (221,189.9) ;
\draw  [color={rgb, 255:red, 155; green, 155; blue, 155 }  ,draw opacity=0.62 ][fill={rgb, 255:red, 155; green, 155; blue, 155 }  ,fill opacity=1 ] (118.17,249.9) .. controls (118.17,248.34) and (119.44,247.07) .. (121,247.07) .. controls (122.56,247.07) and (123.83,248.34) .. (123.83,249.9) .. controls (123.83,251.46) and (122.56,252.72) .. (121,252.72) .. controls (119.44,252.72) and (118.17,251.46) .. (118.17,249.9) -- cycle ;
\draw  [color={rgb, 255:red, 155; green, 155; blue, 155 }  ,draw opacity=0.62 ][fill={rgb, 255:red, 155; green, 155; blue, 155 }  ,fill opacity=1 ] (218.17,189.9) .. controls (218.17,188.34) and (219.44,187.07) .. (221,187.07) .. controls (222.56,187.07) and (223.83,188.34) .. (223.83,189.9) .. controls (223.83,191.46) and (222.56,192.72) .. (221,192.72) .. controls (219.44,192.72) and (218.17,191.46) .. (218.17,189.9) -- cycle ;
\draw  [draw opacity=0] (249,189.4) -- (369.4,189.4) -- (369.4,249.75) -- (249,249.75) -- cycle ; \draw  [color={rgb, 255:red, 155; green, 155; blue, 155 }  ,draw opacity=0.62 ] (249,189.4) -- (249,249.75)(269,189.4) -- (269,249.75)(289,189.4) -- (289,249.75)(309,189.4) -- (309,249.75)(329,189.4) -- (329,249.75)(349,189.4) -- (349,249.75)(369,189.4) -- (369,249.75) ; \draw  [color={rgb, 255:red, 155; green, 155; blue, 155 }  ,draw opacity=0.62 ] (249,189.4) -- (369.4,189.4)(249,209.4) -- (369.4,209.4)(249,229.4) -- (369.4,229.4)(249,249.4) -- (369.4,249.4) ; \draw  [color={rgb, 255:red, 155; green, 155; blue, 155 }  ,draw opacity=0.62 ]  ;
\draw [color={rgb, 255:red, 155; green, 155; blue, 155 }  ,draw opacity=1 ][line width=1]    (249,249.4) -- (369,249.4) ;
\draw [color={rgb, 255:red, 155; green, 155; blue, 155 }  ,draw opacity=1 ][line width=1]    (249,229.4) -- (369,229.4) ;
\draw [color={rgb, 255:red, 155; green, 155; blue, 155 }  ,draw opacity=1 ][line width=1]    (249,209.4) -- (369,209.4) ;
\draw [color={rgb, 255:red, 155; green, 155; blue, 155 }  ,draw opacity=1 ][line width=1]    (249,189.4) -- (369,189.4) ;
\draw [color={rgb, 255:red, 245; green, 166; blue, 35 }  ,draw opacity=1 ][line width=2.25]    (249,248.9) -- (309,248.9) ;
\draw [dashed, color={rgb, 255:red, 245; green, 166; blue, 35 }  ,draw opacity=1 ][line width=2.25]    (309,248.9) -- (289,228.9) ;
\draw [color={rgb, 255:red, 245; green, 166; blue, 35 }  ,draw opacity=1 ][line width=2.25]    (289,228.9) -- (309,228.9) ;
\draw [color={rgb, 255:red, 245; green, 166; blue, 35 }  ,draw opacity=1 ][line width=2.25]    (289,208.9) -- (349,208.9) ;
\draw [dashed, color={rgb, 255:red, 245; green, 166; blue, 35 }  ,draw opacity=1 ][line width=2.25]    (289,208.9) -- (309,228.9) ;
\draw [dashed, color={rgb, 255:red, 245; green, 166; blue, 35 }  ,draw opacity=1 ][line width=2.25]    (349,208.9) -- (336.13,196.02) -- (329,188.9) ;
\draw [color={rgb, 255:red, 245; green, 166; blue, 35 }  ,draw opacity=1 ][line width=2.25]    (329,188.9) -- (369,188.9) ;
\draw  [color={rgb, 255:red, 155; green, 155; blue, 155 }  ,draw opacity=0.62 ][fill={rgb, 255:red, 155; green, 155; blue, 155 }  ,fill opacity=1 ] (266.17,249.4) .. controls (266.17,247.84) and (267.44,246.57) .. (269,246.57) .. controls (270.56,246.57) and (271.83,247.84) .. (271.83,249.4) .. controls (271.83,250.96) and (270.56,252.22) .. (269,252.22) .. controls (267.44,252.22) and (266.17,250.96) .. (266.17,249.4) -- cycle ;
\draw  [color={rgb, 255:red, 155; green, 155; blue, 155 }  ,draw opacity=0.62 ][fill={rgb, 255:red, 155; green, 155; blue, 155 }  ,fill opacity=1 ] (366.17,189.4) .. controls (366.17,187.84) and (367.44,186.57) .. (369,186.57) .. controls (370.56,186.57) and (371.83,187.84) .. (371.83,189.4) .. controls (371.83,190.96) and (370.56,192.22) .. (369,192.22) .. controls (367.44,192.22) and (366.17,190.96) .. (366.17,189.4) -- cycle ;

\draw (116.4,256) node [anchor=north west][inner sep=0.75pt]    {$\mathbf u$};
\draw (216.9,178.4) node [anchor=north west][inner sep=0.75pt]    {$\bfv$};
\draw (262.9,256) node [anchor=north west][inner sep=0.75pt]    {$\mathbf u$};
\draw (363.9,178.4) node [anchor=north west][inner sep=0.75pt]    {$\bfv$};

\end{tikzpicture}
\captionsetup{width=.8\linewidth}
\caption{The passage value of a path is preserved when switching from vertex weights $\omega$ (left) to horizontal edge weights $h$ (right) via \eqref{defh}.}\label{figh}
\end{center}
\end{figure}

Next we define LPP with a south boundary located at level $m$, with boundary weights $h^m$. 
The last-passage value will be denoted by \( G^{\textup{S}}_{m, \bbullet} (h)\), where the superscript ``S'' stands for south boundary. For $x\in \mathbb{Z}$ and $n> m$, the last-passage value from level $m$ to vertex $(x,n)$ is given by
\begin{equation}\label{GS}
G^{\textup{S}}_{m, (x,n)}(h) = 
\begin{cases}
h^m(x) & \text{if } n=m \\
\sup_{z\leq x} \{h^m(z) + G_{(z,m+1), (x,n)}(h)\} & \text{if } n > m.
\end{cases}
\end{equation}
The interpretation is that the allowed paths start at $(0,m)$, travel horizontally to $(z,m)$ for some $z\le x$, and then travel to $(x,n)$ along an up-right path.
Because we allow any $z\le x$ (including negative $z$), the last-passage value $G^{\textup{S}}_{m,(x,n)}$ could be infinite.
To address this, we introduce the following \textit{asymptotic slope condition}: for each $m\in \mathbb{Z}$, we assume there exists a positive real number $\textup{\texttt{slope}}_m(h)$ such that 
\begin{align} \label{slope_condition}
\textup{\texttt{slope}}_m(h) = \lim_{x\to \infty} \frac{h^m(x)}{x}  = \lim_{x\to -\infty} \frac{h^m(x)}{x}.    
\end{align}
The following proposition gives a sufficient condition for $G^{\textup{S}}_{m,\bbullet}$ to be finite.
It is essentially the condition for a queue to be stable, see Appendix \ref{stab} for its proof. 


\begin{proposition}[Sufficient condition for boundary LPP]\label{stab2}
Assume $h$ satisfies the asymptotic slope condition.
For levels $n>m$, if
$$\textup{\texttt{slope}}_m (h)> \max\{\textup{\texttt{slope}}_{m+1}(h), \dots, \textup{\texttt{slope}}_n(h)\},$$
then the last passage value $G^\textup{S}_{m, (x, n)}(h)$ is finite for every $x\in\Z$.
\end{proposition}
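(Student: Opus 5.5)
We argue by induction on the number of levels $n-m$, reducing each step to the stability of a single queue. At each stage we show that the output of one level's queueing map is again finite and inherits the same asymptotic slope. Write $s_j = \textup{\texttt{slope}}_j(h)$ and, for $m\le \ell \le n$, set $H^{\ell}(x) = G^{\textup{S}}_{m,(x,\ell)}(h)$, so $H^m = h^m$.

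The key recursion, read off from \eqref{defGh} and \eqref{GS} by splitting paths at the last point where they enter level $\ell$, is
\[
H^{\ell}(x) = \sup_{z\le x}\big\{H^{\ell-1}(z) + h^{\ell}(x) - h^{\ell}(z-1)\big\}, \qquad \ell>m.
\]
So it suffices to prove the following one-step claim. Suppose $F\colon\Z\to\R\cup\{\pm\infty\}$ is finite everywhere, $F(x)/x\to s$ as $x\to\pm\infty$, and $h^{\ell}$ has slope $s_\ell<s$. Then $\tilde F(x)=\sup_{z\le x}\{F(z)-h^{\ell}(z-1)\}+h^{\ell}(x)$ is finite everywhere, and $\tilde F(x)/x\to s$ as $x\to\pm\infty$. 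Applying this claim for $\ell=m+1,\dots,n$, with $s=s_m$ throughout and using the hypothesis $s_m>s_\ell$, yields finiteness of $H^n$.

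For finiteness, note $F(z)-h^{\ell}(z-1) = z(s-s_\ell)+o(|z|)$ as $z\to-\infty$. This tends to $-\infty$, so the supremum over $z\le x$ is effectively a maximum over finitely many finite terms. It is therefore finite, and it is at least the $z=x$ term.

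For the slope of $\tilde F$, the lower bound comes from taking $z=x$, which gives $\tilde F(x)\ge F(x)+\omega_{(x,\ell)}$; here $\omega\ge0$ and $h^\ell(x)-h^\ell(x-1)=\omega_{(x,\ell)}$. For the upper bound, fix $\delta>0$ and choose $K$ so that $|F(z)-sz|\le\delta|z|+K$ and $|h^\ell(z)-s_\ell z|\le\delta|z|+K$ for all $z$. Then
\[
\tilde F(x)-sx \le \sup_{z\le x}\big\{(s-s_\ell)(z-x)+2\delta(|z|+|x|)\big\}+O(\delta|x|+K).
\]
As long as $2\delta<s-s_\ell$, the supremum in $z$ is attained near $z=x$ (for $z\to-\infty$ the expression decreases), so it is $O(\delta|x|+K)$. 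Dividing by $x$ and letting $\delta\to0$ gives the slope as $x\to\pm\infty$. This is the main technical point: we need the slope, and not merely finiteness, to propagate, so that the strict inequality $s_m>s_{\ell}$ can be used again at every subsequent level.

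One subtlety is that $H^{\ell}$ is not normalized so that its value at $0$ is $0$, but the argument never uses that normalization. Another is that $z$ ranges over all integers up to $x$, including negative ones. The estimate above handles both signs of $x$ uniformly, because we only use the two-sided linear bounds given by the asymptotic slope condition \eqref{slope_condition}.
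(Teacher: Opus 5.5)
Your proof is correct and is essentially the paper's argument unrolled. The paper inducts using \eqref{stabeq}, $G^{\textup{S}}_{m,(y,n)}(h) = G^{\textup{S}}_{m+1,(y,n)}(\sigma^m h) + G^{\textup{S}}_{m,(0,m+1)}(h)$, where $(\sigma^m h)^{m+1}$ is exactly your $H^{m+1}$ recentered at $0$, together with the slope swap $\textup{\texttt{slope}}_{m+1}(\sigma^m h)=\textup{\texttt{slope}}_m(h)$ from Lemma~\ref{stab1}. The only real difference is that the paper cites that single-queue lemma from \cite{fan_seppalainen20}, while your one-step claim proves directly the part you need (finiteness, and the output inheriting slope $s_m$), so your version is self-contained and never uses the queueing maps.
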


\begin{remark}[Random weights]
Ultimately we will take the vertex weights $(\omega_{(x,m)})_{x\in \mathbb{Z}}$ to be i.i.d.~positive random variables with finite mean.
In this case, the asymptotic slope condition \eqref{slope_condition} is satisfied with $\textup{\texttt{slope}}_m(h) = \mathbb{E}[\omega_{(0,m)}]$.
Moreover, Proposition~\ref{stab2} says that the LPP with south boundary makes sense when the weights on the boundary level have a strictly larger mean than those on higher levels.
Intuitively, this is because a long leftward journey from $(0,m)$ to $(z,m)$ will cost more asymptotically (as $z\to-\infty$) than the gain from optimizing over up-right paths from $(z,m)$ to $(x,n)$.
\end{remark}

Once we know $G^{\textup{S}}_{m,(x,n)}$ is finite, we can define horizontal increments \(I^{\textup{S},m}_\mathbf{z}(h)\) for \(\mathbf{z} \in \mathbb{Z} \times \mathbb{Z}_{\geq m}\), vertical increments  $J^{\textup{S},m}_\mathbf{z}(h)$ for \(\mathbf{z} \in \mathbb{Z} \times \mathbb{Z}_{\geq m+1}\), and the dual weights \(\wc \omega_{\mathbf{z}+\mathbf{e}_1} (h)\) for \(\mathbf{z} \in \mathbb{Z} \times \mathbb{Z}_{\geq m}\):
\begin{equation}\label{def_IJ} 
\begin{aligned}
I^{\textup{S},m}_\mathbf{z}(h) &= G^{\textup{S}}_{m, \mathbf{z}}(h) - G^{\textup{S}}_{m, \mathbf{z}-\mathbf{e}_1}(h),\\
J^{\textup{S},m}_\mathbf{z}(h) &= G^{\textup{S}}_{m, \mathbf{z}}(h) - G^{\textup{S}}_{m, \mathbf{z}-\mathbf{e}_2}(h),\\
\wc \omega_{\mathbf{z}+\mathbf{e}_1} (h)&= I^{\textup{S},m}_{\mathbf{z}+\mathbf e_1}(h)\wedge J^{\textup{S},m}_{\mathbf{z}+\mathbf e_2}(h).
\end{aligned}
\end{equation} 
Note that we index the dual weight $\wc \omega$ by $\mathbf{z} + \mathbf e_1$, rather than the more conventional $\mathbf{z}$. This choice ensures that, when we transform these weights to the $h$-environment, $\wc\omega_{\mathbf{z} + \mathbf e_1}$ replaces the original $I^{\textup{S}, m}_{\mathbf{z} + \mathbf e_1}$ weight associated with the edge $[\![\mathbf{z}, \mathbf{z} + \mathbf e_1]\!]$.

With these definitions, we can now introduce the \emph{upward queueing map}, denoted by \( \sigma^m \), which can be seen as a centered version of the Pitman transform. The queueing map $\sigma^m$ acts on the environment \( h \), producing a new environment \( \sigma^m h \), where the two horizontal layers of weights \( (h^{m}, h^{m+1}) \) are replaced with transformed weights \( ((\sigma^m h)^{m}, (\sigma^m h)^{m+1}) \). 
More precisely, let $(\sigma^{m} h)^{m+1}(0) = (\sigma^{m} h)^{m} (0) =0$. 
If $\textup{\texttt{slope}}_m (h)> \textup{\texttt{slope}}_{m+1} (h)$, then complete the definition of $(\sigma^m h)^{m+1}$ and $(\sigma^m h)^{m}$ by prescribing the increments for all $i\in\Z$:
\begin{align}\label{qmap}
\begin{aligned}
(\sigma^{m} h)^{m+1}(i) - (\sigma^{m} h)^{m+1}(i-1) &= I^{\textup{S},m}_{(i, m+1)}(h),\\
(\sigma^{m} h)^{m}(i) - (\sigma^{m} h)^{m}(i-1) &=  \wc \omega_{(i, m)}.
\end{aligned}
\end{align}
If instead $\textup{\texttt{slope}}_m (h) \leq \textup{\texttt{slope}}_{m+1}(h)$, then let $\sigma^m$ act as the identity map: $\sigma^m h = h$.
In either case, $\sigma^m$ acts as the identity on all other levels $k\in \mathbb{Z}\setminus\{m, m+1\}$: $(\sigma^m h)^k = h^k$.

\subsection{The downward queueing map}

Recall the definition of the upward queueing map in the previous section, and we see $\sigma^m$ acts non-trivially on $h$ if $\textup{\texttt{slope}}_m(h) > \textup{\texttt{slope}}_{m+1}(h)$. In addition, as stated in Lemma~\ref{stab1}, in this case $\sigma^m$ swaps the asymptotic slopes
\begin{equation}\label{swappp}
\textup{\texttt{slope}}_m(\sigma^m h) = \textup{\texttt{slope}}_{m+1}(h)\qquad  \textup{and}  \qquad\textup{\texttt{slope}}_{m+1}(\sigma^m h) = \textup{\texttt{slope}}_m(h).
\end{equation}
This motivates the definition of an ``inverse'' of $\sigma^m$, which we call the \emph{downward queueing map} and denote by $\sigma_m$, replacing the superscript $m$ with the subscript $m$.

To define the downward queueing map, we introduce the LPP model with a north boundary. Fix $n \in \mathbb{Z}$
and for $k \leq n$, the last-passage value with the north boundary is given by  
\begin{equation}\label{GN}
G^{\textup{N}}_{(x,k), n} (h)= \begin{cases} -h^n(x) &\textup{if } k = n\\
\sup_{z\geq x} \{-h^{n}(z) + G_{(x+1, k), (z+1,n-1)(h)}\} &\textup{if } k < n.
\end{cases}
\end{equation}
The interpretation is that the allowed paths start at $(0,n)$, travel horizontally to $(z,n)$ for some $z\ge x$, and then travel to $(x,k)$ along a down-left path.
Note that, comparing this definition to \eqref{GS}, the endpoints of the point-to-point LPP appearing inside the supremum are shifted one unit to the right. This shift is a consequence of the indexing convention for the dual weights $\wc\omega$ used in \eqref{def_IJ}.
Otherwise this definition matches that of south-boundary LPP after rotation by $180^\circ$. 

For points $\mathbf{z}$ along or below level $n$, we define the \( I \) and \( J \) increments, as well as the dual weights $\widehat\omega$ (recall the previous notation is $\wc \omega$), in the same manner as before:
\begin{align*}
I^{\textup{N}, n}_{\mathbf{z}} (h) &= {G}{}^{\textup{N}}_{\mathbf{z} - \mathbf{e}_1, n}(h) - {G}{}^{\textup{N}}_{\mathbf{z},  n}(h),\\
J^{\textup{N},{n}}_{\mathbf{z}}(h) &= {G}{}^{\textup{N}}_{ \mathbf{z} - \mathbf{e}_2, {n}}(h) - {G}{}^{\textup{N}}_{\mathbf{z}, {n}}(h),\\
\widehat \omega_{\mathbf z}(h) &= I^{\textup{N}, {n}}_{\mathbf{z}}(h) \wedge J^{\textup{N},{n}}_{\mathbf{z}}(h).
\end{align*}
The downward queueing map $\sigma_m$ is then defined as follows. To start, let  $(\sigma_m h)^m (0)= (\sigma_m h)^{m+1}(0) = 0$. 
If $\textup{\texttt{slope}}_m(h) < \textup{\texttt{slope}}_{m+1}(h)$, then we complete the definition of $(\sigma_m h)^{m}$ and $(\sigma_m h)^{m+1}$ by prescribing the following increments:
\begin{align*}
(\sigma_m h)^m (i) - (\sigma_m h)^m (i-1) &= I^{\textup{N},{m+1}}_{(i, m)}(h),\\
(\sigma_m h)^{m+1} (i) - (\sigma_m h)^{m+1} (i-1) &=  \widehat \omega_{(i, m+1)}.
\end{align*}
On the other hand, if $\textup{\texttt{slope}}_m(h) \geq \textup{\texttt{slope}}_{m+1}(h)$, we then let $\sigma_m$ act as the identity map: $\sigma_m h = h$. 
In either case, $\sigma_m$ acts as the identity on all other levels $k\in \mathbb{Z}\setminus\{m, m+1\}$: $(\sigma_m h)^k = h^k$.

Because of the equivalence between $G^\textup{S}$ and $G^{\textup{N}}$, $\sigma_m$ is essentially the same map as $\sigma^m$ but with a $180^\circ$ rotation. In addition, the following proposition justifies the notion of ``inverse'' for our queueing maps.

\begin{proposition}[Cancellation of upward and downward queueing maps]\label{q_inverse}
We have
\eeq{ \label{inverse_eq}
h = \begin{cases}
\sigma_m \sigma^m h &\text{if $\textup{\texttt{slope}}_m(h) >\textup{\texttt{slope}}_{m+1}(h)$}\\
\sigma^m \sigma_m h &\text{if $\textup{\texttt{slope}}_m(h) <\textup{\texttt{slope}}_{m+1}(h)$}.
\end{cases}
}
\end{proposition}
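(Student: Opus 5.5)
The plan is to prove the first case, $h=\sigma_m\sigma^m h$ when $\texttt{slope}_m(h)>\texttt{slope}_{m+1}(h)$, by direct computation on levels $m,m+1$. The second case then follows by the $180^\circ$ rotation symmetry that exchanges $G^{\textup{S}}$ with $G^{\textup{N}}$ and $\sigma^m$ with $\sigma_m$. Both maps act as the identity off levels $m,m+1$, so only those two levels need checking. Write $h'=\sigma^m h$. By the slope swap \eqref{swappp} (Lemma~\ref{stab1}), $\texttt{slope}_m(h')<\texttt{slope}_{m+1}(h')$, so $\sigma_m$ acts nontrivially on $h'$. It therefore suffices to show $I^{\textup{N},m+1}_{(i,m)}(h')=\omega_{(i,m)}$ and $\widehat\omega_{(i,m+1)}(h')=\omega_{(i,m+1)}$ for all $i$; the normalizations at $0$ then match automatically.

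First I would write the south-boundary quantities as a one-line queue. Let $g(x)=G^{\textup{S}}_{m,(x,m+1)}(h)$. From \eqref{GS} it satisfies the recursion
\[
g(x)=\max\{g(x-1),\,h^m(x)\}+\omega_{(x,m+1)} .
\]
Hence $I^{\textup{S},m}_{(x,m+1)}=g(x)-g(x-1)$ and $J^{\textup{S},m}_{(x,m+1)}=g(x)-h^m(x)$. Also $I^{\textup{S},m}_{(x,m)}=\omega_{(x,m)}$, so
\[
\wc\omega_{(x,m)}=\omega_{(x,m)}\wedge\bigl(g(x-1)-h^m(x-1)\bigr).
\]
From the same recursion I would extract the elementary identity $\omega_{(x,m+1)}=I^{\textup{S},m}_{(x,m+1)}\wedge J^{\textup{S},m}_{(x,m+1)}$. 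This is the usual arrival/departure/service symmetry of a queue.

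Next comes the north-boundary LPP for $h'$ with boundary at level $m+1$. There is only one level below the boundary, so \eqref{GN} reduces to
\[
G^{\textup{N}}_{(x,m),m+1}(h')=\sup_{z\ge x}\bigl\{-h'^{m+1}(z)+h'^m(z+1)-h'^m(x)\bigr\}.
\]
This satisfies the backward recursion
\[
G^{\textup{N}}_{(x,m)}=\max\bigl\{G^{\textup{N}}_{(x+1,m)},\,-h'^{m+1}(x)\bigr\}+\wc\omega_{(x+1,m)} .
\]
I would guess the candidate $F(x)=-h^m(x)+c$, with $c$ fixed by the normalizations $h'^{m+1}(x)=g(x)-g(0)$ and $h^m(0)=0$. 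Substituting $F$ into the recursion reduces it exactly to the formula for $\wc\omega_{(x+1,m)}$ displayed above. So $F$ solves the recursion, and its increments are $\omega_{(i,m)}$, which is the desired identity $I^{\textup{N}}=\omega$ at level $m$.

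The main obstacle is showing that this solution of the recursion is the supremum in \eqref{GN} and not some other solution. I would iterate the recursion from $x$ up to some large $N$. This expresses $F(x)$ as the maximum of the sup restricted to $z\in[x,N)$ and a boundary term $F(N)-h'^m(N)+h'^m(x)$. The boundary term equals $-h^m(N)+h'^m(N)+O(1)$, which tends to $-\infty$ because $\texttt{slope}_m(h')=\texttt{slope}_{m+1}(h)<\texttt{slope}_m(h)$. This is the same stability mechanism as in Proposition~\ref{stab2}. Finally, $J^{\textup{N}}_{(i,m+1)}(h')=G^{\textup{N}}_{(i,m)}-G^{\textup{N}}_{(i,m+1)}$ becomes, after substitution, $g(i)-h^m(i)=J^{\textup{S},m}_{(i,m+1)}$. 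Likewise $I^{\textup{N}}_{(i,m+1)}(h')=h'^{m+1}(i)-h'^{m+1}(i-1)=I^{\textup{S},m}_{(i,m+1)}$. Therefore
\[
\widehat\omega_{(i,m+1)}=I^{\textup{S},m}_{(i,m+1)}\wedge J^{\textup{S},m}_{(i,m+1)}=\omega_{(i,m+1)}
\]
by the queue identity, which completes the first case. I expect the index bookkeeping of the shifted dual weights to be the fiddliest part.
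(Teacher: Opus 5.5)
Your proof is correct, but it takes a different route from the paper. The paper gets the cancellation in one line. It specializes the increment duality of Propositions~\ref{dual_eq1} and~\ref{dual_eq2} to $n=m+1$ and reads it off on the two levels $m,m+1$. For example, $\omega_{(i,m)}=I^{\textup{S},m}_{(i,m)}(h)=I^{\textup{N},m+1}_{(i,m)}(\sigma^m h)$, and the right-hand side is the increment of $(\sigma_m\sigma^m h)^m$.

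You instead reprove that one-level duality from scratch. Your ingredients are the forward recursion for $g$, the backward recursion for $G^{\textup{N}}(h')$, and the explicit solution $F(x)=g(0)-h^m(x)$. The constant $c=g(0)$ is forced: it is the only value for which the substitution closes. Your uniqueness step uses the slope inequality exactly once, via $F(N)+h'^m(N)=g(0)-h^m(N)+h'^m(N)\to-\infty$. Your displayed boundary term has the signs of $h'^m(N)$ and $h'^m(x)$ reversed, but the expression you actually use is the right one.

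The two routes buy different things. The paper's is more economical, since the general-$n$ duality is needed later anyway. Yours is self-contained and shows exactly where stability enters: the recursion alone does not determine $G^{\textup{N}}$. It also verifies that the one-unit shift in \eqref{GN} and the $\mathbf z+\mathbf e_1$ indexing in \eqref{def_IJ} make $\widehat\omega_{(i,m+1)}(h')=I^{\textup{S},m}_{(i,m+1)}\wedge J^{\textup{S},m}_{(i,m+1)}=\omega_{(i,m+1)}$ come out with no off-by-one.

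Your symmetry step for the second case is sound, but you should pin it down. Take the point reflection through $(0,m+\tfrac12)$, namely $\bar h^m(y)=-h^{m+1}(-y)$ and $\bar h^{m+1}(y)=-h^m(-y)$. Then $G^{\textup{S}}_{m,(y,m+1)}(\bar h)=G^{\textup{N}}_{(-y,m),m+1}(h)$ and $G^{\textup{S}}_{m,(y,m)}(\bar h)=G^{\textup{N}}_{(-y,m+1),m+1}(h)$. Hence $\overline{\sigma_m h}=\sigma^m\bar h$ and $\overline{\sigma^m h}=\sigma_m\bar h$, while the bar swaps the two slopes. Applying your first case to $\bar h$ and un-barring gives $h=\sigma^m\sigma_m h$.
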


\begin{remark}[Invertible if actually updating]
One way to interpret Proposition~\ref{q_inverse} is that whenever the queueing map $\sigma^m$ or $\sigma_m$ acts non-trivially on an environment $h$ (that is, whenever it is not the identity map), its action can be inverted with $\sigma_m$ or $\sigma^m$ respectively. 
\end{remark}

\begin{remark}[Trivial extension]
Although we will not use this, we note that when  $\textup{\texttt{slope}}_m(h)=  \textup{\texttt{slope}}_{m+1}(h)$, the identity $h = \sigma_m \sigma^m h = \sigma^m \sigma_m h$ holds trivially since both $\sigma^m$ and $\sigma_m$ act as the identity map in this case.
\end{remark}

To justify Proposition~\ref{q_inverse}, we exploit the following connection between LPP with a south boundary and LPP with a north boundary.  
To start, let us fix $m< n$ and $\mathbf z \in \mathbb{Z}\times [\![m,n ]\!]$. For an environment $h$ with 
$$\textup{\texttt{slope}}_m(h) > \max\{ \textup{\texttt{slope}}_{m+1}(h), \dots, \textup{\texttt{slope}}_{n}(h)\},$$ 
we can define $G^{\textup{S}}_{m, \mathbf z} (h)$ and construct an environment $\sigma^{n-1} \cdots\sigma^m h$. In particular, by the swapping property \eqref{swappp}, we have 
$$\textup{\texttt{slope}}_n(\sigma^{n-1} \cdots\sigma^m h) > \max\{ \textup{\texttt{slope}}_{m}(\sigma^{n-1} \cdots\sigma^m h), \dots, \textup{\texttt{slope}}_{n-1}(\sigma^{n-1} \cdots\sigma^m h)\}.$$
This means we can run an LPP with a north boundary along level $n$ with respect to this new environment $\sigma^{n-1} \cdots\sigma^m h$. Often, we will refer to the two LPPs: $G^{\textup{S}}_{m, \mathbf{z}} (h)$ and $G^{\textup{N}}_{\mathbf{z},n} (\sigma^{n-1} \cdots\sigma^m h)$
as the dual of each other. 

The following proposition describes the connection between the original model and its dual. In particular, it states that their corresponding increments are equal.

\begin{proposition}[{Increment duality, \cite[Lemma 4.7]{seppalainen20a}}]\label{dual_eq1}
Fix $m< n$. For each $\mathbf{z} \in \mathbb{Z}\times [\![m, n]\!]$, 
$$I^{\textup{S},m}_\mathbf{z}(h)  = I^{\textup{N},n}_{\mathbf{z}}(\sigma^{n-1} \cdots\sigma^m h).$$
And for each $\mathbf{z} \in \mathbb{Z}\times [\![m+1, n]\!]$,
$$J^{\textup{S},m}_\mathbf{z}(h)= J^{\textup{N},n}_{\mathbf{z}}(\sigma^{n-1} \cdots\sigma^m h).$$
\end{proposition}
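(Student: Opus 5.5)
The plan is to prove the identity by induction on $n-m$, reducing everything to a deterministic two-level statement about a single queueing map. The input there is the local recursion of LPP on unit squares. Write $I=I^{\textup{S},m}$ and $J=J^{\textup{S},m}$. For every $\mathbf x$ strictly above level $m$, the relation $G^{\textup S}_{m,\mathbf x}=\omega_{\mathbf x}+\max\{G^{\textup S}_{m,\mathbf x-\bfe_1},G^{\textup S}_{m,\mathbf x-\bfe_2}\}$ gives
\[
I_{\mathbf x}=\omega_{\mathbf x}+(J_{\mathbf x-\bfe_1}-I_{\mathbf x-\bfe_2})^+,\qquad J_{\mathbf x}=\omega_{\mathbf x}+(I_{\mathbf x-\bfe_2}-J_{\mathbf x-\bfe_1})^+ .
\]
With the indexing convention $\wc\omega_{\mathbf x-\bfe_2}=I_{\mathbf x-\bfe_2}\wedge J_{\mathbf x-\bfe_1}$ from \eqref{def_IJ}, elementary algebra inverts this square relation:
\[
I_{\mathbf x-\bfe_2}=\wc\omega_{\mathbf x-\bfe_2}+(J_{\mathbf x}-I_{\mathbf x})^+,\qquad J_{\mathbf x-\bfe_1}=\wc\omega_{\mathbf x-\bfe_2}+(I_{\mathbf x}-J_{\mathbf x})^+ ,
\]
and also $\omega_{\mathbf x}=I_{\mathbf x}\wedge J_{\mathbf x}$. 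These are exactly the square recursions satisfied by the increments of a north-boundary LPP, rotated by $180^\circ$ and with the one-unit shift built into \eqref{GN}.

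\textbf{Base case $n=m+1$.} Level $m+1$ of $\sigma^m h$ has increments $I^{\textup{S},m}_{(i,m+1)}(h)$, and $G^{\textup N}_{(x,m+1),m+1}=-(\sigma^mh)^{m+1}(x)$. So on level $m+1$ the horizontal increments $I^{\textup{N},m+1}$ equal $I^{\textup{S},m}$ by definition. It remains to show two things, both computed from the north boundary at level $m+1$ and the weights $\wc\omega$ on level $m$: that $I^{\textup{N},m+1}_{(x,m)}$ reproduces $h^m(x)-h^m(x-1)=I^{\textup S,m}_{(x,m)}$, and that $J^{\textup{N},m+1}_{(x,m+1)}=J^{\textup S,m}_{(x,m+1)}$.

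I would proceed as follows. First I show that the candidate function $F(x,k):=-G^{\textup S}_{m,(x,k)}(h)$ (suitably normalized) on levels $k\in\{m,m+1\}$ satisfies the north-boundary recursion. This follows from the reversed square identities above, which say that the increments of $F$ obey the same local rules as those of $G^{\textup N}$. Second, I show $F=G^{\textup N}$ up to an additive constant.

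The second step is the main obstacle, since $G^{\textup N}$ is defined by a supremum over $z\ge x$ and the local recursion alone does not pin down the solution. I would handle it by a direct sup computation. Unfolding \eqref{GN} for $k=m$ gives
\[
G^{\textup N}_{(x,m),m+1}=\sup_{z\ge x}\Big\{-(\sigma^mh)^{m+1}(z)+\sum_{j=x+1}^{z+1}\wc\omega_{(j,m)}\Big\}.
\]
One inequality holds because each term is bounded by the candidate, via telescoping with $\wc\omega\le I$. The other holds because the supremum is attained at the first $z\ge x$ where $\wc\omega_{(z+1,m)}=J_{(z+1,m+1)}$, i.e.\ where the south geodesic exits the boundary. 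Such a $z$ exists and is finite by the stability condition of Proposition~\ref{stab2} together with the slope swap \eqref{swappp}. The $J$ identity then follows from the vertical square relation.

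\textbf{Inductive step.} Set $\tilde h=\sigma^{n-2}\cdots\sigma^m h$. By induction, $I^{\textup S,m}_{\mathbf z}(h)=I^{\textup N,n-1}_{\mathbf z}(\tilde h)$, and likewise for $J$, on levels $m,\dots,n-1$. On level $n$ the same identities hold by the base-case argument applied to the pair of levels $(n-1,n)$. This uses the fact that, by \eqref{qmap} iterated, the increments of $\tilde h^{n-1}$ are $I^{\textup S,m}$ on level $n-1$, so $G^{\textup S}_{m,\cdot}(h)$ restricted to levels $\ge n-1$ equals the south-boundary LPP $G^{\textup S}_{n-1,\cdot}(\tilde h)$ up to a constant.

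Next I would apply the two-level case to $\tilde h$ at levels $(n-1,n)$. This shows that the north-boundary LPP from level $n$ in $\sigma^{n-1}\tilde h$ reproduces, on level $n-1$, the increments of $\tilde h^{n-1}$. By the dynamic-programming decomposition of \eqref{GN} across level $n-1$, the north LPP from level $n$ in $\sigma^{n-1}\tilde h$, restricted to levels $\le n-1$, therefore coincides up to a constant with the north LPP from level $n-1$ in $\tilde h$. The induction hypothesis then completes the step.
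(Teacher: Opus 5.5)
The paper does not prove Proposition~\ref{dual_eq1}; it imports it from \cite[Lemma 4.7]{seppalainen20a}. Your argument is a correct, self-contained proof, and a genuinely different route from simply citing. It fits the half-infinite geometry used here. As you observe, the reversed unit-square relations alone cannot determine a north-boundary LPP along a line that is infinite to the right. Your explicit evaluation of $G^{\textup N}_{(x,m),m+1}(\sigma^m h)$ supplies exactly the missing boundary information: a telescoping upper bound, attained at the first exit point of the south geodesics. The inductive step, via dynamic programming across one level on both the south and north sides, is also sound. It never invokes Proposition~\ref{q_inverse}: the fact that the north LPP from level $n$ in $\sigma^{n-1}\tilde h$ reproduces $\tilde h^{n-1}$ on level $n-1$ comes out of your two-level computation. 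Since the paper deduces Proposition~\ref{q_inverse} from this very duality, using it would have been circular. The citation buys brevity; your route shows that the two-level identity plus one-level decompositions is all that is needed.

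A few details need repair.

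\textbf{Corner-flip identities.} Your displayed identities have $I$ and $J$ interchanged. From the definitions (cf.\ \eqref{flip}), $I_{\mathbf x}=\omega_{\mathbf x}+(I_{\mathbf x-\bfe_2}-J_{\mathbf x-\bfe_1})^+$ and $J_{\mathbf x}=\omega_{\mathbf x}+(J_{\mathbf x-\bfe_1}-I_{\mathbf x-\bfe_2})^+$. The inverted relations are therefore $I_{\mathbf x-\bfe_2}=\wc\omega_{\mathbf x-\bfe_2}+(I_{\mathbf x}-J_{\mathbf x})^+$ and $J_{\mathbf x-\bfe_1}=\wc\omega_{\mathbf x-\bfe_2}+(J_{\mathbf x}-I_{\mathbf x})^+$, which are indeed the north-boundary recursions.

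\textbf{Indexing in the sup computation.} Since $\wc\omega_{(z+1,m)}=I_{(z+1,m)}\wedge J_{(z,m+1)}$, the term indexed by $z$ differs from the candidate by $\sum_{j=x+1}^{z}(\wc\omega_{(j,m)}-\omega_{(j,m)})+(\wc\omega_{(z+1,m)}-J_{(z,m+1)})\le 0$. The bound thus also uses $\wc\omega_{(z+1,m)}\le J_{(z,m+1)}$, not only $\wc\omega\le I$. The supremum is attained at the first $z\ge x$ with $J_{(z,m+1)}\le\omega_{(z+1,m)}$, not $J_{(z+1,m+1)}$.

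\textbf{Existence of the maximizer.} This is a statement about $z\to+\infty$, so Proposition~\ref{stab2}, which concerns $z\to-\infty$, is not the right input. Argue directly instead. If no such $z$ existed, then $G^{\textup S}_{m,(z,m+1)}(h)>h^m(z+1)$ for all $z\ge x$. The square recursion then forces $G^{\textup S}_{m,(z,m+1)}(h)-h^{m+1}(z)$ to be constant in $z\ge x$. That constant would exceed $h^m(z+1)-h^{m+1}(z)$, which tends to $+\infty$ by the strict slope inequality, a contradiction.

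\textbf{Your first step is unnecessary.} Checking that $F$ satisfies the north recursions is not needed. The sup computation gives $G^{\textup N}_{(x,m),m+1}(\sigma^m h)=C-h^m(x)$ with $C=G^{\textup S}_{m,(0,m+1)}(h)$. This is the same constant for which $G^{\textup N}_{(x,m+1),m+1}(\sigma^m h)=C-G^{\textup S}_{m,(x,m+1)}(h)$. Both the $I$ and the $J$ identities then follow by subtraction.

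\textbf{The south-side claim in the inductive step.} The claim that $G^{\textup S}_{m,\cdot}(h)$ and $G^{\textup S}_{n-1,\cdot}(\tilde h)$ differ by a constant on levels $\ge n-1$ does not follow from iterating \eqref{qmap} alone. It is the one-level decomposition \eqref{stabeq} iterated (equivalently Propositions~\ref{LPP_queue} and~\ref{nest}). This is the south-side mirror of the decomposition you use on the north side, and should be stated as such.
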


By rotating the picture by $180^\circ$, we obtain the following equivalent result.

\begin{proposition}[Increment duality rewritten] \label{dual_eq2}
Fix $m< n$. For each $\mathbf{z} \in \mathbb{Z}\times [\![m, n]\!]$, 
$$I^{\textup{N},n}_{\mathbf{z} }(h)  = I^{\textup{S},m}_\mathbf{z}(\sigma_{m} \cdots\sigma_{n-1} h).$$
And for each $\mathbf{z} \in \mathbb{Z}\times [\![m+1, n]\!]$,
$$J^{\textup{N},n}_{\mathbf{z}}(h)= J^{\textup{S},m}_\mathbf{z}(\sigma_{m} \cdots\sigma_{n-1} h).$$
\end{proposition}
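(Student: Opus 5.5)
The plan is to deduce Proposition~\ref{dual_eq2} from Proposition~\ref{dual_eq1} using the cancellation identity of Proposition~\ref{q_inverse}, rather than redoing the duality argument in rotated coordinates. The standing hypothesis, implicit in the statement, is the north-boundary analogue of the condition in Proposition~\ref{stab2}:
\[
\textup{\texttt{slope}}_n(h) > \max\{\textup{\texttt{slope}}_m(h),\dots,\textup{\texttt{slope}}_{n-1}(h)\},
\]
which makes $G^{\textup{N}}_{\bbullet,n}(h)$ finite. Set $h_n = h$ and $h_{k} = \sigma_{k} h_{k+1}$ for $k = n-1, \dots, m$, so that $h' := h_m = \sigma_m\cdots\sigma_{n-1}h$.

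\textbf{Step 1 (slopes).} I will check by downward induction that $h_{k+1}$ carries the maximal slope on level $k+1$, while its levels $k,\dots,m$ still carry the original slopes of $h$. Since the maximum is strict, $\textup{\texttt{slope}}_k(h_{k+1}) < \textup{\texttt{slope}}_{k+1}(h_{k+1})$, so $\sigma_k$ acts non-trivially on $h_{k+1}$. By the downward analogue of the swapping property \eqref{swappp}, it moves the maximal slope down to level $k$. I will obtain this analogue from Lemma~\ref{stab1} by the $180^\circ$ rotation relating $\sigma_m$ and $\sigma^m$. At the end, $h'$ has
\[
\textup{\texttt{slope}}_m(h') > \max\{\textup{\texttt{slope}}_{m+1}(h'),\dots,\textup{\texttt{slope}}_n(h')\},
\]
so $G^{\textup{S}}_{m,\bbullet}(h')$ is well defined by Proposition~\ref{stab2}, and Proposition~\ref{dual_eq1} applies to $h'$.

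\textbf{Step 2 (undoing the maps).} I will show that $\sigma^{n-1}\cdots\sigma^m h' = h$. For each $k$, $h_k = \sigma_k h_{k+1}$ with $\textup{\texttt{slope}}_k(h_{k+1}) < \textup{\texttt{slope}}_{k+1}(h_{k+1})$, so the second case of Proposition~\ref{q_inverse} gives $\sigma^k h_k = \sigma^k\sigma_k h_{k+1} = h_{k+1}$. Applying this successively for $k = m, m+1, \dots, n-1$ yields
\[
\sigma^{n-1}\cdots\sigma^m h_m = h_n = h.
\]

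\textbf{Step 3 (conclusion).} Proposition~\ref{dual_eq1} applied to $h'$ gives, for $\mathbf z\in\Z\times[\![m,n]\!]$,
\[
I^{\textup{S},m}_{\mathbf z}(h') = I^{\textup{N},n}_{\mathbf z}(\sigma^{n-1}\cdots\sigma^m h') = I^{\textup{N},n}_{\mathbf z}(h).
\]
The same argument gives the $J$ identity for $\mathbf z\in\Z\times[\![m+1,n]\!]$. This is exactly the claim.

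The main obstacle is Step~1, which needs the slope-swapping property for the downward map. If transferring Lemma~\ref{stab1} proves awkward, the fallback is the rotation route. Define the reflected environment $(\mathcal R h)^{k}$ on level $m+n-k$ by reversing and negating the increments of $h^{k}$, with the one-unit horizontal shift dictated by the $\wc\omega$ indexing in \eqref{def_IJ} and \eqref{GN}. Then verify that $G^{\textup{N}}_{\bbullet,n}(h)$ corresponds to $G^{\textup{S}}_{m,\bbullet}(\mathcal R h)$ and that $\sigma_k = \mathcal R\,\sigma^{m+n-k-1}\,\mathcal R$. The bookkeeping of that shift is the only delicate point in this route.
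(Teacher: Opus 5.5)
Your Steps 1--3 are a valid deduction as written, but they run the paper's logic backwards. Step 2 uses the second case of Proposition~\ref{q_inverse}: $\sigma^k\sigma_k g=g$ when $\textup{\texttt{slope}}_k(g)<\textup{\texttt{slope}}_{k+1}(g)$. In the paper that proposition is not an independent input. It is justified afterwards from Propositions~\ref{dual_eq1} and~\ref{dual_eq2} with $n=m+1$.

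The first case needs only Proposition~\ref{dual_eq1} and Proposition~\ref{sumeq}. Level $m$ of $\sigma_m\sigma^m h$ has increments $I^{\textup{N},m+1}_{(i,m)}(\sigma^m h)=I^{\textup{S},m}_{(i,m)}(h)=\omega_{(i,m)}$, and conservation of mass then fixes level $m+1$. The second case, however, comes from the $n=m+1$ instance of the very statement you are proving. Level $m+1$ of $\sigma^m\sigma_m h$ has increments $I^{\textup{S},m}_{(i,m+1)}(\sigma_m h)=I^{\textup{N},m+1}_{(i,m+1)}(h)=\omega_{(i,m+1)}$. Given Proposition~\ref{dual_eq1} and conservation of mass, that cancellation identity for levels $(m,m+1)$ is equivalent to the claim with $n=m+1$. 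So your argument is circular at its base. What Steps 1--3 genuinely show is a reduction: the general $(m,n)$ case follows from Proposition~\ref{dual_eq1} plus the adjacent-level cancellation identity.

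You cannot repair this using only the first case. The identity $\sigma_k\sigma^k=\mathrm{id}$ merely makes $\sigma_k$ a left inverse of $\sigma^k$. To conclude $\sigma^k\sigma_k=\mathrm{id}$ you would need every $g$ with $\textup{\texttt{slope}}_k(g)<\textup{\texttt{slope}}_{k+1}(g)$ to lie in the image of $\sigma^k$, which is the same fact in disguise.

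An independent argument is therefore required. One option is a direct computation with the Pitman formulas of Lemma~\ref{pitdef} and their downward analogues. The other is the $180^\circ$ rotation. You already need the rotation for the downward slope swap in Step 1, so the economical route is to rotate Proposition~\ref{dual_eq1} itself. Under that rotation, $G^{\textup{S}}_{m,\bbullet}\leftrightarrow G^{\textup{N}}_{\bbullet,n}$ and $\sigma^{n-1}\cdots\sigma^m\leftrightarrow\sigma_m\cdots\sigma_{n-1}$. This is exactly your fallback, and it is the paper's entire proof. So the obstacle you flagged (Step 1) is not the real one; Step 2 is.
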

Proposition \ref{q_inverse} follows immediately from the two propositions above applied to the $I$ increments in two neighboring horizontal levels.

\subsection{Nested property of the queueing maps}

In this section, we will recall an important concept known as the nested LPP. By the definition of our queueing maps, we have
\begin{align*}
G^\textup{S}_{m,(a,m+1)}(h) - G^\textup{S}_{m,(b,m+1)}(h) &= (\sigma^m h)^{m+1}(a) - (\sigma^m h)^{m+1}(b),\\
G^\textup{N}_{(a,m-1), m}(h) - G^\textup{N}_{(b,m-1), m}(h) &= (\sigma_{m-1} h)^{m-1}(a) - (\sigma_{m-1} h)^{m-1}(b).
\end{align*}
These identities can be generalized from the levels ($m-1, m, m+1$) to ($k < m < n$) as follows.

\begin{proposition}[{Representation of increments, \cite[Proposition 3.1]{ind_buse}}]\label{LPP_queue}
For $k<m<n$,
\begin{align}
G^{\textup{S}}_{{m}, (a, n)}(h) - G^{\textup{S}}_{{m}, (b, n)}(h) &= (\sigma^{n-1}\cdots\sigma^m h)^n(a) - (\sigma^{n-1}\cdots\sigma^m h)^n(b), \label{rep_south} \\
G^{\textup{N}}_{(a,k), {m}}(h) - G^{\textup{N}}_{(b,k), {m}}(h) &= (\sigma_{k}\cdots\sigma_{m-1} h)^k(a)-  (\sigma_{k}\cdots\sigma_{m-1} h)^k(b). \label{rep_north}
\end{align}
\end{proposition}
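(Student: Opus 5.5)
We prove \eqref{rep_south} by induction on $n-m$, and then obtain \eqref{rep_north} from it by the $180^\circ$ rotation symmetry that relates $G^{\textup{N}}$ to $G^{\textup{S}}$, and $\sigma_k$ to $\sigma^k$. The base case $n=m+1$ is the identity displayed just before the proposition. It holds by definition \eqref{qmap}, since the increments of $(\sigma^m h)^{m+1}$ are exactly $I^{\textup{S},m}_{(i,m+1)}(h)$.

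For the inductive step, the plan is to show that south-boundary LPP from level $m$ in $h$ equals, up to an additive constant, south-boundary LPP from level $m+1$ in the environment $\sigma^m h$. Concretely, we aim to show that for every $x\in\Z$ and $n>m+1$,
\begin{equation*}
G^{\textup{S}}_{m,(x,n)}(h) = G^{\textup{S}}_{m+1,(x,n)}(\sigma^m h) + c,
\end{equation*}
where $c$ does not depend on $x$ or $n$. In fact $c=G^{\textup{S}}_{m,(0,m+1)}(h)$, because we normalized $(\sigma^m h)^{m+1}(0)=0$.

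To verify this, we decompose the outer supremum in \eqref{GS} according to the column $z'$ at which the path leaves level $m+1$. This gives
\begin{equation*}
G^{\textup{S}}_{m,(x,n)}(h)=\sup_{z'\le x}\Big\{G^{\textup{S}}_{m,(z',m+1)}(h)+G_{(z',m+2),(x,n)}(h)\Big\}.
\end{equation*}
This is a standard dynamic-programming split of the path at its last vertex on level $m+1$; it uses \eqref{defGh} together with the observation that the portion of the path on levels $m$ and $m+1$ is exactly what $G^{\textup{S}}_{m,(z',m+1)}$ optimizes over. Now $G^{\textup{S}}_{m,(z',m+1)}(h)-c=(\sigma^m h)^{m+1}(z')$ by the base case. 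Moreover, $G_{(z',m+2),(x,n)}$ uses only levels $\ge m+2$, which $\sigma^m$ leaves unchanged. Hence the right-hand side equals $c+G^{\textup{S}}_{m+1,(x,n)}(\sigma^m h)$, as claimed.

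We then apply the induction hypothesis to the environment $\sigma^m h$ with starting level $m+1$. This is legitimate because the required slope condition carries over: by the swapping property \eqref{swappp}, $\textup{\texttt{slope}}_{m+1}(\sigma^m h)=\textup{\texttt{slope}}_m(h)$, which exceeds the slopes of all higher levels, and those levels are unchanged. The inductive hypothesis therefore yields $(\sigma^{n-1}\cdots\sigma^{m+1}\sigma^m h)^n$, which completes the induction.

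The main obstacle is the hypothesis implicit in the statement. For the maps to act nontrivially and for the LPP values to be finite, we need $\textup{\texttt{slope}}_m(h)>\max\{\textup{\texttt{slope}}_{m+1}(h),\dots,\textup{\texttt{slope}}_n(h)\}$ (via Proposition~\ref{stab2}), and the analogous condition for the north version. We assume this throughout, since otherwise the left-hand side is not defined.

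The other delicate point is that the split formula above must hold with the supremum over infinitely many $z'$. This needs care only when passing the supremum through the decomposition, and finiteness of every term, guaranteed by Proposition~\ref{stab2}, suffices for that.

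Finally, the north identity \eqref{rep_north} follows by applying the same argument after rotating by $180^\circ$, where the index shift in \eqref{GN} is matched by the shifted indexing of the dual weights. Alternatively, one can combine \eqref{rep_south} with the increment duality in Proposition~\ref{dual_eq2}.
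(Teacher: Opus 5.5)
Your argument for \eqref{rep_south} is correct and matches the paper. Your one-step identity $G^{\textup{S}}_{m,(x,n)}(h)=G^{\textup{S}}_{m+1,(x,n)}(\sigma^m h)+G^{\textup{S}}_{m,(0,m+1)}(h)$ is exactly \eqref{stabeq}, which the paper derives by the same interchange of suprema in the proof of Proposition~\ref{stab2}. Iterating it, with the slope swap keeping every map non-trivial, is all that is needed. A minor correction: interchanging the two suprema is valid unconditionally, and finiteness is only needed to subtract the constant $c$.

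The north half does not go through as you claim, because the printed \eqref{rep_north} has the wrong sign. Proposition~\ref{dual_eq2} can be read at the bottom level $k$, where \eqref{GS} gives $I^{\textup{S},k}_{(i,k)}(g)=g^k(i)-g^k(i-1)$. This yields
\[
G^{\textup{N}}_{(i-1,k),m}(h)-G^{\textup{N}}_{(i,k),m}(h)=I^{\textup{N},m}_{(i,k)}(h)=(\sigma_k\cdots\sigma_{m-1}h)^k(i)-(\sigma_k\cdots\sigma_{m-1}h)^k(i-1),
\]
and telescoping gives
\[
G^{\textup{N}}_{(a,k),m}(h)-G^{\textup{N}}_{(b,k),m}(h)=(\sigma_k\cdots\sigma_{m-1}h)^k(b)-(\sigma_k\cdots\sigma_{m-1}h)^k(a).
\]
For $k=m-1$ this follows from the definition of $\sigma_{m-1}$ alone. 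It contradicts the printed version unless both sides vanish, and they do not vanish: $I^{\textup{N},m}_{(i,m-1)}(h)\ge\omega_{(i,m-1)}$, which is a.s.\ positive for exponential weights.

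Your $180^\circ$-rotation argument tracks the index shift in \eqref{GN}, but it misses that reflecting $x\mapsto -x$ negates $h$, because $h(x)-h(x-1)=\omega$. That negation is exactly where the sign flips, so no symmetry argument can produce the displayed formula verbatim. The corrected sign is the one the paper actually uses in the proof of Lemma~\ref{prop_dec1}. Your alternative via Proposition~\ref{dual_eq2} also gives the corrected sign, and it does not need \eqref{rep_south} at all.

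The same telescoping applied to Proposition~\ref{dual_eq1} at the top level $n$ gives \eqref{rep_south} in one line, without induction. Your inductive proof has the advantage of using only the definitions \eqref{GS} and \eqref{qmap} rather than the cited duality.
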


We now use this result to prove the following.

\begin{proposition}[Nested LPP property]\label{nest}
Fix $k<m<n$.
For any vertices $\mathbf{x}, \mathbf{y}$ above or along level $n$, we have 
\begin{align} \label{nest_south}
G^{\textup{S}}_{m, \mathbf{x}}(h) - G^{\textup{S}}_{m, \mathbf{y}}(h) = G^{\textup{S}}_{n, \mathbf{x}}(\sigma^{n-1}\cdots\sigma^m h) - G^{\textup{S}}_{n, \mathbf{y}}(\sigma^{n-1}\cdots\sigma^m h).
\end{align}
Similarly, for any vertices $\mathbf{a}, \mathbf{b}$ below or along level $k$, we have 
$$G^{\textup{N}}_{\mathbf{a}, m}(h) - G^{\textup{N}}_{\mathbf{b}, m}(h) = G^{\textup{N}}_{\mathbf{a}, k}(\sigma_{k}\cdots\sigma_{m-1} h) - G^{\textup{N}}_{\mathbf{b}, k}(\sigma_{k}\cdots\sigma_{m-1} h).$$
\end{proposition}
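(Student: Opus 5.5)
We prove the south-boundary identity \eqref{nest_south}; the north-boundary statement then follows by the same argument after a $180^\circ$ rotation, with Proposition~\ref{LPP_queue}\eqref{rep_north} in place of \eqref{rep_south}. Write $\tilde h = \sigma^{n-1}\cdots\sigma^m h$. The plan is to show that both sides of \eqref{nest_south} are the same functional of one common object: the profile on level $n$ together with the environment strictly above level $n$.

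\textbf{Step 1 (reduction to level-$n$ data).} For a vertex $\mathbf{x}=(x,\ell)$ with $\ell > n$, every path counted in $G^{\textup{S}}_{m,\mathbf{x}}(h)$ crosses from level $n$ to level $n+1$ at some column $z$. Splitting at that crossing gives a dynamic-programming identity,
\begin{equation*}
G^{\textup{S}}_{m,(x,\ell)}(h) = \sup_{z\le x}\big\{ G^{\textup{S}}_{m,(z,n)}(h) + G_{(z,n+1),(x,\ell)}(h)\big\}.
\end{equation*}
To justify this, I would expand the definition \eqref{GS} and interchange suprema, viewing the path's horizontal run on level $m$ followed by its up-right portion as one concatenated path. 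Define $F(z) = G^{\textup{S}}_{m,(z,n)}(h) - G^{\textup{S}}_{m,(0,n)}(h)$. Subtracting the constant $G^{\textup{S}}_{m,(0,n)}(h)$ from both sides yields
\begin{equation*}
G^{\textup{S}}_{m,\mathbf{x}}(h) - G^{\textup{S}}_{m,(0,n)}(h) = \sup_{z\le x}\{F(z) + G_{(z,n+1),\mathbf{x}}(h)\}.
\end{equation*}
The right-hand side depends only on $F$ and on the weights of $h$ on levels $>n$. This formula also covers $\ell=n$, where the left-hand side is simply $F(x)$.

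\textbf{Step 2 (matching the dual side).} By \eqref{rep_south} in Proposition~\ref{LPP_queue}, $F(z) = \tilde h^n(z) - \tilde h^n(0) = \tilde h^n(z)$. The last equality uses the normalization at $0$ built into the definition of the queueing maps. On the other hand, \eqref{GS} gives
\begin{equation*}
G^{\textup{S}}_{n,\mathbf{x}}(\tilde h) = \sup_{z\le x}\{\tilde h^n(z) + G_{(z,n+1),\mathbf{x}}(\tilde h)\}.
\end{equation*}
The maps $\sigma^m,\dots,\sigma^{n-1}$ only modify levels $m,\dots,n$, so $\tilde h^k = h^k$ for every $k>n$. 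Hence $G_{(z,n+1),\mathbf{x}}(\tilde h) = G_{(z,n+1),\mathbf{x}}(h)$. It follows that $G^{\textup{S}}_{n,\mathbf{x}}(\tilde h) = G^{\textup{S}}_{m,\mathbf{x}}(h) - G^{\textup{S}}_{m,(0,n)}(h)$ for every $\mathbf{x}$ above or along level $n$. Taking the difference for $\mathbf{x}$ and $\mathbf{y}$ cancels the constant and gives \eqref{nest_south}.

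\textbf{Main obstacle.} The main obstacle is checking finiteness and well-definedness, which I would treat only lightly. One must assume $\textup{\texttt{slope}}_m(h)$ exceeds the slopes on all higher levels relevant to $\mathbf{x}$ and $\mathbf{y}$, so that the left-hand side is finite by Proposition~\ref{stab2}. One must also check that the maps $\sigma^j$ act non-trivially; this follows by induction from the slope-swapping property \eqref{swappp}. On the dual side, $\textup{\texttt{slope}}_n(\tilde h) = \textup{\texttt{slope}}_m(h)$, so $G^{\textup{S}}_{n,\bbullet}(\tilde h)$ is finite by the same proposition. The interchange of suprema in Step 1 is routine once finiteness is known.
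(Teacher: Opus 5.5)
Your proof is correct and follows essentially the same route as the paper: subtract $G^{\textup{S}}_{m,(0,n)}(h)$, split at the crossing from level $n$ to level $n+1$, identify the level-$n$ profile with $(\sigma^{n-1}\cdots\sigma^m h)^n$ via \eqref{rep_south} and the normalization at $0$, and use that levels above $n$ are untouched. One small nit: your displayed sup formula does not literally cover $\mathbf{x}$ on level $n$ (there $G_{(z,n+1),\mathbf{x}}=-\infty$), but that case follows at once from \eqref{rep_south} and $G^{\textup{S}}_{n,(x,n)}(\tilde h)=\tilde h^n(x)$, which is how the paper treats it separately.
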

\begin{proof}
We will prove the result with the south boundary, and the north-boundary LPP follows similarly. 
To start, let us rewrite the difference as follows:
$$G^{\textup{S}}_{m, \mathbf{x}}(h) - G^{\textup{S}}_{m, \mathbf{y}}(h)= \big[G^{\textup{S}}_{m, \mathbf{x}}(h) -G^{\textup{S}}_{m, (0,n)}(h)\big]- \big[G^{\textup{S}}_{m, \mathbf{y}}(h) - G^{\textup{S}}_{m, (0,n)}(h)\big].$$
Then, to complete the proof, it suffices to show that 
\begin{equation}\label{nest_eq}
\big[G^{\textup{S}}_{m, \mathbf{x}}(h) -G^{\textup{S}}_{m, (0,n)}(h)\big] =  G^{\textup{S}}_{n, \mathbf{x}}(\sigma^{n-1}\cdots\sigma^m h),
\end{equation}
since \eqref{nest_eq} will also hold when $\mathbf{x}$ is replaced with $\mathbf{y}$. 

Note that when $\mathbf{x}$ belongs to level $n$, the equality \eqref{nest_eq} follows from Proposition \ref{LPP_queue} and the fact that $(\sigma^{n-1}\cdots\sigma^m h)^n(0) = 0$.
Next, we consider the case where $\mathbf{x}$ lies strictly above level $n$.
To simplify the notation, let $x_1 = \mathbf{x} \cdot \mathbf{e}_1$.
We have
\begin{align*}
 G^{\textup{S}}_{m, \mathbf{x}}(h) -G^{\textup{S}}_{m, (0,n)}(h)
& \stackrefp{rep_south}{=} \sup_{a\leq x_1}\Big\{G^{\textup{S}}_{m, (a, n)}(h) -G^{\textup{S}}_{m, (0,n)}(h)  + G_{(a, n+1), \mathbf{x}}(h)\Big\} \\
& \stackref{rep_south}{=} \sup_{a\leq x_1}\Big\{ (\sigma^{n-1}\cdots\sigma^m h)^n(a) + G_{(a, n+1), \mathbf{x}}(\sigma^{n-1}\cdots\sigma^m h)\Big\} \\
& \stackrefp{rep_south}{=} G^{\textup{S}}_{n, \mathbf{x}}(\sigma^{n-1}\cdots\sigma^m h).
\end{align*}
We have thus established \eqref{nest_eq}, which completes the proof.
\end{proof}

\subsection{Preservation properties of the queueing maps}

The following result states that the queueing maps preserve the sum of weights across consecutive levels. 

\begin{proposition}[Conservation of interior mass]\label{sumeq}
For any $k\leq m <  n$ and $x, y \in \mathbb{Z}$, we have
\begin{align} \label{con_int_mass}
\sum_{i=k}^n\big[h^i(x) - h^i(y)\big] 
= \sum_{i=k}^n \big[(\sigma^m h)^i(x) -  (\sigma^m h)^i(y)\big] 
= \sum_{i=k}^n \big[(\sigma_{m} h)^i(x) -  (\sigma_{m} h)^i(y)\big].
\end{align}
\end{proposition}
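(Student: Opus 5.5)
The plan is to reduce \eqref{con_int_mass} to a single-vertex identity on the two levels $m,m+1$, and to verify that identity by the last-passage recursion around one unit square. Since $\sigma^m$ and $\sigma_m$ act as the identity on every level outside $\{m,m+1\}$, and both $m$ and $m+1$ lie in $[\![k,n]\!]$ (because $k\le m<n$), the terms with $i\notin\{m,m+1\}$ agree on both sides. It therefore suffices to show
\[
(\sigma^m h)^m(x)-(\sigma^m h)^m(y)+(\sigma^m h)^{m+1}(x)-(\sigma^m h)^{m+1}(y)=h^m(x)-h^m(y)+h^{m+1}(x)-h^{m+1}(y),
\]
and the analogous statement for $\sigma_m$. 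Writing each side as a telescoping sum of unit increments over $i$ between $y$ and $x$, it is enough to prove the identity for each single increment $i-1\to i$.

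If $\textup{\texttt{slope}}_m(h)\le\textup{\texttt{slope}}_{m+1}(h)$, then $\sigma^m$ is the identity and there is nothing to prove. Otherwise, $G^{\textup{S}}_{m,\bbullet}(h)$ is finite by Proposition~\ref{stab2}. Fix $i$ and set
\[
A=G^{\textup{S}}_{m,(i-1,m)},\quad B=G^{\textup{S}}_{m,(i,m)},\quad C=G^{\textup{S}}_{m,(i-1,m+1)},\quad D=G^{\textup{S}}_{m,(i,m+1)}.
\]
By \eqref{GS}, $B-A=h^m(i)-h^m(i-1)=\omega_{(i,m)}$. The definition \eqref{GS} also gives the usual recursion $D=\omega_{(i,m+1)}+\max(B,C)$: split the supremum according to whether the path leaves level $m$ at $z=i$ or at some $z\le i-1$.

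Applying \eqref{def_IJ} with $\mathbf z=(i-1,m)$, the dual weight is $\wc\omega_{(i,m)}=(B-A)\wedge(C-A)$, and the new level-$(m+1)$ increment is $I^{\textup{S},m}_{(i,m+1)}=D-C$. Hence
\[
\wc\omega_{(i,m)}+I^{\textup{S},m}_{(i,m+1)}=\min(B,C)-A+\omega_{(i,m+1)}+\max(B,C)-C=B-A+\omega_{(i,m+1)}=\omega_{(i,m)}+\omega_{(i,m+1)},
\]
which is exactly the required unit-increment identity for $\sigma^m$.

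For $\sigma_m$ I would argue in the same way: the map is the $180^\circ$ rotation of $\sigma^m$, so the same corner computation applies to $G^{\textup{N}}$ with its increments and dual weights $\widehat\omega$. Alternatively, suppose $\textup{\texttt{slope}}_m(h)<\textup{\texttt{slope}}_{m+1}(h)$ and set $g=\sigma_m h$. By the slope swap (the analogue of \eqref{swappp} for $\sigma_m$), $g$ satisfies $\textup{\texttt{slope}}_m(g)>\textup{\texttt{slope}}_{m+1}(g)$, and Proposition~\ref{q_inverse} gives $\sigma^m g=h$. Applying the $\sigma^m$ case to $g$ then yields the claim.

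The only delicate point is bookkeeping: matching the shifted indexing of $\wc\omega$ (indexed by $\mathbf z+\mathbf e_1$) with the correct corner of the unit square, and justifying the recursion for $D$ via finiteness from Proposition~\ref{stab2}.
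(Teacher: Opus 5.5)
Your proposal is correct and is essentially the paper's proof: you reduce to levels $m,m+1$, telescope over unit increments, and verify the one-corner identity $\wc\omega_{(i,m)}+I^{\textup{S},m}_{(i,m+1)}=\omega_{(i,m)}+\omega_{(i,m+1)}$, which the paper obtains from the corner-flipping relation \eqref{flip} just as your $\min+\max$ computation does. Your second route for $\sigma_m$ (apply the $\sigma^m$ case to $\sigma_m h$, then invoke Proposition~\ref{q_inverse}) is a valid shortcut; the paper simply says that argument is analogous.
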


\begin{proof}
We will prove the identity for the upward queueing map $\sigma^m$; the argument for the downward queueing map $\sigma_m$ is analogous.
Without loss of generality, assume $y<x$ (otherwise multiply all terms in \eqref{con_int_mass} by $-1$).

We begin by recalling the well-known ``corner-flipping'' relation
\begin{align}\label{flip}
I^{\textup{S},m}_{\mathbf x} = \omega_{\mathbf x} + (I^{\textup{S},m}_{\mathbf x - \mathbf e_2} - J^{\textup{S},m}_{\mathbf x - \mathbf e_1})^+
\qquad \text{and} \qquad
J^{\textup{S},m}_{\mathbf x} = \omega_{\mathbf x} + (I^{\textup{S},m}_{\mathbf x - \mathbf e_2} - J^{\textup{S},m}_{\mathbf x - \mathbf e_1})^-,
\end{align}
which follows from
\begin{align*}
I^{\textup{S},m}_{\mathbf x}
= G^{\textup{S}}_{m,\mathbf x} - G^{\textup{S}}_{x, \mathbf x - \mathbf e_1} 
&= \omega_{\mathbf x} + \max\Big\{G^{\textup{S}}_{m, \mathbf x - \mathbf e_1},G^{\textup{S}}_{m, \mathbf x - \mathbf e_2}\Big\} - G^{\textup{S}}_{m, \mathbf x - \mathbf e_1}\\
&= \omega_{\mathbf x} + \max\Big\{0, G^{\textup{S}}_{m,\mathbf x - \mathbf e_2} - G^{\textup{S}}_{m, \mathbf x - \mathbf e_1}\Big\} = \omega_{\mathbf x} + (I^{\textup{S},m}_{\mathbf x - \mathbf e_2} - J^{\textup{S},m}_{\mathbf x - \mathbf e_1})^+,
\end{align*}
and a similar computation yields the identity for $J^{\textup{S},m}_{\mathbf x}$.

Next, we note the general fact that
\begin{equation}\label{2equal}
\wc \omega_{\mathbf x - \mathbf e_2} + I^{\textup{S},m}_{\mathbf x} 
\stackref{def_IJ,flip}{=} \Big[I^{\textup{S},m}_{\mathbf x - \mathbf e_2} \wedge J^{\textup{S},m}_{\mathbf x - \mathbf e_1}\Big]  + \Big[\omega_{\mathbf x} + (I^{\textup{S},m}_{\mathbf x - \mathbf e_2} -J^{\textup{S},m}_{\mathbf x - \mathbf e_1})^+\Big] =\omega_{\mathbf x} + I^{\textup{S},m}_{\mathbf x - \mathbf e_2}.
\end{equation}
We now have
\begin{align*}
&h^m(x) - h^m(y) + h^{m+1}(x) - h^{m+1}(y) \\
&\stackref{defh}{=}\sum_{j=y+1}^x\big[\omega_{(j,m)} + \omega_{(j, m+1)}\big] 
=\sum_{j=y+1}^x \big[\omega_{(j,m+1)} + I^{\textup{S},m}_{(j, m)}\big] 
\stackref{2equal}{=} \sum_{j=y+1}^x \big[\wc{\omega}_{(j,m)} + I^{\textup{S},m}_{(j, m+1)}\big] \\
&\stackref{qmap}{=} (\sigma^m h)^m(x) -  (\sigma^m h)^m(y)+  (\sigma^m h)^{m+1}(x) -  (\sigma^{m} h)^{m+1}(y).
\end{align*}
Finally, although the summation in the proposition ranges over levels from $k$ to $n$, the operator $\sigma^m$ affects only $h^m$ and $h^{m+1}$. This observation completes the proof of the proposition.
\end{proof}

The next proposition shows that the queueing map preserves last-passage values. 
This was previously observed independently in \cite{fan_seppalainen20} for discrete LPP and \cite{KPZ_DL} for Brownian LPP. In \cite{fan_seppalainen20}, the isometry was used to help identify the joint distribution of the Busemann process in the exponential case. 
In \cite{KPZ_DL}, the isometry was used to help construct the directed landscape, which is the (conjecturally) universal scaling limit of ($1+1$)-dimensional KPZ models. 
Recalling the notation $G_{\mathbf{x}, \mathbf{y}}(h)$ introduced in \eqref{defGh}, we have the following proposition.

\begin{proposition}[{Isometric property, \cite[Lemma 4.3]{fan_seppalainen20}}]\label{isometry}
For any $k\leq m <  n$ and $a, b \in \mathbb{Z}$,
$$G_{(a,k), (b,n)}(h) = G_{(a,k), (b,n)}(\sigma^m h)=G_{(a,k), (b,n)}(\sigma_{m} h).$$
As a consequence, for $k< m <n$, 
\eeq{ \label{isometry_bdy}
G^\textup{S}_{k, (b,n)}(h) = G^\textup{S}_{k, (b,n)}(\sigma^m h) = G^\textup{S}_{k, (b,n)}(\sigma_m h).
}
\end{proposition}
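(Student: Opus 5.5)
The plan is to localize the statement to the two levels that $\sigma^m$ actually changes, and to prove a two-level isometry directly from the queueing formulas. The case $\sigma_m$ follows from the case $\sigma^m$ by the $180^\circ$ rotation that identifies $G^{\textup{N}}$ with $G^{\textup{S}}$ and $\sigma_m$ with $\sigma^m$. So I will only treat $\sigma^m$, and only the case $\textup{\texttt{slope}}_m(h)>\textup{\texttt{slope}}_{m+1}(h)$; otherwise $\sigma^m h=h$ and there is nothing to prove.

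\emph{Step 1 (reduction to two levels).} Since $k\le m<m+1\le n$, every up-right path from $(a,k)$ to $(b,n)$ enters level $m$ at some column $x$ and leaves level $m+1$ at some column $y\ge x$. Decomposing at these columns gives
\[
G_{(a,k),(b,n)}(h)=\max_{x\le y}\Big\{G_{(a,k),(x,m-1)}(h)+G_{(x,m),(y,m+1)}(h)+G_{(y,m+2),(b,n)}(h)\Big\}.
\]
When $k=m$ the first term is replaced by forcing $x=a$, and when $n=m+1$ the last term is replaced by forcing $y=b$. The first and last terms only involve levels other than $m,m+1$, so they are unchanged by $\sigma^m$. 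It therefore suffices to prove the two-level identity
\[
L(x,y;h):=G_{(x,m),(y,m+1)}(h)=G_{(x,m),(y,m+1)}(\sigma^m h)\qquad\text{for all }x\le y .
\]
By \eqref{defGh}, $L(x,y;h)=\max_{x\le z\le y}\{h^m(z)-h^m(x-1)+h^{m+1}(y)-h^{m+1}(z-1)\}$.

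\emph{Step 2 (rewrite the new weights).} Set $D(z)=G^{\textup{S}}_{m,(z,m+1)}(h)=\sup_{w\le z}\{h^m(w)+h^{m+1}(z)-h^{m+1}(w-1)\}$, which is finite by Proposition~\ref{stab2}. By \eqref{qmap}, the level-$(m+1)$ increments of $\sigma^m h$ are the increments of $D$. By Proposition~\ref{sumeq}, the level-$m$ increments of $\sigma^m h$ over $(x-1,z]$ equal the increments of $S:=h^m+h^{m+1}$ minus those of $D$. Substituting into the formula for $L$ gives
\[
L(x,y;\sigma^m h)=\max_{x\le z\le y}\Big\{S(z)-S(x-1)-D(z)+D(x-1)+D(y)-D(z-1)\Big\}.
\]
Note that $D(z)-D(z-1)=I^{\textup{S},m}_{(z,m+1)}=\omega_{(z,m+1)}+(\cdots)^+$ by \eqref{flip}.

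\emph{Step 3 (the combinatorial identity).} This is the main obstacle: comparing the maximum in Step 2 with $L(x,y;h)$. The key dichotomy is whether the supremum defining $D(y)$ is attained at some $w\ge x$ or at some $w<x$.

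For the bound "$\le$" in the first case, I take the maximizer $z=w^*\in[x,y]$ of $D(y)$. Then $D(y)$ equals the path value from $(w^*,m)$, and $D(x-1)$, $D(w^*)$, $D(w^*-1)$ are each bounded below by explicit path values. These bounds should telescope to show that the $z=w^*$ term in $L(x,y;h)$ dominates every term of the Step 2 maximum. The case $w^*<x$ is handled similarly with $z=x$.

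For the reverse bound "$\ge$", I choose $z$ to be the last column in $[x,y]$ at which the horizontal part of the optimal $D$-path "resets", meaning $I^{\textup{S},m}_{(z,m+1)}=\omega_{(z,m+1)}$; if there is no such column, I take $z=x$. Along the stretch $(z,y]$ the increment of $D$ equals the increment of $h^{m+1}$ plus a term that exactly cancels against $S-D$.

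I expect to get the two inequalities by checking the recursion in $y$ using \eqref{flip} and \eqref{2equal}, inducting on $y-x$ from the base case $y=x$. The base case reduces to \eqref{2equal} at a single corner.

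\emph{Step 4 (consequence).} For \eqref{isometry_bdy}, with $k<m$ the boundary level $h^k$ is untouched. Moreover, for every $z$ the inner value $G_{(z,k+1),(b,n)}$ is invariant by the first part, because $k+1\le m<n$. Taking the supremum over $z\le b$ in \eqref{GS} gives the claim, and finiteness is inherited.
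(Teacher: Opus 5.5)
Your Steps~1, 2 and~4 are correct: localizing to levels $m,m+1$, rewriting $\sigma^m h$ through $D$ and Proposition~\ref{sumeq}, and passing to $G^{\textup{S}}$. The rotation reduction for $\sigma_m$ is also fine. The paper gives no argument for this proposition; it imports it from \cite[Lemma 4.3]{fan_seppalainen20}.

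The gap is Step~3, which carries all the content. It is left as a plan, and the witness you name for ``$\ge$'' is wrong. Put $u(w)=h^m(w)-h^{m+1}(w-1)$ and $M(z)=\sup_{w\le z}u(w)$. Then $D=M+h^{m+1}$ and $I^{\textup{S},m}_{(z,m+1)}=\omega_{(z,m+1)}+(u(z)-M(z-1))^+$, and your Step~2 formula becomes
\[
L(x,y;\sigma^m h)-L(x,y;h)=M(y)+M(x-1)-U+\max_{x\le z\le y}\big\{u(z)-M(z)-M(z-1)\big\},\qquad U:=\max_{x\le z\le y}u(z).
\]
Take $y>x$, $u(x)>M(x-1)$ and $u(z)<u(x)$ for $x<z\le y$. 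Then within $[x,y]$ one has $I^{\textup{S},m}_{(z,m+1)}=\omega_{(z,m+1)}$ exactly for $z\in(x,y]$. Your rule therefore selects $z=y$, whose term falls short of $L(x,y;h)$ by $2u(x)-u(y)-M(x-1)>0$.

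Reading ``reset'' as the last record $w^*$ (the maximizer of $D(y)$) fails too. With $y=x+1$ and $M(x-1)<u(x)<u(x+1)$, the term at $z=w^*=x+1$ falls short by $u(x)-M(x-1)$.

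The fallback of inducting on $y-x$ is not an argument as it stands. $L(x,y+1;\cdot)$ is built from $L(x,y;\cdot)$ together with the new level-$(m+1)$ weight ($\omega$ versus $I$) and the level-$m$ sum over $[x,y+1]$. Both of these differ between $h$ and $\sigma^m h$, so equality of $L$ alone is not an inductive invariant.

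Your ``$\le$'' sketch is salvageable but targets the wrong quantities. $D(x-1)$ enters the $z$-term with a plus sign, so a lower bound on it does not help. What is needed is $D(z)\ge h^m(z)+\omega_{(z,m+1)}$ and $D(z-1)\ge D(x-1)+h^{m+1}(z-1)-h^{m+1}(x-1)$ for every $z\in[x,y]$, not bounds at $w^*$.

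The displayed identity closes the argument directly.
\begin{itemize}
\item If $U\le M(x-1)$, then $M\equiv M(x-1)$ on $[x-1,y]$ and the right-hand side is $0$.
\item If $U>M(x-1)$, then $M(y)=U$. Each bracket is at most $-M(x-1)$, because $u(z)\le M(z)$ and $M(z-1)\ge M(x-1)$. Equality holds at the \emph{first} $z\in[x,y]$ with $u(z)>M(x-1)$, since there $M(z-1)=M(x-1)$ and $M(z)=u(z)$.
\end{itemize}
In your language, the correct witness is the first column of $[x,y]$ with $I^{\textup{S},m}_{(z,m+1)}>\omega_{(z,m+1)}$. This is where the geodesic of $D(z)$ first enters level $m+1$ vertically inside $[x,y]$ --- the opposite of ``last'' and ``$=$''. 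With this, the two-level identity holds for all $x\le y$, and the rest of your proof goes through unchanged.
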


\subsection{Braid relation of the queueing maps}

We start this section by giving an equivalent definition for the upward queueing map $\sigma^m$ from \eqref{qmap}. This will be used to prove the braid relation stated further below.

\begin{lemma}[Queuing map as a Pitman transform]\label{pitdef}
Suppose $\textup{\texttt{slope}}_m (h)> \textup{\texttt{slope}}_{m+1} (h)$ and recall $\sigma^m$  defined in \eqref{qmap}.
We then have the following identities:
\begin{align}
(\sigma^{m} h)^{m+1} (x) &= h^{m+1}(x) + \sup_{z\leq x} \{h^{m}(z) - h^{m+1}(z-1)\} - \sup_{z\leq 0} \{h^{m}(z) - h^{m+1}(z-1)\},  \label{altd1}\\
(\sigma^{m} h)^{m} (x) &= h^{m}(x) - \sup_{z\leq x} \{h^{m}(z) - h^{m+1}(z-1)\} +  \sup_{z\leq 0} \{h^{m}(z) - h^{m+1}(z-1)\}.\label{altd2}
\end{align}
\end{lemma}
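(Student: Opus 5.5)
We prove the lemma by reducing everything to the level-$(m+1)$ increments $I^{\textup{S},m}_{(i,m+1)}(h)$, which define $(\sigma^m h)^{m+1}$ by \eqref{qmap}, and then obtaining the level-$m$ identity from conservation of mass.

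\emph{Step 1: the last-passage value at level $m+1$.} By \eqref{GS}, the paths from level $m$ to $(x,m+1)$ run along level $m$ to some $(z,m)$ with $z\le x$, step up, and then run horizontally from $(z,m+1)$ to $(x,m+1)$. Using \eqref{defGh}, the single-level passage value is $G_{(z,m+1),(x,m+1)}(h) = h^{m+1}(x)-h^{m+1}(z-1)$. Therefore
\[
G^{\textup{S}}_{m,(x,m+1)}(h) = h^{m+1}(x) + \sup_{z\le x}\{h^m(z)-h^{m+1}(z-1)\}.
\]
This supremum is finite because of the slope assumption $\textup{\texttt{slope}}_m(h)>\textup{\texttt{slope}}_{m+1}(h)$. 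Indeed, as $z\to-\infty$ the quantity $h^m(z)-h^{m+1}(z-1)$ behaves like $z\,(\textup{\texttt{slope}}_m-\textup{\texttt{slope}}_{m+1})\to-\infty$; this is also Proposition~\ref{stab2}.

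\emph{Step 2: the level-$(m+1)$ identity \eqref{altd1}.} By \eqref{qmap} and \eqref{def_IJ}, the increments of $(\sigma^m h)^{m+1}$ are the differences $G^{\textup{S}}_{m,(i,m+1)}-G^{\textup{S}}_{m,(i-1,m+1)}$. Together with the normalization $(\sigma^m h)^{m+1}(0)=0$, this gives
\[
(\sigma^m h)^{m+1}(x) = G^{\textup{S}}_{m,(x,m+1)}(h)-G^{\textup{S}}_{m,(0,m+1)}(h).
\]
This is the identity stated just before Proposition~\ref{LPP_queue}. Substituting Step 1 and using $h^{m+1}(0)=0$ yields \eqref{altd1}.

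\emph{Step 3: the level-$m$ identity \eqref{altd2}.} We apply Proposition~\ref{sumeq} with $k=m$, $n=m+1$ and $y=0$. Since all four functions vanish at $0$, it gives
\[
(\sigma^m h)^m(x) = h^m(x)+h^{m+1}(x)-(\sigma^m h)^{m+1}(x).
\]
Inserting \eqref{altd1} yields \eqref{altd2} immediately.

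The only step needing care is Step 1, namely the index convention. In \eqref{defGh} the horizontal contribution on a level starts at $x_j-1$, so entering level $m+1$ at column $z$ produces $-h^{m+1}(z-1)$. On level $m$, the path starts at $(0,m)$ and ends at $(z,m)$, contributing $h^m(z)$. Beyond this bookkeeping the argument is direct, and I expect no real obstacle.
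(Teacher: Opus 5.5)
Your proposal is correct and follows essentially the same route as the paper's proof. You write $(\sigma^m h)^{m+1}(x) = G^{\textup{S}}_{m,(x,m+1)}(h) - G^{\textup{S}}_{m,(0,m+1)}(h)$, expand both terms via \eqref{GS} to get \eqref{altd1}, and then deduce \eqref{altd2} from Proposition~\ref{sumeq} on levels $m$ and $m+1$; your remark that the supremum is finite under the slope hypothesis is a harmless addition that the paper leaves implicit.
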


\begin{proof}
To prove \eqref{altd1}, we write
\begin{align*}
(\sigma^{m} h)^{m+1}(x)
&\stackref{qmap}{=} G^\textup{S}_{m, (x, m+1)}(h) - G^\textup{S}_{m, (0, m+1)} (h) \\
&\stackref{GS}{=} \Big[h^{m+1}(x) + \sup_{z\leq x} \{h^{m}(z) - h^{m+1}(z-1)\}\Big]
-\Big[0 +  \sup_{z\leq 0} \{h^{m}(z) - h^{m+1}(z-1)\}\Big].
\end{align*}
To prove \eqref{altd2}, note that
$h^m(x) + h^{m+1}(x) = (\sigma^m h)^m(x) + (\sigma^m h)^{m+1}(x)$ by Proposition \ref{sumeq}, so 
\begin{equation}\label{alt_def1}(\sigma^m h)^m(x) = h^m(x) + h^{m+1}(x) - (\sigma^m h)^{m+1}(x).\end{equation}
Inserting the rewriting of $(\sigma^m h)^{m+1}(x)$ from \eqref{altd1} into \eqref{alt_def1}, we obtain \eqref{altd2}.
\end{proof}

The braid relation for the Pitman transform was first established in \cite{braid1}. Our proof is adapted from the argument of \cite[Lemma 4.4]{braid2}. For the reader's convenience, we include the details since our setting differs from theirs: the (original) weights $\omega_\mathbf{z}$ in our model are attached to vertices rather than edges, and our Pitman transform is defined in a centered form.

\begin{proposition}[Braid relation]\label{braid}
The following properties hold:
\begin{enumerate}[label=\textup{(\roman*)}]
\item \label{braiditem1} For $i, j \in \mathbb{Z}$ such that $|i-j|\geq 2$, we have 
$$\sigma^i \sigma^jh = \sigma^j \sigma^ih \qquad \textup{and} \qquad \sigma_i \sigma_jh = \sigma_j \sigma_ih.$$

\item \label{braiditem2} For each $i \in \mathbb{Z}$, we have
$$\sigma^i \sigma^{i+1}\sigma^i h = \sigma^{i+1} \sigma^{i}\sigma^{i+1} h \qquad \textup{and}\qquad \sigma_i \sigma_{i+1}\sigma_i h = \sigma_{i+1} \sigma_{i}\sigma_{i+1} h.$$
\end{enumerate}
\end{proposition}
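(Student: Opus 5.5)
The plan is to reduce everything to three levels and then identify the resulting three functions level by level, using the representation, isometry and conservation results already available rather than raw Pitman-transform algebra wherever possible.

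For part~\ref{braiditem1}, the argument is locality. The map $\sigma^i$ changes only levels $i,i+1$. Whether it acts nontrivially depends only on $\textup{\texttt{slope}}_i,\textup{\texttt{slope}}_{i+1}$, and its output on those levels is a function of $(h^i,h^{i+1})$ alone, by Lemma~\ref{pitdef}. When $|i-j|\ge2$ the level pairs $\{i,i+1\}$ and $\{j,j+1\}$ are disjoint. So applying $\sigma^j$ changes neither the input nor the slope test of $\sigma^i$, and vice versa, and the two maps commute. The downward maps are handled identically, or by $180^\circ$ rotation.

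For part~\ref{braiditem2}, only levels $i,i+1,i+2$ matter. I would first do a slope bookkeeping. Write $(s_0,s_1,s_2)$ for the slopes of $h^i,h^{i+1},h^{i+2}$. By \eqref{swappp}, each $\sigma^\cdot$ either acts as the identity or swaps two adjacent slopes, the latter exactly when the lower slope is strictly larger. Running through the orderings of $(s_0,s_1,s_2)$, ties included, both sides perform a bubble sort. In every case other than $s_0>s_1>s_2$, one of the three maps on each side is the identity, and both sides reduce to the same word. For example, if $s_0<s_1$, $s_1>s_2$ and $s_0>s_2$, both sides equal $\sigma^i\sigma^{i+1}h$. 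So the remaining case is the strictly decreasing one, where all six applications act nontrivially.

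In that case, write $L=\sigma^i\sigma^{i+1}\sigma^ih$ and $R=\sigma^{i+1}\sigma^i\sigma^{i+1}h$. I would identify the top, bottom and middle levels in turn.

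\emph{Top level.} We have $L^{i+2}=(\sigma^{i+1}\sigma^ih)^{i+2}$, which by \eqref{rep_south} is the centered $G^{\textup{S}}_{i,(\cdot,i+2)}(h)$. Similarly $R^{i+2}$ is the centered $G^{\textup{S}}_{i,(\cdot,i+2)}(\sigma^{i+1}h)$. By \eqref{isometry_bdy} with $k=i<m=i+1<n=i+2$, this equals $G^{\textup{S}}_{i,(\cdot,i+2)}(h)$, so $L^{i+2}=R^{i+2}$.

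\emph{Bottom level.} We have $R^i=(\sigma^i\sigma^{i+1}h)^i$, since the last map $\sigma^{i+1}$ does not touch level $i$. I would compute both $R^i$ and $L^i$ explicitly from \eqref{altd2}. Each should come out as $h^i(x)$ minus a centered two-step supremum of the form
\[
\sup_{z_1\le z_2\le x}\{h^i(z_1)-h^{i+1}(z_1-1)+h^{i+1}(z_2)-h^{i+2}(z_2-1)\},
\]
and the two expressions are then matched. An alternative that avoids the calculation is to run the top-level argument in rotated form, using Proposition~\ref{q_inverse} to rewrite both sides through downward maps and then applying \eqref{rep_north} together with the isometry for $\sigma_m$.

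\emph{Middle level.} Once the top and bottom levels agree, Proposition~\ref{sumeq} gives that $L$ and $R$ have the same value of $\sum_{\ell=i}^{i+2}(\cdot)^\ell$, since both equal the sum for $h$. Hence $L^{i+1}=R^{i+1}$, and all other levels are untouched.

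The downward statement follows from the upward one by $180^\circ$ rotation, or from Proposition~\ref{q_inverse}.

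I expect the bottom-level identity to be the main obstacle. There the suprema must be nested correctly, including the centering constants at $0$ and the index shifts $z-1$ coming from vertex weights. If the direct computation becomes messy, I would fall back on the rotated LPP argument.
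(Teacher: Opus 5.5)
Your reduction is sound and matches the paper's: part (i) by locality, the slope bookkeeping reducing (ii) to $s_0>s_1>s_2$, the top level via \eqref{rep_south} and \eqref{isometry_bdy} (the paper does the same thing by explicit expansion), and the middle level from Proposition~\ref{sumeq}. The gap is the bottom level. It is the only genuinely nontrivial step, and neither of your two plans handles it.

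The form you predict is wrong. Your displayed supremum equals $G^{\textup{S}}_{i,(x,i+2)}(h)-h^{i+2}(x)$, so it is the top-level quantity. Indeed, if $L^i$ had this form, then $L^i+L^{i+2}=h^i+h^{i+2}$, and conservation would force $L^{i+1}=h^{i+1}$, which is false in general. Expanding $R^i=(\sigma^i\sigma^{i+1}h)^i$ by \eqref{altd2} twice actually gives
\[
R^i(x)=h^i(x)-\sup_{z_2<z_1\le x}\bigl\{h^i(z_1)-h^{i+1}(z_1-1)+h^{i+1}(z_2)-h^{i+2}(z_2-1)\bigr\}+\bigl(\text{same supremum with }x=0\bigr),
\]
with the index order reversed.

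Expanding $L^i$ the same way produces nested suprema in which the first Pitman supremum $\sup_{z\le\cdot}\{h^i(z)-h^{i+1}(z-1)\}$ enters with both signs. Showing that this collapses to $R^i$ is the whole content of the bottom-level identity, and ``then match'' gives no mechanism for it.

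Your fallback proves a different statement. A $180^\circ$ rotation exchanges upward with downward maps and top with bottom. So the rotated top-level argument, via \eqref{rep_north} and Proposition~\ref{isometry} for $\sigma_m$, yields $(\sigma_{i+1}\sigma_i\sigma_{i+1}g)^i=(\sigma_i\sigma_{i+1}\sigma_ig)^i$, which is the bottom level of the downward relation. Transferring this to upward maps through Proposition~\ref{q_inverse} would require the full downward braid relation. Its top level is the rotation of exactly the identity you are missing, so the argument is circular.

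The idea you need, which is the paper's, is to rewrite $R^i$ so that it depends on $h$ only through quantities that $\sigma^i$ leaves unchanged. In the display above, do the following:
\begin{enumerate}
\item Shift $z_2\mapsto z_2-1$.
\item Write $h^i(x)-h^i(z_1)+h^{i+1}(z_1-1)-h^{i+1}(z_2-1)$ as the increment of $h^i+h^{i+1}$ over $(z_2-1,x]$, minus the weight of the two-level path $(z_2,i)\to(z_1,i)\to(z_1,i+1)\to(x,i+1)$.
\item Optimize over $z_1$.
\end{enumerate}
This gives
\[
R^i(x)=\inf_{z\le x}\bigl\{h^{i+2}(z-2)+(h^i+h^{i+1})(x)-(h^i+h^{i+1})(z-1)-G_{(z,i),(x,i+1)}(h)\bigr\}-\bigl(\text{same infimum with }x=0\bigr).
\]
Each ingredient is invariant under $\sigma^i$:
\begin{itemize}
\item level $i+2$ is untouched by $\sigma^i$;
\item increments of $h^i+h^{i+1}$ are preserved by Proposition~\ref{sumeq};
\item $G_{(z,i),(x,i+1)}$ is preserved by Proposition~\ref{isometry}.
\end{itemize}
Both maps in $(\sigma^i\sigma^{i+1}\,\cdot\,)^i$ still act nontrivially on $\sigma^ih$, so the formula remains valid there. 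Substituting $\sigma^ih$ for $h$ therefore gives $R^i=(\sigma^i\sigma^{i+1}\sigma^ih)^i=L^i$.
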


\begin{proof}
We will work with $\sigma^i$, and the result for $\sigma_i$ follows by the equivalence of their definitions. 
Statement \ref{braiditem1} is clear because $\sigma^i$ and $\sigma^j$ act on disjoint levels. 
So we turn our attention to \ref{braiditem2}. 
Without loss of generality, assume $i=1$ and consider $h = (h^1, h^2, h^3)$. In addition, define 
$$f = \sigma^1 \sigma^{2}\sigma^1 h \qquad \textup{and} \qquad g = \sigma^2 \sigma^{1}\sigma^2 h.$$
To start, Proposition \ref{sumeq} gives
$$f^1(x) + f^2(x) + f^3(x) = g^1(x) + g^2(x) + g^3(x) \quad \text{for each } x\in \mathbb{Z}.$$
Therefore, it suffices to show 
\begin{align}
f^1 = (\sigma^1 \sigma^{2}\sigma^1 h)^1 &= (\sigma^{2} \sigma^{1}\sigma^{2} h)^1 = g^1,\label{lvlb}\\
f^3 = (\sigma^1 \sigma^{2}\sigma^1 h)^3 &= (\sigma^{2} \sigma^{1}\sigma^{2} h)^3 = g^3.\label{lvlt}
\end{align}

First consider the case
$\textup{\texttt{slope}}_{1}(h) > \textup{\texttt{slope}}_{2}(h) >\textup{\texttt{slope}}_{3}(h)$.
In this case, all queueing maps act non-trivially. 
Looking at \eqref{lvlt}, we will show 
$$f^3 = (\sigma^2 \sigma^1 h)^3 = g^3.$$ 
In the display above, the equality $f^3 = (\sigma^2 \sigma^1 h)^3$ is clear because $(\sigma^2 \sigma^1 h)^3$ remains unchanged under the map $\sigma^1$. To show $(\sigma^2 \sigma^1 h)^3 = g^3$, we will use the definition in Lemma \ref{pitdef}. We start by rewriting $(\sigma^2 \sigma^1 h)^3(x)$ as follows:
\begin{align*}
& (\sigma^2 \sigma^1 h)^3(x)\\
&= h^3(x) + \sup_{z_1\leq x}\Big\{ h^2(z_1) + \sup_{z_2 \leq z_1}\{h^1(z_2) - h^2( z_2-1)\} - \sup_{z_3 \leq 0}\{h^1(z_3) - h^2(z_3-1) \} - h^3(z_1-1)\Big\}\\
& \qquad \quad -  \sup_{z_1\leq 0}\Big\{ h^2(z_1) + \sup_{z_2 \leq z_1}\{h^1(z_2) - h^2( z_2-1)\} - \sup_{z_3 \leq 0}\{h^1(z_3) - h^2(z_3-1) \} - h^3(z_1-1)\Big\}\\
& = h^3(x) + \sup_{z_1\leq x}\Big\{ h^2(z_1) + \sup_{z_2 \leq z_1}\{h^1(z_2) - h^2( z_2-1)\}  - h^3(z_1-1)\Big\}\\
& \qquad \qquad -  \sup_{z_1\leq 0}\Big\{ h^2(z_1) + \sup_{z_2 \leq z_1}\{h^1(z_2) - h^2( z_2-1)\}  - h^3(z_1-1)\Big\}\\
& = \sup_{z_2\leq z_1\leq x}\Big\{ h^1(z_2)  + [h^2(z_1) - h^2(z_2-1)] + [h^3(x)- h^3(z_1-1)]\Big\}\\
& \qquad \qquad - \sup_{z_2\leq z_1\leq 0}\Big\{ h^1(z_2)  + [h^2(z_1) - h^2(z_2-1)] + [h^3(0)- h^3(z_1-1)]\Big\}\\
& = \sup_{z_2\leq x}\Big\{ h^1(z_2)  + G_{(z_2, 2), (x, 3)}(h)\Big\} - \sup_{z_2\leq 0}\Big\{ h^1(z_2)  + G_{(z_2, 2), (0, 3)}(h)\Big\}.
\end{align*}
Now, from the last line of calculation above, by Proposition \ref{isometry}, it holds that 
\begin{align*}
& \sup_{z_2\leq x}\Big\{ h^1(z_2)  + G_{(z_2, 2), (x, 3)}(h)\Big\} - \sup_{z_2\leq 0}\Big\{ h^1(z_2)  + G_{(z_2, 2), (0, 3)}(h)\Big\}\\
& = \sup_{z_2\leq x}\Big\{ \sigma^2 h^1(z_2)  + G_{(z_2, 2), (x, 3)}(\sigma^2h)\Big\} - \sup_{z_2\leq 0}\Big\{ \sigma^2h^1(z_2)  + G_{(z_2, 2), (0, 3)}(\sigma^2 h)\Big\}.
\end{align*}
In words, this states that we can replace the $h$ environment in the expression $(\sigma^2 \sigma^1 h)^3$ with $\sigma^2 h$. This gives us the desired equality 
$$(\sigma^2 \sigma^1 h)^3 = (\sigma^2 \sigma^1 \sigma^2 h)^3= g^3.$$

Next, looking at \eqref{lvlb}, we will show 
$$f^1 = (\sigma^1 \sigma^2 h)^1 = g^1.$$ 
In the display above, the equality $(\sigma^1 \sigma^2 h)^1 = g^1$ is clear because $(\sigma^1 \sigma^2 h)^1$ remains unchanged under the map $\sigma^2$, so we just need to prove $f^1 = (\sigma^1\sigma^2h)^1$. 
For the last equality in the calculation below, we use Propositions \ref{sumeq} and \ref{isometry}:
\begin{align*}
& \; (\sigma^1 \sigma^2 h)^1(x)\\
&= h^1(x) - \sup_{z_1\leq x}\{  h^1(z_1) - (h^2(z_1-1) - \sup_{z_2 \leq z_1-1}\{h^2(z_2) - h^3( z_2-1)\} + \sup_{z_3 \leq 0}\{h^2(z_3) - h^3(z_3-1) \})\}\\
& \qquad \quad +  \sup_{z_1\leq 0}\{ h^1(z_1) - (h^2(z_1-1) - \sup_{z_2 \leq z_1-1}\{h^2(z_2) - h^3( z_2-1)\} + \sup_{z_3 \leq 0}\{h^2(z_3) - h^3(z_3-1) \})\}\\
& = h^1(x) - \sup_{z_1\leq x}\Big\{ h^1(z_1) -h^2(z_1-1) + \sup_{z_2 \leq z_1-1}\{h^2(z_2) - h^3( z_2-1)\} \Big\}\\
& \qquad \qquad +  \sup_{z_1\leq 0}\Big\{ h^1(z_1) - h^2(z_1-1) + \sup_{z_2 \leq z_1-1}\{h^2(z_2) - h^3( z_2-1)\} \Big\}\\
& = - \sup_{z_2< z_1\leq x}\Big\{ - h^3(z_2-1)  - ([h^2(z_1-1) - h^2(z_2)] + [h^1(x) - h^1(z_1) ])\Big\}\\
& \qquad \qquad + \sup_{z_2< z_1\leq 0}\Big\{ - h^3(z_2-1)  - ([h^2(z_1-1) - h^2(z_2)] + [h^1(0) - h^1(z_1) ])\Big\}\\
& = - \sup_{z_2\leq z_1\leq x}\Big\{ - h^3(z_2-2)  - ([h^2(z_1-1) - h^2(z_2-1)] + [h^1(x) - h^1(z_1) ])\Big\}\\
& \qquad \qquad + \sup_{z_2\leq z_1\leq 0}\Big\{ - h^3(z_2-2)  - [h^2(z_1-1) - h^2(z_2-1)] + [h^1(0) - h^1(z_1)])\Big\}\\
& = - \sup_{z_2\leq z_1\leq x}\Big\{ - h^3(z_2-2) - (h^2(x) - h^2(z_2-1) + h^1(x) - h^1(z_2-1) - \\
& \qquad \qquad\qquad \qquad \qquad \qquad \qquad \qquad  ([h^2(x) - h^2(z_1-1)] - [h^1(z_2-1)- h^1(z_1)]))\Big\}\\
& \qquad \qquad + \sup_{z_2\leq z_1\leq 0}\Big\{ - h^3(z_2-2) - (h^2(0) - h^2(z_2-1) + h^1(0) - h^1(z_2-1) - \\
& \qquad \qquad\qquad \qquad \qquad \qquad \qquad \qquad  ([h^2(0) - h^2(z_1-1)] - [h^1(z_2-1)- h^1(z_1)]))\Big\}\\
& = \inf_{z_2\leq z_1\leq x}\Big\{  h^3(z_2-2) + h^2(x) - h^2(z_2-1) + h^1(x) - h^1(z_2-1) - \\
& \qquad \qquad\qquad \qquad \qquad \qquad \qquad \qquad  ([h^2(x) - h^2(z_1-1)] - [h^1(z_2-1)- h^1(z_1)])\Big\}\\
& \qquad \qquad - \inf_{z_2\leq z_1\leq 0}\Big\{  h^3(z_2-2) + h^2(0) - h^2(z_2-1) + h^1(0) - h^1(z_2-1) - \\
& \qquad \qquad\qquad \qquad \qquad \qquad \qquad \qquad  ([h^2(0) - h^2(z_1-1)] - [h^1(z_2-1)- h^1(z_1)])\Big\}\\
& = \inf_{z_2\leq x}\Big\{  h^3(z_2-2) + h^2(x) - h^2(z_2-1) + h^1(x) - h^1(z_2-1) - G_{(z_2, 1), (x,2)}(h)\Big\}\\
& \qquad \qquad - \inf_{z_2\leq 0}\Big\{  h^3(z_2-2) + h^2(0) - h^2(z_2-1) + h^1(0) - h^1(z_2-1) -  G_{(z_2, 1), (0,2)}(h)\Big\}\\
& = \inf_{z_2\leq x}\Big\{  \sigma^1 h^3(z_2-2) + \sigma^1 h^2(x) - \sigma^1 h^2(z_2-1) + \sigma^1 h^1(x) - \sigma^1 h^1(z_2-1) - G_{(z_2, 1), (x,2)}(\sigma^1 h)\Big\}\\
& \qquad \qquad - \inf_{z_2\leq 0}\Big\{  \sigma^1 h^3(z_2-2) + \sigma^1 h^2(0) - \sigma^1 h^2(z_2-1) + \sigma^1 h^1(0) -\sigma^1 h^1(z_2-1) \\
& \qquad \qquad \qquad\qquad \qquad \qquad \qquad \qquad \qquad \qquad \qquad\qquad   \qquad \qquad \qquad -  G_{(z_2, 1), (0,2)}(\sigma^1 h)\Big\}.
\end{align*}
Again, our last equality above states that one can replace the environment $h$ in $(\sigma^1 \sigma^2  h)^1$ with $\sigma^1 h$, which gives us the desired equality 
$$(\sigma^1 \sigma^2  h)^1 = (\sigma^1 \sigma^2 \sigma^1 h)^1 = f^1.$$ 
With this, we have finished proving \ref{braiditem2} for the case when $\textup{\texttt{slope}}_{1}(h) > \textup{\texttt{slope}}_{2}(h) >\textup{\texttt{slope}}_{3}(h)$.

If $\textup{\texttt{slope}}_{1}(h) \leq \textup{\texttt{slope}}_{2}(h)$, then 
$$ \sigma^1\sigma^2\sigma^1 h = \sigma^1\sigma^2 h  = \sigma^2\sigma^1\sigma^2 h.$$
Similarly, if $ \textup{\texttt{slope}}_{2}(h) \leq \textup{\texttt{slope}}_{3}(h)$, then
$$ \sigma^1\sigma^2\sigma^1 h = \sigma^2\sigma^1 h  = \sigma^2\sigma^1\sigma^2 h.$$
This completes the proof.
\end{proof}

\subsection{Monotonicity of the queueing maps}\label{subsec_monotonicities}
This section recalls a straightforward but important monotonicity result.

\begin{proposition}[{Monotonicity of increments with respect to environment, \cite[Lemma B.1]{balazs_busani_seppalainen20}}]\label{inc_bdry}
Consider two environments $h$ and $\wt h$ that are identical except at level $m$, where instead
$$h^m(i) - h^m(i-1)  \leq \wt h^m(i) - \wt h^m(i-1) \quad \text{for all $i \in \mathbb{Z}$}.$$
For any $k \in \mathbb{Z}_{>m}$ and $i\in \mathbb{Z}$, we have 
$$I^{\textup{S}, m}_{(i,k)}(\wt h) \geq I^{\textup{S},  m}_{(i,k)}(h) \qquad \text{and} \qquad J^{\textup{S}, m}_{(i,k)}(\wt h) \leq J^{\textup{S},  m}_{(i,k)}(h).$$
In addition, for any $k \in \mathbb{Z}_{<m}$ and $i\in \mathbb{Z}$, we have 
$$I^{\textup{N}, m}_{(i,k)}(\wt h) \geq I^{\textup{N},  m}_{(i,k)}(h) \qquad \text{and} \qquad J^{\textup{N}, m}_{(i,k)}(\wt h) \leq J^{\textup{N},  m}_{(i,k)}(h).$$
\end{proposition}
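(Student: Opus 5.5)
We first reduce to a statement about south-boundary last-passage values, and then prove that statement by induction on the number of levels, using the corner-flipping recursion \eqref{flip}.

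Throughout, the environments $h$ and $\wt h$ agree except on level $m$. There the increments are ordered: $\omega_{(i,m)}\le\wt\omega_{(i,m)}$ for every $i$. For the south-boundary statement we will show that $G^{\textup{S}}_{m,(x,n)}(\wt h)-G^{\textup{S}}_{m,(x,n)}(h)$ is non-decreasing in $x$ for each fixed $n\ge m$, and non-increasing in $n$ for each fixed $x$. Taking differences in $x$ gives $I^{\textup{S},m}(\wt h)\ge I^{\textup{S},m}(h)$. Taking differences in $n$ gives $J^{\textup{S},m}(\wt h)\le J^{\textup{S},m}(h)$.

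We argue level by level.

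\emph{Level $m$.} Here $G^{\textup{S}}_{m,(x,m)}=h^m(x)$, so $I^{\textup{S},m}_{(i,m)}=\omega_{(i,m)}$. The boundary horizontal increments are therefore ordered by hypothesis. We take this as the base case. (Note that the proposition only asserts $k>m$.)

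\emph{Induction step.} Fix a level $k>m$. Assume the horizontal increments at level $k-1$ satisfy $\wt I_{(i,k-1)}\ge I_{(i,k-1)}$ for all $i$. We want to show $\wt I_{(i,k)}\ge I_{(i,k)}$ and $\wt J_{(i,k)}\le J_{(i,k)}$ for all $i$. The weights $\omega_{(i,k)}$ are identical in the two environments.

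The recursion \eqref{flip} reads
\[
I_{\mathbf x}=\omega_{\mathbf x}+(I_{\mathbf x-\mathbf e_2}-J_{\mathbf x-\mathbf e_1})^+,\qquad J_{\mathbf x}=\omega_{\mathbf x}+(I_{\mathbf x-\mathbf e_2}-J_{\mathbf x-\mathbf e_1})^-.
\]
This form is awkward for a monotonicity argument: it runs left to right along level $k$ and has no starting point at $-\infty$. It is cleaner to use the explicit formula
\[
J_{(x,k)}=\omega_{(x,k)}+\Bigl[\sup_{z\le x-1}\bigl\{G^{\textup{S}}_{m,(z,k-1)}-G^{\textup{S}}_{m,(x,k-1)}+\textstyle\sum_{j=z+1}^{x-1}\omega_{(j,k)}\bigr\}\Bigr]^+ .
\]
This holds because $G^{\textup{S}}_{m,(x,k)}=\omega_{(x,k)}+\sup_{z\le x}\{G^{\textup{S}}_{m,(z,k-1)}+\sum_{j=z+1}^{x-1}\omega_{(j,k)}\}$, read with the appropriate convention for $z=x$.

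In this formula, $G_{(z,k-1)}-G_{(x,k-1)}=-\sum_{j=z+1}^{x}I_{(j,k-1)}$. Increasing the level-$(k-1)$ increments therefore decreases every term inside the supremum, and hence decreases $J_{(x,k)}$. So $\wt J_{(x,k)}\le J_{(x,k)}$.

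For the horizontal increments we use the identity $I_{(x,k)}+J_{(x-1,k)}$ relation from \eqref{flip}, namely $I_{(x,k)}-\omega_{(x,k)}=(I_{(x,k-1)}-J_{(x-1,k)})^+$. Its right-hand side is increasing in $I_{(x,k-1)}$ and decreasing in $J_{(x-1,k)}$. Both inputs move in the favorable direction, so $\wt I_{(x,k)}\ge I_{(x,k)}$.

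Induction on $k$ then gives the south-boundary statement. Finiteness of all quantities is guaranteed by Proposition~\ref{stab2}. This holds for both environments, provided $\textup{\texttt{slope}}_m(h)$ exceeds the higher slopes; the statement implicitly assumes this, and increasing the level-$m$ increments preserves it.

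\emph{North-boundary statement.} This follows by the $180^\circ$ rotation that exchanges \eqref{GS} and \eqref{GN}. Under this rotation, the roles of $I$ and $J$ are preserved, up to the index shift already built into the definitions. Alternatively, one can redo the same induction downward from level $m$.

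The main obstacle is the $J$ step. The recursion \eqref{flip} has no initial condition at $-\infty$, which is why I go through the variational formula for $J$ instead. One must also check that the suprema are attained, or at least finite, so that the monotone comparison is legitimate. This again uses the slope condition from Proposition~\ref{stab2}.
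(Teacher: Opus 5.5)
Your proof is correct. The paper gives no argument of its own for Proposition~\ref{inc_bdry}; it imports the statement from \cite[Lemma B.1]{balazs_busani_seppalainen20}. Comparison lemmas of this type are most naturally proved in a quadrant with boundary weights on both axes, where \eqref{flip} can be run by induction outward from the corner. On the bi-infinite south boundary used here there is no corner to start from, so \eqref{flip} alone does not close the induction. Your level-by-level scheme handles exactly this point:
\begin{itemize}
\item first you get $J$ at level $k$ from the entire row of level-$(k-1)$ increments, via the variational formula;
\item then \eqref{flip} gives $I$ at level $k$ pointwise.
\end{itemize}
This makes the argument self-contained in the setting the paper actually uses, with no approximation from quadrants needed.

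There is one slip in the variational formula. A path whose last visit to level $k-1$ is at column $z$ collects $\omega_{(z,k)},\dots,\omega_{(x,k)}$, so the formula should read
\[
J_{(x,k)}=\omega_{(x,k)}+\Bigl[\sup_{z\le x-1}\Bigl\{G^{\textup{S}}_{m,(z,k-1)}-G^{\textup{S}}_{m,(x,k-1)}+\sum_{j=z}^{x-1}\omega_{(j,k)}\Bigr\}\Bigr]^+,
\]
with the sum starting at $j=z$ rather than $z+1$. This is harmless. For $k>m$ the level-$k$ weights coincide in $h$ and $\wt h$. The only term that changes is $G^{\textup{S}}_{m,(z,k-1)}-G^{\textup{S}}_{m,(x,k-1)}=-\sum_{j=z+1}^{x}I_{(j,k-1)}$, and it moves in the direction you need.

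Finally, the $180^\circ$-rotated version of your induction gives more than the stated north-boundary claim. It also yields $J^{\textup{N},m}_{(i,m)}(\wt h)\le J^{\textup{N},m}_{(i,m)}(h)$, which is the image of your first step $k=m+1$. The range $k<m$ in the statement as written excludes this case. It is, however, precisely the case the paper invokes for $J^{\textup{N},n}_{(i,n)}$ in the proof of Lemma~\ref{prop_dec1}.
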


\subsection{Burke property} \label{subsec_burke}

In this section, we recall the well-known Burke property of the $M/M/1$ queue.

\begin{proposition}[{Burke property, \cite[Proposition 3.5]{ind_buse}}]\label{burk}
For $0< \lambda < \rho$, consider two sequences of independent random variables, $\{\omega_{(i,m)}\}_{i\in \mathbb{Z}}$ where $\omega_{(i,m)} \sim \Exp(\lambda)$, and $\{\omega_{(i,m+1)}\}_{i\in \mathbb{Z}}$ where $\omega_{(i,m+1)} \sim \Exp(\rho)$. Using these vertex weights, define $h^m$ and $h^{m+1}$ via \eqref{defh}.
Then the following equality of joint distributions holds:
$$(h^m, h^{m+1}) \stackrel{\textup{d}}{=} ((\sigma^m h)^{m+1}, (\sigma^m h)^{m}).$$
\end{proposition}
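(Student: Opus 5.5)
The plan is to reduce Proposition~\ref{burk} to a one-step distributional identity at a single corner and then propagate it along level $m+1$ from far to the left, in the spirit of the classical proof of Burke's theorem for the $M/M/1$ queue. Since both $(h^m,h^{m+1})$ and $((\sigma^m h)^{m+1},(\sigma^m h)^m)$ vanish at $0$, each pair is determined by its increments. So it suffices to show the following: the variables $\{I^{\textup{S},m}_{(i,m+1)}\}_{i\in\Z}$ and $\{\wc\omega_{(i,m)}\}_{i\in\Z}$ are mutually independent, with $I^{\textup{S},m}_{(i,m+1)}\sim\Exp(\lambda)$ and $\wc\omega_{(i,m)}\sim\Exp(\rho)$. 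By Kolmogorov consistency, it is enough to check this for the increments indexed by $i\in[\![-N+1,N]\!]$, for each $N$. Note that $\textup{\texttt{slope}}_m(h)=1/\lambda>1/\rho=\textup{\texttt{slope}}_{m+1}(h)$, so $\sigma^m$ acts non-trivially and Proposition~\ref{stab2} guarantees that all quantities are finite.

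\emph{Step 1 (one-corner lemma).} Let $U\sim\Exp(\lambda)$, $V\sim\Exp(\rho-\lambda)$ and $W\sim\Exp(\rho)$ be independent. Set
\[
U'=W+(U-V)^+,\qquad V'=W+(U-V)^-,\qquad W'=U\wedge V .
\]
I will show that $(U',V',W')$ are again independent with laws $\Exp(\lambda)$, $\Exp(\rho-\lambda)$ and $\Exp(\rho)$, respectively. The plan is to compute directly. Given $W'=U\wedge V=t$, the event $\{U<V\}$ has probability $\lambda/\rho$, independently of $t$. By memorylessness, the overshoot $|U-V|$ is then $\Exp(\rho-\lambda)$ on $\{U<V\}$ and $\Exp(\lambda)$ on $\{U>V\}$. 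Adding the independent $W\sim\Exp(\rho)$ and comparing the joint Laplace transform of $(U',V')$ with the product form should close the computation. This is the step I expect to require the most care; it is the only place where exponentiality enters.

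\emph{Step 2 (matching the recursion).} At a vertex $\mathbf x=(j,m+1)$, the corner-flipping relation \eqref{flip} reads
\[
I^{\textup{S},m}_{\mathbf x}=\omega_{\mathbf x}+(I_{\mathbf x-\mathbf e_2}-J_{\mathbf x-\mathbf e_1})^+,\qquad J^{\textup{S},m}_{\mathbf x}=\omega_{\mathbf x}+(I_{\mathbf x-\mathbf e_2}-J_{\mathbf x-\mathbf e_1})^-,
\]
and by \eqref{def_IJ} the dual weight is $\wc\omega_{(j,m)}=I_{\mathbf x-\mathbf e_2}\wedge J_{\mathbf x-\mathbf e_1}$. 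Here $I^{\textup{S},m}_{(j,m)}=\omega_{(j,m)}\sim\Exp(\lambda)$ and $\omega_{\mathbf x}\sim\Exp(\rho)$. Thus the map at column $j$ is exactly the one in Step~1, with $U=\omega_{(j,m)}$, $V=J_{(j-1,m+1)}$ and $W=\omega_{(j,m+1)}$.

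\emph{Step 3 (initial condition and induction).} The variable $J_{(-N,m+1)}$ is a function of the weights in columns $\le -N$, hence independent of all weights in columns $>-N$. I still need its law to be $\Exp(\rho-\lambda)$. I would get this from the Lindley-type recursion $J_{(j,m+1)}=\omega_{(j,m+1)}+(J_{(j-1,m+1)}-\omega_{(j,m)})^+$, which follows from \eqref{flip}. The law of $J_{(j,m+1)}$ does not depend on $j$ by shift invariance, so it is a fixed point of this recursion. The recursion has a unique stationary law by stability ($\lambda<\rho$), for instance by coupling from the past using the explicit supremum in \eqref{GS}. Step~1 shows that $\Exp(\rho-\lambda)$ is such a fixed point, so the two coincide; alternatively one could use the explicit $M/M/1$ sojourn-time computation.

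With this initial condition, induction on $j=-N+1,\dots,N$ using Step~1 gives the following at each stage. The variables
\[
I^{\textup{S},m}_{(-N+1,m+1)},\dots,I^{\textup{S},m}_{(j,m+1)},\qquad \wc\omega_{(-N+1,m)},\dots,\wc\omega_{(j,m)},\qquad J_{(j,m+1)},
\]
together with the untouched weights in columns $>j$, are independent with the claimed marginals. The key point in the inductive step is that the new triple $(U',V',W')$ is independent of everything already produced, because the input $(U,V,W)$ is. Dropping $J_{(N,m+1)}$ at the end gives the finite-dimensional statement, and hence the proposition.
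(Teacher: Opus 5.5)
Your proof is correct. The paper gives no proof of this proposition; it quotes it from \cite[Proposition 3.5]{ind_buse}, where it is the classical $M/M/1$ Burke theorem. There is therefore no in-paper argument to compare against, and what you wrote is the standard self-contained proof of the cited fact: a one-corner exponential lemma propagated column by column from a stationary incoming $J$.

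Your Step~1 closes as you expect. $W'=U\wedge V$ is independent of $(\mathbf 1\{U<V\},|U-V|)$ and of $W$, hence of $(U',V')$. State this explicitly, since the Laplace transform alone only handles the pair $(U',V')$. The transform computation then gives
\[
\mathbb E\big[e^{-aU'-bV'}\big]=\frac{\rho}{\rho+a+b}\Big(\frac{\lambda}{\rho}\cdot\frac{\rho-\lambda}{\rho-\lambda+b}+\frac{\rho-\lambda}{\rho}\cdot\frac{\lambda}{\lambda+a}\Big)=\frac{\lambda}{\lambda+a}\cdot\frac{\rho-\lambda}{\rho-\lambda+b}.
\]

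For the initial condition, \eqref{GS} gives explicitly
\[
J_{(-N,m+1)}=\omega_{(-N,m+1)}+\sup_{k\ge 0}\sum_{l=1}^{k}\big(\omega_{(-N-l,m+1)}-\omega_{(-N-l+1,m)}\big).
\]
This confirms that $J_{(-N,m+1)}$ is measurable with respect to columns $\le -N$. It also shows that $J_{(-N,m+1)}$ is exactly Loynes' stationary solution, so either of your two identifications of its law as $\Exp(\rho-\lambda)$ (uniqueness plus Step~1, or the sojourn-time formula) is valid.

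Compared with citing the result, your route uses only \eqref{flip} and \eqref{def_IJ} from the paper. It also isolates exactly where exponentiality enters: the one-corner lemma and the law of the incoming $J$. This fits the paper's point that the Burke property is the only distribution-specific ingredient.
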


In other words, when $0<\lambda < \rho$, the queueing map $\sigma^m$ preserves independence and swaps the exponential rates of levels $m$ and $m+1$.

\section{Results for LPP with boundary}\label{proof1}

\begin{figure}[t]
\begin{center}

\tikzset{every picture/.style={line width=0.75pt}} 

\begin{tikzpicture}[x=0.75pt,y=0.75pt,yscale=-1,xscale=1]

\draw    (89.77,230.16) -- (295.44,229.92) ;
\draw    (90.05,261.47) -- (295.72,261.23) ;
\draw    (90.32,313.85) -- (296,313.61) ;
\draw [color={rgb, 255:red, 74; green, 144; blue, 226 }  ,draw opacity=1 ][line width=2.25]    (150.2,107.06) -- (179.8,107) ;
\draw [color={rgb, 255:red, 208; green, 2; blue, 27 }  ,draw opacity=1 ][line width=2.25]    (63.28,50.11) -- (63.13,78.36) ;
\draw [color={rgb, 255:red, 126; green, 211; blue, 33 }  ,draw opacity=1 ][line width=2.25]    (245.73,167.05) -- (277.1,167.2) ;
\draw  [fill={rgb, 255:red, 0; green, 0; blue, 0 }  ,fill opacity=1 ] (177.2,200) .. controls (177.2,198.01) and (178.81,196.4) .. (180.8,196.4) .. controls (182.79,196.4) and (184.4,198.01) .. (184.4,200) .. controls (184.4,201.99) and (182.79,203.6) .. (180.8,203.6) .. controls (178.81,203.6) and (177.2,201.99) .. (177.2,200) -- cycle ;
\draw  [fill={rgb, 255:red, 0; green, 0; blue, 0 }  ,fill opacity=1 ] (146.6,107.06) .. controls (146.6,105.07) and (148.21,103.46) .. (150.2,103.46) .. controls (152.19,103.46) and (153.8,105.07) .. (153.8,107.06) .. controls (153.8,109.05) and (152.19,110.66) .. (150.2,110.66) .. controls (148.21,110.66) and (146.6,109.05) .. (146.6,107.06) -- cycle ;
\draw  [fill={rgb, 255:red, 0; green, 0; blue, 0 }  ,fill opacity=1 ] (242.13,167.05) .. controls (242.13,165.06) and (243.75,163.45) .. (245.73,163.45) .. controls (247.72,163.45) and (249.33,165.06) .. (249.33,167.05) .. controls (249.33,169.04) and (247.72,170.65) .. (245.73,170.65) .. controls (243.75,170.65) and (242.13,169.04) .. (242.13,167.05) -- cycle ;
\draw  [fill={rgb, 255:red, 0; green, 0; blue, 0 }  ,fill opacity=1 ] (177.2,230.2) .. controls (177.2,228.21) and (178.81,226.6) .. (180.8,226.6) .. controls (182.79,226.6) and (184.4,228.21) .. (184.4,230.2) .. controls (184.4,232.19) and (182.79,233.8) .. (180.8,233.8) .. controls (178.81,233.8) and (177.2,232.19) .. (177.2,230.2) -- cycle ;
\draw  [fill={rgb, 255:red, 0; green, 0; blue, 0 }  ,fill opacity=1 ] (177.2,261) .. controls (177.2,259.01) and (178.81,257.4) .. (180.8,257.4) .. controls (182.79,257.4) and (184.4,259.01) .. (184.4,261) .. controls (184.4,262.99) and (182.79,264.6) .. (180.8,264.6) .. controls (178.81,264.6) and (177.2,262.99) .. (177.2,261) -- cycle ;
\draw  [fill={rgb, 255:red, 0; green, 0; blue, 0 }  ,fill opacity=1 ] (177.2,313.4) .. controls (177.2,311.41) and (178.81,309.8) .. (180.8,309.8) .. controls (182.79,309.8) and (184.4,311.41) .. (184.4,313.4) .. controls (184.4,315.39) and (182.79,317) .. (180.8,317) .. controls (178.81,317) and (177.2,315.39) .. (177.2,313.4) -- cycle ;
\draw  [fill={rgb, 255:red, 0; green, 0; blue, 0 }  ,fill opacity=1 ] (180.79,279.33) .. controls (181.43,279.32) and (181.94,279.84) .. (181.95,280.47) .. controls (181.95,281.11) and (181.43,281.62) .. (180.8,281.63) .. controls (180.16,281.63) and (179.65,281.11) .. (179.65,280.48) .. controls (179.64,279.84) and (180.16,279.33) .. (180.79,279.33) -- cycle ;
\draw  [fill={rgb, 255:red, 0; green, 0; blue, 0 }  ,fill opacity=1 ] (180.78,292.81) .. controls (181.41,292.81) and (181.93,293.33) .. (181.93,293.96) .. controls (181.93,294.6) and (181.42,295.11) .. (180.78,295.11) .. controls (180.15,295.12) and (179.63,294.6) .. (179.63,293.97) .. controls (179.63,293.33) and (180.14,292.82) .. (180.78,292.81) -- cycle ;
\draw  [fill={rgb, 255:red, 0; green, 0; blue, 0 }  ,fill opacity=1 ] (180.68,286) .. controls (181.31,286) and (181.83,286.52) .. (181.83,287.15) .. controls (181.83,287.79) and (181.32,288.3) .. (180.68,288.3) .. controls (180.05,288.31) and (179.53,287.79) .. (179.53,287.16) .. controls (179.53,286.52) and (180.04,286.01) .. (180.68,286) -- cycle ;
\draw  [fill={rgb, 255:red, 0; green, 0; blue, 0 }  ,fill opacity=1 ] (176.2,107) .. controls (176.2,105.01) and (177.81,103.4) .. (179.8,103.4) .. controls (181.79,103.4) and (183.4,105.01) .. (183.4,107) .. controls (183.4,108.99) and (181.79,110.6) .. (179.8,110.6) .. controls (177.81,110.6) and (176.2,108.99) .. (176.2,107) -- cycle ;
\draw [color={rgb, 255:red, 208; green, 2; blue, 27 }  ,draw opacity=1 ][line width=2.25]    (63.13,78.36) -- (88.24,78.36) ;
\draw  [fill={rgb, 255:red, 0; green, 0; blue, 0 }  ,fill opacity=1 ] (84.64,78.36) .. controls (84.64,76.37) and (86.26,74.76) .. (88.24,74.76) .. controls (90.23,74.76) and (91.84,76.37) .. (91.84,78.36) .. controls (91.84,80.34) and (90.23,81.96) .. (88.24,81.96) .. controls (86.26,81.96) and (84.64,80.34) .. (84.64,78.36) -- cycle ;
\draw [color={rgb, 255:red, 126; green, 211; blue, 33 }  ,draw opacity=1 ][line width=2.25]    (245.9,138) -- (275.68,138) ;
\draw  [fill={rgb, 255:red, 0; green, 0; blue, 0 }  ,fill opacity=1 ] (242.3,138) .. controls (242.3,136.01) and (243.91,134.4) .. (245.9,134.4) .. controls (247.89,134.4) and (249.5,136.01) .. (249.5,138) .. controls (249.5,139.99) and (247.89,141.6) .. (245.9,141.6) .. controls (243.91,141.6) and (242.3,139.99) .. (242.3,138) -- cycle ;
\draw [color={rgb, 255:red, 126; green, 211; blue, 33 }  ,draw opacity=1 ][line width=2.25]    (275.68,138) -- (275.68,167.2) ;
\draw  [fill={rgb, 255:red, 0; green, 0; blue, 0 }  ,fill opacity=1 ] (272.08,138) .. controls (272.08,136.01) and (273.69,134.4) .. (275.68,134.4) .. controls (277.67,134.4) and (279.28,136.01) .. (279.28,138) .. controls (279.28,139.99) and (277.67,141.6) .. (275.68,141.6) .. controls (273.69,141.6) and (272.08,139.99) .. (272.08,138) -- cycle ;
\draw  [fill={rgb, 255:red, 0; green, 0; blue, 0 }  ,fill opacity=1 ] (272.08,167.2) .. controls (272.08,165.21) and (273.69,163.6) .. (275.68,163.6) .. controls (277.67,163.6) and (279.28,165.21) .. (279.28,167.2) .. controls (279.28,169.19) and (277.67,170.8) .. (275.68,170.8) .. controls (273.69,170.8) and (272.08,169.19) .. (272.08,167.2) -- cycle ;
\draw  [fill={rgb, 255:red, 0; green, 0; blue, 0 }  ,fill opacity=1 ] (59.68,50.11) .. controls (59.68,48.12) and (61.29,46.51) .. (63.28,46.51) .. controls (65.27,46.51) and (66.88,48.12) .. (66.88,50.11) .. controls (66.88,52.09) and (65.27,53.71) .. (63.28,53.71) .. controls (61.29,53.71) and (59.68,52.09) .. (59.68,50.11) -- cycle ;
\draw [color={rgb, 255:red, 208; green, 2; blue, 27 }  ,draw opacity=1 ][line width=2.25]    (118.22,49.66) -- (118.08,77.91) ;
\draw  [fill={rgb, 255:red, 0; green, 0; blue, 0 }  ,fill opacity=1 ] (114.48,77.91) .. controls (114.48,75.92) and (116.09,74.31) .. (118.08,74.31) .. controls (120.07,74.31) and (121.68,75.92) .. (121.68,77.91) .. controls (121.68,79.89) and (120.07,81.51) .. (118.08,81.51) .. controls (116.09,81.51) and (114.48,79.89) .. (114.48,77.91) -- cycle ;
\draw  [fill={rgb, 255:red, 0; green, 0; blue, 0 }  ,fill opacity=1 ] (114.62,49.66) .. controls (114.62,47.67) and (116.24,46.06) .. (118.22,46.06) .. controls (120.21,46.06) and (121.82,47.67) .. (121.82,49.66) .. controls (121.82,51.65) and (120.21,53.26) .. (118.22,53.26) .. controls (116.24,53.26) and (114.62,51.65) .. (114.62,49.66) -- cycle ;
\draw [color={rgb, 255:red, 208; green, 2; blue, 27 }  ,draw opacity=1 ][line width=2.25]    (63.13,78.36) -- (62.99,106.6) ;
\draw  [fill={rgb, 255:red, 0; green, 0; blue, 0 }  ,fill opacity=1 ] (59.53,78.36) .. controls (59.53,76.37) and (61.15,74.76) .. (63.13,74.76) .. controls (65.12,74.76) and (66.73,76.37) .. (66.73,78.36) .. controls (66.73,80.34) and (65.12,81.96) .. (63.13,81.96) .. controls (61.15,81.96) and (59.53,80.34) .. (59.53,78.36) -- cycle ;
\draw [color={rgb, 255:red, 208; green, 2; blue, 27 }  ,draw opacity=1 ][line width=2.25]    (37.88,106.6) -- (62.99,106.6) ;
\draw  [fill={rgb, 255:red, 0; green, 0; blue, 0 }  ,fill opacity=1 ] (34.28,106.6) .. controls (34.28,104.62) and (35.89,103) .. (37.88,103) .. controls (39.87,103) and (41.48,104.62) .. (41.48,106.6) .. controls (41.48,108.59) and (39.87,110.2) .. (37.88,110.2) .. controls (35.89,110.2) and (34.28,108.59) .. (34.28,106.6) -- cycle ;
\draw  [fill={rgb, 255:red, 0; green, 0; blue, 0 }  ,fill opacity=1 ] (59.39,106.6) .. controls (59.39,104.62) and (61,103) .. (62.99,103) .. controls (64.98,103) and (66.59,104.62) .. (66.59,106.6) .. controls (66.59,108.59) and (64.98,110.2) .. (62.99,110.2) .. controls (61,110.2) and (59.39,108.59) .. (59.39,106.6) -- cycle ;
\draw [color={rgb, 255:red, 126; green, 211; blue, 33 }  ,draw opacity=1 ][line width=2.25]    (276.7,107.2) -- (306.48,107.2) ;
\draw  [fill={rgb, 255:red, 0; green, 0; blue, 0 }  ,fill opacity=1 ] (273.1,107.2) .. controls (273.1,105.21) and (274.71,103.6) .. (276.7,103.6) .. controls (278.69,103.6) and (280.3,105.21) .. (280.3,107.2) .. controls (280.3,109.19) and (278.69,110.8) .. (276.7,110.8) .. controls (274.71,110.8) and (273.1,109.19) .. (273.1,107.2) -- cycle ;
\draw [color={rgb, 255:red, 126; green, 211; blue, 33 }  ,draw opacity=1 ][line width=2.25]    (306.48,107.2) -- (306.48,136.4) ;
\draw  [fill={rgb, 255:red, 0; green, 0; blue, 0 }  ,fill opacity=1 ] (302.88,107.2) .. controls (302.88,105.21) and (304.49,103.6) .. (306.48,103.6) .. controls (308.47,103.6) and (310.08,105.21) .. (310.08,107.2) .. controls (310.08,109.19) and (308.47,110.8) .. (306.48,110.8) .. controls (304.49,110.8) and (302.88,109.19) .. (302.88,107.2) -- cycle ;
\draw  [fill={rgb, 255:red, 0; green, 0; blue, 0 }  ,fill opacity=1 ] (302.88,136.4) .. controls (302.88,134.41) and (304.49,132.8) .. (306.48,132.8) .. controls (308.47,132.8) and (310.08,134.41) .. (310.08,136.4) .. controls (310.08,138.39) and (308.47,140) .. (306.48,140) .. controls (304.49,140) and (302.88,138.39) .. (302.88,136.4) -- cycle ;

\draw (307.6,306.2) node [anchor=north west][inner sep=0.75pt]    {$\Exp(\rho_{2k})$};
\draw (307.6,252.4) node [anchor=north west][inner sep=0.75pt]    {$\Exp( \rho_2)$};
\draw (307.6,220.8) node [anchor=north west][inner sep=0.75pt]    {$\Exp(\rho_1)$};
\draw (162,175) node [anchor=north west][inner sep=0.75pt]    {$( 0,0)$};
\draw (154.8,322.8) node [anchor=north west][inner sep=0.75pt]    {$( 0,-2k)$};
\draw (158.4,85.86) node [anchor=north west][inner sep=0.75pt]    {$\mathcal  B$};
\draw (186,95.3) node [anchor=north west][inner sep=0.75pt]    {$( 0,n)$};
\draw (83.4,47.8) node [anchor=north west][inner sep=0.75pt]    {$\mathcal  R$};
\draw (251.7,111.8) node [anchor=north west][inner sep=0.75pt]    {$\mathcal G$};
\draw (121.44,111.76) node [anchor=north west][inner sep=0.75pt]    {$(-1,n)$};

\end{tikzpicture}

\captionsetup{width=.8\linewidth}
\caption{
The setup of the LPP with boundary in Proposition \ref{bdry_cov}.}\label{fig1}
\end{center}

\end{figure}
To prove our main theorems, we will use the distributional identity between the Busemann process and the increments of an LPP model with boundary conditions \cite{fan_seppalainen20}, which will be restated precisely as Proposition \ref{B_bdry} in Section \ref{proof_main}. In this section, we begin by introducing the LPP model with a south boundary and prove an important negative association result, as stated in Proposition \ref{bdry_cov}.

The following setup for the LPP with south boundary is illustrated in Figure \ref{fig1}. Let $k \in \mathbb{Z}_{>0}$, and fix parameters \( 1> \rho_1>  \rho_2  > \dots > \rho_{2k} > 0 \).
The boundary is located at level $-2k$, so we just specify the environment $h$ (via \eqref{defh}) at levels $-2k$ and above.
On the negative levels, we let the weights $\{\omega_{(x,-\ell)}\}_{x\in\Z}$ be $\Exp(\rho_\ell)$ random variables, for each $\ell\in\{1,\ldots,2k\}$.
On the non-negative levels, we let the weights $\{\omega_{(x,\ell)}\}_{x\in\Z,\ell\ge0}$ be $\Exp(1)$ random variables.
Furthermore, we assume all weights are independent.

Next, let $\mathcal{B}$ denote the unit edge $[\![(-1,n), (0,n)]\!]$, where $n$ is a fixed large integer. Then, for each value of $j$ in $\{1, 2, \dots , 2k\}$, let  $\mathcal{R}^j = (\mathcal{R}^j_i)_{i=1}^{2k}$ be a sequence of $2k$ unit edges contained in $(-1, n) + \mathbb{Z}_{\leq 0} \times \mathbb{Z}_{\geq 0}$. 
In addition, let $\mathcal{G}^j = (\mathcal{G}^j_i)_{i=1}^{2k}$ be a sequence of $2k$ unit edges contained in $(0,n) + \mathbb{Z}_{\geq 0} \times \mathbb{Z}_{\leq 0}$. 
We assume $n$ is sufficiently large that all the chosen edges in $\{\mathcal{R}_i^j, \mathcal{G}_i^j\}_{i,j=1}^{2k}$ lie strictly above the horizontal line $y=0$.

Next, we introduce increments associated with these unit edges.
Define
\eeq{ \label{G_blue}
G(\mathcal{B})= G^\textup{S}_{-k, (-1,n)}(h) - G^\textup{S}_{-k, (0,n)}(h).
}
For $\mathcal{G}_i^j = [\![\mathbf c, \mathbf d ]\!]$ with $\mathbf d - \mathbf c \in \{\mathbf e_1, -\mathbf e_2 \}$, define 
\eq{ 
G(\mathcal{G}^j_i) = G^\textup{S}_{-j, \mathbf c}(h) - G^\textup{S}_{-j, \mathbf d}(h).
}
Similarly, for edge $\mathcal{R}^j_i = [\![\mathbf e, \mathbf f]\!]$ with $\mathbf f - \mathbf e \in \{\mathbf e_1, -\mathbf e_2 \}$, define
\eq{ 
G(\mathcal{R}^j_i) = G^\textup{S}_{-j, \mathbf e}(h) - G^\textup{S}_{-j, \mathbf f}(h).
}
Our goal in this section is to prove the following proposition.

\begin{proposition}[Negative association or independence in LPP with boundary] \label{bdry_cov}
For any coordinate-wise monotone functions $f$ and $g$ that are monotone in the same direction (that is, either both are increasing or both are decreasing),
it holds that 
\begin{align} \label{bdry_cov1}
\Cov\Big(f\big(G(\mathcal{B})\big), g\big( (G(\mathcal{R}^j_i))_{i,j=1}^{2k} ,(G(\mathcal{G}^j_i))_{i,j=1}^{2k}\big) \Big) \leq 0.
\end{align}
In addition, if we restrict the range of the index $j$, then we have
\begin{align} \label{bdry_cov2}
\Cov\Big(f\big(G(\mathcal{B})\big), g\big( (G(\mathcal{G}^j_i))_{i=1,\dots,2k}^{j=1,\dots,k} , (G(\mathcal{R}^j_i))_{i=1,\dots,2k}^{j=k,\dots,2k} \big) \Big) = 0.
\end{align}
\end{proposition}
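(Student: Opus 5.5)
Throughout I take $G(\mathcal{B})$ to be the statement as written: the south-boundary LPP from level $-k$. The plan is to use the queueing maps to move all boundary levels so that a single environment represents everything. In that environment $G(\mathcal{B})$ depends on one "middle" level, the $\mathcal{R}$-type quantities depend on levels that are monotone in that level in one direction, and the $\mathcal{G}$-type quantities in the other. Negative covariance then follows from a Harris/FKG-type inequality for independent inputs, and \eqref{bdry_cov2} follows from independence.

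\textbf{Step 1: rewrite the increments.} By the nested property (Proposition~\ref{nest}), for each $j$ the increments $G(\mathcal{R}^j_i)$ and $G(\mathcal{G}^j_i)$ equal the corresponding south-boundary increments from level $0$ in the environment $\sigma^{-1}\cdots\sigma^{-j}h$. Similarly, $G(\mathcal{B})$ is a horizontal increment at level $n$ of the south-boundary LPP from level $0$ in the environment $\sigma^{-1}\cdots\sigma^{-k}h$.

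The Burke property (Proposition~\ref{burk}) lets us follow distributions along these maps. Applying the sequence of maps that bubbles level $-j$ up to level $-1$ turns the bottom $2k$ levels into independent exponential layers with permuted rates. The braid relation (Proposition~\ref{braid}) lets me write all the maps $\sigma^{-1}\cdots\sigma^{-j}$, $j=1,\dots,2k$, as prefixes of one common reduced word. The goal is an environment $\bar h$ with the following structure: the levels $\ge 0$ are untouched, and level $-1$ of $\sigma^{-1}\cdots\sigma^{-j}h$ is built by successively queueing. Equivalently, by Proposition~\ref{LPP_queue}, it is the boundary-LPP increment process $G^{\textup{S}}_{-j,(\cdot,-1)}(h)$ from level $-j$.

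\textbf{Step 2: monotone coupling in the level-$(-k)$ data.} For fixed bulk weights, $G(\mathcal{B})$ is a function of the boundary profile $P_k:=G^{\textup{S}}_{-k,(\cdot,-1)}(h)$. By Proposition~\ref{inc_bdry}, $G(\mathcal{B})$ is decreasing in the increments of $P_k$, since larger $I$ increments at the base give smaller $I$ increments at level $n$ on the left side.

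The profiles $P_j$ for $j\ne k$ are obtained from $P_k$ by further queueing. For $j>k$ this is the upward direction, feeding extra layers below. For $j<k$ it is the reverse direction, and there I would use the downward maps together with Proposition~\ref{q_inverse}, expressing $P_j$ via $\sigma_{\cdot}$ applied to $P_k$ and independent layers. By Proposition~\ref{inc_bdry}, the increments of $\mathcal{R}$-edges (to the left) and $\mathcal{G}$-edges (to the right, below) are monotone in the increments of $P_j$.

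The sign bookkeeping should work out as follows. On the $\mathcal{R}$ side the edges point right or down from a vertex west of the column, so $G(\mathcal{R}^j_i)$ equals minus a horizontal $I$ or plus a vertical $J$. On the $\mathcal{G}$ side the same holds, and by Proposition~\ref{inc_bdry} all of these are decreasing in the base increments. Conversely, $G(\mathcal{B})=-I_{(0,n)}$ is also decreasing, so the expected sign is positive. The actual negativity must come from the fact that $P_j$ is an \emph{antitone} function of $P_k$ in the relevant coordinates. Here the queueing map's conservation of mass (Proposition~\ref{sumeq}) is used: pushing more mass into level $-1$ at stage $k$ leaves less in the dual layer that feeds later stages.

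I expect this sign analysis to be the main obstacle. The plan is to write the whole vector $(G(\mathcal{R}),G(\mathcal{G}))$ as $\Phi(\xi,\eta)$ and $G(\mathcal{B})$ as $\Psi(\xi)$. Here $\xi$ is the collection of bulk weights together with one independent product family, $\eta$ is independent of $\xi$, $\Phi$ is coordinatewise monotone in the opposite sense to $\Psi$ in $\xi$, and $\Phi$ is free in $\eta$.

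\textbf{Step 3: conclude.} Conditioning on $\eta$ and applying Harris' inequality for independent exponentials in $\xi$ gives \eqref{bdry_cov1}. For \eqref{bdry_cov2}, with $j\le k$ on the $\mathcal{G}$ side and $j\ge k$ on the $\mathcal{R}$ side, I would instead use the independence statement. After the Burke swaps, the increment on $\mathcal{B}$ is measurable with respect to a set of layers that is independent of those generating the restricted family. This is essentially the mechanism of \cite{ind_buse}, read via Proposition~\ref{dual_eq1} along the down-right path through $\mathcal{B}$ with rates ordered appropriately. Independence then gives zero covariance.
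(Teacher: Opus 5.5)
There is a genuine gap: your Step~2 is where the whole proof has to happen, and it is not carried out. Worse, the mechanism you sketch cannot produce a decomposition $\Psi(\xi)$, $\Phi(\xi,\eta)$ with opposite coordinatewise monotonicity.

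\emph{Bulk weights in $\xi$.} If $\xi$ contains the bulk weights of $h$, this fails already for $j=k$. Take $\mathcal{R}^k_1=[\![(-2,n),(-1,n)]\!]$.
\begin{itemize}
\item Raising $\omega_{(-2,n)}$ weakly raises $G(\mathcal{R}^k_1)=G^{\textup{S}}_{-k,(-2,n)}-G^{\textup{S}}_{-k,(-1,n)}$. Since $G^{\textup{S}}_{-k,(0,n-1)}$ does not see this weight, it also weakly lowers $I^{\textup{S},-k}_{(0,n)}=\omega_{(0,n)}+(G^{\textup{S}}_{-k,(0,n-1)}-G^{\textup{S}}_{-k,(-1,n)})^+$, i.e.\ weakly raises $G(\mathcal{B})$. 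The two move together.
\item Raising $\omega_{(-1,n)}$ instead moves the two in opposite directions.
\end{itemize}
So there is no consistent orientation in the bulk coordinates.

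\emph{Increments of $P_k$ in $\xi$.} This fails too, by your own bookkeeping: $G(\mathcal{B})$, $G(\mathcal{R}^k_i)$ and $G(\mathcal{G}^k_i)$ are all decreasing in these increments. For $j=k$ there is no further ``antitone'' map available to reverse the sign. Yet by \eqref{bdry_cov2} these quantities are independent of $G(\mathcal{B})$, so the true relation arises from cancellation, not from coordinatewise monotonicity in the original inputs.

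\emph{The case $j\neq k$.} The antitone claim is unsupported. The description of $P_j$, $j>k$, as $P_k$ ``queued further'' is also not justified. By Proposition~\ref{nest}, $P_j$ is the output of the same tandem queue on levels $-k+1,\dots,-1$, fed with the different input $(\sigma^{-k-1}\cdots\sigma^{-j}h)^{-k}$. That input sees the level-$(-k)$ weights as bulk weights, not through $P_k$.

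The missing idea is to change environment so that $G(\mathcal{B})$ becomes a \emph{single} coordinate, independent of all others, and to prove monotonicity only in that one coordinate. The paper proceeds as follows.
\begin{itemize}
\item It sets $h^*=\pi^{-k,n-1}\cdots\pi^{-1,n-1}h$ as in \eqref{hstar}. By Proposition~\ref{isometry} and \eqref{rep_south}, $G(\mathcal{B})=(h^*)^n(-1)$.
\item By the Burke property, $h^*$ is again an independent exponential environment with permuted rates. After replacing it by an equidistributed $h^\star$, one has $G(\mathcal{B})=-\omega^\star_{(0,n)}$.
\item Cancellation (Proposition~\ref{q_inverse}) and the braid relation give \eqref{braid_h}, which writes every $G(\mathcal{R}^j_i)$ and $G(\mathcal{G}^j_i)$ as an explicit function of $h^\star$.
\item Lemmas~\ref{prop_dec1} and~\ref{prop_dec2} show that lowering $-\omega^\star_{(0,n)}$ never lowers these increments. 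It leaves unchanged those with $j\le k$ on the $\mathcal{G}$ side and $j\ge k$ on the $\mathcal{R}$ side. This is the substantive work: it uses the corner-flipping induction, the comparison of $G^{\textup{N},<s}$ with $G^{\textup{N}}$ for vertical increments, and the decrease of the dual weights $I\wedge J$.
\item Conditioning on all other weights reduces \eqref{bdry_cov1} to the one-variable Harris (Chebyshev) inequality. \eqref{bdry_cov2} is then immediate, because the restricted family is measurable with respect to those weights.
\end{itemize}
Your appeal to \cite{ind_buse} for \eqref{bdry_cov2} does not substitute for this. That result gives mutual independence along a single down-right path with one ordered parameter per edge. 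Here the edges range over two quadrants, and several parameters act jointly on each.
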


\begin{proof}
We start by rewriting all of the LPP increments of $h$ in terms of a new environment $h^*$, which is defined below in \eqref{hstar}.
To simplify the notation, for $m < n - 1$, let 
\eq{ 
\pi^{m,n-1} = \sigma^{n-1} \cdots \sigma^{m+1} \sigma^m \qquad \textup{ and } \qquad \pi_{m,n-1} = \sigma_{m} \cdots \sigma_{n-2} \sigma_{n-1}. 
}
With this notation, define the new environment $h^*$ as 
\begin{equation}\label{hstar}
h^* =\pi^{-k, n-1}\cdots \pi^{-2, n-1}\pi^{-1, n-1} h.
\end{equation}
 By the Burke property (Proposition \ref{burk}), it will be helpful to think of each queueing map as simply swapping the rates of the exponential random variables along each horizontal level. 
With this in mind, we introduce the notation $[a]_b$ to indicate that the exponential random variables along level $b$ have rate $a$. 
Then, the rates in the environment $h$ on levels $n,n-1,\ldots,-2k$ are given by 
\[
\Big([1]_n, [1]_{n-1}, \dots, [1]_{1}, [1]_{0}, [\rho_{1}]_{-1}, [\rho_{2}]_{-2}, \dots, [\rho_{2k}]_{-2k}\Big).
\]
When the first (rightmost) queueing map $\pi^{-1, n-1}$ is applied to $h$ in \eqref{hstar}, because $\rho_1<1$, by the Burke property (Proposition \ref{burk}), the queueing map $\pi^{-1, n-1}$ moves the parameter $\rho_{1}$, originally at level $-1$, up to level $n$ through successive pairwise swaps. Thus, we obtain a new environment $\pi^{-1, n-1}h$ with rates
\[
\Big([\rho_1]_n, [1]_{n-1}, \dots, [1]_{0}, [1]_{-1}, [\rho_2]_{-2}, \dots, [\rho_{2k}]_{-2k}\Big).
\]
Looking back to \eqref{hstar}, the next queueing map $\pi^{-2, n-1}$ is then applied to the environment $\pi^{-1, n-1}h$. Again, because $\rho_2 < \min\{\rho_1, 1\}$, by the Burke property (Proposition \ref{burk}), the new environment $\pi^{-2, n-1}\pi^{-1, n-1}h$ has rates
\[
\Big([\rho_2]_n,[\rho_1]_{n-1}, [1]_{n-2}, \dots, [1]_{-1}, [1]_{-2}, [\rho_3]_{-3}, \dots, [\rho_{2k}]_{-2k}\Big).
\]
This is again analogous to moving the rate $\rho_2$, originally at level $-2$, up to level $n$ through successive pairwise swaps.
The process continues in the same manner, and in the end, we obtain the environment $h^*$, whose rates on levels $n,n-1,\ldots,-2k$ are given by 
\begin{equation}\label{rate_h*}
\Big([\rho_k]_n, [\rho_{k-1}]_{n-1}, \dots, [\rho_{1}]_{n-k+1}, [1]_{n-k}, \dots, [1]_{-k}, [\rho_{k+1}]_{-k-1}, \dots, [\rho_{2k}]_{-2k}\Big).
\end{equation}

The motivation for this definition is the following: the value of $G(\mathcal{B})$ becomes a weight in this new environment $h^*$. More precisely, it holds that  
\eeq{
G(\mathcal{B}) 
&\stackrefpp{G_blue}{isometry_bdy}{=} G^{\textup{S}}_{{-k}, (-1,n)}(h) - G^{\textup{S}}_{{-k}, (0,n)}(h) \\
&\stackref{isometry_bdy}{=}G^{\textup{S}}_{{-k}, (-1,n)}(\pi^{-k, n-1}\cdots \pi^{-2, n-1}\pi^{-1, n-1} h) \\
& \qquad \qquad \qquad\qquad - G^{\textup{S}}_{{-k}, (0,n)}(\pi^{-k, n-1}\cdots \pi^{-2, n-1}\pi^{-1, n-1} h) \\
&\stackrefpp{rep_south}{isometry_bdy}{=} (\pi^{-k, n-1}\cdots \pi^{-2, n-1}\pi^{-1, n-1} h)^n(-1)  \\
&\stackrefp{isometry_bdy}{=} (h^*)^n(-1).\label{reB1}
}

Next, we rewrite $G(\mathcal{G}_i^j)$. The steps are the same for all $i \in\{1, \dots, 2k\}$, but differ depending on the value of $j$.
To start, we consider the case where $j \in\{k+1, \dots, 2k\}$. 
Suppose the endpoints of $\mathcal{G}^{j}_i$ are $\{\mathbf{c}, \mathbf d\}$ with $\mathbf d - \mathbf c \in \{\mathbf e_1, -\mathbf e_2 \}$. 
Suppose vertex $\mathbf d$ lies on level $t$, where $1\leq t \leq n$. 
To proceed with the rewriting, we start by expressing $h$ in terms of $h^*$. This is done in the calculation below, and the resulting formula is given in \eqref{braid_h}.

Due to how we obtain the specific rates of $h^*$ described in \eqref{rate_h*}, all the queueing maps in $h^*$ are non-trivial (i.e.\ not identity maps). Therefore, the cancellation described in Proposition \ref{q_inverse} can be applied here and we obtain
\begin{equation}\label{re_h}
h =\pi_{-1, n-1} \pi_{-2, n-1} \dots \pi_{-k+1, n-1}\pi_{-k, n-1}h^*.
\end{equation}

Next, we will use the braid relation, Proposition~\ref{braid}, to delay the mixing of \((h^*)^n\) with the environment below level $n$ due to the first (rightmost) queueing map \(\pi_{-k,n-1}\) in \eqref{re_h}. 
To do this, first note that for any $p< q \leq r \leq s$, by Proposition \ref{braid}, it holds that
\eeq{
 \pi_{q, r}  \pi_{p, s} & =  \pi_{q,r-1} \pi_{p, r-2} (\sigma_{r} \sigma_{r-1}\sigma_{r})\pi_{r+1, s} \label{br1}\\
& = \pi_{q,r-1} \pi_{p, r-2} ({\sigma_{r-1}} \textcolor{red}{\sigma_{r}}\textcolor{blue}{\sigma_{r-1}})\textcolor{red}{\pi_{r+1,s}}\\
& =  \pi_{q,r-2} \pi_{p, r-3} (\sigma_{r-1} \sigma_{r-2}{\sigma_{r-1}})\textcolor{red}{\pi_{r, s}}\textcolor{blue}{\sigma_{r-1}} \\
& =   \pi_{q,r-2} \pi_{p, r-3} ({\sigma_{r-2}} \textcolor{red}{\sigma_{r-1}}\textcolor{blue}{\sigma_{r-2}})\textcolor{red}{\pi_{r, s}}\textcolor{blue}{\sigma_{r-1}}\\
& =   \pi_{q,r-3} \pi_{p, r-4} (\sigma_{r-2}\sigma_{r-3}{\sigma_{r-2}}) \textcolor{red}{\pi_{r-1, s}}\textcolor{blue}{\pi_{r-2,r-1}} \\
&\hspace{1.2ex}\vdots\\
& = \pi_{p, q-2}(\sigma_{q} \sigma_{q-1} \sigma_q)\textcolor{red}{\pi_{q+1, s}}\textcolor{blue}{\pi_{q,r-1}} \\
& = \pi_{p, q-2}(\sigma_{q-1} \textcolor{red}{\sigma_{q}}\textcolor{blue}{\sigma_{q-1}})\textcolor{red}{\pi_{q+1, s}}\textcolor{blue}{\pi_{q,r-1}}\\
& = \pi_{p, s}\pi_{q-1,r-1}.
}
Applying the above identity repeatedly to the right-hand side of \eqref{re_h}, we obtain 
\begin{align*}
\textup{RHS of \eqref{re_h}} &=\pi_{-1, n-1}\dots\pi_{-k+2, n-1}(\pi_{-k+1, n-1}\pi_{-k, n-1})h^*\\
& = \pi_{-1, n-1}\dots(\pi_{-k+2, n-1}\pi_{-k, n-1})\pi_{-k, n-2}h^*\\
&\hspace{1.2ex}\vdots\\
& =  (\pi_{-1, n-1} \pi_{-k, n-1}) \dots \pi_{-k+1, n-2}\pi_{-k, n-2}h^*\\
& =  \pi_{-k, n-1}\pi_{-2, n-2}\dots \pi_{-k+1, n-2}\pi_{-k, n-2}h^*.
\end{align*}
With the calculation above, equation \eqref{re_h} becomes
\begin{equation}\label{braid_h}
h =  \pi_{-k, n-1}\pi_{-2, n-2}\dots \pi_{-k+1, n-2}\pi_{-k, n-2}h^*.
\end{equation}
We note that the queueing maps $\pi_{-2, n-2}\cdots \pi_{-k+1, n-2}\pi_{-k, n-2}$ appearing on the right-hand side of \eqref{braid_h} do not scramble the weights $(h^*)^n$ with the environment below $y = n$.
The mixing only happens when the last (leftmost) queueing map $\pi_{-k, n-1}$ is applied. 

Using the formula \eqref{braid_h}, we continue with the rewriting. Since $j \geq k+1$, Proposition \ref{isometry} gives us the first equality below. In addition, note $\pi_{-t, n-1}\pi_{-2, n-2}\dots \pi_{-k+1, n-2}\pi_{-k, n-2}h^*$ has rates 
$$\Big([1]_n, \dots,[1]_{t+1},  [\rho_k]_{t},  [1]_{t-1}\dots,  [1]_{-1}, [\rho_{1}]_{-2}, \dots, [\rho_{k-1}]_{-k}, [\rho_{k+1}]_{-k-1}, \dots  [\rho_{2k}]_{-2k}\Big).$$
Therefore, the queueing map $\pi_{-k, t-1}$ acts non-trivially on  $\pi_{-t, n-1}\pi_{-2, n-2}\dots \pi_{-k+1, n-2}\pi_{-k, n-2}h^*$ since $\rho_k< \min \{1, \rho_1, \dots, \rho_{k-1}\}$. Then, Proposition \ref{q_inverse} can be applied, and it gives us the last equality below:
\eeq{
G(\mathcal{G}^j_i) 
& \stackref{isometry_bdy}{=}  G^{\textup{S}}_{-j, \mathbf c}(\pi^{-k, t-1}h) - G^{\textup{S}}_{-j, \mathbf d }(\pi^{-k, t-1}h) \\
&\stackrefpp{braid_h}{isometry_bdy}{=} G^{\textup{S}}_{-j, \mathbf c}(\pi^{-k, t-1} \pi_{-k, n-1}\pi_{-2, n-2}\dots \pi_{-k+1, n-2}\pi_{-k, n-2}h^*) \\
& \qquad\qquad - G^{\textup{S}}_{-j, \mathbf d }(\pi^{-k, t-1} \pi_{-k, n-1}\pi_{-2, n-2}\dots \pi_{-k+1, n-2}\pi_{-k, n-2}h^*) \\
&\stackrefp{isometry_bdy}{=} G^{\textup{S}}_{-j, \mathbf c}(\pi^{-k, t-1} \pi_{-k, t-1}\pi_{t, n-1}\pi_{-2, n-2}\dots \pi_{-k+1, n-2}\pi_{-k, n-2}h^*) \\
& \qquad\qquad - G^{\textup{S}}_{-j, \mathbf d }(\pi^{-k, t-1} \pi_{-k, t-1}\pi_{t, n-1}\pi_{-2, n-2}\dots \pi_{-k+1, n-2}\pi_{-k, n-2}h^*) \\
& \stackrefpp{inverse_eq}{isometry_bdy}{=}  G^{\textup{S}}_{-j, \mathbf c}(\pi_{t, n-1}\pi_{-2, n-2}\dots \pi_{-k+1, n-2}\pi_{-k, n-2}h^*) \\
& \qquad\qquad - G^{\textup{S}}_{-j, \mathbf d }(\pi_{t, n-1}\pi_{-2, n-2}\dots \pi_{-k+1, n-2}\pi_{-k, n-2}h^*).\label{reG1}
}
This finishes the rewriting of $\{G(\mathcal{G}_i^j)\}_{i=1}^{2k}$ for $j = k+1, \dots, 2k$.

Next, let us rewrite $G(\mathcal{G}_i^j)$ for $j =1, \dots, k-1$. Again, suppose the endpoints of $\mathcal{G}^{j}_i$ are $[\![\mathbf{c}, \mathbf d]\!]$ with $\mathbf d - \mathbf c \in \{\mathbf e_1, -\mathbf e_2 \}$. 
To start, we note that the rates in the environment 
$\pi_{-2, n-2}\dots \pi_{-k+1, n-2}\pi_{-k, n-2}h^*$ is given by 
$$\Big([\rho_k]_n, [1]_{n-1}, \dots,[1]_{-1},  [\rho_{1}]_{-2}, [\rho_2]_{-3}, \dots, [\rho_{k-1}]_{-k}, [\rho_{k+1}]_{-k-1}, \dots, [\rho_{2k}]_{-2k}\Big).$$
In particular, for $j = 1, \dots, k-1$, the queueing map $\pi^{-j-1, n-2}$ acts non-trivially on this since $\rho_j < \min\{1, \rho_1, \dots, \rho_{j-1} \}$. Therefore, Proposition \ref{q_inverse} can be applied to obtain the first equality below in \eqref{switch1}. In addition, the rates of $\pi_{-k, n-1}\pi^{-j-1, n-2}\pi_{-2, n-2}\dots \pi_{-k+1, n-2}\pi_{-k, n-2}h^*$ is given by 
$$\Big([\rho_j]_n, [1]_{n-1}, \dots,[1]_{-1},  [\rho_{1}]_{-2}, \dots, [\rho_{j-1}]_{-j}, [\rho_{j+1}]_{-j-1}\dots, [\rho_{2k}]_{-2k}\Big).$$
Since $\rho_j < \min\{1, \rho_1, \dots, \rho_{j-1}\}$, $\pi_{-j, n-1}$ acts non-trivially on this environment, and by Proposition \ref{q_inverse}, we also obtain the last equality below:
\eeq{
&\pi^{-j, n-1}\pi_{-k, n-1}\pi_{-2, n-2}\dots \pi_{-k+1, n-2}\pi_{-k, n-2}h^* \label{switch1}\\
&\stackref{inverse_eq}{=}\pi^{-j, n-1}\textcolor{red}{\pi_{-k, n-1}}(\textcolor{blue}{\pi_{-j-1, n-2}}\pi^{-j-1, n-2})\pi_{-2, n-2}\dots \pi_{-k+1, n-2}\pi_{-k, n-2}h^*\\
& \stackref{br1}{=}(\pi^{-j, n-1}\textcolor{blue}{\pi_{-j, n-1}})\textcolor{red}{\pi_{-k, n-1}}\pi^{-j-1, n-2}\pi_{-2, n-2}\dots \pi_{-k+1, n-2}\pi_{-k, n-2}h^*\\
&\stackref{inverse_eq}{=}\pi_{-k, n-1}\pi^{-j-1, n-2}\pi_{-2, n-2}\dots \pi_{-k+1, n-2}\pi_{-k, n-2}h^*.
}
In the calculation below, the first equality follows from Proposition~\ref{dual_eq1}:
\eeq{
G(\mathcal{G}^j_i) 
&\stackrefp{switch1}{=}  G^{\textup{N}}_{ \mathbf c, n}(\pi^{-j, n-1}h) - G^{\textup{N}}_{ \mathbf d, n}(\pi^{-j, n-1}h)\\
& \stackrefpp{braid_h}{switch1}{=} G^{\textup{N}}_{ \mathbf c, n}(\pi^{-j, n-1}\pi_{-k, n-1}\pi_{-2, n-2}\dots \pi_{-k+1, n-2}\pi_{-k, n-2}h^*) \\
&\qquad \qquad - G^{\textup{N}}_{ \mathbf d, n}(\pi^{-j, n-1}\pi_{-k, n-1}\pi_{-2, n-2}\dots \pi_{-k+1, n-2}\pi_{-k, n-2}h^*)\\
& \stackref{switch1}{=} G^{\textup{N}}_{ \mathbf c, n}(\pi_{-k, n-1}\pi^{-j-1, n-2}\pi_{-2, n-2}\dots \pi_{-k+1, n-2}\pi_{-k, n-2}h^*) \\
&\qquad \qquad - G^{\textup{N}}_{ \mathbf d, n}(\pi_{-k, n-1}\pi^{-j-1, n-2}\pi_{-2, n-2}\dots \pi_{-k+1, n-2}\pi_{-k, n-2}h^*)\label{reG2}.
}
The above is the rewriting of $\{G(\mathcal{G}_i^j)\}_{i=1}^{2k}$ for $j = 1, \dots, k-1$.

Finally, when $j = k$, we can apply Proposition \ref{q_inverse} similar to \eqref{switch1} above and obtain
\eeq{
G(\mathcal{G}^k_i) 
&\stackrefp{inverse_eq}{=}  G^{\textup{N}}_{ \mathbf c, n}(\pi^{-k, n-1}h) - G^{\textup{N}}_{ \mathbf d, n}(\pi^{-k, n-1}h)\\
& \stackref{braid_h}{=} G^{\textup{N}}_{ \mathbf c, n}(\pi^{-k, n-1}\pi_{-k, n-1}\pi_{-2, n-2}\dots \pi_{-k+1, n-2}\pi_{-k, n-2}h^*) \\
&\qquad \qquad- G^{\textup{N}}_{ \mathbf d, n}(\pi^{-k, n-1}\pi_{-k, n-1}\pi_{-2, n-2}\dots \pi_{-k+1, n-2}\pi_{-k, n-2}h^*)\\
& \stackref{inverse_eq}{=} G^{\textup{N}}_{ \mathbf c, n}(\pi_{-2, n-2}\dots \pi_{-k+1, n-2}\pi_{-k, n-2}h^*) \\
&\qquad \qquad - G^{\textup{N}}_{ \mathbf d, n}(\pi_{-2, n-2}\dots \pi_{-k+1, n-2}\pi_{-k, n-2}h^*)\label{reG3}.
}
Together with \eqref{reG1} and \eqref{reG2}, we have finished the rewriting of $\{G(\mathcal{G}_i^j) \}_{i=1}^{2k}$ for all $j \in \{1, \dots, 2k\}$. 

Next, we will rewrite $G(\mathcal{R}_i^j)$. 
Again, let us denote the unit edge of $\mathcal{R}_i^j$ as $[\![\mathbf e, \mathbf f]\!]$, where $\mathbf f - \mathbf e \in \{\mathbf e_1, -\mathbf e_2 \}$.
Note that by Proposition \ref{isometry}, for $j \in\{k+1, \dots, 2k\}$, it holds that 
\begin{equation}\label{reR1}
G(\mathcal{R}_i^j) = G^\textup{S}_{-j, \mathbf e}(h) - G^\textup{S}_{-j, \mathbf f}(h) \stackrel{\textup{Prop.~\ref{isometry}}}{=} G^\textup{S}_{-j, \mathbf e}(h^*) - G^\textup{S}_{-j, \mathbf f}(h^*).
\end{equation}
For $j = k$,  Propositions \ref{isometry} and \ref{nest} together give the second equality below:
\begin{equation}\label{reR2}
G(\mathcal{R}_i^k) = G^\textup{S}_{-k, \mathbf e}(h) - G^\textup{S}_{-k, \mathbf f}(h) = G^\textup{S}_{n, \mathbf e}(h^*) - G^\textup{S}_{n, \mathbf f}(h^*).
\end{equation}
This finishes the rewriting for $j = k, \dots, 2k$, and next, we will do the rewriting for $j = 1, \dots, k-1$. 
Proposition \ref{nest} gives us the first equality below, and the last equality follows from the fact that the LPP $G^\textup{S}_{n, \bbullet}$ only uses weights along and above level $n$:
\eeq{
G(\mathcal{R}_i^j) 
& \stackref{nest_south}{=} G^{\textup{S}}_{{n}, \mathbf e}(\pi^{-j, n-1} h) - G^{\textup{S}}_{{n}, \mathbf f}(\pi^{-j, n-1} h)\\
& \stackrefpp{braid_h}{nest_south}{=} G^{\textup{S}}_{n, \mathbf e}(\pi^{-j, n-1}\pi_{-k, n-1}\pi_{-2, n-2}\dots \pi_{-k+1, n-2}\pi_{-k, n-2}h^*) \\
&\qquad \qquad - G^{\textup{S}}_{n, \mathbf f}(\pi^{-j, n-1}\pi_{-k, n-1}\pi_{-2, n-2}\dots \pi_{-k+1, n-2}\pi_{-k, n-2}h^*) \\
& \stackrefpp{switch1}{nest_south}{=} G^{\textup{S}}_{n, \mathbf e}(\pi_{-k, n-1}\pi^{-j-1, n-2}\pi_{-2, n-2}\dots \pi_{-k+1, n-2}\pi_{-k, n-2}h^*)  \\
&\qquad \qquad  - G^{\textup{S}}_{n, \mathbf f}(\pi_{-k, n-1}\pi^{-j-1, n-2}\pi_{-2, n-2}\dots \pi_{-k+1, n-2}\pi_{-k, n-2}h^*) \\
&\stackrefp{nest_south}{=} G^{\textup{S}}_{n, \mathbf e}(\sigma_{n-1}\pi^{-j-1, n-2}\pi_{-2, n-2}\dots \pi_{-k+1, n-2}\pi_{-k, n-2}h^*)  \\
&\qquad \qquad  - G^{\textup{S}}_{n, \mathbf f}(\sigma_{n-1}\pi^{-j-1, n-2}\pi_{-2, n-2}\dots \pi_{-k+1, n-2}\pi_{-k, n-2}h^*).\label{reR3}
}
With this, we have completed our task of rewriting $G(\mathcal{B}), G(\mathcal{G}_i^j)$ and $G(\mathcal{R}_i^j)$ with respect to $h^*$, which appear in \eqref{reB1}, \eqref{reG1}, \eqref{reG2}, \eqref{reG3}, \eqref{reR1}, \eqref{reR2}, and \eqref{reR3}.

Next, let $\omega^\star = \{\omega^\star_\mathbf{z}\}$ denote the collection of independent exponential random variables whose rates match the rates of $h^*$. Specifically, the rates for levels $n,n-1,\ldots,-2k$ are
\[
\Big([\rho_k]_n, [\rho_{k-1}]_{n-1}, \dots, [\rho_{1}]_{n-k+1}, [1]_{n-k}, \dots, [1]_{-k}, [\rho_{k+1}]_{-k-1}, \dots, [\rho_{2k}]_{-2k}\Big),
\]
and the rates are equal to $1$ everywhere else.
Suppose that the environment $h^\star$ is defined from $\omega^\star$ via the mapping $\eqref{defh}$.
Since the rates of $\omega^\star$ and $h^*$ match, we have
\begin{equation}\label{defhstar}
h^* \stackrel{\textup{d}}{=} h^\star.
\end{equation}
Therefore, as functions of the two environments, it holds that 
\begin{align*}
&\Big(G(\mathcal{B}), (G(\mathcal{G}_i^j))_{i,j=1}^{2k}, (G(\mathcal{R}_i^j))_{i,j=1}^{2k}\Big) \textup{ computed from $h^*$} \\
& \qquad \qquad \stackrel{\textup{d}}{=} \Big(G(\mathcal{B}), (G(\mathcal{G}_i^j))_{i,j=1}^{2k}, (G(\mathcal{R}_i^j))_{i,j=1}^{2k}\Big) \textup{ computed from $h^\star$}.
\end{align*}
Therefore, from now on, we shall always view $G(\mathcal{B}), G(\mathcal{G}_i^j)$ and $G(\mathcal{R}_i^j)$ as computed directly from the environment $h^\star$ (instead of $h^*$) using the formulas \eqref{reB1}, \eqref{reG1}, \eqref{reG2}, \eqref{reG3}, \eqref{reR1}, \eqref{reR2}, and \eqref{reR3}. 

Next, we will show that if we decrease the value of a single weight $G(\mathcal{B}) = (h^\star)^n(-1) = -\omega^\star_{(0,n)}$ in the environment $\omega^\star$, this action does not decrease the values of $G(\mathcal{R}^j_i)$ and $G(\mathcal{G}^j_i)$ for all $i,j = 1, \dots, 2k$. We will split the argument into two parts to highlight that, in some cases, changing the value of the single weight $G(\mathcal{B}) = (h^\star)^n(-1)= -\omega^\star_{(0,n)}$ has no effect on $G(\mathcal{R}_i^j)$ and $G(\mathcal{G}_i^j)$ for certain values of $j$. These results are recorded as the two propositions below.

\begin{lemma}[Monotonicity observation]\label{prop_dec1}
If the value of $G(\mathcal{B}) = -\omega^\star_{(0,n)}$ is decreased in the environment $\omega^\star$, this does not decrease the values of $\{G(\mathcal{G}^j_i)\}_{i=1}^{2k}$ for $j = k+1, \dots, 2k$ and $\{G(\mathcal{R}^j_i)\}_{i=1}^{2k}$ for $j = 1, \dots, k-1$.
\end{lemma}

\begin{lemma}[Independence observation]\label{prop_dec2}
If the value of $G(\mathcal{B}) = -\omega^\star_{(0,n)}$ is changed in the environment $\omega^\star$, this has no effect on the values of $\{G(\mathcal{G}^j_i)\}_{i=1}^{2k}$ for $j = 1, \dots, k$ and $\{G(\mathcal{R}^j_i)\}_{i=1}^{2k}$ for $j = k, \dots, 2k$.
\end{lemma}

We postpone the proofs of these lemmas and first finish the proof of Proposition \ref{bdry_cov}. 
Let $\mathcal{F}$ denote the $\sigma$-algebra generated by the all weights in $\omega^\star$ except $\omega^\star_{(0,n)}$, which is equal to $-G(\mathcal{B})$. 
Recall the assumption that functions $f$ and $g$ are monotone in the same direction. Viewing
$
f(G(\mathcal{B}))$ and $
g\big((G(\mathcal{R}_i^j))_{i,j=1}^{2k}, (G(\mathcal{G}_i^j))_{i,j=1}^{2k}\big)
$
inside the conditional expectation $\mathbb{E}[\cdots|\mathcal{F}]$, Lemmas \ref{prop_dec1} and \ref{prop_dec2} together imply that $f(G(\mathcal{B}))$ is a non-decreasing (respectively, non-increasing) function of $G(\mathcal{B})$, whereas
$
g\big((G(\mathcal{R}_i^j))_{i,j=1}^{2k}, (G(\mathcal{G}_i^j))_{i,j=1}^{2k}\big)
$
is a non-increasing (respectively, non-decreasing) function of $G(\mathcal{B})$. In this case, the Harris--FKG inequality gives us the inequality below:
\begin{align}
\begin{split}  \label{harris}
&\mathbb{E}\Big[f(G(\mathcal{B}))g\big((G(\mathcal{R}_i^j))_{i,j=1}^{2k}, (G(\mathcal{G}_i^j))_{i,j=1}^{2k}\big)\Big]\\
&=\mathbb{E}\Big[\mathbb{E}\Big[f(G(\mathcal{B}))g\big((G(\mathcal{R}_i^j))_{i,j=1}^{2k}, (G(\mathcal{G}_i^j))_{i,j=1}^{2k}\big)\Big|\mathcal{F} \Big]\Big]\\
&  \leq  \mathbb{E}\Big[\mathbb{E}\Big[f(G(\mathcal{B}))\Big|\mathcal{F}\Big]\cdot \mathbb{E}\Big[g\big((G(\mathcal{R}_i^j))_{i,j=1}^{2k}, (G(\mathcal{G}_i^j))_{i,j=1}^{2k}\big)\Big|\mathcal{F} \Big]\Big]\\
& = \mathbb{E}\Big[\mathbb{E}\Big[f(G(\mathcal{B})) \Big]\cdot\mathbb{E}\Big[g\big((G(\mathcal{R}_i^j))_{i,j=1}^{2k}, (G(\mathcal{G}_i^j))_{i,j=1}^{2k}\big)\Big|\mathcal{F} \Big]\Big]\\
& = \mathbb{E}\Big[f(G(\mathcal{B})) \Big]\cdot \mathbb{E}\Big[g\big((G(\mathcal{R}_i^j))_{i,j=1}^{2k}, (G(\mathcal{G}_i^j))_{i,j=1}^{2k}\big)\Big].
\end{split}
\end{align}
This proves \eqref{bdry_cov1}.

To obtain \eqref{bdry_cov2}, we apply Lemma \ref{prop_dec2}. 
Since changing $G(\mathcal{B})$ does not affect the value of $g\big( (G(\mathcal{G}^j_i))_{i=1,\dots,2k}^{j=1,\dots,k} , (G(\mathcal{R}^j_i))_{i=1,\dots,2k}^{j=k,\dots,2k}\big)$, one can apply the Harris-FKG inequality in both directions, and the inequality in \eqref{harris} becomes an equality in this setting. Thus, we obtain that the covariance is equal to zero.
\end{proof}

In the next two subsections, we prove Lemma \ref{prop_dec1} and Lemma \ref{prop_dec2}.

\subsection{Proof of Lemma \ref{prop_dec1}}

Recall environment $\omega^\star$, or equivalently $h^\star$, defined before \eqref{defhstar}. Let us define a new environment   $\omega^\star_{\textup{dec}}$, or equivalently $h^\star_{\textup{dec}}$, where $\omega^\star_{\mathbf{z}} = \omega^\star_{\textup{dec}, \mathbf{z}}$ except that at the vertex $(0, n)$, 
$$- \omega^\star_{(0,n)} \geq - \omega^\star_{\textup{dec}, (0,n)}.$$
To simplify the notation, define
\begin{equation}\label{def_htilde}
\begin{aligned}
\wt h &= \pi_{-2, n-2}\cdots \pi_{-k+1, n-2}\pi_{-k, n-2}h^\star,\\
\wt h_\textup{dec} &= \pi_{-2, n-2}\cdots \pi_{-k+1, n-2}\pi_{-k, n-2}h^\star_\textup{dec}.
\end{aligned}
\end{equation}
Note that $\wt h$ is introduced only as a simplified notation, and the underlying configuration $h^\star$ remains implicit in the background throughout.

Let us consider $G(\mathcal{G}^j_i)$ for $j = k+1, \dots, 2k$, which has been rewritten in \eqref{reG1}. 
With the new notation $\wt h$, the formula \eqref{reG1} can be rewritten as
$$
G(\mathcal{G}^j_i) =
G^{\textup{S}}_{-j, \mathbf c}(\pi_{t, n-1}\wt h)
-
G^{\textup{S}}_{-j, \mathbf d}(\pi_{t, n-1}\wt h),
$$
and our goal is to show 
\begin{equation}\label{ggoal1}
G^{\textup{S}}_{-j, \mathbf c}(\pi_{t, n-1}\wt h_\textup{dec})
-
G^{\textup{S}}_{-j, \mathbf d}(\pi_{t, n-1}\wt h_\textup{dec}) \geq G^{\textup{S}}_{-j, \mathbf c}(\pi_{t, n-1}\wt h)
-
G^{\textup{S}}_{-j, \mathbf d}(\pi_{t, n-1}\wt h).
\end{equation}

To do this, we will split the argument into two cases, depending on whether $\mathbf d - \mathbf c$ is $\mathbf e_1$ or $-\mathbf e_2$. 
Let us look at the case where $\mathbf d - \mathbf c = \mathbf e_1$. 
First note that in the environment $\pi_{t, n-1}\wt h$, decreasing the value of $-\omega^\star_{(0,n)}$ does not change the portion of the environment below level $t$:
\eq{ 
(\pi_{t, n-1}\wt h)^r = (\pi_{t, n-1}\wt h_\textup{dec})^r \qquad \textup{for each }r\leq t-1.
}
In addition, note that for each $a<b$, by Proposition \ref{LPP_queue}, 
\begin{align*}
(\pi_{t, n-1}\wt h)^t(b)  -(\pi_{t, n-1}\wt h)^t(a) 
\stackref{rep_north}{=} G^\textup{N}_{(a, t), n}(\wt h) - G^\textup{N}_{(b,t), n}(\wt h).
\end{align*}
Note that by decreasing $-\omega^\star_{(0,n)}$, Proposition~\ref{inc_bdry} implies that the horizontal increments along the semi-infinite horizontal line $(-\infty, 0]\times \{t\}$, i.e.,
\begin{align*}
I^{\textup{N}, n}_{(i,t)} (\wt h) = G^\textup{N}_{(i-1, t), n}(\wt h)  - G^\textup{N}_{(i, t), n}(\wt h) \quad \text{for each $i \in \mathbb{Z}_{\leq0}$},
\end{align*}
will increase or remain unchanged; in other words, the following holds:
\begin{align}
I^{\textup{N}, n}_{(i,t)} (\wt h)  &\leq I^{\textup{N}, n}_{(i,t)} (\wt h_\textup{dec}) \quad \textup{for each $i \in \mathbb{Z}_{\leq0}$,} \label{inc_I1}\\
I^{\textup{N}, n}_{(i,t)} (\wt h)  & = I^{\textup{N}, n}_{(i,t)} (\wt h_\textup{dec}) \quad \textup{for each $i \in \mathbb{Z}_{>0}$}.\label{inc_I2}
\end{align}
We note that the weights in \eqref{inc_I1} are seen by $G^{\textup{S}}_{-j, (0,t)}(\pi_{t, n-1}\wt h)$ and  $G^{\textup{S}}_{-j, (0,t)}(\pi_{t, n-1}\wt h_\textup{dec})$ along the semi-infinite horizontal line $(-\infty, 0]\times \{t\}$. In particular, the inequality in \eqref{inc_I1} implies
$$G^{\textup{S}}_{-j, (0,t)}(\pi_{t, n-1}\wt h) \leq G^{\textup{S}}_{-j, (0,t)}(\pi_{t, n-1}\wt h_\textup{dec}).$$
On the other hand, because $(\pi_{t, n-1}\wt h)^r = (\pi_{t, n-1}\wt h_\textup{dec})^r$ for $r \leq t-1$, the following equality holds
$$G^{\textup{S}}_{-j, (0,t-1)}(\pi_{t, n-1}\wt h) = G^{\textup{S}}_{-j, (0,t-1)}(\pi_{t, n-1}\wt h_\textup{dec}).$$
Combining the inequality and equality from the two math displays above, it implies that the $J$-increment across the vertical edge $[\![(0,t-1), (0,t) ]\!]$ must increase:
\begin{equation}\label{J_inc}
J^{\textup{S}, -j}_{(0,t)}(\pi_{t, n-1}\wt h_{\textup{dec}}) \geq J^{\textup{S}, -j}_{(0,t)}(\pi_{t, n-1}\wt h).
\end{equation}
Now, using the fact that $(\pi_{t, n-1}\wt h)^r = (\pi_{t, n-1}\wt h_\textup{dec})^r$ for $r \leq t-1$ again, we obtain 
\begin{equation}\label{same_I1}
I^{\textup{S}, -j}_{(i,t-1)}(\pi_{t, n-1}\wt h_\textup{dec}) = I^{\textup{S}, -j}_{(i,t-1)}(\pi_{t, n-1}\wt h) \quad \textup{for each $i\in \mathbb{Z}$}.
\end{equation}
By the ``corner-flipping'' induction from \eqref{flip} (where the $\omega$'s are coming from \eqref{inc_I2}, the $J$'s from \eqref{J_inc}, and the $I$'s from \eqref{same_I1}; see Figure \ref{corner_G}), we must have 
$$I^{\textup{S}, -j}_{(i,t)}(\pi_{t, n-1}\wt h_{\textup{dec}}) \geq I^{\textup{S}, -j}_{(i,t)}(\pi_{t, n-1}\wt h) \quad \textup{for each } i \in \mathbb{Z}_{>0}.$$
This is exactly \eqref{ggoal1} when $\mathbf d - \mathbf c = \mathbf{e}_1$.

\begin{figure}[t]
\begin{center}

\tikzset{every picture/.style={line width=0.75pt}} 

\begin{tikzpicture}[x=0.75pt,y=0.75pt,yscale=-1,xscale=1]

\draw  [draw opacity=0][dash pattern={on 0.84pt off 2.51pt}] (66.67,79.67) -- (352.17,79.67) -- (352.17,137.33) -- (66.67,137.33) -- cycle ; \draw  [color={rgb, 255:red, 0; green, 0; blue, 0 }  ,draw opacity=1 ][dash pattern={on 0.84pt off 2.51pt}] (66.67,79.67) -- (66.67,137.33)(123.67,79.67) -- (123.67,137.33)(180.67,79.67) -- (180.67,137.33)(237.67,79.67) -- (237.67,137.33)(294.67,79.67) -- (294.67,137.33)(351.67,79.67) -- (351.67,137.33) ; \draw  [color={rgb, 255:red, 0; green, 0; blue, 0 }  ,draw opacity=1 ][dash pattern={on 0.84pt off 2.51pt}] (66.67,79.67) -- (352.17,79.67)(66.67,136.67) -- (352.17,136.67) ; \draw  [color={rgb, 255:red, 0; green, 0; blue, 0 }  ,draw opacity=1 ][dash pattern={on 0.84pt off 2.51pt}]  ;
\draw [color={rgb, 255:red, 208; green, 2; blue, 27 }  ,draw opacity=1 ][line width=2.25]    (66.67,79.67) -- (66.67,136.67) ;
\draw  [fill={rgb, 255:red, 0; green, 0; blue, 0 }  ,fill opacity=1 ] (64,79.67) .. controls (64,78.19) and (65.19,77) .. (66.67,77) .. controls (68.14,77) and (69.33,78.19) .. (69.33,79.67) .. controls (69.33,81.14) and (68.14,82.33) .. (66.67,82.33) .. controls (65.19,82.33) and (64,81.14) .. (64,79.67) -- cycle ;
\draw [color={rgb, 255:red, 65; green, 117; blue, 5 }  ,draw opacity=1 ][line width=2.25]    (66.67,136.67) -- (351.67,136.67) ;
\draw  [fill={rgb, 255:red, 0; green, 0; blue, 0 }  ,fill opacity=1 ] (64,136.67) .. controls (64,135.19) and (65.19,134) .. (66.67,134) .. controls (68.14,134) and (69.33,135.19) .. (69.33,136.67) .. controls (69.33,138.14) and (68.14,139.33) .. (66.67,139.33) .. controls (65.19,139.33) and (64,138.14) .. (64,136.67) -- cycle ;
\draw  [color={rgb, 255:red, 245; green, 166; blue, 35 }  ,draw opacity=1 ][fill={rgb, 255:red, 245; green, 166; blue, 35 }  ,fill opacity=1 ] (121,79.67) .. controls (121,78.19) and (122.19,77) .. (123.67,77) .. controls (125.14,77) and (126.33,78.19) .. (126.33,79.67) .. controls (126.33,81.14) and (125.14,82.33) .. (123.67,82.33) .. controls (122.19,82.33) and (121,81.14) .. (121,79.67) -- cycle ;
\draw  [fill={rgb, 255:red, 0; green, 0; blue, 0 }  ,fill opacity=1 ] (121,136.67) .. controls (121,135.19) and (122.19,134) .. (123.67,134) .. controls (125.14,134) and (126.33,135.19) .. (126.33,136.67) .. controls (126.33,138.14) and (125.14,139.33) .. (123.67,139.33) .. controls (122.19,139.33) and (121,138.14) .. (121,136.67) -- cycle ;
\draw  [color={rgb, 255:red, 245; green, 166; blue, 35 }  ,draw opacity=1 ][fill={rgb, 255:red, 245; green, 166; blue, 35 }  ,fill opacity=1 ] (178,79.67) .. controls (178,78.19) and (179.19,77) .. (180.67,77) .. controls (182.14,77) and (183.33,78.19) .. (183.33,79.67) .. controls (183.33,81.14) and (182.14,82.33) .. (180.67,82.33) .. controls (179.19,82.33) and (178,81.14) .. (178,79.67) -- cycle ;
\draw  [fill={rgb, 255:red, 0; green, 0; blue, 0 }  ,fill opacity=1 ] (178,136.67) .. controls (178,135.19) and (179.19,134) .. (180.67,134) .. controls (182.14,134) and (183.33,135.19) .. (183.33,136.67) .. controls (183.33,138.14) and (182.14,139.33) .. (180.67,139.33) .. controls (179.19,139.33) and (178,138.14) .. (178,136.67) -- cycle ;
\draw  [color={rgb, 255:red, 245; green, 166; blue, 35 }  ,draw opacity=1 ][fill={rgb, 255:red, 245; green, 166; blue, 35 }  ,fill opacity=1 ] (235,79.67) .. controls (235,78.19) and (236.19,77) .. (237.67,77) .. controls (239.14,77) and (240.33,78.19) .. (240.33,79.67) .. controls (240.33,81.14) and (239.14,82.33) .. (237.67,82.33) .. controls (236.19,82.33) and (235,81.14) .. (235,79.67) -- cycle ;
\draw  [fill={rgb, 255:red, 0; green, 0; blue, 0 }  ,fill opacity=1 ] (235,136.67) .. controls (235,135.19) and (236.19,134) .. (237.67,134) .. controls (239.14,134) and (240.33,135.19) .. (240.33,136.67) .. controls (240.33,138.14) and (239.14,139.33) .. (237.67,139.33) .. controls (236.19,139.33) and (235,138.14) .. (235,136.67) -- cycle ;
\draw  [fill={rgb, 255:red, 0; green, 0; blue, 0 }  ,fill opacity=1 ] (292,136.67) .. controls (292,135.19) and (293.19,134) .. (294.67,134) .. controls (296.14,134) and (297.33,135.19) .. (297.33,136.67) .. controls (297.33,138.14) and (296.14,139.33) .. (294.67,139.33) .. controls (293.19,139.33) and (292,138.14) .. (292,136.67) -- cycle ;
\draw  [color={rgb, 255:red, 245; green, 166; blue, 35 }  ,draw opacity=1 ][fill={rgb, 255:red, 245; green, 166; blue, 35 }  ,fill opacity=1 ] (292,79.67) .. controls (292,78.19) and (293.19,77) .. (294.67,77) .. controls (296.14,77) and (297.33,78.19) .. (297.33,79.67) .. controls (297.33,81.14) and (296.14,82.33) .. (294.67,82.33) .. controls (293.19,82.33) and (292,81.14) .. (292,79.67) -- cycle ;
\draw  [color={rgb, 255:red, 245; green, 166; blue, 35 }  ,draw opacity=1 ][fill={rgb, 255:red, 245; green, 166; blue, 35 }  ,fill opacity=1 ] (349,79.67) .. controls (349,78.19) and (350.19,77) .. (351.67,77) .. controls (353.14,77) and (354.33,78.19) .. (354.33,79.67) .. controls (354.33,81.14) and (353.14,82.33) .. (351.67,82.33) .. controls (350.19,82.33) and (349,81.14) .. (349,79.67) -- cycle ;
\draw  [fill={rgb, 255:red, 0; green, 0; blue, 0 }  ,fill opacity=1 ] (349,136.67) .. controls (349,135.19) and (350.19,134) .. (351.67,134) .. controls (353.14,134) and (354.33,135.19) .. (354.33,136.67) .. controls (354.33,138.14) and (353.14,139.33) .. (351.67,139.33) .. controls (350.19,139.33) and (349,138.14) .. (349,136.67) -- cycle ;

\draw (47.5,100.57) node [anchor=north west][inner sep=0.75pt]   {$J$};

\draw (27.5,67.57) node [anchor=north west][inner sep=0.75pt]    {$(0,t)$};
\draw (02.5,127.57) node [anchor=north west][inner sep=0.75pt]   {$(0, t-1)$};

\draw (115.5,58.9) node [anchor=north west][inner sep=0.75pt]    {$I^{N}$};
\draw (87.83,142.57) node [anchor=north west][inner sep=0.75pt]   {$I^{S}$};

\end{tikzpicture}

\captionsetup{width=.8\linewidth}
\caption{Weights used in ``corner-flipping'' induction.
The $I^N$ weights are considered in \eqref{inc_I2}, the $J$ weight in \eqref{J_inc}, and the $I^S$ weights in \eqref{same_I1}.}
\label{corner_G}
\end{center}
\end{figure}

Now, let us look at the case when $\mathbf d - \mathbf c = -\mathbf{e}_2$. To simplify notation, let $s\geq 0$ be fixed so that
$$\mathbf{c} = (s, t+1)\qquad \textup{and} \qquad \mathbf{d} = (s, t).$$
By replacing the horizontal level $t$ in the argument for \eqref{inc_I1} and \eqref{inc_I2} with level $t+1$, the same argument used there yields
\begin{align}
I^{\textup{N}, n}_{(i,t+1)} (\wt h)  &\leq I^{\textup{N}, n}_{(i,t+1)} (\wt h_\textup{dec}) \quad \textup{for each $i \in \mathbb{Z}_{\leq0}$},\label{inc_I11}\\
I^{\textup{N}, n}_{(i,t+1)} (\wt h)  & = I^{\textup{N}, n}_{(i,t+1)} (\wt h_\textup{dec}) \quad \textup{for each $i \in \mathbb{Z}_{>0}$}.\label{inc_I22}
\end{align}

Next, let us write $G^{{\textup{N}}, < s}_{\bbullet, n}$ to denote north-boundary LPP in which the allowed paths are those exiting the boundary strictly to the right of $(-s, n)$.
Similarly, let $G^{{\textup{N}}, \geq s}_{\bbullet, n}$ denote the north-boundary LPP with allowed paths exiting the boundary at or to the left of $(-s, n)$. 
In the rewriting below, the third equality follows from decomposing the LPP depending on where the geodesic jumps up from level $t-1$ to level $t$, the next three equalities follow directly from the definitions and are illustrated in Figure \ref{2lines}, and the last equality follows from the fact that $(\pi_{t+1, n-1}\wt h)^r = \wt h^r$ for $r\leq t-1$:
\begin{align}
&G^{\textup{S}}_{-j, (s, t+1)}(\pi_{t, n-1}\wt h) \nonumber \stackrel{\textup{Prop.~\ref{isometry}}}{=}G^{\textup{S}}_{-j, (s, t+1)}(\pi^{t, t+1}\pi_{t, n-1}\wt h) \nonumber \stackref{inverse_eq}{=}G^{\textup{S}}_{-j, (s, t+1)}(\pi_{t+1, n-1}\wt h) \nonumber\\
 &=\sup_{m\leq s}\Big\{G^{\textup{S}}_{-j, (m, t-1)}(\pi_{t+1, n-1}\wt h) + \textcolor{red}{\sup_{q: m \leq q \leq s} \Big\{(\pi_{t+1, n-1}\wt h)^t(q) -(\pi_{t+1, n-1}\wt h)^t(m-1) }\label{redterm}\\
 & \qquad \qquad  \qquad \qquad \textcolor{red}{+ (\pi_{t+1, n-1}\wt h)^{t+1}(s) -(\pi_{t+1, n-1}\wt h)^{t+1}(q-1)  \Big\} } \Big\}\nonumber\\
  &=\sup_{m\leq s}\Big\{G^{\textup{S}}_{-j, (m, t-1)}(\pi_{t+1, n-1}\wt h) + \textcolor{green!45!black}{\sup_{q: m \leq q \leq s} \Big\{(\pi_{t+1, n-1}\wt h)^t(q) -(\pi_{t+1, n-1}\wt h)^t(m-1) }\nonumber\\
 & \qquad \qquad  \qquad \qquad \textcolor{green!45!black}{ -(\pi_{t+1, n-1}\wt h)^{t+1}(q-1)\Big\}}\Big\} + \textcolor{blue}{(\pi_{t+1, n-1}\wt h)^{t+1}(s)}  \nonumber\\
   &=\sup_{m\leq s}\Big\{G^{\textup{S}}_{-j, (m, t-1)}(\pi_{t+1, n-1}\wt h) + \textcolor{green!45!black}{\sup_{q': m-1 \leq q' \leq s-1} \Big\{(\pi_{t+1, n-1}\wt h)^t(q'+1) -(\pi_{t+1, n-1}\wt h)^t(m-1) }\nonumber\\
 & \qquad \qquad  \qquad \qquad \textcolor{green!45!black}{ -(\pi_{t+1, n-1}\wt h)^{t+1}(q')\Big\}} \Big\}+ \textcolor{blue}{(\pi_{t+1, n-1}\wt h)^{t+1}(s)}  \nonumber\\
&=\sup_{m\leq s}\Big\{G^{\textup{S}}_{-j, (m, t-1)}(\pi_{t+1, n-1}\wt h) + \textcolor{green!45!black}{G^{\textup{N}, <s}_{(m-1, t), t+1}(\pi_{t+1, n-1}\wt h) }\Big\} + \textcolor{blue}{(\pi_{t+1, n-1}\wt h)^{t+1}(s) }.\label{re_t+1}
\end{align}

\begin{figure}[t]
\begin{center}

\tikzset{every picture/.style={line width=0.75pt}} 

\begin{tikzpicture}[x=0.75pt,y=0.75pt,yscale=-1,xscale=1]

\draw [color={rgb, 255:red, 126; green, 211; blue, 33 }  ,draw opacity=1 ][line width=3]    (208.21,123.03) -- (302.39,123.03) ;
\draw [color={rgb, 255:red, 126; green, 211; blue, 33 }  ,draw opacity=1 ] [dash pattern={on 0.84pt off 2.51pt}]  (260.29,80) -- (301.03,124.01) ;
\draw [color={rgb, 255:red, 74; green, 144; blue, 226 }  ,draw opacity=1 ][line width=3]    (57.02,77.15) -- (338.24,77.15) ;
\draw [color={rgb, 255:red, 126; green, 211; blue, 33 }  ,draw opacity=1 ][line width=3]    (57.98,81.05) -- (262.5,81.05) ;
\draw [dashed, color={rgb, 255:red, 208; green, 2; blue, 5 }  ,draw opacity=1 ][line width=3]    (208.82,126.68) -- (302.62,126.71) ;
\draw [dashed, color={rgb, 255:red, 208; green, 2; blue, 5 }  ,draw opacity=1 ][line width=3]    (262.5,81.05) -- (338.43,81.05) ;
\draw  [fill={rgb, 255:red, 0; green, 0; blue, 0 }  ,fill opacity=1 ] (202.71,125.07) .. controls (202.71,122.67) and (204.66,120.72) .. (207.07,120.72) .. controls (209.48,120.72) and (211.43,122.67) .. (211.43,125.07) .. controls (211.43,127.48) and (209.48,129.43) .. (207.07,129.43) .. controls (204.66,129.43) and (202.71,127.48) .. (202.71,125.07) -- cycle ;
\draw [color={rgb, 255:red, 208; green, 2; blue, 27 }  ,draw opacity=1 ] [dash pattern={on 0.84pt off 2.51pt}]  (263.83,80.77) -- (304.58,124.77) ;
\draw  [fill={rgb, 255:red, 0; green, 0; blue, 0 }  ,fill opacity=1 ] (300.22,124.77) .. controls (300.22,122.37) and (302.17,120.42) .. (304.58,120.42) .. controls (306.98,120.42) and (308.93,122.37) .. (308.93,124.77) .. controls (308.93,127.18) and (306.98,129.13) .. (304.58,129.13) .. controls (302.17,129.13) and (300.22,127.18) .. (300.22,124.77) -- cycle ;
\draw  [fill={rgb, 255:red, 0; green, 0; blue, 0 }  ,fill opacity=1 ] (257.86,79.36) .. controls (257.86,76.95) and (259.81,75) .. (262.21,75) .. controls (264.62,75) and (266.57,76.95) .. (266.57,79.36) .. controls (266.57,81.76) and (264.62,83.71) .. (262.21,83.71) .. controls (259.81,83.71) and (257.86,81.76) .. (257.86,79.36) -- cycle ;
\draw  [fill={rgb, 255:red, 0; green, 0; blue, 0 }  ,fill opacity=1 ] (334.43,79.64) .. controls (334.43,77.24) and (336.38,75.29) .. (338.78,75.29) .. controls (341.19,75.29) and (343.14,77.24) .. (343.14,79.64) .. controls (343.14,82.05) and (341.19,84) .. (338.78,84) .. controls (336.38,84) and (334.43,82.05) .. (334.43,79.64) -- cycle ;
\draw  [fill={rgb, 255:red, 0; green, 0; blue, 0 }  ,fill opacity=1 ] (52.43,79.36) .. controls (52.43,76.95) and (54.38,75) .. (56.78,75) .. controls (59.19,75) and (61.14,76.95) .. (61.14,79.36) .. controls (61.14,81.76) and (59.19,83.71) .. (56.78,83.71) .. controls (54.38,83.71) and (52.43,81.76) .. (52.43,79.36) -- cycle ;
\draw    (77.6,73.6) -- (88.6,84.6) ;
\draw    (99.6,73.6) -- (110.6,84.6) ;
\draw    (143.6,73.6) -- (154.6,84.6) ;
\draw    (121.6,73.6) -- (132.6,84.6) ;
\draw    (165.6,73.6) -- (176.6,84.6) ;
\draw    (187.6,73.6) -- (198.6,84.6) ;
\draw    (209.6,73.6) -- (220.6,84.6) ;
\draw    (231.6,73.6) -- (242.6,84.6) ;
\draw    (112.4,40.8) -- (136.08,67.89) ;
\draw [shift={(137.4,69.4)}, rotate = 228.84] [color={rgb, 255:red, 0; green, 0; blue, 0 }  ][line width=0.75]    (10.93,-3.29) .. controls (6.95,-1.4) and (3.31,-0.3) .. (0,0) .. controls (3.31,0.3) and (6.95,1.4) .. (10.93,3.29)   ;

\draw (27.1,92.14) node [anchor=north west][inner sep=0.75pt]    {$( 0,t+1)$};
\draw (344.4,70.2) node [anchor=north west][inner sep=0.75pt]    {$( s,t+1)$};
\draw (168.75,131.77) node [anchor=north west][inner sep=0.75pt]    {$( m,t)$};
\draw (305.5,133.59) node [anchor=north west][inner sep=0.75pt]    {$( q,t)$};
\draw (221.33,51.32) node [anchor=north west][inner sep=0.75pt]    {$( q-1,t+1)$};
\draw (10,20.6) node [anchor=north west][inner sep=0.75pt]    {green and blue cancels to zero here};

\end{tikzpicture}

\captionsetup{width=.8\linewidth}
\caption{Rewriting the two-line LPP (shown in dashed red) from \eqref{redterm} using a sum of LPP with north boundary (shown in solid green, interior lines) and the environment along level $t+1$ (shown in solid blue, top line). This figure essentially says ``green segments $+$ blue segment $=$ red segments.''}\label{2lines}
\end{center}
\end{figure}

On the other hand, we will rewrite $G^{\textup{S}}_{-j, (s, t)}(\pi_{t, n-1}\wt h)$ as follows. The first equality below follows by decomposing the LPP value along level $t$, and the second equality follows from Proposition \ref{LPP_queue},
\eeq{
&G^{\textup{S}}_{-j, (s, t)}(\pi_{t, n-1}\wt h) \\
&\stackrefp{rep_north}{=}\sup_{m\leq s}\Big\{G^{\textup{S}}_{-j, (m, t-1)}(\pi_{t, n-1}\wt h) + \Big[(\pi_{t, n-1}\wt h)^t(m-1) - (\pi_{t, n-1}\wt h)^t(s)\Big]\Big\}\\
&\stackref{rep_north}{=}\sup_{m\leq s}\Big\{G^{\textup{S}}_{-j, (m, t-1)}(\pi_{t, n-1}\wt h) + \Big[G^{\textup{N}}_{(m-1, t), t+1}(\pi_{t+1, n-1}\wt h)  - G^{\textup{N}}_{(s, t), t+1}(\pi_{t+1, n-1}\wt h)\Big]\Big\}\\
&\stackrefp{rep_north}{=} \sup_{m\leq s}\Big\{G^{\textup{S}}_{-j, (m, t-1)}(\wt h) + G^{\textup{N}}_{(m-1, t), t+1}(\pi_{t+1, n-1}\wt h) \Big\}- G^{\textup{N}}_{(s, t), t+1}(\pi_{t+1, n-1}\wt h).\label{re_t}
}
In particular, in the two rewritings above, we note that the following quantities are the same for both $\wt h$ and $\wt h_\textup{dec}$, because by definition these quantities do not use any of the weights that are changed:
\begin{align}
G^{\textup{S}}_{-j, (m, t-1)}(\wt h) &= G^{\textup{S}}_{-j, (m, t-1)}(\wt h_\textup{dec}),\nonumber\\ 
(\pi_{t+1, n-1}\wt h)^{t+1}(s) &= (\pi_{t+1, n-1}\wt h_{\textup{dec}})^{t+1}(s) \quad \textup{ for $s\geq 0$},\label{same_dec}\\
G^{\textup{N}}_{(s, t), t+1}(\pi_{t+1, n-1}\wt h) &= G^{\textup{N}}_{(s, t), t+1}(\pi_{t+1, n-1}\wt h_\textup{dec}) \quad \textup{ for $s\geq 0$}.\nonumber
\end{align}
In particular, we note that all these terms are the same between $\wt h$ and $\wt h_{\textup{dec}}$ when we try to establish \eqref{ggoal1}.

Using the rewritings of $G^{\textup{S}}_{-j, (s, t+1)}(\pi_{t, n-1}\wt h)$ and
$G^{\textup{S}}_{-j, (s, t)}(\pi_{t, n-1}\wt h)$ in \eqref{re_t+1} and
\eqref{re_t}, together with the observation in \eqref{same_dec}, we see that
to prove \eqref{ggoal1}, it suffices to show
\eeq{
&\sup_{p\leq s} \Big\{r_p + G^{\textup{N}, <s}_{(p-1, t), t+1}(\pi_{t+1, n-1}\wt h_\textup{dec})\Big\} -  \sup_{m \leq s} \Big\{ r_m + G^{\textup{N}, <s}_{(m-1, t), t+1}(\pi_{t+1, n-1}\wt h) \Big\}\label{ver_goal}\\
&\geq \sup_{p\leq s} \Big\{ r_p + G^{\textup{N}}_{(p-1, t), t+1}(\pi_{t+1, n-1}\wt h_\textup{dec})\Big\} - \sup_{m\leq s} \Big\{ r_m + G^{\textup{N}}_{(m-1, t), t+1}(\pi_{t+1, n-1}\wt h)\Big\},
}
where $$r_m = G^{\textup{S}}_{-j, (m, t-1)}(\wt h) = G^{\textup{S}}_{-j, (m, t-1)}(\wt h_\textup{dec}).$$
The intuition behind this inequality \eqref{ver_goal} is that if we restrict the exit time of the north boundary, i.e., $G^{\textup{N}, < s}$, then it becomes more likely to collect the increased weights from \eqref{inc_I11} along the semi-infinite horizontal line $(-\infty, 0]\times \{t+1\}$ compared to $G^{\textup{N}}$. The rigorous argument is given below. 

We will argue by cases. In the environment $\wt h$, at least one of the two cases must happen:
\begin{align}
&\sup_{m \leq s} \Big\{ r_m + G^{\textup{N}}_{(m-1, t), t+1}(\pi_{t+1, n-1}\wt h) \Big\}= \sup_{j\leq s} \Big\{ r_j + G^{\textup{N}, <s}_{(j-1, t), t+1}(\pi_{t+1, n-1}\wt h)\Big\},\label{case1<}\\
& \sup_{m \leq s} \Big\{ r_m + G^{\textup{N}}_{(m-1, t), t+1}(\pi_{t+1, n-1}\wt h) \Big\}= \sup_{j\leq s} \Big\{ r_j + G^{\textup{N}, \geq s}_{(j-1, t), t+1}(\pi_{t+1, n-1}\wt h)\Big\}.\label{case1>}
\end{align}
Similarly, in the environment $\wt h_{\textup{dec}}$, at least one of the two cases must happen:
\begin{align}
&\sup_{m \leq s} \Big\{ r_m + G^{\textup{N}}_{(m-1, t), t+1}(\pi_{t+1, n-1}\wt h_\textup{dec}) \Big\}= \sup_{j\leq s} \Big\{ r_j + G^{\textup{N}, <s}_{(j-1, t), t+1}(\pi_{t+1, n-1}\wt h_\textup{dec})\Big\},\label{case2<}\\
& \sup_{m \leq s} \Big\{ r_m + G^{\textup{N}}_{(m-1, t), t+1}(\pi_{t+1, n-1}\wt h_\textup{dec}) \Big\}= \sup_{j\leq s} \Big\{ r_j + G^{\textup{N}, \geq s}_{(j-1, t), t+1}(\pi_{t+1, n-1}\wt h_\textup{dec})\Big\}.\label{case2>}
\end{align}
There are four different combinations, and we will look at them separately:
\begin{enumerate}[(a)]
\item \label{caseA} \eqref{case1<} + \eqref{case2<}. In this case, \eqref{ver_goal} holds as an equality. 
\item \eqref{case1>} + \eqref{case2>}. In this case, we have 
\begin{align*}
&\sup_{p\leq s} \Big\{ r_p + G^{\textup{N}}_{(p-1, t), t+1}(\pi_{t+1, n-1}\wt h_\textup{dec})\Big\} - \sup_{m\leq s} \Big\{ r_m + G^{\textup{N}}_{(m-1, t), t+1}(\pi_{t+1, n-1}\wt h)\Big\}\\
& = \sup_{p\leq s} \Big\{ r_p + G^{\textup{N}, \geq s}_{(p-1, t), t+1}(\pi_{t+1, n-1}\wt h_\textup{dec})\Big\} - \sup_{m\leq s} \Big\{ r_m + G^{\textup{N}, \geq s}_{(m-1, t), t+1}(\pi_{t+1, n-1}\wt h)\Big\}
= 0,
\end{align*}
where the last equality with zero is because the condition ``$\geq s$'' means that the LPPs inside both suprema only pick up boundary weights among \eqref{inc_I22}, but none of the changed weights in \eqref{inc_I11}. In this case, \eqref{ver_goal} holds because the right-hand side is zero.

\item \eqref{case1>} + \eqref{case2<}. In this case, \eqref{case1>} implies 
\begin{align}\sup_{m \leq s} \Big\{ r_m + G^{\textup{N}}_{(m-1, t), t+1}(\pi_{t+1, n-1}\wt h) \Big\}  \geq  \sup_{m\leq s} \Big\{ r_m + G^{\textup{N}, <s}_{(m-1, t), t+1}(\pi_{t+1, n-1}\wt h)\Big\}.\label{case3_ineq}
\end{align}
Then our desired inequality \eqref{ver_goal} follows from subtracting the inequality \eqref{case3_ineq} from the equality \eqref{case2<}. 

\item  \eqref{case1<} + \eqref{case2>}. In this case, it suffices to assume that \eqref{case1<} holds while \eqref{case1>} fails, since otherwise, case \ref{caseA} applies. Here, we will argue that this case is impossible. To see the contradiction, note that the first and last terms below are the same, while there is a strict inequality between them:
\begin{align*}\sup_{m \leq s} \Big\{ r_m + G^{\textup{N}}_{(m-1, t), t+1}(\pi_{t+1, n-1}\wt h) \Big\}  
& \stackrel{\mbox{\footnotesize\eqref{case1>} fails}}{>}  \sup_{m\leq s} \Big\{ r_m + G^{\textup{N}, \geq s}_{(m-1, t), t+1}(\pi_{t+1, n-1}\wt h)\Big\}\\
& \stackrel{\hphantom{\mbox{\footnotesize\eqref{case1>} fails}}}{=} \sup_{m\leq s} \Big\{ r_m + G^{\textup{N}, \geq s}_{(m-1, t), t+1}(\pi_{t+1, n-1}\wt h_\textup{dec})\Big\}\\
&  \stackrel{\parbox{\widthof{\footnotesize\eqref{case1>} fails}}{\centering\footnotesize\eqref{case2>}}}{=} \sup_{m\leq s} \Big\{ r_m + G^{\textup{N}}_{(m-1, t), t+1}(\pi_{t+1, n-1}\wt h_\textup{dec})\Big\}\\
& \stackrel{\hphantom{\mbox{\footnotesize\eqref{case1>} fails}}}{\ge} \sup_{m\leq s} \Big\{ r_m + G^{\textup{N}}_{(m-1, t), t+1}(\pi_{t+1, n-1}\wt h)\Big\},
\end{align*}
where the first equality holds because none of the changed weights among \eqref{inc_I11} are used in the LPP term. 
\end{enumerate}

This completes the proof of \eqref{ver_goal}. Therefore, we have shown that every vertical increment is non-decreasing when the environment is changed from $\wt h$ to $\wt h_{\mathrm{dec}}$.
We have thus established that decreasing $-\omega^\star_{(0,n)}$ makes
$\{G(\mathcal{G}^j_i)\}_{i=1}^{2k}$ non-decreasing for every
$j\in\{ k+1, \dots, 2k\}$.
\begin{figure}[t]
\begin{center}

\tikzset{every picture/.style={line width=0.75pt}} 

\begin{tikzpicture}[x=0.75pt,y=0.75pt,yscale=-1,xscale=1]

\draw  [draw opacity=0] (6.4,106.5) -- (426.48,106.5) -- (426.48,176.95) -- (6.4,176.95) -- cycle ; \draw  [color={rgb, 255:red, 155; green, 155; blue, 155 }  ,draw opacity=0.33 ] (6.4,106.5) -- (6.4,176.95)(76.4,106.5) -- (76.4,176.95)(146.4,106.5) -- (146.4,176.95)(216.4,106.5) -- (216.4,176.95)(286.4,106.5) -- (286.4,176.95)(356.4,106.5) -- (356.4,176.95)(426.4,106.5) -- (426.4,176.95) ; \draw  [color={rgb, 255:red, 155; green, 155; blue, 155 }  ,draw opacity=0.33 ] (6.4,106.5) -- (426.48,106.5)(6.4,176.5) -- (426.48,176.5) ; \draw  [color={rgb, 255:red, 155; green, 155; blue, 155 }  ,draw opacity=0.33 ]  ;
\draw [color={rgb, 255:red, 245; green, 166; blue, 35 }  ,draw opacity=1 ][line width=3.75]    (6.4,106.5) -- (286.4,106.5) ;
\draw  [fill={rgb, 255:red, 0; green, 0; blue, 0 }  ,fill opacity=1 ] (423.07,106.5) .. controls (423.07,104.66) and (424.56,103.17) .. (426.4,103.17) .. controls (428.24,103.17) and (429.72,104.66) .. (429.72,106.5) .. controls (429.72,108.34) and (428.24,109.83) .. (426.4,109.83) .. controls (424.56,109.83) and (423.07,108.34) .. (423.07,106.5) -- cycle ;
\draw  [fill={rgb, 255:red, 0; green, 0; blue, 0 }  ,fill opacity=1 ] (423.07,176.5) .. controls (423.07,174.66) and (424.56,173.17) .. (426.4,173.17) .. controls (428.24,173.17) and (429.72,174.66) .. (429.72,176.5) .. controls (429.72,178.34) and (428.24,179.83) .. (426.4,179.83) .. controls (424.56,179.83) and (423.07,178.34) .. (423.07,176.5) -- cycle ;
\draw  [fill={rgb, 255:red, 0; green, 0; blue, 0 }  ,fill opacity=1 ] (353.07,176.5) .. controls (353.07,174.66) and (354.56,173.17) .. (356.4,173.17) .. controls (358.24,173.17) and (359.72,174.66) .. (359.72,176.5) .. controls (359.72,178.34) and (358.24,179.83) .. (356.4,179.83) .. controls (354.56,179.83) and (353.07,178.34) .. (353.07,176.5) -- cycle ;
\draw [color={rgb, 255:red, 74; green, 144; blue, 226 }  ,draw opacity=1 ][line width=3.75]    (76.4,106.5) -- (76.4,176.5) ;
\draw [color={rgb, 255:red, 74; green, 144; blue, 226 }  ,draw opacity=1 ][line width=3.75]    (146.4,106.5) -- (146.4,176.5) ;
\draw [color={rgb, 255:red, 74; green, 144; blue, 226 }  ,draw opacity=1 ][line width=3.75]    (216.4,106.5) -- (216.4,176.5) ;
\draw [color={rgb, 255:red, 74; green, 144; blue, 226 }  ,draw opacity=1 ][line width=3.75]    (286.4,106.5) -- (286.4,176.5) ;
\draw [line width=0.75]    (356.4,106.5) -- (356.4,176.5) ;
\draw [line width=0.75]    (426.4,106.5) -- (426.4,176.5) ;
\draw [dashed, color={rgb, 255:red, 208; green, 2; blue, 27 }  ,draw opacity=1 ][line width=3.75]    (286.4,106.5) -- (356.4,106.5) ;
\draw  [fill={rgb, 255:red, 0; green, 0; blue, 0 }  ,fill opacity=1 ] (283.07,106.5) .. controls (283.07,104.66) and (284.56,103.17) .. (286.4,103.17) .. controls (288.24,103.17) and (289.72,104.66) .. (289.72,106.5) .. controls (289.72,108.34) and (288.24,109.83) .. (286.4,109.83) .. controls (284.56,109.83) and (283.07,108.34) .. (283.07,106.5) -- cycle ;
\draw  [fill={rgb, 255:red, 0; green, 0; blue, 0 }  ,fill opacity=1 ] (353.07,106.5) .. controls (353.07,104.66) and (354.56,103.17) .. (356.4,103.17) .. controls (358.24,103.17) and (359.72,104.66) .. (359.72,106.5) .. controls (359.72,108.34) and (358.24,109.83) .. (356.4,109.83) .. controls (354.56,109.83) and (353.07,108.34) .. (353.07,106.5) -- cycle ;
\draw  [fill={rgb, 255:red, 0; green, 0; blue, 0 }  ,fill opacity=1 ] (213.07,106.5) .. controls (213.07,104.66) and (214.56,103.17) .. (216.4,103.17) .. controls (218.24,103.17) and (219.72,104.66) .. (219.72,106.5) .. controls (219.72,108.34) and (218.24,109.83) .. (216.4,109.83) .. controls (214.56,109.83) and (213.07,108.34) .. (213.07,106.5) -- cycle ;
\draw  [fill={rgb, 255:red, 0; green, 0; blue, 0 }  ,fill opacity=1 ] (213.07,176.5) .. controls (213.07,174.66) and (214.56,173.17) .. (216.4,173.17) .. controls (218.24,173.17) and (219.72,174.66) .. (219.72,176.5) .. controls (219.72,178.34) and (218.24,179.83) .. (216.4,179.83) .. controls (214.56,179.83) and (213.07,178.34) .. (213.07,176.5) -- cycle ;
\draw  [fill={rgb, 255:red, 0; green, 0; blue, 0 }  ,fill opacity=1 ] (283.07,176.5) .. controls (283.07,174.66) and (284.56,173.17) .. (286.4,173.17) .. controls (288.24,173.17) and (289.72,174.66) .. (289.72,176.5) .. controls (289.72,178.34) and (288.24,179.83) .. (286.4,179.83) .. controls (284.56,179.83) and (283.07,178.34) .. (283.07,176.5) -- cycle ;
\draw  [fill={rgb, 255:red, 0; green, 0; blue, 0 }  ,fill opacity=1 ] (143.07,176.5) .. controls (143.07,174.66) and (144.56,173.17) .. (146.4,173.17) .. controls (148.24,173.17) and (149.72,174.66) .. (149.72,176.5) .. controls (149.72,178.34) and (148.24,179.83) .. (146.4,179.83) .. controls (144.56,179.83) and (143.07,178.34) .. (143.07,176.5) -- cycle ;
\draw  [fill={rgb, 255:red, 0; green, 0; blue, 0 }  ,fill opacity=1 ] (143.07,106.5) .. controls (143.07,104.66) and (144.56,103.17) .. (146.4,103.17) .. controls (148.24,103.17) and (149.72,104.66) .. (149.72,106.5) .. controls (149.72,108.34) and (148.24,109.83) .. (146.4,109.83) .. controls (144.56,109.83) and (143.07,108.34) .. (143.07,106.5) -- cycle ;
\draw  [color={rgb, 255:red, 0; green, 0; blue, 0 }  ,draw opacity=1 ][fill={rgb, 255:red, 0; green, 0; blue, 0 }  ,fill opacity=1 ] (73.07,106.5) .. controls (73.07,104.66) and (74.56,103.17) .. (76.4,103.17) .. controls (78.24,103.17) and (79.72,104.66) .. (79.72,106.5) .. controls (79.72,108.34) and (78.24,109.83) .. (76.4,109.83) .. controls (74.56,109.83) and (73.07,108.34) .. (73.07,106.5) -- cycle ;
\draw  [fill={rgb, 255:red, 0; green, 0; blue, 0 }  ,fill opacity=1 ] (73.07,176.5) .. controls (73.07,174.66) and (74.56,173.17) .. (76.4,173.17) .. controls (78.24,173.17) and (79.72,174.66) .. (79.72,176.5) .. controls (79.72,178.34) and (78.24,179.83) .. (76.4,179.83) .. controls (74.56,179.83) and (73.07,178.34) .. (73.07,176.5) -- cycle ;
\draw  [line width=1.5]  (279.42,106.5) .. controls (279.42,102.65) and (282.55,99.52) .. (286.4,99.52) .. controls (290.25,99.52) and (293.37,102.65) .. (293.37,106.5) .. controls (293.37,110.35) and (290.25,113.48) .. (286.4,113.48) .. controls (282.55,113.48) and (279.42,110.35) .. (279.42,106.5) -- cycle ;
\draw  [line width=1.5]  (349.42,106.5) .. controls (349.42,102.65) and (352.55,99.52) .. (356.4,99.52) .. controls (360.25,99.52) and (363.37,102.65) .. (363.37,106.5) .. controls (363.37,110.35) and (360.25,113.48) .. (356.4,113.48) .. controls (352.55,113.48) and (349.42,110.35) .. (349.42,106.5) -- cycle ;
\draw  [line width=1.5]  (209.42,106.5) .. controls (209.42,102.65) and (212.55,99.52) .. (216.4,99.52) .. controls (220.25,99.52) and (223.37,102.65) .. (223.37,106.5) .. controls (223.37,110.35) and (220.25,113.48) .. (216.4,113.48) .. controls (212.55,113.48) and (209.42,110.35) .. (209.42,106.5) -- cycle ;
\draw  [line width=1.5]  (139.42,106.5) .. controls (139.42,102.65) and (142.55,99.52) .. (146.4,99.52) .. controls (150.25,99.52) and (153.37,102.65) .. (153.37,106.5) .. controls (153.37,110.35) and (150.25,113.48) .. (146.4,113.48) .. controls (142.55,113.48) and (139.42,110.35) .. (139.42,106.5) -- cycle ;
\draw  [line width=1.5]  (69.42,106.5) .. controls (69.42,102.65) and (72.55,99.52) .. (76.4,99.52) .. controls (80.25,99.52) and (83.37,102.65) .. (83.37,106.5) .. controls (83.37,110.35) and (80.25,113.48) .. (76.4,113.48) .. controls (72.55,113.48) and (69.42,110.35) .. (69.42,106.5) -- cycle ;
\draw [line width=0.75]    (356.4,106.5) -- (426.4,106.5) ;

\draw (129,79.4) node [anchor=north west][inner sep=0.75pt]    {$I\wedge J$};
\draw (152,133.9) node [anchor=north west][inner sep=0.75pt]    {$J$};
\draw (107.5,110.9) node [anchor=north west][inner sep=0.75pt]    {$I$};
\draw (343.5,74.4) node [anchor=north west][inner sep=0.75pt]    {$( 0,n)$};
\draw (255,74.4) node [anchor=north west][inner sep=0.75pt]    {$( -1,n)$};

\end{tikzpicture}

\captionsetup{width=.8\linewidth}
\caption{If the weight on the red dashed edge increases, then the blue vertical increments corresponding to the LPP with north boundary decrease, while the orange horizontal increments in remain unchanged. Therefore, the dual weights, defined by $I \wedge J$, also decrease.}\label{R_2lines}
\end{center}
\end{figure}

Finally, let us look at $\{\mathcal{R}_i^j\}_{i=1}^{2k}$ where $j\in\{ 1, \dots, k-1\}$, with the rewriting in \eqref{reR3}. Let us denote the unit edge of $\mathcal{R}_i^j$ as $[\![\mathbf e, \mathbf f]\!]$, where $\mathbf f - \mathbf e \in \{\mathbf e_1, -\mathbf e_2 \}$.
Adapting the notation $\wt h$ from \eqref{def_htilde}, we have 
$$G(\mathcal{R}_i^j) = G^\textup{S}_{n, \mathbf e}( \sigma_{n-1}\pi^{-j-1, n-2} \wt h) - G^\textup{S}_{n, \mathbf f}( \sigma_{n-1}\pi^{-j-1, n-2} \wt h).$$

The next part of the argument is illustrated in Figure \ref{R_2lines}. Let us note that the boundary of $G^\textup{S}_{n, \bbullet}(\sigma_{n-1}\pi^{-j-1, n-2} \wt h)$, namely $(\sigma_{n-1}\pi^{-j-1, n-2} \wt h)^n$, is computed from the dual weights along level $n$ of the LPP with north boundary $G^\textup{N}_{\bbullet, n}(\pi^{-j-1, n-2} \wt h)$.
By Proposition \ref{inc_bdry}, if we decrease the value of $-\omega^\star_{(0,n)} = (h^\star)^n(-1)$, which corresponds to an increase of the boundary value because of the additional negative sign for defining $G^\textup{N}$ in \eqref{GN}, the vertical increments $\{J^{\textup{N}, n}_{(i, n)}(\pi^{-j-1, n-2} \wt h)\}_{i \leq -1}$ (shown in blue in Figure \ref{R_2lines}) will become smaller. Note that the horizontal increments $\{I^{\textup{N}, n}_{(i, n)}(\pi^{-j-1, n-2} \wt h)\}_{i \leq -1}$ (shown in orange in Figure \ref{R_2lines}) are not affected by the decrease in $-\omega^\star_{(0,n)}$. Therefore, the dual weights
$$\Big\{I^{\textup{N}, n}_{(i, n)}(\pi^{-j-1, n-2} \wt h) \wedge J^{\textup{N}, n}_{(i, n)}(\pi^{-j-1, n-2} \wt h)\Big\}_{i \leq -1}$$
will become smaller. By definition, for $i \leq -1$, 
$$(\sigma_{n-1}\pi^{-j-1, n-2}\wt h)^n(i) - (\sigma_{n-1}\pi^{-j-1, n-2}\wt h)^n(i-1)=  I^{\textup{N}, n}_{(i, n)}(\pi^{-j-1, n-2}\wt h) \wedge J^{\textup{N}, n}_{(i, n)}(\pi^{-j-1, n-2}\wt h),$$
therefore we have shown 
\eeq{
&(\sigma_{n-1}\pi^{-j-1, n-2}\wt h)^n(i) - (\sigma_{n-1}\pi^{-j-1, n-2}\wt h)^n(i-1)\label{dual_dec}\\
&\geq (\sigma_{n-1}\pi^{-j-1, n-2}\wt h_\textup{dec})^n(i) - (\sigma_{n-1}\pi^{-j-1, n-2}\wt h_\textup{dec})^n(i-1) \quad \text{for $i \leq -1$}.
}

Now, because both $\mathbf e$ and $\mathbf f$ are along or to the left of the vertical line $x = -1$, this means that, by definition, the weight $(\sigma_{n-1}\pi^{-j-1, n-2}\wt h)^n(-1)$ must appear in the passage values $G^\textup{S}_{n, \mathbf e}( \sigma_{n-1}\pi^{-j-1, n-2} \wt h)$ and $G^\textup{S}_{n, \mathbf f}( \sigma_{n-1}\pi^{-j-1, n-2} \wt h)$, because both of their geodesics must take a $-\mathbf{e}_1$ step at the start, going from $(0,n)$ to $(-1, n)$. This means no matter what the value $(\sigma_{n-1}\pi^{-j-1, n-2}\wt h)^n(-1)$ is, it does not affect the difference
$$G^\textup{S}_{n, \mathbf e}( \sigma_{n-1}\pi^{-j-1, n-2} \wt h) - G^\textup{S}_{n, \mathbf f}( \sigma_{n-1}\pi^{-j-1, n-2} \wt h)$$
due to its cancellation. Therefore, $(\sigma_{n-1}\pi^{-j-1, n-2}\wt h)^n(-1)$ and $(\sigma_{n-1}\pi^{-j-1, n-2}\wt h_\textup{dec})^n(-1)$ may be simply treated as ``zero'' when looking at the difference of the last-passage values. 

This observation together with \eqref{dual_dec} means Proposition \ref{inc_bdry} can be applied here, and we obtain
\begin{align*}
&G^\textup{S}_{n, \mathbf e}( \sigma_{n-1}\pi^{-j-1, n-2} \wt h) - G^\textup{S}_{n, \mathbf f}( \sigma_{n-1}\pi^{-j-1, n-2} \wt h) \\
&\leq G^\textup{S}_{n, \mathbf e}( \sigma_{n-1}\pi^{-j-1, n-2} \wt h_\textup{dec}) - G^\textup{S}_{n, \mathbf f}( \sigma_{n-1}\pi^{-j-1, n-2} \wt h_\textup{dec}),
\end{align*}
which completes the argument that the increments $G(\mathcal{R}_i^j)$ are non-decreasing for $j \in\{ 1, \dots, k-1\}$. With this, we have finished the proof of Lemma \ref{prop_dec1}.

\subsection{Proof of Lemma \ref{prop_dec2}}

Again, let us use the short notation $\wt h$ as in \eqref{def_htilde}. 
First, let us look at $G(\mathcal{G}^j_i)$ where $j \in\{ 1, \dots, k-1\}$. 
In this case, our rewriting from \eqref{reG2} is 
$$
G(\mathcal{G}^j_i) =
G^{\textup{N}}_{ \mathbf c, n}(\pi_{-k, n-1}\pi^{-j-1, n-2}\wt h)  - G^{\textup{N}}_{ \mathbf d, n}(\pi_{-k, n-1}\pi^{-j-1, n-2}\wt h).
$$
Then, note that by definition, if we look at the weights to the right of the vertical line $x=0$, the two environments are the same:
$$\pi_{-k, n-1}\pi^{-j-1, n-2}\wt h = \pi_{-k, n-1}\pi^{-j-1, n-2}\wt h_\textup{dec} \quad \textup{to the right of $x=0$}.$$
In addition, for the LPPs with north boundaries $G^{\textup{N}}_{ \mathbf c, n}$ and $G^{\textup{N}}_{ \mathbf d, n}$, because both points $\mathbf{c}$ and $\mathbf d$ are at or to the right of $x=0$, by defintion, the will only use weights at or to the right of $x= 0$. This means that for $\mathbf y \in \{\mathbf c, \mathbf d\}$, 
$$G^{\textup{N}}_{ \mathbf y, n}(\pi_{-k, n-1}\pi^{-j-1, n-2}\wt h)  = G^{\textup{N}}_{ \mathbf y, n}(\pi_{-k, n-1}\pi^{-j-1, n-2}\wt h_\textup{dec}).$$

Next, we look at $G(\mathcal{G}^j_i)$ where $j = k$. In this case, the rewriting is given in \eqref{reG3}, and it follows from the same argument as above; that is, for $\mathbf y \in \{\mathbf c, \mathbf d\}$,
$$G^{\textup{N}}_{ \mathbf y, n}(\wt h)  = G^{\textup{N}}_{ \mathbf y, n}(\wt h_\textup{dec}),$$
because both LPPs only use weights to the right of $x= 0$, which are the same for $\wt h$ and $\wt h_\textup{dec}$.

Next, we move on to $G(\mathcal{R}^j_i)$ for $j \in\{k+1, \dots, 2k\}$, where the rewriting is given in \eqref{reR1}. The increments remain unchanged because the geodesics of the two LPPs in \eqref{reR1} cannot pass through the edge $[\![(-1,n),(0,n)]\!]$, whose weight was changed from $\wt h$ to $\wt h_{\textup{dec}}$.

Lastly, we consider $G(\mathcal{R}^j_i)$ for $j = k$, where the rewriting is given in \eqref{reR2}. This increment also remains unchanged because both geodesics of the LPPs appearing in \eqref{reR2} must pass through the edge $[\![(-1,n),(0,n)]\!]$, whose weight has been modified. Consequently, when taking their difference, the contribution from this edge cancels out. This completes the proof of Lemma \ref{prop_dec2}.

\section{Proofs of main results}\label{proof_main}

In this section, we use Proposition \ref{bdry_cov} to prove results stated in Section~\ref{sec:main}. 
In order to do this, we will first state the equality in distribution between the Busemann process and the increments of LPP with boundary.
To make this connection, we rotate the original model (with independent~$\Exp(1)$ weights) by $180^\circ$. 
In particular, we replace up-right paths with down-left paths, and we replace the limiting direction $\boldsymbol{\xi}[\rho]$ from \eqref{char1} with $-\boldsymbol{\xi}[\rho]$, where $\rho \in (0,1)$.
In this setting, we define the Busemann function as follows. Let $\mathbf{v}_n$ be a sequence of vectors such that $|\mathbf{v}_n|_1 \to \infty$ and $\mathbf{v}_n/|\mathbf{v}_n|_1 \to - \boldsymbol{\xi}[\rho]$. 
The (rotated) Busemann function with direction parameter $\rho$ is the following almost sure limit  
\begin{equation}\label{rota_buse}
\wt B_{\mathbf{x}, \mathbf{y}}^\rho = \lim_{n\to \infty}  G_{\mathbf{v}_n, \mathbf x}(\wt h) - G_{\mathbf{v}_n, \mathbf y}(\wt h),
\end{equation}
where $\wt h$ is an environment of independent~$\Exp(1)$ weights.

With this setup, Theorem 3.2 and Lemma 3.3 of \cite{fan_seppalainen20} give us the following distributional equality, which we will state below. 

Fix any positive integer $k$, and recall the LPP model with boundary defined at the beginning of Section \ref{proof1}. Fix parameters $\{\rho_j\}_{j=1}^{2k}$ where $1> \rho_1>  \rho_2  > \dots > \rho_{2k} > 0$. For each parameter $\rho_j$, we fix $2k$ pairs of lattice points $\{\mathbf{a}_i^j, \mathbf{b}_i^j\} \subset \mathbb{Z} \times \mathbb{Z}_{\geq 0}$ for $i = 1, \dots, 2k$.  

\begin{proposition}[{Joint distribution of Busemann functions, \cite[Theorem 3.2 and Lemma 3.3]{fan_seppalainen20}}] \label{B_bdry}
Given parameters $\{\rho_j\}_{j=1}^{2k}$ where $1> \rho_1>  \rho_2  > \dots > \rho_{2k} > 0$, let $h$ be environment from the beginning of Section \ref{proof1}.
For any lattice points $\big((\mathbf{a}_i^j, \mathbf{b}_i^j)\big)_{i,j=1}^{2k}$ in $\Z\times\Z_{\ge0}$, we have the following distributional equality of random vectors:
$$\big(\wt B^{\rho_j}_{\mathbf{a}_i^j, \mathbf{b}_i^j}\big)_{i,j=1}^{2k} \stackrel{\textup{d}}{=} \big( G^\textup{S}_{-j,\mathbf{a}_i^j}(h) - G^\textup{S}_{-j,\mathbf{b}_i^j}(h)\big)_{i,j=1}^{2k}.$$
\end{proposition}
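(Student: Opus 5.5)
The plan is to reduce the statement to an identity between laws of horizontal increment sequences on a single level. After that, everything above that level is produced by the same deterministic south-boundary LPP map, driven by independent $\Exp(1)$ weights.

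\emph{Step 1: reduction to one level.} In the rotated model, write $G_{\mathbf v_n,\mathbf x}$ for $\mathbf x$ at level $\ge 0$ by decomposing according to where the down-left geodesic from $\mathbf x$ crosses level $-1$. Subtract $G_{\mathbf v_n,(0,-1)}$ and let $n\to\infty$. This should give, for each $\rho$ and $\mathbf x\in\Z\times\Z_{\ge0}$,
\[
\wt B^{\rho}_{(0,-1),\mathbf x}\;=\;\sup_{z\le x_1}\Big\{\wt B^{\rho}_{(0,-1),(z,-1)}+G_{(z,0),\mathbf x}(\wt h)\Big\},
\]
which is a south-boundary LPP at level $-1$. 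Its boundary function is the horizontal Busemann profile $x\mapsto \wt B^\rho_{(0,-1),(x,-1)}$. Justifying the exchange of limit and supremum needs a tightness/localization input: the exit point from level $-1$ should stay bounded as $n\to\infty$. This follows from $\wt B^\rho$ increments having mean $1/\rho>1$, as in the stability condition of Proposition~\ref{stab2}.

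Since these boundary profiles are measurable with respect to weights strictly below level $0$, they are independent of $\{\wt\omega_{\mathbf z}\}_{\mathbf z\in\Z\times\Z_{\ge0}}$. On the other side, Propositions~\ref{LPP_queue} and~\ref{nest} show that $G^{\textup S}_{-j,\bbullet}(h)$ restricted to levels $\ge0$ is, up to an additive constant, the south-boundary LPP at level $-1$ with boundary $(\sigma^{-2}\cdots\sigma^{-j}h)^{-1}$. The maps $\sigma$ only touch levels $<0$, so this boundary is likewise independent of the $\Exp(1)$ weights on levels $\ge0$. It therefore suffices to prove the joint equality in law, across $j=1,\dots,2k$, of the level-$(-1)$ profiles
\[
\big(\wt B^{\rho_j}_{(0,-1),(\bbullet,-1)}\big)_{j}\;\stackrel{\textup d}{=}\;\big((\sigma^{-2}\cdots\sigma^{-j}h)^{-1}\big)_{j}.
\]

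\emph{Step 2: identify the right-hand side as a jointly invariant measure.} Call the right-hand law $\mu$; it is the law of a multiline queue. By Proposition~\ref{burk}, applied iteratively, each marginal $(\sigma^{-2}\cdots\sigma^{-j}h)^{-1}$ consists of i.i.d.\ $\Exp(\rho_j)$ increments. This matches the marginal law of $\wt B^{\rho_j}$ along a horizontal line, by \eqref{mdist}.

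Next, show $\mu$ is invariant under the one-step evolution that maps a tuple of boundary profiles on one level, together with a fresh independent $\Exp(1)$ row, to the tuple of south-boundary increments on the next level. Adding a fresh row on top means applying one more upward queueing map to each component with a common $\Exp(1)$ row. By the isometry Proposition~\ref{isometry}, the nested property, and the braid relations (Proposition~\ref{braid}), this extra $\sigma$ can be commuted to the bottom of the multiline queue. There it only rearranges independent exponential rows, whose rates are preserved by Proposition~\ref{burk}. Hence the output has law $\mu$ again.

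\emph{Step 3: uniqueness, which I expect to be the main obstacle.} The Busemann profiles are also jointly invariant under the same evolution, and they have the same ergodic product marginals. Their joint law is moreover supported on ordered tuples: the increments are monotone in $\rho$, since Proposition~\ref{inc_bdry} makes $\sigma$ preserve increment ordering. So what is needed is uniqueness of a jointly invariant, translation-ergodic measure with prescribed i.i.d.\ marginals. I would obtain this by a coupling argument in the style of Ferrari--Martin. Run two coupled copies from any two such measures, with the same $\Exp(1)$ weights driving both. Use the attractiveness of the queueing maps (Proposition~\ref{inc_bdry}) together with coalescence of geodesics, so that each coordinate's profiles in the two copies agree on growing windows. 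Agreement of the marginals then forces agreement of the joint law. Combining this with Step 1 yields the stated equality for arbitrary points $(\mathbf a^j_i,\mathbf b^j_i)$.
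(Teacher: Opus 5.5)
The paper does not prove this proposition; it imports it from \cite{fan_seppalainen20}. Your outline reconstructs the invariance-plus-identification strategy behind that result using the present paper's tools, and Steps~1 and~2 hold up apart from two small repairs in Step~1.

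First, the orientation is reversed. With $\wt B_{\mathbf x,\mathbf y}=\lim(G_{\mathbf v_n,\mathbf x}-G_{\mathbf v_n,\mathbf y})$ the identity reads $\wt B^\rho_{\mathbf x,(0,-1)}=\sup_{z\le x_1}\{\wt B^\rho_{(z,-1),(0,-1)}+G_{(z,0),\mathbf x}\}$.

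Second, the mean $1/\rho>1$ of Busemann increments does not by itself control the finite-$n$ exit points. It is cleaner to pass the recursion $G_{\mathbf v,\mathbf x}=\omega_{\mathbf x}+\max(G_{\mathbf v,\mathbf x-\mathbf e_1},G_{\mathbf v,\mathbf x-\mathbf e_2})$ to the limit and iterate it leftward. The terms entering from column $x_1-M$ then vanish by the LLN for the Busemann row increments (mean $1/\rho$), since LPP in a strip of fixed height grows only like $M$.

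Step~2 is correct; here is a concrete version. Put $\bar h=\sigma^{-2k}\cdots\sigma^{-2}\sigma^{-1}h$. Proposition~\ref{burk} gives $\bar h$ independent rows with rates $\rho_1,\dots,\rho_{2k},1$ on levels $0,-1,\dots,-2k$. For each $j$ one has $(\sigma^{-1}\cdots\sigma^{-j}h)^0=(\sigma^{-1}\cdots\sigma^{-j+1}\bar h)^0$. To see this, move $\sigma^{-2k},\dots,\sigma^{-j-1}$ to the far left, where they do not touch level $0$. What remains is $(\sigma^{-1}\cdots\sigma^{-j}h')^0$ with $h'=\sigma^{-j+1}\cdots\sigma^{-1}h$. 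By \eqref{rep_south} its increments are those of $G^{\textup{S}}_{-j,(\cdot,0)}(h')$, and $G^{\textup{S}}_{-j,(\cdot,0)}(h')=G^{\textup{S}}_{-j,(\cdot,0)}(h)$ by \eqref{isometry_bdy}.

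The genuine gap is Step~3. This is where the Busemann law is actually identified, and you assert it rather than prove it. Attractiveness (Proposition~\ref{inc_bdry}) compares outputs of \emph{ordered} inputs. But for coordinates with equal laws, an ordered coupling is already an identical one, so attractiveness yields nothing beyond what you put in. Likewise, ``coalescence of geodesics'' is never turned into a statement about increments.

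The missing ingredient is a single-coordinate loss-of-memory lemma. Suppose a profile on level $-N$ has slope $1/\rho$ in the LLN sense (e.g.\ i.i.d.\ $\Exp(\rho)$ increments). Then the increments of the south-boundary LPP it generates, on a fixed window of level $0$, converge in probability as $N\to\infty$ to $\wt B^{\rho}$ computed from the same weights above level $-N$. A proof sketch:
\begin{enumerate}
\item The exit points of the relevant geodesics are $-cN+o(N)$ with $c=\rho^2/(1-\rho)^2$. This follows from the LLN for the boundary and strict concavity of $s\mapsto s(1-1/\rho)+2\sqrt{sN}$.
\item Path crossing sandwiches the boundary-LPP increments between point-to-point increments from $(-cN\pm\epsilon N,-N+1)$.
\item These converge a.s.\ to Busemann increments with parameters on either side of $\rho$, which tend to $\rho$ as $\epsilon\to0$.
\item Monotonicity in the parameter, plus a.s.\ continuity at the fixed $\rho$ (the one-sided limits are ordered and have equal means), closes the sandwich.
\end{enumerate}

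Now apply the lemma coordinatewise to the multiline queue placed on level $-N$ and run up to level $0$; by Step~2 it still has law $\mu$ there. The resulting vector converges in probability to the Busemann vector, so the two laws coincide. This route needs neither ergodicity, nor invariance of the Busemann law, nor a general uniqueness theorem for jointly invariant measures.
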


\subsection{Proofs of Theorem \ref{thm_1} and Theorem \ref{imp_ind}}

Let $\mathbf{a}, \mathbf{b}$ be two nearest-neighbor lattice points such that $\mathbf{b} - \mathbf{a} \in \{\mathbf{e}_1, -\mathbf{e_2}\}$. 
This will be the new unit edge $\mathcal{B} = [\![\mathbf a,\mathbf b]\!]$. Define the collections $\{\mathcal{R}_i^j\}_{i,j=1}^{2k}$ and $\{\mathcal{G}_i^j\}_{i,j=1}^{2k}$ analogously to the beginning of Section \ref{proof1} with respect to this new edge $\mathcal{B}$.
Letting $B^\rho_k(\mathcal{B}), B^{\rho_j}(\mathcal{R}_i^j), B^{\rho_j}(\mathcal{G}_i^j)$ denote the Busemann functions (defined in \eqref{rota_buse}) across these edges with respect to their ordered endpoints, we have the following result.

\begin{proposition}[Negative association or independence with unit edges]\label{prop_1edge}
For any coordinate-wise monotone functions $f$ and $g$ that are monotone in the same direction, it holds that 
\begin{equation}\label{b_cov}
\Cov\Big(f\big(B^{\rho_k}(\mathcal{B})\big), g\big( (B^{\rho_j}(\mathcal{R}^j_i))_{i,j=1}^{2k} ,(B^{\rho_j}(\mathcal{G}^j_i))_{i,j=1}^{2k}\big) \Big) \leq 0.
\end{equation}
In addition, if we restrict the range of the index $j$, then we have
\begin{equation}\label{b_ind}
\Cov\Big(f\big(B^{\rho_k}(\mathcal{B})\big), g\big( (B^{\rho_j}(\mathcal{R}^j_i))_{i=1, \dots, 2k}^{j=1,\dots, k} ,(B^{\rho_j}(\mathcal{G}^j_i))_{i=1 \dots 2k}^{j=k,\dots, 2k}\big) \Big) = 0.
\end{equation}
\end{proposition}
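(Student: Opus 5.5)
The plan is to transfer Proposition~\ref{bdry_cov} to the Busemann setting through the distributional identity of Proposition~\ref{B_bdry}, after a lattice translation that places the edge $\mathcal B$ in the position used in Section~\ref{proof1}.

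First, by translation invariance of the i.i.d.\ $\Exp(1)$ environment, the joint law of the rotated Busemann functions $\wt B^\rho_{\mathbf x,\mathbf y}$ in \eqref{rota_buse} is invariant under simultaneous shifts of all vertex pairs, for all $\rho$ jointly. If $\mathbf b-\mathbf a=\mathbf e_1$, I will shift so that $\mathcal B$ becomes $[\![(-1,n),(0,n)]\!]$ with $n$ large enough that every edge in $\{\mathcal R^j_i,\mathcal G^j_i\}$ lies strictly above $y=0$. The quadrant conditions on $\mathcal R^j_i$ and $\mathcal G^j_i$ are defined relative to $\mathcal B$, so they are preserved by the shift. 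If $\mathbf b-\mathbf a=-\mathbf e_2$, I will reflect across the antidiagonal (swap coordinates and reverse orientation). This swaps $\mathbf e_1\leftrightarrow\mathbf e_2$, sends $\rho\mapsto 1-\rho$, reverses the ordering of the parameters, and preserves the down-right order $\preceq$. Checking that the relabelled collection still satisfies the hypotheses of Proposition~\ref{bdry_cov} is the step I expect to need the most care: the roles of $\mathcal R$ and $\mathcal G$ and the index ranges in \eqref{b_ind} may get interchanged. A cleaner alternative I would try first is to rerun Proposition~\ref{bdry_cov} with a vertical $\mathcal B$, using the corner-flipping relation \eqref{flip} to see that a vertical increment across $[\![(0,n),(0,n-1)]\!]$ plays the same role as a boundary weight.

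Second, I will apply Proposition~\ref{B_bdry} to the vector consisting of $\wt B^{\rho_k}(\mathcal B)$ together with all $\wt B^{\rho_j}(\mathcal R^j_i)$ and $\wt B^{\rho_j}(\mathcal G^j_i)$. Since $\mathcal B$ is assigned parameter $\rho_k$, it is one of the pairs indexed by $j=k$. This gives a joint equality in distribution between this vector and the vector formed by $G(\mathcal B)$ from \eqref{G_blue} (which uses boundary level $-k$), $G(\mathcal R^j_i)$, and $G(\mathcal G^j_i)$.

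The covariance in \eqref{b_cov} depends only on this joint law, so it equals the covariance in \eqref{bdry_cov1}, which is nonpositive. Similarly, \eqref{b_ind} follows from \eqref{bdry_cov2}. One point to check here is that the index ranges match: in \eqref{bdry_cov2} they are $\mathcal G$ with $j\le k$ and $\mathcal R$ with $j\ge k$, whereas \eqref{b_ind} is written with $\mathcal R$ for $j\le k$ and $\mathcal G$ for $j\ge k$. I expect this to be reconciled by the $180^\circ$ rotation between the $B$ and $\wt B$ conventions, which interchanges the two quadrants. If needed, I will track that relabelling explicitly.

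Finally, any integrability issues are harmless. For bounded monotone $f$ and $g$ everything is finite. For general non-negative monotone $f$ and $g$, the same argument goes through after truncation, using monotone convergence.
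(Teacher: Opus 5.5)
This is essentially the paper's proof: the horizontal case is Proposition~\ref{B_bdry} combined with Proposition~\ref{bdry_cov} after a translation, and the vertical case uses the coordinate swap $(x,y)\mapsto(y,x)$ with reversed orientations. Under that swap, the interchange of the upper-left and lower-right quadrants is exactly compensated by the reversal $\rho_j\mapsto 1-\rho_j$ of the parameter order, the global sign change is absorbed by passing to $f(-\,\cdot\,)$ and $g(-\,\cdot\,)$, and the parameter $1-\rho_k$ (which lands in position $k+1$) is moved to the required position by padding with dummy parameters, so the corner-flipping alternative is unnecessary. Your reading of the index mismatch is also correct: \eqref{b_ind} is the right statement for the unrotated $B^\rho$ (which is how it is used for Theorem~\ref{imp_ind}), and it follows from \eqref{bdry_cov2} via $B^\rho_{\mathbf x,\mathbf y}\stackrel{\textup{d}}{=}\wt B^\rho_{-\mathbf x,-\mathbf y}$, which swaps the two quadrants while keeping the parameters---a step the paper's ``follow immediately'' leaves implicit.
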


\begin{proof}
First, note that because of the distributional equality stated in Proposition \ref{B_bdry}, when $\mathcal{B} = (\mathbf a, \mathbf b)$ with $\mathbf{b} - \mathbf{a} = \mathbf{e}_1$, the results follow immediately from Proposition \ref{bdry_cov}.

For the case when $\mathbf{b} - \mathbf{a} = -\mathbf{e}_2$, we will use the fact that our i.i.d.~environment is symmetric with respect to the diagonal $x=y$. 
Fix $\mathbf{x}, \mathbf{y}$ in $\mathbb{Z}^2$, and let $\mathbf{x}_{\textup{r}}$ and $\mathbf{y}_{\textup{r}}$ denote the reflection of $\mathbf{x}$ and $\mathbf{y}$ across the diagonal line, respectively. Then, jointly for all $\mathbf{x}$ and $\mathbf {y}$, 
$$\wt B^\rho_{\mathbf{x}, \mathbf{y}} \stackrel{\textup{d}}{=} \wt B^{1-\rho}_{\mathbf{x}_{\textup{r}}, \mathbf{y}_{\textup{r}}} = -\wt B^{1-\rho}_{\mathbf{y}_{\textup{r}}, \mathbf{x}_{\textup{r}}}.$$
Note that the second equality is motivated by the fact that if $\mathbf{x}$ precedes $\mathbf{y}$ along a down-right path, then, after the reflection, $\mathbf{y}_{\textup{r}}$ precedes $\mathbf{x}_{\textup{r}}$ along a down-right path. 

Now suppose $\mathcal{B}=(\mathbf{a},\mathbf{b})$ with $\mathbf{b}-\mathbf{a}=-\mathbf{e}_2$, and all other unit edges $\mathcal{R}_i^j$ and $\mathcal{G}_i^j$ are fixed. Let $\overline{\mathcal{B}}$, $\overline{\mathcal{R}}_i^j$, and $\overline{\mathcal{G}}_i^j$ denote the collections of unit edges obtained by first reflecting the original edges across the diagonal and then reversing the order of their endpoints. Then, it holds that 
$$\Big(B^{\rho_k}(\mathcal{B}), (B^{\rho_j}({\mathcal{R}}_i^j))_{i,j=1}^{2k}, (B^{\rho_j}({\mathcal{G}}_i^j))_{i,j=1}^{2k}\Big) \stackrel{\textup{d}}{=}\Big(-B^{1-\rho_k}(\overline{\mathcal{B}}), (-B^{1-\rho_j}(\overline{\mathcal{R}}_i^j))_{i,j=1}^{2k}, (-B^{1-\rho_j}(\overline{\mathcal{G}}_i^j))_{i,j=1}^{2k}\Big).$$
Note that $\overline{\mathcal{B}}$ is now a horizontal edge. In addition, for any coordinate-wise monotone function $h(t_1, \dots, t_n)$, putting a negative sign for each of its coordinates, $h(-t_1, \dots, -t_n)$, preserves its monotonicity. Therefore, the first case for the horizontal edge that we considered can be applied here. Therefore, \eqref{b_cov} holds as shown below:
\begin{align*}
&\Cov\Big(f\big(B^{\rho_k}(\mathcal{B})\big), g\big( (B^{\rho_j}(\mathcal{R}^j_i))_{i,j=1}^{2k} ,(B^{\rho_j}(\mathcal{G}^j_i))_{i,j=1}^{2k}\big) \Big)\\
&= \Cov\Big(f\big(-B^{1-\rho_k}(\overline{\mathcal{B}})\big), g\big( (-B^{1-\rho_j}(\overline{\mathcal{R}}^j_i))_{i,j=1}^{2k} ,(-B^{1-\rho_j}(\overline{\mathcal{G}}^j_i))_{i,j=1}^{2k}\big) \Big) \leq 0.
\end{align*}

Next, we prove the zero-covariance result  in \eqref{b_ind} when  $\mathbf{b} - \mathbf{a} = -\mathbf{e}_2$. To begin, note that in order to apply the result for the horizontal-edge case with $\overline{\mathcal{B}}$, the Busemann increments with exponential rates greater than or equal to the rate of the Busemann increment across $\overline{\mathcal{B}}$ must lie above $\overline{\mathcal{B}}$, while the Busemann increments with rates less than or equal to that rate must lie below $\overline{\mathcal{B}}$. This condition is satisfied by the second display equation below because, after reflection, the rates change from $\rho_j$ to $1-\rho_j$, while the increments above $\overline{\mathcal{B}}$ are reflected below $\overline{\mathcal{B}}$, and these two changes exactly compensate for each other. Therefore,
\begin{align*}
&\Cov\Big(f\big(B^{\rho_k}(\mathcal{B})\big), g\big( (B^{\rho_j}(\mathcal{R}^j_i))_{i=1 \dots 2k}^{j=1 \dots k} ,(B^{\rho_j}(\mathcal{G}^j_i))_{i=1 \dots 2k}^{j=k \dots 2k}\big)\Big)\\
&=\Cov\Big(f\big(-B^{1-\rho_k}(\overline{\mathcal{B}})\big), g\big( (-B^{1-\rho_j}(\overline{\mathcal{R}}^j_i))_{i=1 \dots 2k}^{j=1 \dots k} ,(-B^{1-\rho_j}(\overline{\mathcal{G}}^j_i))_{i=1 \dots 2k}^{j=k \dots 2k}\big)\Big) = 0.
\end{align*}
This completes the proof of the proposition.
\end{proof}

We now use Proposition \ref{prop_1edge} to prove Theorems \ref{thm_1} and \ref{imp_ind}. 

\begin{proof}[Proof of Theorem \ref{thm_1}]
Suppose $(B_i)_{i=1}^n$ and $(R_i)_{i=1}^k$ are defined as in Theorem \ref{thm_1}. 
First, if each of the $R_i$ is a Busemann increment across a unit edge $[\![\mathbf{a}_i, \mathbf{b}_i]\!]$ with $\mathbf{b}_i - \mathbf{a}_i \in \{\mathbf{e}_1, -\mathbf{e}_2\}$, then we can apply Proposition \ref{prop_1edge} iteratively, which allows us to obtain the following inequalities:
\begin{align*}
\mathbb{E}\Big[g\big(R_1, \dots, R_k\big) \prod_{i=1}^n f_i(B_i)\Big] &\leq \mathbb{E}\Big[g\big(R_1, \dots, R_k\big) \prod_{i=2}^{n} f_i(B_i)\Big] \mathbb{E}\big[f_1(B_{1})\big]\\
& \leq \mathbb{E}\Big[g\big(R_1, \dots, R_k\big) \prod_{i=3}^{n} f_i(B_i)\Big]  
\prod_{j=1}^2 \mathbb{E}\big[f_j(B_j)\big]\\
&\hspace{1.2ex}\vdots\\
&\le \mathbb{E}\big[g(R_1, \dots, R_k)\big]
\prod_{j=1}^n \mathbb{E}\big[f_j(B_j)\big].
\end{align*}

Now, consider when the Busemann function $R_i$ has two general endpoints along a down-right path. Because Busemann functions are additive, this means each $R_i$ can be written as the sum of Busemann increments across unit edges $[\![\mathbf{a}_i, \mathbf{b}_i]\!]$ with $\mathbf{b}_i - \mathbf{a}_i \in \{\mathbf{e}_1, -\mathbf{e}_2\}$. More precisely, by additivity of the Busemann functions, there is a collection of Busemann increments across unit edges, denoted by $\{\wt R_j\}_{j=1}^\ell$, such that each $R_i$ is a non-negative linear combination of $\{\wt R_j\}_{j=1}^\ell$, i.e.,
$$R_i = \sum_{j=1}^\ell a_j^i \wt R_j \quad \text{for each $i\in\{ 1, \dots, k\}$},$$
where all $a_j^i \geq 0$.
Then, it holds that 
$$g(R_1, \dots, R_k) = g\Big(\sum_{j=1}^\ell a_j^1 \wt R_j,\dots,  \sum_{j=1}^\ell a_j^k \wt R_j\Big) = \wt g(\wt R_{1}, \wt R_{2}, \dots, \wt R_{\ell}),$$ 
and here $\wt g$ is still monotone because every $a_j^i$ is non-negative. Thus, Proposition \ref{bdry_cov} can also be applied when the $R_i$'s are Busemann functions with general endpoints along down-right paths. This completes the proof of Theorem \ref{thm_1}.
\end{proof}

\begin{proof}[Proof of Theorem \ref{imp_ind}]
First, by the additivity of the Busemann function, Proposition~\ref{prop_1edge} implies that $B^\lambda_{\mathbf{a}, \mathbf{b}}$ is jointly independent of all $B^\rho_{\mathbf{x},\mathbf{y}}$ for which $(\rho, \mathbf{x}, \mathbf{y})$ satisfies \eqref{ind_order}, with the additional requirement that $\mathbf{x}$ and $\mathbf{y}$ lie along a down-right path.

When $\mathbf{x}$ and $\mathbf{y}$ do not lie along a down-right path, we may define a new vertex
$$\mathbf{z} = \Big(\max\{\mathbf{x}\cdot \mathbf{e}_1, \mathbf{y}\cdot \mathbf{e}_1\}, \min\{\mathbf{x}\cdot \mathbf{e}_2,\mathbf{y}\cdot \mathbf{e}_2\}\Big)$$
and write
\[
B^\rho_{\mathbf{x}, \mathbf{y}} = B^\rho_{\mathbf{x}, \mathbf{z}} - B^\rho_{\mathbf{y}, \mathbf{z}}.
\]
Note that both $(\rho, \mathbf{x}, \mathbf{z})$ and $(\rho, \mathbf{y}, \mathbf{z})$ satisfy \eqref{ind_order}, and the pairs $\mathbf{x}, \mathbf{z}$ and $\mathbf{y}, \mathbf{z}$ each lie along a down-right path. Therefore, from the observation at the start of the proof, we know that $B^\lambda_{\mathbf{a}, \mathbf{b}}$ is jointly independent with all $B^\rho_{\mathbf{x}, \mathbf{z}}$ and  $B^\rho_{\mathbf{y}, \mathbf{z}}$. Thus, $B^\lambda_{\mathbf{a}, \mathbf{b}}$ is jointly independent of all $B^\rho_{\mathbf{x}, \mathbf{y}}$ for which $(\rho, \mathbf{x}, \mathbf{y})$ satisfies \eqref{ind_order}.
\end{proof}

\subsection{Proofs of the corollaries}

\begin{proof}[Proof of Corollary \ref{mgf}]
We will just prove \eqref{mgf_a}, since the argument for \eqref{mgf_b} is the same.
For any constant $L$, iterative applications of Theorem \ref{thm_1} yield
\begin{align*}
\mathbb{E}\Big[\prod_{i=1}^n \big[L\wedge \exp\big( \alpha_i B^{\rho_i}_{\mathbf y_i, \mathbf y_{i+1}}\big)\big]\Big]  
&\leq \mathbb{E}\Big[\prod_{i=2}^{n} \big[L\wedge \exp\big( \alpha_i B^{\rho_i}_{\mathbf y_i, \mathbf y_{i+1}}\big)\big]\Big]\cdot \mathbb{E}\big[L\wedge \exp\big( \alpha_1 B^{\rho_1}_{\mathbf y_1, \mathbf y_{2}}\big)\big]\\
&\leq \mathbb{E}\Big[\prod_{i=3}^{n}\big[L\wedge \exp\big( \alpha_i B^{\rho_i}_{\mathbf y_i, \mathbf y_{i+1}}\big)\big]\Big]\cdot \prod_{j=1}^2\mathbb{E}\big[L\wedge \exp\big( \alpha_{j} B^{\rho_j}_{\mathbf y_j, \mathbf y_{j+1}}\big)\big]\\
&\hspace{1.2ex}\vdots\\
& \leq \prod_{j=1}^n \mathbb{E}\big[L\wedge\exp\big(\alpha_j B^{\rho_j}_{\mathbf y_j, \mathbf y_{j+1}}\big)\big].
\end{align*}
Send $L\to \infty$ and apply the monotone convergence theorem to obtain the desired result. 
\end{proof} 

\begin{proof}[Proof of Corollary \ref{thm_diffusive}]
Recall from \eqref{mdist} that $B^{\rho_i}_{\mathbf{y}_i, \mathbf{y}_{i+1}}$ is either an exponential random variable or the negative of an exponential random variable; denote its rate by $\lambda_i\in\{\rho_i,1-\rho_i\}$.
Let us write $X_i = B^{\rho_i}_{\mathbf{y}_i, \mathbf{y}_{i+1}} - \mathbb{E}[B^{\rho_i}_{\mathbf{y}_i, \mathbf{y}_{i+1}}]$.
If $B_i$ is positive, then whenever $|\alpha|\le\lambda_i/2$ we have
\eq{
\log\mathbb E[e^{\alpha X_i}]
= \log\Big[\frac{\lambda_i}{\lambda_i-\alpha}e^{-\alpha/\lambda_i}\Big]
= \log\Big[\frac{1}{1-\alpha/\lambda_i}\Big]-\frac{\alpha}{\lambda_i}
= \sum_{n=2}^\infty \frac{(\alpha/\lambda_i)^n}{n}
\le (\alpha/\lambda_i)^2.
}
Similarly, if $B_i$ is negative, then whenever $|\alpha|\le\lambda_i/2$ we have
\eq{
\log\mathbb E[e^{\alpha X_i}]
= \log\Big[\frac{\lambda_i}{\lambda_i+\alpha}e^{\alpha/\lambda_i}\Big]
= \log\Big[\frac{1}{1+\alpha/\lambda_i}\Big]+\frac{\alpha}{\lambda_i}
= \sum_{n=2}^\infty \frac{(-\alpha/\lambda_i)^n}{n}
\le (\alpha/\lambda_i)^2.
}
By assumption we have $\lambda_i \ge \varepsilon$ for all $i$.
Therefore, these inequalities show that
\eeq{ \label{3noc}
\log\mathbb E[e^{\alpha X_i}] \le \alpha^2/\varepsilon^2 \quad \text{for every $i$, whenever $|\alpha|\le\varepsilon/2$.}
}
Now we convert this estimate to tail bounds. 
For any $\alpha\in(0,\varepsilon/2]$, we have
\begin{align*}
\mathbb{P}\Big(\sum_{i=1}^n X_i \geq t \sqrt{n}\Big)  
= \mathbb{P}(e^{\alpha \sum_{i=1}^n X_i} \geq e^{\alpha t\sqrt{n}})  
\leq  \frac{ \mathbb{E}[e^{\alpha \sum_{i=1}^n X_i}] }{e^{\alpha t\sqrt{n}}}
\stackref{mgf_a}{\leq} \frac{ \prod_{i=1}^n \mathbb{E}[e^{\alpha X_i}] }{e^{\alpha t\sqrt{n}}}
\stackref{3noc}{\leq}  e^{n\alpha^2/\varepsilon^2 - \alpha t\sqrt{n}}.
\end{align*}
If $\varepsilon t\le\sqrt{n}$, then we can take $\alpha = \frac{\varepsilon^2t}{2\sqrt{n}}$ to obtain
\eq{
\mathbb{P}\Big(\sum_{i=1}^n X_i \geq t \sqrt{n}\Big)
\le e^{-\varepsilon^2t^2/4}
= e^{-\min\{\varepsilon^2t^2,\varepsilon t\sqrt{n}\}/4}.
}
If instead $\varepsilon t>\sqrt{n}$, then we take $\alpha=\varepsilon/2$ to obtain
\eq{
\mathbb{P}\Big(\sum_{i=1}^n X_i \geq t \sqrt{n}\Big)
\le \exp\Big(\frac{n}{4}-\frac{\varepsilon t\sqrt{n}}{2}\Big)
< \exp\Big(\frac{\varepsilon t\sqrt{n}}{4}-\frac{\varepsilon t\sqrt{n}}{2}\Big)
= e^{-\varepsilon t\sqrt{n}/4}
= e^{-\min\{\varepsilon^2t^2,\varepsilon t\sqrt{n}\}/4}.
}
The two cases together yield \eqref{diffusive_a}.
To obtain \eqref{diffusive_b}, but begin with
\eq{
\mathbb{P}\Big(\sum_{i=1}^n X_i \leq -t \sqrt{n}\Big)  
= \mathbb{P}(e^{-\alpha \sum_{i=1}^n X_i} \geq e^{\alpha t\sqrt{n}})  
\leq  \frac{ \mathbb{E}[e^{-\alpha \sum_{i=1}^n X_i}] }{e^{\alpha t\sqrt{n}}},
}
and then use \eqref{mgf_b} instead of \eqref{mgf_a}.
\end{proof}

\begin{proof}[Proof of Corollary~\ref{downright_path}]
We just argue \eqref{negative_orthant_dependence_a}, since \eqref{negative_orthant_dependence_b} is analogous.
By induction, it suffices to show
\eq{
\mathbb{P}\Big(\bigcap_{i=1}^n \Big\{B^{\rho_{i}}_{\mathbf y_{i}, \mathbf y_{i+1}} > t_i\Big\}\Big) 
&\leq \mathbb{P}\Big(\bigcap_{i=1}^{n-1} \Big\{B^{\rho_{i}}_{\mathbf y_{i}, \mathbf y_{i+1}} > t_i\Big\}\Big) \mathbb{P}\Big( B^{\rho_{n}}_{\mathbf y_{n}, \mathbf y_{n+1}} > t_n\Big).
}
This inequality is the special case of Theorem~\ref{thm_1} when
\eq{
g\big(B^{\rho_{1}}_{\mathbf y_{1}, \mathbf y_{2}},\ldots,B^{\rho_{n-1}}_{\mathbf y_{n-1}, \mathbf y_{n}}\big)
&= \begin{cases}
1 &\text{if $B^{\rho_{i}}_{\mathbf y_{i}, \mathbf y_{i+1}} > t_i$ for every $i\in\{1,\ldots,n-1\}$} \\
0 &\text{otherwise},
\end{cases} \\
f\big(B^{\rho_{n}}_{\mathbf y_{n}, \mathbf y_{n+1}}\big)
&= \begin{cases}
1 &\text{if $B^{\rho_{n}}_{\mathbf y_{n}, \mathbf y_{n+1}}>t_n$}\\
0 &\text{otherwise}.
\end{cases}
}
This completes the proof.
\end{proof}

\subsection{Proof of Proposition \ref{prop_strict}}\label{strict1}
In this case, we look at the joint distribution of two neighboring horizontal edges. Fix $0< \lambda < \rho < 1$, and consider a special case of \cite[Theorem 3.2]{fan_seppalainen20}, which gives the following equality in distribution.
Define the random variables $X_1, X_2 \sim \Exp(\rho), I_1, I_2 \sim \Exp(\lambda), J \sim \Exp(\rho-\lambda)$, where all variables are jointly independent. Then, using the corner flipping relation \eqref{flip}, it holds that
$$(B^\lambda_{-\mathbf e_1, \mathbf{0}}, B^\rho_{\mathbf{0}, \mathbf e_1}) \stackrel{\textup{d}}{=} (X_2+ (I_2-(X_1 +(J- I_1)^+))^+, X_1).$$
In addition, marginally, it holds that $X_1 \sim \Exp(\rho)$ and $X_2+ (I_2-(X_1 +(J- I_1)^+))^+ \sim \Exp(\lambda)$. 

Then, note that 
\begin{align*}
& \mathbb{P}(X_1 > s, X_2+ (I_2-(X_1 +(J- I_1)^+))^+ > t)\\
& \leq \mathbb{P}(X_1 > s, X_2+ (I_2-(s +(J- I_1)^+))^+ > t)\\
& =\mathbb{P}(X_1 > s)\mathbb{P}(X_2+ (I_2-(s +(J- I_1)^+))^+ > t).
\end{align*}
Now, as $s \to \infty$, the second probability above $\mathbb{P}(X_2+ (I_2-(s +(J- I_1)^+))^+ > t)$ approaches $e^{-\rho t}$, while the marginal probability is $\mathbb{P}(X_2+ (I_2-(X_1 +(J- I_1)^+))^+ > t) = e^{-\lambda t}.$ Thus, for any $t > 0$ and all sufficiently large $s$, we have 
\begin{align*}
&\mathbb{P}(X_1 > s, X_2+ (I_2-(X_1 +(J- I_1)^+))^+ > t) \\
& < \mathbb{P}(X_1 > s)\mathbb{P}(X_2+ (I_2-(X_1 +(J- I_1)^+))^+ > t),
\end{align*}
This implies that for all sufficiently large values of $s$ and $t$, we have
$$\mathbb{P}(B^\rho_{\mathbf{0}, \mathbf e_1} > s, B^\lambda_{-\mathbf e_1, \mathbf{0}} > t) < \mathbb{P}(B^\rho_{\mathbf{0}, \mathbf e_1} > s)\mathbb{P}(B^\lambda_{-\mathbf e_1, \mathbf{0}} > t).$$
Combined with Theorem \ref{thm_1}, which asserts that the non-strict version of the above inequality holds for all $s,t >0$, we obtain 
$$\mathbb{E}[B^\lambda_{-\mathbf e_1, \mathbf{0}}B^\rho_{\mathbf{0}, \mathbf e_1}] < \mathbb{E}[B^\lambda_{-\mathbf e_1, \mathbf{0}}]\mathbb{E}[B^\rho_{\mathbf{0}, \mathbf e_1}].$$

\appendix

\section{Technical calculations}

\subsection{Stability of tandem queue}\label{stab}
Recall that $h$ satisfies the \textit{asymptotic slope condition} if for each $m\in \mathbb{Z}$, there exists a positive real number $\textup{\texttt{slope}}_m(h)$ such that 
$$\textup{\texttt{slope}}_m (h) = \lim_{x\to \infty} \frac{h^m(x)}{x}= \lim_{x\to -\infty} \frac{h^m(x)}{x}.$$

The following result is a special case of our Proposition \ref{stab2}, when $n=m+1$.

\begin{lemma}[{One level of boundary LPP, \cite[Lemmas A.1 and A.3]{fan_seppalainen20}}]\label{stab1}
Assume $h$ satisfies the asymptotic slope condition.
If 
\begin{align} \label{strict_slopes}
\textup{\texttt{slope}}_m (h)> \textup{\texttt{slope}}_{m+1}(h),
\end{align}
then the last passage value $G^\textup{S}_{m, (x, m+1)}(h)$ is finite for every $x\in\Z$.
Furthermore, if \eqref{strict_slopes} holds, then the upward queueing map $\sigma^m$ swaps the asymptotic slopes:
$$\textup{\texttt{slope}}_m(\sigma^m h) = \textup{\texttt{slope}}_{m+1}(h)\qquad  \textup{and}  \qquad\textup{\texttt{slope}}_{m+1}(\sigma^m h) = \textup{\texttt{slope}}_m(h).$$
\end{lemma}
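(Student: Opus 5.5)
The plan is to treat the two assertions of Lemma~\ref{stab1} separately: first finiteness of $G^\textup{S}_{m,(x,m+1)}(h)$, then the swap of slopes. Both reduce to explicit formulas from Lemma~\ref{pitdef}.

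\emph{Finiteness.} By \eqref{GS} with $n=m+1$, and since $G_{(z,m+1),(x,m+1)} = h^{m+1}(x)-h^{m+1}(z-1)$, we have
\[
G^\textup{S}_{m,(x,m+1)}(h) = h^{m+1}(x) + \sup_{z\le x}\{h^m(z)-h^{m+1}(z-1)\}.
\]
Write $\phi(z)=h^m(z)-h^{m+1}(z-1)$. By the slope condition \eqref{slope_condition}, $\phi(z)/z \to \texttt{slope}_m(h)-\texttt{slope}_{m+1}(h)=:\delta>0$ as $z\to-\infty$. Hence $\phi(z)\to-\infty$, and the supremum over $z\le x$ is attained on a finite set. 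This gives finiteness.

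\emph{Slope swap.} Set $M(x)=\sup_{z\le x}\phi(z)$. By \eqref{altd1},
\[
(\sigma^m h)^{m+1}(x)=h^{m+1}(x)+M(x)-M(0),
\]
and by \eqref{alt_def1} the slopes of the two new levels sum to $\texttt{slope}_m+\texttt{slope}_{m+1}$. So it suffices to show $M(x)/x\to\delta$ as $x\to+\infty$ and $M(x)/x\to 0$ as $x\to-\infty$. Then the new level $m+1$ has slope $\texttt{slope}_{m+1}+\delta=\texttt{slope}_m$ in both directions, and level $m$ has slope $\texttt{slope}_{m+1}$.

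For $x\to+\infty$, we have $M(x)\ge\phi(x)\sim\delta x$. Conversely, for any $\varepsilon>0$ there is $C$ with $\phi(z)\le(\delta+\varepsilon)|z|+C$ for all $z$, so $M(x)\le(\delta+\varepsilon)x+C'$ for large $x$. Together these give $M(x)/x\to\delta$.

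For $x\to-\infty$, $M$ is nondecreasing and tends to $-\infty$, so we need $M(x)=o(|x|)$. The bound $M(x)\le \sup_{z\le x}(\delta+\varepsilon)z+C$ points the wrong way, so we need a lower bound. Since $M(x)\ge\phi(x)\approx\delta x$, we only get $M(x)/x\le\delta$ up to errors, which is not enough. This is the main obstacle.

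For a sharper lower bound, note $M(x)\ge\phi(x)$ with $\phi(x)/x\to\delta$. Thus $\liminf M(x)/x$ as $x\to-\infty$ is $\ge$ roughly $\delta$ in the sense $M(x)\ge(\delta+\varepsilon)x-C$, i.e.\ $M(x)/x\le\delta+\varepsilon$. On the other side, $\phi(z)\le(\delta-\varepsilon)z+C$ for $z\le x\to-\infty$, and the sup of this over $z\le x$ is $(\delta-\varepsilon)x+C$. Hence $M(x)/x\ge\delta-\varepsilon$.

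So in fact $M(x)/x\to\delta$ as $x\to-\infty$ as well, and the target above was misstated: as $x\to-\infty$, $M$ behaves like $\phi$ since $\phi$ is eventually increasing in the averaged sense. Therefore $M(x)/x\to\delta$ at both ends. The new level $m+1$ then has slope $\texttt{slope}_{m+1}+\delta=\texttt{slope}_m$ at $\pm\infty$, and the sum identity gives the new level $m$ slope $\texttt{slope}_{m+1}$.

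The care needed is entirely in these $\varepsilon$-sandwich estimates, and the $x\to-\infty$ direction is where one must check that the supremum is essentially attained near $x$.
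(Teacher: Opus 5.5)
\textbf{Verdict.} Your proposal is correct in substance. The paper gives no proof of this lemma; it simply imports it from \cite[Lemmas A.1 and A.3]{fan_seppalainen20}. So your argument is a genuinely different route: it makes the lemma self-contained.

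\textbf{Comparison with the paper.} You reduce both assertions to the asymptotics of $M(x)=\sup_{z\le x}\phi(z)$, where $\phi(z)=h^m(z)-h^{m+1}(z-1)$, via $G^{\textup{S}}_{m,(x,m+1)}(h)=h^{m+1}(x)+M(x)$.
\begin{itemize}
\item Finiteness follows from $\phi(z)/z\to\delta>0$ as $z\to-\infty$, so $\phi\to-\infty$ there.
\item The slope swap follows from $M(x)/x\to\delta$ at both ends, combined with conservation of mass.
\end{itemize}
Using \eqref{altd1} and \eqref{alt_def1} is not circular. Those identities only need the finiteness you establish first; indeed $(\sigma^m h)^{m+1}(x)=G^{\textup{S}}_{m,(x,m+1)}-G^{\textup{S}}_{m,(0,m+1)}$ is just \eqref{qmap}.

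\textbf{Points to clean up in a write-up.}

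First, discard the initial target ``$M(x)/x\to0$ as $x\to-\infty$''. It is false, and it would contradict the slope condition, which requires the same limit at $\pm\infty$. The ``main obstacle'' it creates is spurious. State the target $M(x)/x\to\delta$ at both ends from the outset, as you eventually do.

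Second, for $x\to+\infty$, the bound $\phi(z)\le(\delta+\varepsilon)|z|+C$ does not yield $M(x)\le(\delta+\varepsilon)x+C'$, because $\sup_{z\le x}|z|=\infty$. Use instead $\phi(z)\le(\delta+\varepsilon)z^++C$. This is valid because $\phi$ is bounded above on $z\le 0$, i.e.\ $M(0)<\infty$. Alternatively, write $M(x)=\max\{M(0),\max_{0<z\le x}\phi(z)\}$.

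Third, for $x\to-\infty$, the inequality $M(x)\ge\phi(x)$ gives an upper bound, $\limsup_{x\to-\infty}M(x)/x\le\delta$, because dividing by $x<0$ flips the inequality. It says nothing about a $\liminf$, so correct that wording. The matching bound $\liminf_{x\to-\infty}M(x)/x\ge\delta-\varepsilon$ comes from $\phi(z)\le(\delta-\varepsilon)z+C$ on $z\le 0$. Say explicitly that you take $0<\varepsilon<\delta$: positivity of $\delta-\varepsilon$ is exactly what makes $\sup_{z\le x}(\delta-\varepsilon)z=(\delta-\varepsilon)x$.

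With these fixes, the conclusions follow as you state them. The new level $m+1$ has slope $\textup{\texttt{slope}}_{m+1}(h)+\delta=\textup{\texttt{slope}}_m(h)$ at both ends. The new level $m$ then has slope $\textup{\texttt{slope}}_{m+1}(h)$ by \eqref{alt_def1}.
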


We now argue inductively to obtain Proposition \ref{stab2}.

\begin{proof}[Proof of Proposition \ref{stab2}]
The base case $n=m+1$ is covered by Lemma~\ref{stab1} above. To finish the induction, we will need that for $n \geq m+2$, 
\begin{equation}\label{stabeq}
G^{\textup{S}}_{m,  (y, n)}(h) = G^{\textup{S}}_{m+1, (y,n)}(\sigma^m h) + G^{\textup{S}}_{m, (0,m+1)}(h).
\end{equation}
To see how this formula would allow us to finish the induction, recall from Lemma~\ref{stab1} that $\textup{\texttt{slope}}_{m+1}(\sigma^m h)=\textup{\texttt{slope}}_m(h)$.
We also assume
$$\textup{\texttt{slope}}_m (h)> \max \{\textup{\texttt{slope}}_{m+1}(h), \textup{\texttt{slope}}_{m+2}(h), \dots, \textup{\texttt{slope}}_n(h)\}.$$
These two facts together allow us to apply Lemma~\ref{stab1} again to determine that both terms on the right-hand side of \eqref{stabeq} are finite; therefore, the left-hand side is finite.

To finish the proof, we will establish \eqref{stabeq}.
For $n \geq m+2$, we have
\begin{align*}
G^{\textup{S}}_{m,  (x, n)}(h) 
&= \sup_{z\leq x} \{h^m(z) + G_{(z, m+1)(x, n)})\}\\
&= \sup_{z\leq x} \{h^m(z) +  \sup_{z\leq j \leq x} \{h^{m+1}(j) - h^{m+1}(z-1) + G_{(j, m+2)(x, n)}\}\}\\
& = \sup_{j \leq x}  \{ G_{(j, m+2)(x, n)}+  \sup_{z\leq j} \{h^m(z) + (h^{m+1}(j) -h^{m+1}(z-1)) \}\}\\
&= \sup_{j \leq x}  \{ G_{(j, m+2)(x, n)}+  G^{\textup{S}}_{m, (j,m+1)}\}\\
& =\sup_{j \leq x}  \{ G_{(j, m+2)(x, n)}+  G^{\textup{S}}_{m, (j,m+1)} - G^{\textup{S}}_{m, (0,m+1)}\} + G^{\textup{S}}_{m, (0,m+1)}\\
&=\sup_{j \leq x}  \{ G_{(j, m+2)(x, n)}+  (\sigma^mh)^{m+1}(j)\} + G^{\textup{S}}_{m, (0,m+1)}\\
& = G^{\textup{S}}_{m+1, (x,n)}(\sigma^m h) + G^{\textup{S}}_{m, (0,m+1)}(h).
\end{align*}
With this, we have finished the proof.
\end{proof}

\bibliographystyle{myacm2}
{\small
\bibliography{time,erikbib} 
}

\end{document}